\def\rg{\hbox to 30pt{\rightarrowfill}}
\def\lg{\hbox to 30pt{\leftarrowfill}}
          \newtheorem{theorem}{Theorem}[section]
      \newtheorem{definition}[theorem]{Definition}
      \newtheorem{proposition}[theorem]{Proposition}
      \newtheorem{corollary}[theorem]{Corollary}
      \newtheorem{example}[theorem]{Example}
      \newcommand{\CC}{{\mathbb C}}
      \newcommand{\NN}{{\mathbb N}}
      \newcommand{\ZZ}{{\mathbb Z}}
      \newcommand{\DD}{{\mathbb D}}
      \newcommand{\FF}{{\mathbb F}}
      \newcommand{\TT}{{\mathbb T}}
      \newcommand{\cA}{{\mathcal A}}
      \newcommand{\cB}{{\mathcal B}}
      \newcommand{\cD}{{\mathcal D}}
      \newcommand{\cE}{{\mathcal E}}
      \newcommand{\cF}{{\mathcal F}}
      \newcommand{\cG}{{\mathcal G}}
      \newcommand{\cH}{{\mathcal H}}
      \newcommand{\cK}{{\mathcal K}}
      \newcommand{\cM}{{\mathcal M}}
      \newcommand{\cR}{{\mathcal R}}
      \newcommand{\cS}{{\mathcal S}}
      \newcommand{\cT}{{\mathcal T}}
      \newdimen\expt
      \def\boxit#1{\setbox0\hbox{$\displaystyle{#1}$}
            \hbox{\lower.4\expt
       \hbox{\lower3\expt\hbox{\lower\dp0
            \hbox{\vbox{\hrule height.4\expt
       \hbox{\vrule width.4\expt\hskip3\expt
            \vbox{\vskip3\expt\box0\vskip2\expt}%
       \hskip3\expt\vrule width.4\expt}\hrule height.4\expt}}}}}}
\begin{document}
       \pagestyle{myheadings}
      \markboth{ Gelu Popescu}{ Multivariable  Bohr
      inequalities }

      \title [ Bohr inequalities  for free holomorphic functions on  polyballs ]
      { Bohr inequalities   for free holomorphic functions on  polyballs }
        \author{Gelu Popescu}
      \date{February 24, 2017}
      \thanks{Research supported in  part by  NSF grant DMS 1500922}
      \subjclass[2000]{Primary:   47A56; 46L52; Secondary: 32A38;
  47A63}
      \keywords{Multivariable operator theory; Bohr's inequality; Noncommutative polyball; Free holomorphic function; Free pluriharmonic function;   Berezin transform; Fock space }

      \address{Department of Mathematics, The University of Texas
      at San Antonio \\ San Antonio, TX 78249, USA}
      \email{\tt gelu.popescu@utsa.edu}

      \begin{abstract}
      Multivariable operator theory is used to provide Bohr inequalities for free holomorphic functions with operator coefficients on the regular polyball ${\bf B_n}$, ${\bf n}=(n_1,\ldots, n_k)\in \NN^k$, which is a noncommutative analogue of the scalar polyball $(\CC^{n_1})_1\times \cdots \times (\CC^{n_k})_1$. The Bohr radius $K_{mh}({\bf B_n})$ (resp.~$K_{h}({\bf B_n})$) associated with the multi-homogeneous (resp. homogeneous) power series expansions of the free holomorphic functions  are the main objects of study in this paper.  We extend a theorem of Bombieri and Bourgain for the disc $\DD:=\{z\in \CC: |z|<1\}$ to the polyball, and   obtain the estimations
     $\frac{1}{3\sqrt{k}}< K_{mh}({\bf B_n})< \frac{2\sqrt{\log k}}{\sqrt{k}}$ if  $k>1$,  extending Boas-Khavinson  result for the scalar polydisk $\DD^k$.

With respect to the homogeneous power series expansion, we  prove that  $K_{h}({\bf B_n})=1/3$,  extending the classical result,  and
       obtain  analogues of Carath\' eodory,  Fej\' er,  and Egerv\' ary-Sz\' azs inequalities  for   free holomorhic functions  with  operator coefficients and  positive  real parts on the polyball. These results are used to provide multivariable   analogues of Landau's   inequality   and  Bohr's inequality  when the norm is replaced by the numerical radius of  an operator.

 When specialized to the regular polydisk ${\bf D}^k$ (which corresponds to the case $n_1=\cdots =n_k=1$), we obtain new results concerning Bohr, Landau, Fej\' er, and Harnack inequalities   for operator-valued holomorphic functions  and $k$-pluriharmonic functions on the scalar  polydisc $\DD^k$.
The results of the paper can be used to obtain Bohr type inequalities for the noncommutative ball algebra $\boldsymbol\cA_{\bf n}$, the Hardy algebra ${\bf F}^\infty_{\bf n}$, and the $C^*$-algebra  $C^*({\bf S})$,  generated by the universal model ${\bf S}$ of the polyball ${\bf B_n}$.

 \end{abstract}

      \maketitle

\section*{Contents}
{\it

\quad Introduction

\begin{enumerate}
   \item[1.]   Preliminaries on Berezin transforms on noncommutative polyballs
   \item[2.] Bohr inequalities for free holomorphic functions on  polyballs
 \item[3.]  Bohr inequalities  for free  holomorphic functions with $F(0)\geq 0$ and $\Re F\leq I$
 \item[4.] The Bohr radius $K_{mh}({\bf B_n})$ and Bombieri-Bourgain theorem for the polyball
\item[5.]  The Bohr radius $K_{h}({\bf B_n})$ for the Hardy space $H^\infty({\bf B_n})$
\item[6.]   Fej\' er and  Bohr  inequalities  for  multivariable polynomials with operator coefficients
\item[7.]   Harnack   inequalities for free $k$-pluriharmonic functions
   \end{enumerate}

\quad References

}

      \section*{Introduction}

 Bohr's inequality
\cite{Bo}
asserts  that if  $f(z):=\sum\limits_{k=0}^\infty a_kz^k$ is an  analytic function on the open unit disc
$\DD:=\{z\in\CC: \ |z|<1\}$ such that  $\|f\|_\infty\leq 1$, then
$$\sum_{k=0}^\infty r^k |a_k|\leq 1\quad \text{  for }\  0\leq r\leq \frac{1} {3}.
$$
M. Riesz, Schur, and Weiner showed, independently, that $\frac{1} {3}$ is the best possible constant. Other proofs were  later
obtained
 by Sidon \cite{S} and Tomic \cite{T}.
 Dixon \cite{D} used Bohr's inequality in connection with
the long-standing problem of characterizing Banach algebras satisfying  the   von Neumann inequality \cite{vN} (see also \cite{Pa-book} and \cite{Pi}).

In 1997, Boas and Khavinson \cite{BK} introduced the Bohr radius $K_N$ for the Hardy space  $H^\infty(\DD^N)$ of bounded holomorphic function on the $N$-dimensional polydisc and showed that, if $N>1$, then
$$
\frac{1}{3\sqrt{N}}<K_N< \frac{2\sqrt{\log N}}{\sqrt{N}}.
$$
Inspired by this result, several authors  (see \cite{Ai}, \cite{AV}, \cite{BPS}, \cite{BK}, \cite{DFOOS}, \cite{DMS},
 \cite{DT}, \cite{PPoS}, \cite{Po-Bohr}, \cite{Po-domains} and the references therein) have considered multivariable analogues of
Bohr's inequality.
  Due to the remarkable  work by Defant, Frerick, Ortega-Cerdà,  Ouna\" ies, and Seip \cite{DFOOS}, and by
  Bayart, Pellegrino, and Seoane-Sep\' ulveda   \cite{BPS}, we know now the asymptotic  behaviour of the Bohr radius  $K_N$, i.e.
   $$
   \lim_{N\to \infty}\frac{K_N}{\sqrt{(\log N)/N}}=1.
   $$
 In \cite{DMS}, Defant, Maestre, and Schwarting   obtained  upper and lower estimates for the Bohr radius in the setting of  holomorphic functions defined on $\DD^N$ with values in Banach spaces.
 Noncommutative multivariable analogues of  Bohr's inequality were obtained in \cite{PPoS} and  \cite{Po-Bohr} for the class of noncommutative holomorphic functions on the open unit ball
 $$
[B(\cH)^k]_1:=\left\{(X_1,\ldots, X_k)\in B(\cH)^k:\ \|X_1
X_1^*+\cdots +X_kX_k^* \|^{1/2} <1\right\},
$$
where  $k\in \NN:=\{1,2,\ldots\}$ and  $B(\cH)$ is the algebra
  of all bounded linear operators on a Hilbert space $\cH$.

  The main goal of the  present paper is to  study  the Bohr phenomenon  in the setting of  free holomorphic functions on  the noncommutative polyball ${\bf B_n}$, ${\bf n}=(n_1,\ldots, n_k)\in \NN^k$,  which is a noncommutative analogue of the scalar polyball
$(\CC^{n_1})_1\times \cdots \times (\CC^{n_k})_1$, where $(\CC^{n})_1:=\{{\bf z}\in \CC^n: \ \|{\bf z}\|_2<1\}$.

To present our  results, we need some definitions.
  We denote by  $B(\cH)^{n_1}\times_c\cdots \times_c B(\cH)^{n_k}$, where $n_i \in\NN$,
   the set of all tuples  ${\bf X}:=({ X}_1,\ldots, { X}_k)$ in $B(\cH)^{n_1}\times\cdots \times B(\cH)^{n_k}$
     with the property that the entries of ${X}_s:=(X_{s,1},\ldots, X_{s,n_s})$  are commuting with the entries of
      ${X}_t:=(X_{t,1},\ldots, X_{t,n_t})$  for any $s,t\in \{1,\ldots, k\}$, $s\neq t$.
  Note that  the operators $X_{s,1},\ldots, X_{s,n_s}$ are not necessarily commuting.
  Define  the polyball
  $${\bf P_n}(\cH):=[B(\cH)^{n_1}]_1\times_c \cdots \times_c [B(\cH)^{n_k}]_1.
  $$
    If $A$ is  a positive invertible operator, we write $A>0$. The {\it regular polyball} on the Hilbert space $\cH$  is defined by
$$
{\bf B_n}(\cH):=\left\{ {\bf X}\in {\bf P_n}(\cH) : \ {\bf \Delta_{X}}(I)> 0  \right\},
$$
where
 the {\it defect mapping} ${\bf \Delta_{X}}:B(\cH)\to  B(\cH)$ is given by
$
{\bf \Delta_{X}}:=\left(id -\Phi_{X_1}\right)\circ \cdots \circ\left(id -\Phi_{ X_k}\right),
$
 and
$\Phi_{X_i}:B(\cH)\to B(\cH)$  is the completely positive linear map defined by
$$\Phi_{X_i}(Y):=\sum_{j=1}^{n_i}   X_{i,j} Y X_{i,j} ^*, \qquad Y\in B(\cH).
$$
 Note that if $k=1$, then ${\bf B_n}(\cH)$ coincides with the noncommutative unit ball $[B(\cH)^{n_1}]_1$.
   We remark that the scalar representation of
  the  ({\it abstract})
 {\it regular polyball} ${\bf B}_{\bf n}:=\{{\bf B_n}(\cH):\ \cH \text{\ is a Hilbert space} \}$ is
   ${\bf B_n}(\CC)={\bf P_n}(\CC)= (\CC^{n_1})_1\times \cdots \times (\CC^{n_k})_1$.
A multivariable  operator model theory and a theory of free holomorphic functions on polydomains which admit universal operator models have been recently developed  in \cite{Po-Berezin3} and \cite{Po-Berezin-poly}. An important feature of these theories is that they  are related, via noncommutative Berezin transforms, to the study of the operator algebras generated by the universal models associated with the domains, as well as to the theory of functions in several complex variable (\cite{Ru1}, \cite{Ru2}). These results played a crucial role in   our work on
  the curvature invariant \cite{Po-curvature-polyball}, the Euler characteristic  \cite{Po-Euler-charact}, and the  group of  free holomorphic automorphisms  on  noncommutative regular polyballs \cite{Po-automorphisms-polyball}, and will play an important role in the present paper.

For each $i\in\{1,\ldots, k\}$, let $\FF_{n_i}^+$ be the free semigroup with generators $g_1^i,\ldots, g^i_{n_i}$ and identity $g^i_0$.
Let $Z_i:=(Z_{i,1},\ldots, Z_{i,n_i})$ be
an  $n_i$-tuple of noncommuting indeterminates and assume that, for any
$p,q\in \{1,\ldots, k\}$, $p\neq q$, the entries in $Z_p$ are commuting
 with the entries in $Z_q$. We set $Z_{i,\alpha_i}:=Z_{i,j_1}\cdots Z_{i,j_p}$
  if $\alpha_i\in \FF_{n_i}^+$ and $\alpha_i=g_{j_1}^i\cdots g_{j_p}^i$, and
   $Z_{i,g_0^i}:=1$. If  $\boldsymbol\alpha:=(\alpha_1,\ldots, \alpha_k)\in \FF_{n_1}^+\times \cdots \times\FF_{n_k}^+$, we denote ${\bf Z}_{\boldsymbol\alpha}:= {Z}_{1,\alpha_1}\cdots {Z}_{k,\alpha_k}$.
Let $\ZZ$ be the set of all integers and $\ZZ_+$ be the set of all nonnegative integers.
  A formal power series $\varphi=\sum\limits_{\boldsymbol\alpha\in \FF_{n_1}^+\times \cdots \times\FF_{n_k}^+} a_{(\boldsymbol\alpha)} {\bf Z}_{\boldsymbol\alpha}$
  in ideterminates ${\bf Z}=\{Z_{i,j}\}$ and scalar coefficients $a_{(\boldsymbol\alpha)}\in \CC$
  is called   {\it free holomorphic function}  on the
{\it abstract  polyball}
$ {\bf B_n}:=
\{ {\bf  B_n}(\cH):\ \cH \text{ is a Hilbert space}\}$ if the series
$$
\varphi({\bf X} ):=\sum\limits_{{\bf p}\in \ZZ_+^k}\sum\limits_{\boldsymbol\alpha\in \Lambda_{\bf p}} a_{(\boldsymbol\alpha)}  {\bf X}_{\boldsymbol\alpha} \qquad (\text{\it multi-homogeneous expansion})
$$
is convergent in the operator norm topology for any ${\bf X}=\{X_{i,j}\}\in {\bf B_n}(\cH)$   and any Hilbert space $\cH$. Here we use the notation
$\Lambda_{\bf p}:=\{\boldsymbol\alpha=(\alpha_1,\ldots, \alpha_k)\in \FF_{n_1}^+\times\cdots \times \FF_{n_k}^+: \ |\alpha_i|=p_i \}$, where ${\bf p}=(p_1,\ldots, p_k)\in \ZZ_+^k$ and $|\alpha_i|$ is the length of $\alpha_i$.
In this case, we proved \cite{Po-automorphisms-polyball} that
  $$
 \varphi({\bf X} )
  =
\sum_{q=0}^\infty \sum_{\boldsymbol\alpha\in  \Gamma_{ q}} a_{(\boldsymbol\alpha)}   {\bf X}_{\boldsymbol\alpha}\qquad (\text{\it homogeneous expansion})
$$
where $\Gamma_{q}:=\{\boldsymbol\alpha=(\alpha_1,\ldots, \alpha_k)\in \FF_{n_1}^+\times\cdots \times \FF_{n_k}^+: \  |\alpha_1|+\cdots +|\alpha_k|=q\}$ and
 the convergence of the series  is in the operator norm topology. In fact, this result holds true for free holomorphic functions with operator coefficients.

The Bohr radius $K_{mh}({\bf B_n})$ for the Hardy space $H^\infty({\bf B_n})$ of all bounded free holomorphic functions on ${\bf B_n}$, with respect to the multi-homogeneous expansion  of its elements, is the largest $r\geq 0$ such that
 \begin{equation*}
\sum\limits_{{\bf p}\in \ZZ_+^k}\left\|\sum\limits_{\boldsymbol\alpha\in \Lambda_{\bf p}} a_{(\boldsymbol\alpha)}  {\bf X}_{\boldsymbol\alpha}\right\|
\leq \|\varphi\|_\infty, \qquad {\bf X}\in   r {\bf B_n}(\cH)^-,
\end{equation*}
  for any   $\varphi\in H^\infty({\bf B_n})$. Similarly, we define the Bohr radius
  $K_{h}({\bf B_n})$ for the Hardy space $H^\infty({\bf B_n})$  with respect to the homogeneous expansion  of its elements. Note that when $k=1$ the two definitions coincide.
When the Hardy space  $H^\infty({\bf B_n})$ is replaced by the subspace $ H_0^\infty({\bf B_n}):=\{f\in  H^\infty({\bf B_n}): \ f(0)=0\}$ the corresponding Bohr radii are denoted by
$K^0_{mh}({\bf B_n})$ and $K^0_{h}({\bf B_n})$, respectively.

In Section 2, we obtain Weiner type inequalities for the  coefficients of bounded free holomorphic functions on polyballs, which are used to obtain Bohr inequalities for  bounded free holomorphic functions with operator coefficients. As a   consequence, we show that
the Bohr radius $K_{mh}({\bf B_n})$   satisfies the inequalities
$$ 1-\left(\frac{2}{3}\right)^{1/k}\leq K_{mh}({\bf B_n})\leq \frac{1}{3}, \qquad k\geq 1.
 $$
Let $F\in H^\infty({\bf B_n})$ have the  representation
$
F({\bf X} ):=\sum_{{\bf p}\in \ZZ_+^k}\sum_{\boldsymbol\alpha\in \Lambda_{\bf p}} a_{(\boldsymbol\alpha)}  {\bf X}_{\boldsymbol\alpha}
$
and let
$$
\cD(F,r):=\sum\limits_{{\bf p}\in \ZZ_+^k}r^{|{\bf p}|}\left\|\sum\limits_{\boldsymbol\alpha\in \Lambda_{\bf p}} a_{(\boldsymbol\alpha)}  {\bf S}_{\boldsymbol\alpha}\right\|
$$
be the associated majorant series, where ${\bf S}=\{{\bf S}_{ij}\}$ is the universal model of the polyball (see Section 1). Define
$$
d_{\bf B_n}(r):=\sup \frac{\cD(F,r)}{\|F\|_\infty}, \qquad r\in [0,1),
$$
where the supremum is taken over all $F\in H^\infty({\bf B_n})$  with $F$ not identically $0$. The results of this section show that $d_{\bf B_n}(r)=1$ if $0\leq r\leq 1-\left(\frac{2}{3}\right)^{1/k}$.  While we obtain upper bounds for
$m_{\bf B_n}(r)$,
the precise value of $d_{\bf B_n}(r)$ as $ 1-\left(\frac{2}{3}\right)^{1/k}<r<1$ remains unknown, in general.
   Progress on this problem was made, in the classical case of the disc $\DD$,
 by Bombieri and Bourgain in \cite{BB}, where they proved  that
 $d_{\DD}(r)\sim \frac{1}{\sqrt{1-r^2}}$ as $r\to 1$.
We  extend their result proving  that  $d_{\bf B_n}(r)$ behaves asymptotically as  $\left(\frac{1}{\sqrt{1-r^2}}\right)^k$ if $r\to 1$, i.e.
 $$
 \lim_{r\to 1} \frac{d_{\bf B_n}(r)}{\left(\frac{1}{\sqrt{1-r^2}}\right)^k}=1.
 $$
In particular, the result applies to  the scalar polydisc $\DD^k$.

In Section 3, we obtain an analogue of Landau's  inequality \cite{LG} for  bounded free holomorhic functions with operator coefficients on the polyball  (see Theorem \ref{W-O}). This is used to obtain Bohr inequalities for free holomorphic functions
 $F:{\bf B_n}(\cH)\to B(\cK)\otimes_{min}B(\cH)$
  such that $ F(0)\geq 0$ and  $\Re F({\bf X})\leq I$ for any ${\bf X}\in {\bf B_n}(\cH)$.
The result plays a crucial role in Section 4, where we prove that the Bohr radius $ K_{mh}({\bf B_n})$
satisfies the inequalities
$$
\frac{1}{3\sqrt{k}}< K_{mh}({\bf B_n})< \frac{2\sqrt{\log k}}{\sqrt{k}}, \qquad k>1,
$$
and  obtain the asymptotic upper bound
$$
\limsup_{k\to\infty}\frac{K_{mh}({\bf B_n})}{ \sqrt{(\log k)/k}}\leq 1.
$$

Section 5 concerns the Bohr radius $K_{h}({\bf B_n})$ for the Hardy space $H^\infty({\bf B_n})$, with respect to the homogeneous expansion of its elements. Using the results of Section 3, we prove that
$$
K_{h}({\bf B_n})=\frac{1}{3},
$$
which extends the classical result to our multivariable noncommutative setting.
Let $F\in H^\infty({\bf B_n})$ with representation
$
F({\bf X} ):=\sum_{q=0}^\infty \sum_{\boldsymbol\alpha\in \Gamma_q}  a_{(\boldsymbol \alpha)} {\bf X}_{\boldsymbol \alpha}
$
and let
$$
\cM(F,r):=\sum_{q=0}^\infty r^q \left\|\sum_{\boldsymbol\alpha\in \Gamma_q}  a_{(\boldsymbol \alpha)} {\bf S}_{\boldsymbol \alpha}\right\|\quad \text{ and }\quad  m_{\bf B_n}(r):=\sup \frac{\cM(F,r)}{\|F\|_\infty}, \qquad r\in [0,1),
$$
where the supremum is taken over all $F\in H^\infty({\bf B_n})$  with $F$ not identically $0$.
The results of this section show that $m_{\bf B_n}(r)=1$ if $r\in[0,\frac{1}{3}]$.  While we obtain upper bounds for
$m_{\bf B_n}(r)$,
the precise value of $m_{\bf B_n}(r)$ as $\frac{1}{3}<r<1$ remains unknown, in general.

Concerning  the  Bohr radius $K_{mh}^0({\bf B_n})$,  we show  that it satisfies    the inequalities
$$
\sqrt{1-\left(\frac{1}{2}\right)^{1/k}}\leq K_{mh}^0({\bf B_n})\leq \frac{1}{\sqrt{2}},   \qquad  \text{ if } \ k\geq 1,
$$
and
$$
\frac{1}{2\sqrt{k}}< K_{mh}^0({\bf B_n})< \frac{2\sqrt{\log k}}{\sqrt{k}}, \qquad \text{ if } \ k>1.
$$
Estimations for the Bohr radius $K_{h}^0({\bf B_n})$  are also obtained.

In Section 6, we   obtain  analogues of Carath\' eodory's   inequality \cite{Ca}, and Fej\' er  \cite{Fe} and Egerv\' ary-Sz\' azs inequalities \cite{ES} for   free holomorhic functions  with  operator coefficients and  positive  real parts on the polyball  (see Theorem \ref{Fejer}). These results are use to provide (see Theorem \ref{OM}) an analogue of Landau's   inequality \cite{LG}   and  Bohr type inequalities  when the norm is replaced by the numerical radius of  an operator, i.e.
$$
\omega(T):=\sup\{|\left<Th,h\right>| : \ h\in\cH, \|h\|=1\}, \qquad T\in B(\cH).
$$
In particular, we obtain the following   numerical radius versions of Landau's inequality and  Bohr's inequality for  free holomorphic functions on polyballs.
If $m\in \NN\cup \{\infty\}$ and
   $f({\bf X}):= \sum\limits_{q=1}^m\sum\limits_{\boldsymbol\alpha\in \Gamma_q} a_{(\boldsymbol \alpha)} {\bf X}_{\boldsymbol \alpha}
$
 is a free holomorphic function with $f(0)\geq 0$ and $\Re f({\bf X})\leq I$ on the polyball ${\bf B_n}$,  then $$ \omega\left(\sum_{\boldsymbol\alpha\in \Gamma_q} a_{(\boldsymbol\alpha)} {\bf S}_{\boldsymbol\alpha}\right)\leq 2(1-a_0)\cos \frac{\pi}{\left[\frac{m}{q}\right]+2},\qquad q\in \{1,\ldots, m\},$$
  and
$$
  \sum_{q=0}^m  \omega\left(\sum_{\boldsymbol\alpha\in \Gamma_q} a_{(\boldsymbol\alpha)} {\bf S}_{\boldsymbol\alpha}\right)r^q\leq 1,\qquad r\in [0,t_m],
$$
    where  $t_m\in (0,1)$ is the solution of the equation
$$
\sum_{q=1}^m t^q \cos \frac{\pi}{\left[\frac{m}{q}\right]+2}=\frac{1}{2},
$$
 and $[x]$ is the integer part of $x$. Moreover, the sequence $\{t_m\}$ is  strictly decreasing and converging  to $\frac{1}{3}$. When $m=\infty$, we have $t_\infty=\frac{1}{3}$.
As a consequence of these results,   we deduce  that if $f$ is a
  holomorphic function   on the polydisc $\DD^k$ such that $\Re f({\bf z})\leq 1$ for  ${\bf z}\in \DD^k$
 and   $f({\bf a})\geq 0$ for some ${\bf a}=(a_1, \ldots, a_k)\in \DD^k$, then
$$
\sum_{i=1}^k (1-|a_i|^2)\left|\left(\frac{\partial f}{\partial z_i}\right)({\bf a})\right|
\leq  2(1-f({\bf a})),
$$
which is an extension of Landau's inequality to the polydisk.

In Section 7, we provide Harnack type inequalities for positive free $k$-pluriharmonic function  with operator coefficients  on polyballs. In particular,   if
$F$ is a positive free $k$-pluriharmonic function  with scalar coefficients   and of   degree $m_i\in \NN\cup \{\infty\}$ with respect to the variables $X_{i,1},\ldots, X_{i,n_i}$,  then we prove that
 $$
   F({\bf X})\leq F(0)
   \prod_{i=1}^k \left(1+2\sum_{p_i=1}^{m_i}\rho_i^{p_i} \cos \frac{\pi}{\left[\frac{m_i}{p_i}\right]+2}\right)\leq F(0)\prod_{i=1}^k \frac{1+\rho_i}{1-\rho_i}
 $$
 for any ${\bf X}\in \boldsymbol\rho{\bf B}_{\bf n}(\cH)^-$    and  $\boldsymbol\rho:=(\rho_1,\ldots, \rho_k)\in [0,1)^k$, where $[x]$ is the integer part of $x$.

We remark that when $n_1=\cdots= n_k=1$ the free holomorphic functions on the regular polydisc ${\bf D}^k:={\bf B_n}$ can be identified with the holomorphic functions on the scalar polydisc $\DD^k$, and  the Hardy space $H^\infty({\bf B_n})$ can be identified with the Hardy space $H^\infty(\DD^k)$. As a consequence all the results  of the present paper hold true in this particular setting. In this way, we recover some known results but at the same time we provide new results concerning Bohr, Landau,  Fej\' er, and Harnack inequalities   for operator-valued holomorphic functions  and $k$-pluriharmonic functions on the polydisc.

We should also mention that our results can be used to obtain Bohr type inequalities for the noncommutative ball algebra $\boldsymbol\cA_{\bf n}$, the Hardy algebra ${\bf F}^\infty_{\bf n}$, and the $C^*$-algebra  $C^*({\bf S})$,  generated by the universal model ${\bf S}=\{{\bf S}_{i,j}\}$ of the polyball ${\bf B_n}$.

\section{Preliminaries on Berezin transforms on noncommutative polyballs}

Let $H_{n_i}$ be
an $n_i$-dimensional complex  Hilbert space with orthonormal basis $e^i_1,\ldots, e^i_{n_i}$.
  We consider the {\it full Fock space}  of $H_{n_i}$ defined by
$F^2(H_{n_i}):=\CC 1 \oplus\bigoplus_{p\geq 1} H_{n_i}^{\otimes p},$
where  $H_{n_i}^{\otimes p}$ is the
(Hilbert) tensor product of $p$ copies of $H_{n_i}$. Let $\FF_{n_i}^+$ be the unital free semigroup on $n_i$ generators
$g_{1}^i,\ldots, g_{n_i}^i$ and the identity $g_{0}^i$.
  Set $e_\alpha^i :=
e^i_{j_1}\otimes \cdots \otimes e^i_{j_p}$ if
$\alpha=g^i_{j_1}\cdots g^i_{j_p}\in \FF_{n_i}^+$
 and $e^i_{g^i_0}:= 1\in \CC$.
  The length of $\alpha\in
\FF_{n_i}^+$ is defined by $|\alpha|:=0$ if $\alpha=g_0^i$  and
$|\alpha|:=p$ if
 $\alpha=g_{j_1}^i\cdots g_{j_p}^i$, where $j_1,\ldots, j_p\in \{1,\ldots, n_i\}$.
 We  define
 the {\it left creation  operator} $S_{i,j}$ acting on the  Fock space $F^2(H_{n_i})$  by setting
$
S_{i,j} e_\alpha^i:=  e^i_j\otimes e^i_{ \alpha}$, $\alpha\in \FF_{n_i}^+,
$
 and
  the operator  ${\bf S}_{i,j}$ acting on the Hilbert tensor  product
$F^2(H_{n_1})\otimes\cdots\otimes F^2(H_{n_k})$ by setting
$${\bf S}_{i,j}:=\underbrace{I\otimes\cdots\otimes I}_{\text{${i-1}$
times}}\otimes \,S_{i,j}\otimes \underbrace{I\otimes\cdots\otimes
I}_{\text{${k-i}$ times}},
$$
where  $i\in\{1,\ldots,k\}$ and  $j\in\{1,\ldots,n_i\}$.  We denote ${\bf S}:=({\bf S}_1,\ldots, {\bf S}_k)$, where  ${\bf S}_i:=({\bf S}_{i,1},\ldots,{\bf S}_{i,n_i})$, or ${\bf S}:=\{{\bf S}_{i,j}\}$.   The noncommutative Hardy algebra ${\bf F}_{\bf n}^\infty$ (resp.~the polyball algebra $\boldsymbol\cA_{\bf n}$) is the weakly closed (resp.~norm closed) non-selfadjoint  algebra generated by $\{{\bf S}_{i,j}\}$ and the identity.
Similarly, we   define
 the {\it right creation  operator} $R_{i,j}:F^2(H_{n_i})\to
F^2(H_{n_i})$     by setting
 $
R_{i,j} e_\alpha^i:=  e^i_ {\alpha }\otimes e^i_j$ for $ \alpha\in \FF_{n_i}^+,
$
 and
  the operator  ${\bf R}_{i,j}$ acting on the Hilbert tensor  product
$F^2(H_{n_1})\otimes\cdots\otimes F^2(H_{n_k})$ by setting
$${\bf R}_{i,j}:=\underbrace{I\otimes\cdots\otimes I}_{\text{${i-1}$
times}}\otimes \,R_{i,j}\otimes \underbrace{I\otimes\cdots\otimes
I}_{\text{${k-i}$ times}}.
$$
  The polyball algebra $\boldsymbol\cR_{\bf n}$ is the norm closed non-selfadjoint  algebra generated by $\{{\bf R}_{i,j}\}$ and the identity.

We recall (see \cite{Po-poisson}, \cite{Po-Berezin-poly}) some basic properties for  the   noncommutative Berezin     transforms associated  with regular polyballs.
 Let  ${\bf X}=({ X}_1,\ldots, { X}_k)\in {\bf B_n}(\cH)^-$ with $X_i:=(X_{i,1},\ldots, X_{i,n_i})$.
We  use the notation
$X_{i,\alpha_i}:=X_{i,j_1}\cdots X_{i,j_p}$
  if  $\alpha_i=g_{j_1}^i\cdots g_{j_p}^i\in \FF_{n_i}^+$ and
   $X_{i,g_0^i}:=I$.
The {\it noncommutative Berezin kernel} associated with any element
   ${\bf X}$ in the noncommutative polyball ${\bf B_n}(\cH)^-$ is the operator
   $${\bf K_{X}}: \cH \to F^2(H_{n_1})\otimes \cdots \otimes  F^2(H_{n_k}) \otimes  \overline{{\bf \Delta_{X}}(I) (\cH)}$$
   defined by
   $$
   {\bf K_{X}}h:=\sum_{\beta_i\in \FF_{n_i}^+, i=1,\ldots,k}
   e^1_{\beta_1}\otimes \cdots \otimes  e^k_{\beta_k}\otimes {\bf \Delta_{X}}(I)^{1/2} X_{1,\beta_1}^*\cdots X_{k,\beta_k}^*h, \qquad h\in \cH,
   $$
   where $ {\bf \Delta_{X}}(I)$ is the defect operator.
A very  important property of the Berezin kernel is that
     ${\bf K_{X}} { X}^*_{i,j}= ({\bf S}_{i,j}^*\otimes I)  {\bf K_{X}}$ for any  $i\in \{1,\ldots, k\}$ and $ j\in \{1,\ldots, n_i\}.
    $
    The {\it Berezin transform at} ${\bf X}\in {\bf B_n}(\cH)$ is the map $ \boldsymbol{\cB_{\bf X}}: B(\otimes_{i=1}^k F^2(H_{n_i}))\to B(\cH)$
 defined by
\begin{equation*}
 {\boldsymbol\cB_{\bf X}}[g]:= {\bf K^*_{\bf X}} (g\otimes I_\cH) {\bf K_{\bf X}},
 \qquad g\in B(\otimes_{i=1}^k F^2(H_{n_i})).
 \end{equation*}
  If $g$ is in   the $C^*$-algebra $C^*({\bf S})$ generated by ${\bf S}_{i,1},\ldots,{\bf S}_{i,n_i}$, where $i\in \{1,\ldots, k\}$, we  define the Berezin transform at  ${\bf X}\in {\bf B_n}(\cH)^-$  by
  $${\boldsymbol\cB_{\bf X}}[g]:=\lim_{r\to 1} {\bf K^*_{r\bf X}} (g\otimes I_\cH) {\bf K_{r\bf X}},
 \qquad g\in  C^*({\bf S}),
 $$
 where the limit is in the operator norm topology.
In this case, the Berezin transform at ${\bf X}$ is a unital  completely positive linear  map such that
 $${\boldsymbol\cB_{\bf X}}({\bf S}_{\boldsymbol\alpha} {\bf S}_{\boldsymbol\beta}^*)={\bf X}_{\boldsymbol\alpha} {\bf X}_{\boldsymbol\beta}^*, \qquad \boldsymbol\alpha, \boldsymbol\beta \in \FF_{n_1}^+\times \cdots \times\FF_{n_k}^+,
 $$
 where  ${\bf S}_{\boldsymbol\alpha}:= {\bf S}_{1,\alpha_1}\cdots {\bf S}_{k,\alpha_k}$ if  $\boldsymbol\alpha:=(\alpha_1,\ldots, \alpha_k)\in \FF_{n_1}^+\times \cdots \times\FF_{n_k}^+$.
Using the noncommutative Berezin transforms, we proved in \cite{Po-automorphisms-polyball} that a formal power series $\varphi:=\sum\limits_{\boldsymbol\alpha\in \FF_{n_1}^+\times \cdots \times\FF_{n_k}^+} a_{(\boldsymbol\alpha)}  {\bf Z}_{\boldsymbol\alpha}$ is a  free holomorphic function (with scalar coefficients) on the
abstract  polyball
${\bf B_n}$  if and only if  the series
$\sum\limits_{(p_1,\ldots, p_k)\in \ZZ_+^k}\|\sum\limits_{{ \alpha_i\in \FF_{n_i}^+, |\alpha_i|=p_i}\atop{i\in \{1,\ldots, k\} }}a_{(\boldsymbol\alpha)}  r_1^{p_1}\cdots r_k^{p_k}{\bf S}_{\boldsymbol\alpha}\|
$
converges for any $r_i\in [0, 1)$,   which is equivalent to the condition that the series
$
\sum_{q=0}^\infty \|\sum_{{\boldsymbol\alpha\in \FF_{n_1}^+\times \cdots \times\FF_{n_k}^+ }\atop {|\alpha_1|+\cdots +|\alpha_k|=q}} a_{(\boldsymbol\alpha)}  r^q {\bf S}_{\boldsymbol\alpha}\|
$
is convergent for any $r\in [0,1)$.
In \cite{Po-Berezin-poly},
we identified the  noncommutative algebra
${\bf F}_{\bf n}^\infty$ with the  Hardy subalgebra $H^\infty({\bf B_n})$ of   bounded free holomorphic functions  on
${\bf B_n}$ with scalar coefficients.
More precisely, we proved that  the map
$ \Phi:H^\infty({\bf B_n})\to {\bf F}_{n}^\infty $ defined by
$
\Phi\left(\varphi\right):=\text{\rm SOT-}\lim_{r\to 1}\varphi(r{\bf S}),
$
is a completely isometric isomorphism of operator algebras, where
$\varphi(r{\bf S}):=\sum_{q=0}^\infty \sum_{{\boldsymbol\alpha\in \FF_{n_1}^+\times \cdots \times\FF_{n_k}^+ }\atop {|\alpha_1|+\cdots +|\alpha_k|=q}} r^q a_{(\alpha)} {\bf S}_{\boldsymbol\alpha}$ and the convergence of the series is in the operator norm topology.
Moreover, if  $\varphi$
is  a free holomorphic function on the abstract  polyball ${\bf B_n}$, then the following statements are equivalent:
 \begin{enumerate}
 \item[(i)]$\varphi\in H^\infty({\bf B_n})$;
\item[(ii)] $\sup\limits_{0\leq r<1}\|\varphi(r{\bf S})\|<\infty$;
\item[(iii)]
there exists $\psi\in {\bf F}_{\bf n}^\infty $ with $\varphi({\bf X})={\boldsymbol\cB}_{\bf X}[\psi]$, where ${\boldsymbol\cB}_{\bf X}$ is the  noncommutative Berezin
transform  associated with the abstract   polyball ${\bf B_n}$. Moreover, $\psi$ is uniquely determined by $\varphi$, namely,
$\psi=\text{\rm SOT-}\lim_{r\to 1}\varphi(r{\bf S})$
and
\begin{equation*}
\|\psi\|=\sup_{0\leq r<1}\|\varphi(r{\bf S})\|=
\lim_{r\to 1}\|\varphi(r{\bf S})\|=\|\varphi\|_\infty.
\end{equation*}
\end{enumerate}

We use the notation $\widehat \varphi:=\psi$ and call $\widehat \varphi$ the ({\it model}) {\it boundary function} of $\varphi$ with respect to the universal model ${\bf S}$ of the regular polyball. Similar results hold for free holomorphic functions with operator coefficients.
More information on  noncommutative Berezin transforms and multivariable operator theory on noncommutative balls and  polydomains can be found in \cite{Po-poisson}, \cite{Po-unitary},    and \cite{Po-Berezin-poly}.
For basic results on completely positive (resp.~bounded)  maps  we refer the reader to \cite{Pa-book} and \cite{Pi}.

If ${\bf z}:=(z_1,\ldots, z_k)\in \CC^k$ and ${\bf p}:=(p_1,\ldots, p_k)\in \ZZ_+^k$, we write ${\bf z}^{\bf p}$ for the monomial $z_1^{p_1}\cdots z_k^{p_k}$ and use the notation $|{\bf p}|:=p_1+\cdots +p_k$. Similarly, if ${\bf X}:=(X_1,\ldots, X_k)\in B(\cH)^k$, then ${\bf X}^{\bf p}:=X_1^{p_1}\cdots X_k^{p_k}$.
When ${\bf n}=(n_1,\ldots, n_k)$ with $n_1=\cdots=n_k=1$, we call ${\bf D}^k:={\bf B_n}$ the {\it regular polydisk}. Note that the scalar representation ${\bf D}^k(\CC)$ coincides with the scalar polydisk $\DD^k$.

\begin{proposition} \label{prelim} If $\{a_{\bf p}\}_{{\bf p}\in \ZZ_+^k}\subset \CC$, then
$f({\bf z}):=\sum_{{{\bf p} \in \ZZ_+^k}} a_{\bf p} {\bf z}^{\bf p}$ is  a holomorphic function on the scalar  polydisk $\DD^k$ if and only if
$F({\bf X} ):=\sum_{{{\bf p} \in \ZZ_+^k}} a_{\bf p} {\bf X}^{\bf p}$  is a free holomorphic function on the regular polydisc ${\bf D}^k$. Moreover, the following statements hold.
\begin{enumerate}
\item[(i)]  $f\in H^\infty(\DD^k)$ if and only if
$F\in H^\infty({\bf D}^k)$, in which case  $\|f\|_\infty=\|F\|_\infty$.
\item[(ii)] $\Re f({\bf z})\leq 1$ for any ${\bf z}\in \DD^k$
if and only if $\Re F(r{\bf S})\leq I$ for any $r\in [0,1)$, where
${\bf S}=(S_1,\ldots, S_k)$ is   the universal model of the regular polydisk ${\bf D}^k$.
\item[(iii)] The Banach algebra $H^\infty(\DD^k)$ is isometrically isomorphic to  $H^\infty({\bf D}^k)$, which is isometrically embedded in $H^\infty({\bf B_n})$.
\end{enumerate}
\end{proposition}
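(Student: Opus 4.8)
The plan is to exploit the collapse that occurs when $n_1=\cdots=n_k=1$. In that case each $\FF_{n_i}^+$ is free on a single generator, so for every ${\bf p}\in\ZZ_+^k$ the index set $\Lambda_{\bf p}$ is a singleton and the noncommutative monomials ${\bf Z}_{\boldsymbol\alpha}$ reduce to the genuinely commuting monomials ${\bf Z}^{\bf p}=Z_1^{p_1}\cdots Z_k^{p_k}$. Moreover $F^2(H_1)\cong\ell^2(\ZZ_+)$, so the universal model ${\bf S}=(S_1,\ldots,S_k)$ consists of $k$ commuting shifts on $\bigotimes_{i=1}^k\ell^2(\ZZ_+)\cong H^2(\DD^k)$, and ${\bf S}^{\bf p}=M_{{\bf z}^{\bf p}}$ is an isometry, hence $\|{\bf S}^{\bf p}\|=1$. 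Feeding this into the convergence criterion for free holomorphic functions recalled in the Preliminaries, the defining condition that $\sum_{\bf p}\|a_{\bf p}r_1^{p_1}\cdots r_k^{p_k}{\bf S}^{\bf p}\|$ converge for all $r_i\in[0,1)$ becomes exactly $\sum_{\bf p}|a_{\bf p}|\,r_1^{p_1}\cdots r_k^{p_k}<\infty$ for all $r_i\in[0,1)$, which is the classical criterion for $f({\bf z})=\sum_{\bf p}a_{\bf p}{\bf z}^{\bf p}$ to be holomorphic on $\DD^k$. This settles the main equivalence.

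For part (i) I would invoke the identification $H^\infty({\bf B_n})\cong{\bf F}_{\bf n}^\infty$ from the Preliminaries: when $n_i=1$ the algebra ${\bf F}_{\bf n}^\infty$ is precisely the analytic Toeplitz algebra $\{M_g:\ g\in H^\infty(\DD^k)\}$ on $H^2(\DD^k)$, whose multiplier norm satisfies $\|M_g\|=\|g\|_\infty$. Writing $f_r({\bf z}):=f(r{\bf z})$, one has $F(r{\bf S})=\sum_{\bf p}a_{\bf p}r^{|{\bf p}|}{\bf S}^{\bf p}=M_{f_r}$, so $\|F(r{\bf S})\|=\|f_r\|_\infty=\sup_{r\DD^k}|f|$; since this increases to $\|f\|_\infty$ as $r\to1$ and $\|F\|_\infty=\sup_{0\le r<1}\|F(r{\bf S})\|$, we get $F\in H^\infty({\bf D}^k)\iff f\in H^\infty(\DD^k)$ with $\|f\|_\infty=\|F\|_\infty$.

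For part (ii), note that $\Re F(r{\bf S})=\Re M_{f_r}=T_{\Re f_r}$ is the Toeplitz operator on $H^2(\DD^k)$ with the real symbol $\Re f_r$. In the forward direction, $f_r$ is holomorphic on a neighborhood of $\overline{\DD^k}$, so $\Re f_r\le1$ on $\TT^k$, whence for $h\in H^2(\DD^k)$ one computes $\langle T_{\Re f_r}h,h\rangle=\int_{\TT^k}(\Re f_r)|h|^2\,dm\le\|h\|^2$, giving $\Re F(r{\bf S})\le I$. For the converse I would apply the Berezin transform $\boldsymbol\cB_{\bf z}$ at scalar points ${\bf z}\in\DD^k={\bf B_n}(\CC)$: it is unital, $\ast$-preserving and completely positive, hence order preserving, so $\Re F(r{\bf S})\le I$ yields $\Re\boldsymbol\cB_{\bf z}[F(r{\bf S})]\le1$; since $\boldsymbol\cB_{\bf z}[{\bf S}^{\bf p}]={\bf z}^{\bf p}$ we have $\boldsymbol\cB_{\bf z}[F(r{\bf S})]=f(r{\bf z})$, so $\Re f(r{\bf z})\le1$, and letting $r{\bf z}$ range over all of $\DD^k$ gives $\Re f\le1$.

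For part (iii), the correspondence $f\leftrightarrow F$ is a bijective algebra homomorphism (both ${\bf z}^{\bf p}$ and ${\bf X}^{\bf p}$ multiply by adding exponents) and is isometric by (i), which gives the isometric isomorphism $H^\infty(\DD^k)\cong H^\infty({\bf D}^k)$. To embed $H^\infty({\bf D}^k)$ isometrically into $H^\infty({\bf B_n})$ for general ${\bf n}$, I would use the operators ${\bf S}_{i,1}$ (the first creation operator in each block): these are $k$ doubly commuting pure isometries, and the isometry $W=W_1\otimes\cdots\otimes W_k$ determined by $W_i(e^1_p)=(e^i_1)^{\otimes p}$ intertwines the polydisc shifts with the ${\bf S}_{i,1}$. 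Since the $H^\infty(\DD^k)$ functional calculus of $k$ doubly commuting pure shifts is isometric, the map sending $F$ to the free holomorphic function on ${\bf B_n}$ carrying the same coefficients on the monomials ${\bf S}_{1,1}^{p_1}\cdots{\bf S}_{k,1}^{p_k}$ is an isometric embedding. The two least routine points are the converse in (ii) — which is the conceptual heart and relies on positivity of the Berezin transform together with the evaluation identity $\boldsymbol\cB_{\bf z}[F(r{\bf S})]=f(r{\bf z})$ — and the isometry of the embedding in (iii), for which one must verify that the functional calculus of the doubly commuting shifts ${\bf S}_{i,1}$ reproduces the $H^\infty(\DD^k)$ norm rather than a strictly smaller one; the remainder is bookkeeping forced by the collapse $n_i=1$.
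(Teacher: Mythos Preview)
Your proof is correct and follows essentially the same route as the paper: the collapse $n_i=1$ makes ${\bf S}$ the tuple of coordinate multipliers on $H^2(\DD^k)$, the holomorphy criterion reduces to absolute convergence of $\sum_{\bf p}|a_{\bf p}|r^{|{\bf p}|}$, part (ii) is handled by the Toeplitz integral for the forward direction and the Berezin transform for the converse, and part (iii) combines (i) with an embedding via the operators ${\bf S}_{i,1}$. The only notable difference is in part (i): the paper obtains the converse inequality $\|f\|_\infty\le\|F\|_\infty$ by applying the Berezin transform $\boldsymbol\cB_{\frac{1}{r}{\bf z}}$ to $F(r{\bf S})$, whereas you use the multiplier-norm identity $\|F(r{\bf S})\|=\|M_{f_r}\|=\|f_r\|_\infty$ directly---your argument is a touch more compact. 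For the embedding in (iii) you actually supply more detail than the paper (the intertwining isometry $W$ and the appeal to the isometric functional calculus for doubly commuting pure shifts), where the paper simply asserts $\|F\|_\infty=\|G\|_\infty$.
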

\begin{proof} Note that
$f({\bf z}):=\sum_{{{\bf p} \in \ZZ_+^k}} a_{\bf p} {\bf z}^{\bf p}$ is  a holomorphic function on the scalar  polydisk $\DD^k$ if and only if $\sum_{{\bf p}\in \ZZ_+^k} r^{|{\bf p}|} |a_{\bf p}|<\infty$ for any $r\in [0,1)$ (see \cite{Ru1}). Due to the remarks preceding the proposition, the latter condition is equivalent to
  the fact that
$F({\bf X}):=\sum_{{\bf p} \in \ZZ_+^k} a_{\bf p}{\bf X}^{\bf p} $
  is  a free holomorphic   function on the regular polydisc ${\bf D}^k$. Let  ${\bf S}=(S_1,\ldots, S_k)$ be the  universal model of the regular polydisk ${\bf D}^k$ and note that $S_1,\ldots, S_k$ are unitarily equivalent to the multiplication operators  by the coordinate functions $z_1,\ldots, z_k$ on the Hardy space $H^2(\DD^k)$.
  If $f\in H^\infty(\DD^k)$, then standard arguments imply
  $
  \sup_{0\leq r<1}\|F(r{\bf S})\|\leq \sup_{{\bf z}\in \DD^k}|f({\bf z})|<\infty,
  $
 which due to the remarks preceding the proposition  shows that $F\in H^\infty({\bf D}^k)$.
 Conversely, if we assume that $F\in H^\infty({\bf D}^k)$, then $\sup\limits_{0\leq r<1}\|F(r{\bf S})\|<\infty$. If ${\bf z}\in \DD^k$, let $r\in (0,1)$ such that $\frac{1}{r}{\bf z}\in \DD^k$. Applying the Berezin transform ${\boldsymbol\cB}_{\frac{1}{r}{\bf z}}$ to $F(r{\bf S})$, we obtain
 $
 f({\bf z})={\boldsymbol\cB}_{\frac{1}{r}{\bf z}}[F(r{\bf S})]$
 which implies $\sup_{{\bf z}\in \DD^k}|f({\bf z})|\leq \sup\limits_{0\leq r<1}\|F(r{\bf S})\|<\infty.$ Now it is clear that  $\|f\|_\infty=\|F\|_\infty$.

 To prove part (ii),  assume that $\Re f({\bf z})\leq 1$ for any ${\bf z}\in \DD^k$. We use the natural identification of the Hardy spaces $H^2(\DD^k)$ with $H^2(\TT^k)$, and $H^\infty(\DD^k)$ with $H^\infty (\TT^k)$. Under this identification, the shifts $S_1,\ldots, S_n$  are the multiplications by the coordinate functions $\xi_1,\ldots, \xi_k$ on $H^2(\TT^k)$.
Note that, for each $h\in H^2(\TT^k)$, we have
$$
\left<[2I-F(r{\bf S})^*-F(r{\bf S})]h, h\right>_{H^2(\TT^k)}
=\int_{\TT^k} [2-\overline{f(r\boldsymbol\xi)}-f(r\boldsymbol\xi)]|h(\boldsymbol\xi)|^2 dm_k(\boldsymbol\xi)\geq 0,\qquad \boldsymbol\xi\in \TT^k,
$$
where $m_k$ is the normalized Lebesgue measure on $\TT^k$.
Therefore, $\Re F(r{\bf S})\leq I$ for any $r\in [0,1)$.
Conversely, using the Berezin transform at ${\bf z}\in \DD^k$, we have
$$
\overline{f(r{\bf z})}+f(r{\bf z})=\boldsymbol B_{\bf z}[F(r{\bf S})^*-F(r{\bf S})]\leq 2,
$$
for any $r\in [0,1)$ and ${\bf z}\in \DD^k$, which completes the proof of part (ii).

The fact that the  Banach algebra $H^\infty(\DD^k)$ is isometrically isomorphic to  $H^\infty({\bf D}^k)$ follows from item (i). Note that if
$F({\bf Y})= \sum_{{\bf p} \in \ZZ_+^k} a_{\bf p}{\bf Y}^{\bf p}$, ${\bf Y}\in {\bf D}^k$,
is a free holomorphic   function in $H^\infty({\bf D}^k)$, then the function
$G({\bf X}):=\sum_{{\bf p}=(p_1,\ldots, p_k)\in \ZZ_+^k} a_{\bf p}X_{1,1}^{p_1}X_{2,1}^{p_2}\cdots X_{k,1}^{p_k}$,
 where ${\bf X}= \{X_{i,j}\}\in {\bf B_n}(\cH)$,
 is in the noncommutative Hardy algebra $H^\infty({\bf B_n})$ and $\|F\|_\infty=\|G\|_\infty$. Hence, part (iii) follows. The proof is complete.
\end{proof}
We remark that a result similar to Proposition \ref{prelim} holds for free holomorphic functions with coefficients bounded linear operators acting on  a separable Hilbert space.

\section{Bohr inequalities for free holomorphic functions  on  polyballs}

We  define the right (resp.~minimal) sets in $\FF_{n_1}^+\times\cdots \times \FF_{n_k}^+$,  mention some of their properties and give several examples.
The exhaustions of
 $\FF_{n_1}^+\times\cdots \times \FF_{n_k}^+$  by right minimal  sets or by orthogonal sets will play an important role throughout the paper. We associate with each such an exhaustion, a Bohr type inequality for the Hardy space $H^\infty({\bf B_n})$.
 We  obtain Weiner type inequalities for the  coefficients of bounded free holomorphic functions on polyballs, which are used to obtain Bohr inequalities for  bounded free holomorphic functions with operator coefficients.  We also extend a theorem of Bombieri and Bourgain for the disc   to the polyball and obtain estimations for the Bohr radii $K_{mh}({\bf B_n})$ and $K_{mh}^0({\bf B_n})$.

  If $\omega, \gamma\in \FF_n^+$,
we say that $\gamma<_r \omega$ if there is $\sigma\in
\FF_n^+\backslash\{g_0\}$ such that $\omega=\sigma \gamma$. In this
case,  we set $\omega\backslash_r \gamma:=\sigma$.  We also use the notation
$\gamma\leq_r \omega$  when $\gamma<_r \omega$  or $\gamma=\omega$. Note that if $\gamma=\omega$, then $\omega\backslash_r \gamma:=g_0$.
Similarly, we say
that $\gamma
<_{l}\omega$ if there is $\sigma\in
\FF_n^+\backslash\{g_0\}$ such that $\omega= \gamma \sigma$ and set
$\omega\backslash_l \gamma:=\sigma$. The notation $\gamma
\leq_{l}\omega$ is clear.
 We denote by
$\widetilde\alpha$  the reverse of $\alpha\in \FF_n^+$, i.e.
  $\widetilde \alpha= g_{i_k}\cdots g_{i_1}$ if
   $\alpha=g_{i_1}\cdots g_{i_k}\in\FF_n^+$.
  Note that $\gamma\leq_r \omega$ if and only if $\widetilde \gamma\leq_l\widetilde \omega$. In this case, we have
 $ \widetilde{\omega\backslash_r\gamma}=\widetilde\omega \backslash_l\widetilde\gamma$.
Let $\boldsymbol\omega=(\omega_1,\ldots, \omega_k)$ and $\boldsymbol\gamma=(\gamma_1,\ldots, \gamma_k)$ be in $\FF_{n_1}^+\times\cdots \times \FF_{n_k}^+$. We say that $\boldsymbol\omega\leq_r\boldsymbol\gamma$,   if  $\omega_i\leq_r \gamma_i$ for each $i\in \{1,\ldots, k\}$.
Similarly, we say that  $\boldsymbol\omega\leq_l\boldsymbol\gamma$, if
$\widetilde{\boldsymbol\omega}\leq_r \widetilde{\boldsymbol\gamma}$, where
$\widetilde{\boldsymbol\omega}=(\widetilde{\omega_1},\ldots, \widetilde{\omega_k})$.

\begin{definition}
A subset $\Lambda$ of  $\FF_{n_1}^+\times\cdots \times \FF_{n_k}^+$ is called {\it right minimal} if, for any $\boldsymbol\omega, \boldsymbol\gamma\in \Lambda$,
$\boldsymbol\omega\leq_r\boldsymbol\gamma$ implies $\boldsymbol\omega=\boldsymbol\gamma$.
We   say that $\Lambda$ is {\it left minimal} if, for any $\boldsymbol\omega, \boldsymbol\gamma\in \Lambda$,
$\boldsymbol\omega\leq_l\boldsymbol\gamma$ implies $\boldsymbol\omega=\boldsymbol\gamma$.
If a set $\Lambda$ is both left and right minimal, we call it minimal.
\end{definition}

Note that $\Lambda$ is right minimal if and only if
$\widetilde\Lambda:=\{\widetilde\omega:\ \omega \in \Lambda\}$ is left minimal.
Here are a few characterizations of right minimal sets. Since the proof is straightforward, we shall omit it.
\begin{proposition}\label{Prop1}  Let ${\bf S}:=\{{\bf S}_{i,j}\}$ be the universal model of the polyball ${\bf B_n}$, and let $\{e^i_{\alpha}\}_{\alpha\in \FF_{n_i}^+}$ be the orthogonal basis for the Fock space $F^2(H_{n_i})$.
Then the following statements hold.
\begin{enumerate}
\item[(i)]  For $\alpha=g_{i_1}\cdots g_{i_p}\in \FF_n^+$, we denote by $D_\alpha^r$ the set of all right divisors of $\alpha$, i.e.
$$
D_\alpha^r:=\{g_0, g_{i_p}, g_{i_{p-1}}g_{i_p},\ldots, g_{i_1}\cdots g_{i_p}\}.
$$
If $\beta \in \FF_n^+\backslash D_\alpha^r$  and
$\alpha \in \FF_n^+\backslash D_\beta^r$, then  $\{\alpha, \beta\}$ is a right minimal set. Moreover, a set $\Lambda\subset \FF_n^+$ is right minimal if and only if, for any $\alpha, \beta\in \Lambda$ with $\alpha\neq \beta$, we have  $\beta \in \FF_n^+\backslash D_\alpha^r$  and
$\alpha \in \FF_n^+\backslash D_\beta^r$.

\item[(ii)]   $\Lambda\subset \FF_{n_1}^+\times\cdots \times \FF_{n_k}^+$ is a right minimal  set if and only if, for any $\boldsymbol\beta, \boldsymbol\gamma\in \Lambda$ and $\boldsymbol \alpha \in \FF_{n_1}^+\times\cdots \times \FF_{n_k}^+$,
$$\left< e_{\boldsymbol \alpha}\otimes  e_{\boldsymbol \beta}, e_{\boldsymbol \gamma}\right>=1
$$
if and only if $\boldsymbol \alpha=(g_0^1,\ldots, g_0^k)$ and $\boldsymbol \beta=\boldsymbol \gamma$, where $e_{\boldsymbol \beta}:=e^1_{\beta_1}\otimes \cdots \otimes  e^k_{\beta_k}$ if  $\boldsymbol \beta=(\beta_1,\ldots, \beta_k)$.

\end{enumerate}
\end{proposition}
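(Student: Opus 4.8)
The plan is to reduce both parts to the single elementary fact that, in each Fock space $F^2(H_{n_i})$, the orthonormal basis vector $e^i_\alpha$ records the word $\alpha\in\FF_{n_i}^+$, that concatenation of basis vectors corresponds to concatenation of words, and that the inner products are therefore governed entirely by the combinatorics of right divisors. Nothing analytic is needed; the whole content is free-semigroup divisibility transported through the Fock basis.

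For part (i), I would first record the translation that makes $D_\alpha^r$ relevant: for $\gamma,\alpha\in\FF_n^+$ one has $\gamma\leq_r\alpha$ if and only if $\gamma\in D_\alpha^r$, because $D_\alpha^r$ is by definition exactly the set of suffixes of $\alpha$, i.e.\ the words $\gamma$ with $\alpha=\sigma\gamma$ for some $\sigma\in\FF_n^+$. Next I would observe that right minimality is a purely pairwise condition: $\Lambda$ is right minimal precisely when, for all distinct $\alpha,\beta\in\Lambda$, neither $\beta\leq_r\alpha$ nor $\alpha\leq_r\beta$ holds. Combining these two remarks, a two-point set $\{\alpha,\beta\}$ is right minimal exactly when $\beta\notin D_\alpha^r$ and $\alpha\notin D_\beta^r$ (note these hypotheses already force $\alpha\neq\beta$, since $\alpha\in D_\alpha^r$), which is the first assertion; the \emph{moreover} statement then follows by applying this equivalence to every pair of distinct elements of $\Lambda$.

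For part (ii), the main computation is the inner product. Since $e^i_{\alpha_i}\otimes e^i_{\beta_i}=e^i_{\alpha_i\beta_i}$ (concatenation in the $i$-th Fock space, consistent with $S_{i,j}e^i_\alpha=e^i_{g^i_j\alpha}$) and hence $e_{\boldsymbol\alpha}\otimes e_{\boldsymbol\beta}=e_{\boldsymbol\alpha\boldsymbol\beta}$ with $\boldsymbol\alpha\boldsymbol\beta:=(\alpha_1\beta_1,\ldots,\alpha_k\beta_k)$, orthonormality of each basis $\{e^i_\delta\}$ gives
$$
\left\langle e_{\boldsymbol\alpha}\otimes e_{\boldsymbol\beta},\, e_{\boldsymbol\gamma}\right\rangle=\prod_{i=1}^k\left\langle e^i_{\alpha_i\beta_i},\, e^i_{\gamma_i}\right\rangle,
$$
which equals $1$ if and only if $\alpha_i\beta_i=\gamma_i$ for every $i$, i.e.\ $\boldsymbol\alpha\boldsymbol\beta=\boldsymbol\gamma$. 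I would then note that the existence of some $\boldsymbol\alpha$ with $\boldsymbol\alpha\boldsymbol\beta=\boldsymbol\gamma$ is exactly the relation $\boldsymbol\beta\leq_r\boldsymbol\gamma$, in which case $\boldsymbol\alpha=\boldsymbol\gamma\backslash_r\boldsymbol\beta$ is uniquely determined.

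With the inner product understood, I would verify both implications of the outer equivalence. If $\Lambda$ is right minimal and the inner product is $1$ for $\boldsymbol\beta,\boldsymbol\gamma\in\Lambda$, then $\boldsymbol\alpha\boldsymbol\beta=\boldsymbol\gamma$ yields $\boldsymbol\beta\leq_r\boldsymbol\gamma$, hence $\boldsymbol\beta=\boldsymbol\gamma$ by minimality, and cancelling the suffix forces each $\alpha_i=g_0^i$; the reverse direction of the inner "if and only if" is the trivial check that $\boldsymbol\alpha=(g_0^1,\ldots,g_0^k)$ together with $\boldsymbol\beta=\boldsymbol\gamma$ gives inner product $1$ (and needs no hypothesis). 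Conversely, assuming the inner-product condition, any relation $\boldsymbol\omega\leq_r\boldsymbol\gamma$ with $\boldsymbol\omega,\boldsymbol\gamma\in\Lambda$ produces $\boldsymbol\alpha$ with $\boldsymbol\alpha\boldsymbol\omega=\boldsymbol\gamma$, so the inner product with $\boldsymbol\beta=\boldsymbol\omega$ is $1$, and the hypothesis forces $\boldsymbol\omega=\boldsymbol\gamma$, establishing right minimality. The only place demanding care is the suffix/prefix bookkeeping — that $\omega=\sigma\gamma$ makes $\gamma$, not $\sigma$, the right divisor — so that componentwise concatenation $\boldsymbol\alpha\boldsymbol\beta=\boldsymbol\gamma$ encodes $\boldsymbol\beta\leq_r\boldsymbol\gamma$ rather than the reverse; there is no genuine obstacle beyond this, which is why the statement is routine enough to omit.
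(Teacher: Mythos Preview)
Your proof is correct and is exactly the natural unwinding of the definitions; the paper itself omits the proof as straightforward, and what you have written is precisely the argument one has in mind when doing so. Your identification of $\gamma\leq_r\alpha$ with $\gamma\in D_\alpha^r$, the reduction of right minimality to a pairwise non-divisibility condition, and the reading of $e_{\boldsymbol\alpha}\otimes e_{\boldsymbol\beta}$ as $e_{\boldsymbol\alpha\boldsymbol\beta}$ (componentwise concatenation inside each Fock factor) so that the inner product equals $1$ exactly when $\boldsymbol\alpha\boldsymbol\beta=\boldsymbol\gamma$, are all correct and are the intended content.
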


\begin{example} \label{Ex1} The following statements hold.
\begin{enumerate}
\item[(i)]
If $p\in \NN\cup\{0\}$, then
$\Lambda_p:=\{\alpha\in \FF_n^+:\ |\alpha|=p\}$ is a minimal set in $\FF_n^+$.
\item[(ii)]
If $p_1,\ldots, p_k\in \NN\cup\{0\}$ and $\Lambda_{p_i}:=\{\alpha\in \FF_{n_i}^+:\ |\alpha|=p_i\}$, then $\Lambda_{p_1}\times\cdots \times \Lambda_{p_k}$ is a minimal set in  $\FF_{n_1}^+\times\cdots \times \FF_{n_k}^+$.
\item[(iii)] If $p\in \NN\cup\{0\}$, then
$$
\Gamma_p:=\{(\alpha_1,\ldots, \alpha_k)\in \FF_{n_1}^+\times\cdots \times \FF_{n_k}^+: \  |\alpha_1|+\cdots +|\alpha_k|=p\}
$$
is a minimal set in  $\FF_{n_1}^+\times\cdots \times \FF_{n_k}^+$.
\item[(iv)] Any subset of a (left, right) minimal set is a (left, right) minimal set.
\item[(v)] If  $i\in \{1,\ldots, k-1\}$ and $\Lambda\subset \FF_{n_1}^+\times\cdots \times \FF_{n_i}^+$ and
$\Gamma\subset \FF_{n_{i+1}}^+\times\cdots \times \FF_{n_k}^+$ are (left, right)  minimal sets, then so if $\Lambda\times \Gamma$.
\end{enumerate}
\end{example}

\begin{definition} Let ${\bf S}:=\{{\bf S}_{i,j}\}$ be the universal model of the polyball ${\bf B_n}$.
 A set $\Lambda\subset \FF_{n_1}^+\times\cdots \times \FF_{n_k}^+$ is  called {\it orthogonal} if the isometries  ${\bf S}_{\boldsymbol\alpha}$,
$\boldsymbol\alpha\in \Lambda$, have orthogonal ranges in $F^2(H_{n_1})\otimes\cdots\otimes F^2(H_{n_k})$, i.e. ${\bf S}_{\boldsymbol\beta}^* {\bf S}_{\boldsymbol\alpha}=0$ for any $\boldsymbol\alpha, \boldsymbol\beta\in \Lambda$ with
$\boldsymbol\alpha\neq \boldsymbol\beta$.
\end{definition}

Here are a few properties for the orthogonal sets. Since the proof is straightforward, we shall omit it.

\begin{proposition} \label{Prop2} Let $\Lambda$ be a subset of $ \FF_{n_1}^+\times\cdots \times \FF_{n_k}^+$.
\begin{enumerate}
\item[(i)] $\Lambda$ is an orthogonal set if and only if, for any $\boldsymbol\alpha=(\alpha_1,\ldots, \alpha_k), \boldsymbol\beta=(\beta_1,\ldots \beta_k)\in \Lambda$ with
$\boldsymbol\alpha\neq \boldsymbol\beta$,  there is $i\in \{1,\ldots, k\}$ such that $\alpha_i\neq g_0^i$ and $\beta_i$ is not a left divisor of $\alpha_i$, i.e. there is no $\gamma_i\in \FF_{n_i}$ such that $\alpha_i=\beta_i\gamma_i$.
\item[(ii)] If $\Lambda$ is an  orthogonal set, then $\Lambda$ is left minimal.
\item[(iii)] If $\Lambda$ is an orthogonal set and $\widetilde \Lambda=\Lambda$, then $\Lambda$ is minimal.
\end{enumerate}
\end{proposition}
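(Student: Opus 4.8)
The plan is to reduce everything to the one-variable behaviour of the left creation operators and then tensorize. Since ${\bf S}_{i,j}$ acts as $S_{i,j}$ in the $i$-th tensor factor and as the identity in the remaining ones, the product ${\bf S}_{\boldsymbol\alpha}={\bf S}_{1,\alpha_1}\cdots{\bf S}_{k,\alpha_k}$ is the elementary tensor $S_{1,\alpha_1}\otimes\cdots\otimes S_{k,\alpha_k}$ on $F^2(H_{n_1})\otimes\cdots\otimes F^2(H_{n_k})$. The first step I would record is therefore the factorization
\[
{\bf S}_{\boldsymbol\beta}^*{\bf S}_{\boldsymbol\alpha}=\bigl(S_{1,\beta_1}^*S_{1,\alpha_1}\bigr)\otimes\cdots\otimes\bigl(S_{k,\beta_k}^*S_{k,\alpha_k}\bigr),
\]
together with the elementary fact that a tensor product of bounded operators vanishes if and only if one of its factors does. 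This turns the orthogonality condition ${\bf S}_{\boldsymbol\beta}^*{\bf S}_{\boldsymbol\alpha}=0$ into a statement localized at a single coordinate.

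Next I would establish the one-variable criterion that $S_{i,\beta_i}^*S_{i,\alpha_i}=0$ exactly when neither $\alpha_i$ nor $\beta_i$ is a left divisor of the other. This is a direct basis computation: $S_{i,\alpha_i}e^i_\mu=e^i_{\alpha_i\mu}$, while $S_{i,\beta_i}^*e^i_\nu=e^i_{\nu\backslash_l\beta_i}$ if $\beta_i\leq_l\nu$ and $0$ otherwise; hence $S_{i,\beta_i}^*S_{i,\alpha_i}e^i_\mu\neq0$ for some $\mu$ iff $\beta_i\leq_l\alpha_i\mu$ for some $\mu$, which happens iff $\alpha_i,\beta_i$ are comparable for $\leq_l$ (if $\beta_i\leq_l\alpha_i$ the product equals $S_{i,\alpha_i\backslash_l\beta_i}$, and if $\alpha_i\leq_l\beta_i$ it equals $S_{i,\beta_i\backslash_l\alpha_i}^*$, both nonzero). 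Substituting this into the factorization gives that $\Lambda$ is orthogonal iff for every pair $\boldsymbol\alpha\neq\boldsymbol\beta$ in $\Lambda$ there is a coordinate $i$ at which $\alpha_i$ and $\beta_i$ are incomparable for $\leq_l$, which is exactly (i). The step I expect to demand the most care is this one-variable equivalence: one must keep the left-divisor conventions straight and observe that incomparability at coordinate $i$ forces both $\alpha_i\neq g_0^i$ and $\beta_i\neq g_0^i$ (since $g_0^i$ is a left divisor of every word), so that the criterion is genuinely symmetric in $\boldsymbol\alpha$ and $\boldsymbol\beta$, as the definition of orthogonality requires.

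For (ii) I would argue directly, without invoking (i). Suppose $\Lambda$ is orthogonal and $\boldsymbol\omega\leq_l\boldsymbol\gamma$ with $\boldsymbol\omega,\boldsymbol\gamma\in\Lambda$ and $\boldsymbol\omega\neq\boldsymbol\gamma$. Then $\gamma_i=\omega_i\sigma_i$ for each $i$, with $\boldsymbol\sigma=(\sigma_1,\ldots,\sigma_k)\neq(g_0^1,\ldots,g_0^k)$, so ${\bf S}_{\boldsymbol\gamma}={\bf S}_{\boldsymbol\omega}{\bf S}_{\boldsymbol\sigma}$; since ${\bf S}_{\boldsymbol\omega}$ is an isometry, ${\bf S}_{\boldsymbol\omega}^*{\bf S}_{\boldsymbol\gamma}={\bf S}_{\boldsymbol\sigma}\neq0$, contradicting orthogonality. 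Hence $\boldsymbol\omega=\boldsymbol\gamma$, so $\Lambda$ is left minimal. For (iii), if $\widetilde\Lambda=\Lambda$ then $\widetilde\Lambda$ is orthogonal (it is the same set), so by (ii) it is left minimal; by the remark that $\Lambda$ is right minimal if and only if $\widetilde\Lambda$ is left minimal, $\Lambda$ is also right minimal, and together with (ii) this shows $\Lambda$ is minimal.
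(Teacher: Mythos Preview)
The paper omits the proof, declaring it straightforward; your argument supplies precisely the routine verification intended, and parts (ii) and (iii) are handled cleanly. Your treatment of (i) is also correct, with one caveat worth recording: the symmetric criterion you actually prove---that for every distinct pair there is a coordinate $i$ at which $\alpha_i$ and $\beta_i$ are $\leq_l$-incomparable---is the right characterization of orthogonality, but it is a priori strictly stronger than the one-sided condition literally written in (i). The literal condition (applied separately to the two orderings of a pair, with the index $i$ allowed to change) does not force orthogonality: for $k=2$ and $n_1=n_2=2$, the set $\Lambda=\{(g_1,g_1g_2),(g_1g_2,g_1)\}\subset\FF_2^+\times\FF_2^+$ satisfies the one-sided condition for both orderings, yet ${\bf S}_{\boldsymbol\beta}^*{\bf S}_{\boldsymbol\alpha}=S_{1,g_2}^*\otimes S_{2,g_2}\neq0$. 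So your phrase ``which is exactly (i)'' is tacitly repairing a minor imprecision in the stated proposition rather than literally matching it---worth noting, but your incomparability version is the one that is genuinely equivalent to orthogonality.
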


\begin{example}\label{Ex2}  The following statements hold.
\begin{enumerate}
\item[(i)]
If $p\in \NN$, then
$\Lambda_p:=\{\alpha\in \FF_n^+:\ |\alpha|=p\}$ is an orthogonal set in $\FF_n^+$.
\item[(ii)] Let $\Lambda\subset \FF_n^+$ with $\Lambda\neq \{g_0\}$. Then $\Lambda$ is orthogonal if and only it is left minimal.
\item[(iii)]
If $p_1,\ldots, p_k\in \NN\cup\{0\}$ and $\Lambda_{p_i}:=\{\alpha\in \FF_{n_i}^+:\ |\alpha|=p_i\}$, then $\Lambda_{p_1}\times\cdots \times \Lambda_{p_k}$ is an orthogonal  set in  $\FF_{n_1}^+\times\cdots \times \FF_{n_k}^+$ if at least one $p_i\geq 1$.

\item[(iv)] Any subset of an orthogonal set is  orthogonal.
\item[(v)] If  $i\in \{1,\ldots, k-1\}$ and $\Lambda\subset \FF_{n_1}^+\times\cdots \times \FF_{n_i}^+$ and
$\Gamma\subset \FF_{n_{i+1}}^+\times\cdots \times \FF_{n_k}^+$ are orthogonal sets, then so if $\Lambda\times \Gamma$.
\end{enumerate}
\end{example}

We remark that there are minimal and orthogonal sets in $\FF_{n_1}^+\times\cdots \times \FF_{n_k}^+$ which are  infinite. Indeed, it is easy to see that
$$
\Lambda=\{g_2g_1 g_2, g_2g_1^2 g_2, g_2g_1^3 g_2,\ldots\}\subset \FF_n^+
$$
is such a set in $\FF_{n}^+$.
Taking cartesian products of this kind of sets, we obtain   minimal and orthogonal sets in $\FF_{n_1}^+\times\cdots \times \FF_{n_k}^+$, which are infinite.

In what follows we obtain a Weiner type inequality for the coefficients of bounded free holomorhic functions  on the regular  polyball. Without loss of generality, we assume throughout this paper that $\cH$ and $\cK$ are separable Hilbert spaces.

\begin{proposition}\label{W} Let $\Lambda$ be a right minimal  subset of $ \FF_{n_1}^+\times\cdots \times \FF_{n_k}^+$ that does not contain the neutral element ${\bf g}_0=(g_0^1,\ldots, g_0^k)$ and let $F:{\bf B_n}(\cH)\to B(\cK)\otimes_{min}B(\cH)$ be a bounded free holomorphic function with  representation
$$
F({\bf X})=\sum_{\boldsymbol \alpha\in  \FF_{n_1}^+\times\cdots \times \FF_{n_k}^+} A_{(\boldsymbol \alpha)}\otimes  {\bf X}_{\boldsymbol \alpha}
$$
such that $\|F\|_\infty\leq 1$ and $F(0)$ is a scalar operator, i.e. $F(0)=a_0 I$ for some $a_0\in \CC$.
Then
$$
\left\|\sum_{\boldsymbol \alpha\in \Lambda}
A_{(\boldsymbol \alpha) }^*A_{(\boldsymbol \alpha) }\right\|^{1/2}\leq 1-|a_0|^2.
$$
\end{proposition}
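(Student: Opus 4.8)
The plan is to deduce the bound from the positivity of a single $2\times 2$ operator matrix obtained by compressing the model of $F$ to a two-block subspace, where the off-diagonal block is exactly the column operator whose norm must be estimated; the role of right minimality is to force the two diagonal blocks of that compression to equal the scalar $a_0$.

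First I would replace $F$ by its model on the Fock space. For $0\le r<1$ put $\Theta_r:=F(r{\bf S})=\sum_{\boldsymbol\alpha}r^{|\boldsymbol\alpha|}A_{(\boldsymbol\alpha)}\otimes{\bf S}_{\boldsymbol\alpha}$, where $|\boldsymbol\alpha|:=|\alpha_1|+\cdots+|\alpha_k|$; this acts on $\cK\otimes\Omega$ with $\Omega:=F^2(H_{n_1})\otimes\cdots\otimes F^2(H_{n_k})$, the series converges in norm by Section 1, $\|\Theta_r\|\le\|F\|_\infty\le1$, and its vacuum coefficient is $A_{({\bf g}_0)}=a_0 I$. (If $|a_0|=1$ the statement is trivial, so assume $|a_0|<1$.) Let $\iota\xi:=\xi\otimes e_{{\bf g}_0}$, set $\cV_0:=\iota\cK$ and $\cL:=\overline{\operatorname{span}}\{\xi\otimes e_{\boldsymbol\gamma}:\xi\in\cK,\ \boldsymbol\gamma\in\Lambda\}$; since ${\bf g}_0\notin\Lambda$ these are orthogonal, and I note that $\mathbf A_r:=P_\cL\Theta_r\iota$ satisfies $\mathbf A_r^*\mathbf A_r=\sum_{\boldsymbol\alpha\in\Lambda}r^{2|\boldsymbol\alpha|}A_{(\boldsymbol\alpha)}^*A_{(\boldsymbol\alpha)}$, so it is precisely its norm that must be controlled.

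The heart of the argument is the compression of $\Theta_r$ to $\cM:=\cV_0\oplus\cL$. Using ${\bf S}_{\boldsymbol\beta}e_{\boldsymbol\gamma}=e_{\boldsymbol\beta\boldsymbol\gamma}$ one reads off the four blocks: analyticity (no positive-degree word divides ${\bf g}_0$) gives $P_{\cV_0}\Theta_r|_\cL=0$ and $P_{\cV_0}\Theta_r|_{\cV_0}=a_0 I$, the lower-left block is $\mathbf A_r$, and the decisive identity is $P_\cL\Theta_r|_\cL=a_0 I_\cL$. Indeed, for $\boldsymbol\gamma,\boldsymbol\alpha\in\Lambda$ the coefficient of $e_{\boldsymbol\alpha}$ in $\Theta_r(\xi\otimes e_{\boldsymbol\gamma})$ is governed by $\langle e_{\boldsymbol\beta}\otimes e_{\boldsymbol\gamma},e_{\boldsymbol\alpha}\rangle$, and Proposition \ref{Prop1}(ii) tells us this is nonzero only when $\boldsymbol\beta={\bf g}_0$ and $\boldsymbol\gamma=\boldsymbol\alpha$, contributing exactly $a_0$. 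I expect this to be the main point and the only place the hypothesis enters: right minimality is precisely what prevents any positive-degree coefficient of $F$ from surviving the compression, collapsing both diagonal blocks to scalars.

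Since a compression of a contraction is a contraction, $N:=\begin{pmatrix}a_0 I&0\\\mathbf A_r&a_0 I\end{pmatrix}$ satisfies $\|N\|\le1$. Writing $I-N^*N\ge0$ and taking the Schur complement of its invertible lower-right corner $(1-|a_0|^2)I$ yields $(1-|a_0|^2)I-\tfrac{1}{1-|a_0|^2}\mathbf A_r^*\mathbf A_r\ge0$, that is $\sum_{\boldsymbol\alpha\in\Lambda}r^{2|\boldsymbol\alpha|}A_{(\boldsymbol\alpha)}^*A_{(\boldsymbol\alpha)}\le(1-|a_0|^2)^2 I$. Letting $r\uparrow1$, the partial sums increase to $\sum_{\boldsymbol\alpha\in\Lambda}A_{(\boldsymbol\alpha)}^*A_{(\boldsymbol\alpha)}$, so $\bigl\|\sum_{\boldsymbol\alpha\in\Lambda}A_{(\boldsymbol\alpha)}^*A_{(\boldsymbol\alpha)}\bigr\|\le(1-|a_0|^2)^2$, which is the assertion. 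The remaining points — the norm convergence of $\Theta_r$ and the Schur-complement manipulation — are routine; all the content sits in the collapse $P_\cL\Theta_r|_\cL=a_0 I$ forced by right minimality.
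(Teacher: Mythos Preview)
Your proof is correct and follows essentially the same route as the paper: compress the model operator to $\cK\otimes\operatorname{span}\{e_{{\bf g}_0}\}\cup\{e_{\boldsymbol\beta}:\boldsymbol\beta\in\Lambda\}$, use right minimality (Proposition~\ref{Prop1}(ii)) to see that the compression is lower triangular with scalar diagonal $a_0 I$, and then extract the norm bound on the off-diagonal column. The only cosmetic differences are that the paper works directly with the SOT boundary function $\widehat F$ rather than $F(r{\bf S})$ with $r\to 1$, and it quotes the $2\times 2$ contraction lemma from \cite{PPoS} instead of redoing the Schur-complement computation you carry out.
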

\begin{proof}
Let $\{e_{\boldsymbol \alpha}\}_{\boldsymbol \alpha\in \FF_{n_1}^+\times\cdots \times \FF_{n_k}^+}$ be the orthonormal basis of the Hilbert space $F^2(H_{n_1})\otimes \cdots \otimes  F^2(H_{n_k})$ and let $\cE$ be the closed linear span of the vectors $1, e_{\boldsymbol \beta}$, where $\boldsymbol \beta\in \Lambda$. If  $\widehat F:=\text{\rm SOT-}\lim_{r\to 1}F(r{\bf S})$ is the boundary function of $F$ with respect to ${\bf S}$,
  we have
$$\left<P_{\cK\otimes \cE} \widehat F|_{\cK\otimes \cE} (x\otimes 1),y\otimes 1\right>= \lim_{r\to 1}\left<  F(r{\bf S}) x\otimes 1,y\otimes 1\right>=a_0\left<x,y\right>,
$$
$$\left<P_{\cK\otimes \cE} \widehat F|_{\cK\otimes \cE} (x\otimes e_{\boldsymbol \beta}),y\otimes 1\right>= \lim_{r\to 1}\left<  F(r{\bf S}) (x\otimes  e_{\boldsymbol \beta}),y\otimes 1\right>=0,
$$
and
$$\left<P_{\cK\otimes \cE} \widehat F|_{\cK\otimes \cE} (x\otimes 1), y\otimes e_{\boldsymbol \beta}\right>= \lim_{r\to 1}\left<  F(r{\bf S}) (x\otimes 1),y\otimes e_{\boldsymbol \beta}\right>=\left<A_{(\boldsymbol\beta)}x, y\right>,
$$
for any $\boldsymbol \beta\in \Lambda$ and $x,y\in \cK$. Since $\Lambda$ is a right minimal  subset of $ \FF_{n_1}^+\times\cdots \times \FF_{n_k}^+$,  Proposition \ref{Prop1}, part (ii), implies
\begin{equation*}
\begin{split}
\left<P_{\cK\otimes \cE} \widehat F |_{\cK\otimes \cE}(x\otimes  e_{\boldsymbol \beta}), y\otimes e_{\boldsymbol \gamma}\right>&=\lim_{r\to 1}\left<  F(r{\bf S})  (x\otimes e_{\boldsymbol \beta}),y\otimes e_{\boldsymbol \gamma}\right>\\
&=\sum_{\boldsymbol \alpha\in \FF_{n_1}^+\times\cdots \times \FF_{n_k}^+}\left< A_{\boldsymbol \alpha}x, y \right> \left<e_{\boldsymbol\alpha}\otimes e_{\boldsymbol\beta}, e_{\boldsymbol\gamma}\right>=a_0\left<x, y \right> \delta_{\boldsymbol \beta,\boldsymbol \gamma}
\end{split}
\end{equation*}
for any $\boldsymbol \beta,\boldsymbol\gamma\in \Lambda$ and $x,y\in \cK$.
Consequently, the matrix representation of the contraction $P_{\cK\otimes \cE} \widehat F |_{\cK\otimes \cE}$ with respect to the decomposition $$\cK\otimes \cE=(\cK\otimes 1)\oplus \bigoplus_{\boldsymbol \beta\in \Lambda}(\cK\otimes e_{\boldsymbol \beta})$$
is
$$
\left(\begin{matrix}
 a_0 I_\cK& [0\ \ \ \cdots \ \  \ 0]\\
 \left[
 \begin{matrix}A_{(\boldsymbol \alpha)}\\
 \vdots\\
 \boldsymbol \alpha\in \Lambda
 \end{matrix}
 \right]&
 \left[
 \begin{matrix}a_0 I_\cK&\cdots& 0\\
 \vdots&\ddots& \vdots\\
 0&\cdots& a_0 I_\cK
 \end{matrix}
 \right]
 \end{matrix}
 \right).
 $$
According to Lemma 3.4 from  \cite{PPoS}, if $\cH, \cK$ are Hilbert spaces, $B$ is a bounded linear operator from $\cM$ to $\cG$, and $\lambda\in \CC$, then  $\left(\begin{matrix}\lambda I_\cM&0\\B&\lambda I_\cG\end{matrix}\right)$ is a contraction if and only if  $\|B\|\leq 1- |\lambda|^2$. Applying this result to our setting, we deduce that
$$
\left\|\sum_{\boldsymbol \alpha\in \Lambda}
A_{(\boldsymbol \alpha) }^*A_{(\boldsymbol \alpha) }\right\|^{1/2}\leq 1-|a_0|^2.
$$
The proof is complete.
\end{proof}

In what follows, we need some notation.
Let ${\boldsymbol\rho}:=(\rho_1,\ldots, \rho_k)$, $\rho_i:=(\rho_{i,1},\ldots, \rho_{i,n_i})$,  with
    $\rho_{i,j}\geq 0$. We also use the abbreviation $\boldsymbol\rho=(\rho_{i,j})$.   We denote
$\rho_{i,\alpha_i}:=\rho_{i,j_1}\cdots \rho_{i,j_p}$
  if  $\alpha_i=g_{j_1}^i\cdots g_{j_p}^i\in \FF_{n_i}^+$ and
   $\rho_{i,g_0^i}:=1$, and  ${\boldsymbol\rho}_{\boldsymbol\alpha}:= {\rho}_{1,\alpha_1}\cdots {\rho }_{k,\alpha_k}$ if  $\boldsymbol\alpha:=(\alpha_1,\ldots, \alpha_k)\in \FF_{n_1}^+\times \cdots \times\FF_{n_k}^+$.
If ${\bf X}=\{X_{i,j}\}\in B(\cH)^{n_1+\cdots, n_k}$, set $\boldsymbol\rho {\bf X}:=(\rho_{i,j}X_{i,j})$.
   Moreover, if ${\boldsymbol \gamma }:=(\gamma_1,\ldots, \gamma_k)$, $\gamma_i\geq 0$, we set ${\boldsymbol \gamma }{\bf X}:=(\gamma_1{X}_1,\ldots, \gamma_k{X}_k)$, where
    $\gamma_iX_i:=(\gamma_iX_{i,1},\ldots, \gamma_iX_{i,j})$.
 When $r\geq 0$, the notation $r{\bf X}=(rX_{i,j})$ is clear.

An exhaustion of
 $\FF_{n_1}^+\times\cdots \times \FF_{n_k}^+$  by right minimal  sets is a countable collection $\{\Sigma_m\}_{m=0}^\infty$ of non-empty, disjoint, right minimal sets $\Sigma_m$ with $\Sigma_0=\{{\bf g}\}$ and  the property that $\cup_{m=0}^\infty \Sigma_m =\FF_{n_1}^+\times\cdots \times \FF_{n_k}^+$.

\begin{theorem} \label{Bohr1}  Let $F:{\bf B_n}(\cH)\to B(\cK)\otimes_{min}B(\cH)$ be a bounded free holomorphic function with  representation
$$
F({\bf X})=\sum_{\boldsymbol \alpha\in  \FF_{n_1}^+\times\cdots \times \FF_{n_k}^+} A_{(\boldsymbol \alpha)}\otimes  {\bf X}_{\boldsymbol \alpha}
$$
such that $\|F\|_\infty\leq 1$ and $F(0)$ is a scalar operator.
If $\{\Sigma_m\}_{m=0}^\infty$ is an exhaustion of
 $\FF_{n_1}^+\times\cdots \times \FF_{n_k}^+$  by right  minimal  sets and
 $\boldsymbol\rho=(\rho_{i,j})\in [0,1)^{n_1+\cdots n_k}$  is such that
$$
\sum\limits_{m=1}^\infty \left\|\sum_{ \boldsymbol\alpha \in \Sigma_m}
 {\boldsymbol\rho}_{\boldsymbol\alpha}^2{\bf S}_{\boldsymbol \alpha}{\bf S}_{\boldsymbol \alpha}^*\right\|^{1/2}\leq \frac{1}{2},
 $$
then
$$
\sum\limits_{ m=0}^\infty\sup_{{\bf X}\in \boldsymbol\rho {\bf B_n(\cH)}^-}\left\|\sum\limits_ {\boldsymbol\alpha\in \Sigma_m} A_{(\boldsymbol \alpha)} \otimes {\bf X}_{\boldsymbol \alpha}\right\|
\leq
  1.
$$
\end{theorem}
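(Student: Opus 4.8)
The plan is to reduce the Bohr-type estimate to the Weiner inequality of Proposition~\ref{W} applied separately to each right minimal piece $\Sigma_m$ of the exhaustion, and then to sum these contributions using the hypothesis on $\boldsymbol\rho$. Write $a_0\in\CC$ for the scalar with $F(0)=a_0 I$, and recall that $\|F\|_\infty\le 1$ forces $|a_0|\le 1$. For each $m\ge 1$ the set $\Sigma_m$ is right minimal and (since $\Sigma_0=\{{\bf g}_0\}$ is the only piece containing the neutral element) does not contain ${\bf g}_0$, so Proposition~\ref{W} applies verbatim to give
\begin{equation*}
\left\|\sum_{\boldsymbol\alpha\in\Sigma_m} A_{(\boldsymbol\alpha)}^*A_{(\boldsymbol\alpha)}\right\|^{1/2}\le 1-|a_0|^2.
\end{equation*}

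**Estimating each homogeneous-type block on $\boldsymbol\rho{\bf B_n}(\cH)^-$.** The main step is to bound $\sup_{{\bf X}\in\boldsymbol\rho{\bf B_n}(\cH)^-}\bigl\|\sum_{\boldsymbol\alpha\in\Sigma_m} A_{(\boldsymbol\alpha)}\otimes{\bf X}_{\boldsymbol\alpha}\bigr\|$ by the product of the Weiner bound above and the factor $\bigl\|\sum_{\boldsymbol\alpha\in\Sigma_m}\boldsymbol\rho_{\boldsymbol\alpha}^2\,{\bf S}_{\boldsymbol\alpha}{\bf S}_{\boldsymbol\alpha}^*\bigr\|^{1/2}$ appearing in the hypothesis. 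First I would use the Berezin transform to replace ${\bf X}$ by the universal model: since ${\bf X}\in\boldsymbol\rho{\bf B_n}(\cH)^-$, writing ${\bf X}=\boldsymbol\rho{\bf Y}$ with ${\bf Y}\in{\bf B_n}(\cH)^-$, the relation ${\bf X}_{\boldsymbol\alpha}=\boldsymbol\rho_{\boldsymbol\alpha}{\bf Y}_{\boldsymbol\alpha}$ together with the intertwining property ${\boldsymbol\cB_{\bf Y}}({\bf S}_{\boldsymbol\alpha}{\bf S}_{\boldsymbol\beta}^*)={\bf Y}_{\boldsymbol\alpha}{\bf Y}_{\boldsymbol\beta}^*$ and complete positivity of the Berezin transform lets me dominate the norm of the operator block by the norm of the corresponding block in the $\boldsymbol\rho{\bf S}$ variables. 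Then the key algebraic inequality is a Cauchy--Schwarz/factorization estimate: for a finite or countable family, one has
\begin{equation*}
\left\|\sum_{\boldsymbol\alpha\in\Sigma_m} A_{(\boldsymbol\alpha)}\otimes\boldsymbol\rho_{\boldsymbol\alpha}{\bf S}_{\boldsymbol\alpha}\right\|
\le\left\|\sum_{\boldsymbol\alpha\in\Sigma_m} A_{(\boldsymbol\alpha)}^*A_{(\boldsymbol\alpha)}\right\|^{1/2}
\left\|\sum_{\boldsymbol\alpha\in\Sigma_m}\boldsymbol\rho_{\boldsymbol\alpha}^2\,{\bf S}_{\boldsymbol\alpha}{\bf S}_{\boldsymbol\alpha}^*\right\|^{1/2},
\end{equation*}
which is the noncommutative analogue of factoring a sum $\sum A_{\boldsymbol\alpha}\otimes T_{\boldsymbol\alpha}$ as a product of a row and a column operator. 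Combining this with the Weiner bound gives, for each $m\ge 1$,
\begin{equation*}
\sup_{{\bf X}\in\boldsymbol\rho{\bf B_n}(\cH)^-}\left\|\sum_{\boldsymbol\alpha\in\Sigma_m} A_{(\boldsymbol\alpha)}\otimes{\bf X}_{\boldsymbol\alpha}\right\|
\le(1-|a_0|^2)\left\|\sum_{\boldsymbol\alpha\in\Sigma_m}\boldsymbol\rho_{\boldsymbol\alpha}^2\,{\bf S}_{\boldsymbol\alpha}{\bf S}_{\boldsymbol\alpha}^*\right\|^{1/2}.
\end{equation*}

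**Summation and the $m=0$ term.** Finally I would sum over $m$. The $m=0$ term is just $\|A_{({\bf g}_0)}\|=|a_0|$. For $m\ge 1$, summing the displayed bound and invoking the hypothesis $\sum_{m=1}^\infty\bigl\|\sum_{\boldsymbol\alpha\in\Sigma_m}\boldsymbol\rho_{\boldsymbol\alpha}^2{\bf S}_{\boldsymbol\alpha}{\bf S}_{\boldsymbol\alpha}^*\bigr\|^{1/2}\le\frac12$ yields
\begin{equation*}
\sum_{m=0}^\infty\sup_{{\bf X}\in\boldsymbol\rho{\bf B_n}(\cH)^-}\left\|\sum_{\boldsymbol\alpha\in\Sigma_m} A_{(\boldsymbol\alpha)}\otimes{\bf X}_{\boldsymbol\alpha}\right\|
\le|a_0|+(1-|a_0|^2)\cdot\tfrac12.
\end{equation*}
The proof then closes by the elementary scalar estimate $|a_0|+\frac12(1-|a_0|^2)\le 1$ for all $|a_0|\le 1$, which rearranges to $\frac12(1-|a_0|)^2\ge 0$.

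**Expected main obstacle.** The delicate point is the operatorial factorization inequality and the Berezin-transform reduction in the second paragraph: one must justify that passing from ${\bf X}\in\boldsymbol\rho{\bf B_n}(\cH)^-$ to the universal model $\boldsymbol\rho{\bf S}$ does not increase the norm, and that the row/column factorization holds for the possibly infinite orthogonal-type sums indexed by $\Sigma_m$. I expect the cleanest route is to recognize $\sum_{\boldsymbol\alpha}A_{(\boldsymbol\alpha)}\otimes\boldsymbol\rho_{\boldsymbol\alpha}{\bf S}_{\boldsymbol\alpha}$ as a product of the row operator with entries $A_{(\boldsymbol\alpha)}$ and the column operator with entries $\boldsymbol\rho_{\boldsymbol\alpha}{\bf S}_{\boldsymbol\alpha}^*$ (up to adjoints), whose norms are exactly the two square-root factors, with convergence controlled by the summability hypothesis on $\boldsymbol\rho$.
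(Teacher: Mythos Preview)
Your proposal is correct and follows essentially the same route as the paper's proof: apply Proposition~\ref{W} to each right minimal piece $\Sigma_m$, use the row/column factorization
\[
\left\|\sum_{\boldsymbol\alpha\in\Sigma_m}\boldsymbol\rho_{\boldsymbol\alpha}A_{(\boldsymbol\alpha)}\otimes{\bf S}_{\boldsymbol\alpha}\right\|
=\left\|\bigl[\boldsymbol\rho_{\boldsymbol\alpha}{\bf S}_{\boldsymbol\alpha}\otimes I:\boldsymbol\alpha\in\Sigma_m\bigr]\begin{bmatrix}I\otimes A_{(\boldsymbol\alpha)}\\ \vdots\\ \boldsymbol\alpha\in\Sigma_m\end{bmatrix}\right\|
\]
to separate the two square-root factors, pass from ${\bf X}\in\boldsymbol\rho{\bf B_n}(\cH)^-$ to $\boldsymbol\rho{\bf S}$ via the noncommutative von Neumann inequality (your Berezin-transform argument is the same mechanism), and finish with $|a_0|+\tfrac12(1-|a_0|^2)\le 1$. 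The only cosmetic difference is that the paper invokes the von Neumann inequality by name rather than unpacking it through the Berezin kernel, and it applies the factorization before the reduction to ${\bf S}$ rather than after; neither change affects the argument.
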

\begin{proof}
  Since $\Sigma_m$ is a right minimal set,  we can use Proposition \ref{W} and deduce that
\begin{equation*}
\begin{split}
\left\|\sum_{\boldsymbol\alpha \in \Sigma_m}{\boldsymbol\rho}_{\boldsymbol\alpha}A_{(\boldsymbol\alpha)}\otimes  {\bf S}_{\boldsymbol\alpha}\right\|
&=
\left\|\left[ {\boldsymbol\rho}_{\boldsymbol\alpha}{\bf S}_{\boldsymbol \alpha}\otimes I:\ \boldsymbol\alpha \in \Sigma_m\right]
\left[\begin{matrix} I\otimes A_{(\boldsymbol\alpha)}\\
:\\
\boldsymbol\alpha\in \Sigma_m\end{matrix}\right]\right\|\\
&\leq
 \left\|\sum_{ \boldsymbol\alpha \in \Sigma_m}
 {\boldsymbol\rho}_{\boldsymbol\alpha}^2{\bf S}_{\boldsymbol \alpha}{\bf S}_{\boldsymbol \alpha}^*\right\|^{1/2} \left\|\sum_{\boldsymbol \alpha\in \Sigma_m}
A_{(\boldsymbol \alpha) }^*A_{(\boldsymbol \alpha) }\right\|^{1/2}\\
 &\leq (1-|a_0|^2)\left\|\sum_{ \boldsymbol\alpha \in \Sigma_m}
 {\boldsymbol\rho}_{\boldsymbol\alpha}^2{\bf S}_{\boldsymbol \alpha}{\bf S}_{\boldsymbol \alpha}^*\right\|^{1/2}.
 \end{split}
\end{equation*}
Now, using the noncommutative von Neumann   inequality for regular  polyballs (see \cite{Po-von}, \cite{Po-poisson}),  we deduce that
\begin{equation*}
\begin{split}
\sum\limits_{ m=0}^\infty\sup_{{\bf X}\in \boldsymbol\rho {\bf B_n(\cH)}^-}\left\|\sum\limits_ {\boldsymbol\alpha\in \Sigma_m} A_{(\boldsymbol \alpha)}\otimes {\bf X}_{\boldsymbol \alpha}\right\|
&\leq
\sum\limits_{ m=0}^\infty \left\|\sum\limits_{\boldsymbol\alpha \in \Sigma_m}  {\boldsymbol\rho}_{\boldsymbol\alpha}A_{(\boldsymbol \alpha)}\otimes {\bf S}_{\boldsymbol \alpha}\right\|\\
&
\leq|a_0|+ (1-|a_0|^2)\sum\limits_{ m=1}^\infty\left\|\sum_{ \boldsymbol\alpha \in \Sigma_m}
 {\boldsymbol\rho}_{\boldsymbol\alpha}^2{\bf S}_{\boldsymbol \alpha}{\bf S}_{\boldsymbol \alpha}^*\right\|^{1/2}\\
 &\leq |a_0|+ \frac{1-|a_0|^2}{2}\leq 1.
\end{split}
\end{equation*}
The proof is complete.
\end{proof}

Acoording to Example \ref{Ex2}, part (iii), if $p_1,\ldots, p_k\in \NN\cup\{0\}$ and $\Lambda_{p_i}:=\{\alpha\in \FF_{n_i}^+:\ |\alpha|=p_i\}$, then $\Lambda_{\bf p}:=\Lambda_{p_1}\times\cdots \times \Lambda_{p_k}$ is an orthogonal  set in  $\FF_{n_1}^+\times\cdots \times \FF_{n_k}^+$ if at least one $p_i\geq 1$.
On the other hand, due to Example \ref{Ex1}, part (ii), the set $\Lambda_{\bf p}$ is minimal.
   Therefore, $\{\Lambda_{\bf p}\}_{{\bf p}\in \ZZ_+^k}$ is an exhaustion of $\FF_{n_1}^+\times\cdots \times \FF_{n_k}^+$ by minimal and orthogonal sets.
The following definition concerns the polyball ${\bf B_n}$, the Hardy space $H^\infty({\bf B_n})$ of all bounded free holomorphic functions on the polyball with scalar coefficients,  and the representation  of its elements $F$ in terms of  {\it multi-homogeneous} polynomials, i.e.
\begin{equation*}
F({\bf X})=\sum\limits_{{\bf p}\in \ZZ_+^k} \left(\sum\limits_ {\boldsymbol\alpha\in \Lambda_{\bf p}} a_{(\boldsymbol \alpha)}  {\bf X}_{\boldsymbol \alpha}\right), \qquad {\bf X}\in {\bf B_n}(\cH),
\end{equation*}
for any Hilbert space $\cH$, where the convergence is in the operator norm topology.
 In this case, the {\it Bohr radius} for the polyball ${\bf B_n}$  is  denoted  by $K_{mh}({\bf B_n})$ and is the largest $r\geq 0$ such that
   \begin{equation*}
\sum\limits_{{\bf p}\in \ZZ_+^k} \left\|\sum\limits_ {\boldsymbol\alpha\in \Lambda_{\bf p}} a_{(\boldsymbol \alpha)}  {\bf X}_{\boldsymbol \alpha}\right\|
\leq \|F\|_\infty, \qquad {\bf X}\in   r {\bf B_n}(\cH)^-,
\end{equation*}
  for any   $F\in H^\infty({\bf B_n})$.
Due to the noncommutative von Neumann inequality, the latter inequality  is equivalent to
  $\sum\limits_{{\bf p}\in \ZZ_+^k} \left\|\sum\limits_ {\boldsymbol\alpha\in \Lambda_{\bf p}} a_{(\boldsymbol \alpha)}  r^{|\boldsymbol\alpha|}{\bf S}_{\boldsymbol \alpha}\right\|
\leq \|F\|_\infty$, where   $|\boldsymbol \alpha|:=|\alpha_1|+\cdots +|\alpha_k|$, if $\boldsymbol\alpha=(\alpha_1,\ldots, \alpha_k)$ is in $\FF_{n_1}^+\times \cdots \times\FF_{n_k}^+$. Now, we obtain the first estimations for the Bohr radius  $K_{mh}({\bf B_n})$.

\begin{theorem} \label{Bohr2}  Let $F:{\bf B_n}(\cH)\to B(\cK)\otimes_{min}B(\cH)$ be a bounded free holomorphic function with  representation
$$
F({\bf X})=\sum_{\boldsymbol \alpha\in  \FF_{n_1}^+\times\cdots \times \FF_{n_k}^+} A_{(\boldsymbol \alpha)}\otimes  {\bf X}_{\boldsymbol \alpha}
$$
such that $\|F\|_\infty\leq 1$ and $F(0)$ is a scalar operator.

 \begin{enumerate}
  \item[(i)]
If $\boldsymbol\rho:=(\rho_{i,j})$ with $\rho_{i,j}=r_i\in [0,1)$ for   $j\in \{1,\ldots, n_i\}$,  then
$$
\sum\limits_{{\bf p}\in \ZZ_+^k} \left\|\sum\limits_ {\boldsymbol\alpha\in \Lambda_{\bf p}} A_{(\boldsymbol \alpha)} \otimes {\bf X}_{\boldsymbol \alpha}\right\|
\leq
 |a_0|+(1-|a_0|^2)\left[\prod_{i=1}^k (1-r_i)^{-1}-1\right]
$$
for any $ {\bf X}\in   \boldsymbol \rho {\bf B_n}(\cH)^-$.
Consequently, if
$\prod_{i=1}^k (1-r_i)\geq \frac{2}{3}$, then
$$
\sum\limits_{{\bf p}\in \ZZ_+^k} \left\|\sum\limits_ {\boldsymbol\alpha\in \Lambda_{\bf p}} A_{(\boldsymbol \alpha)}\otimes  {\bf X}_{\boldsymbol \alpha}\right\|
\leq1, \qquad {\bf X}\in   \boldsymbol \rho {\bf B_n}(\cH)^-.
$$
\item[(ii)] The Bohr radius $K_{mh}({\bf B_n})$ satisfies the inequalities
 $$ 1-\left(\frac{2}{3}\right)^{1/k}\leq K_{mh}({\bf B_n})\leq \frac{1}{3}.
 $$
\end{enumerate}
\end{theorem}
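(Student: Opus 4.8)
The plan is to apply the machinery of Theorem \ref{Bohr1} to the specific exhaustion $\{\Lambda_{\bf p}\}_{{\bf p}\in\ZZ_+^k}$ by minimal and orthogonal sets identified just before the statement, but to retain the explicit value of the intervening norms rather than merely bounding them by $\tfrac12$. For part (i), I would fix ${\bf p}=(p_1,\ldots,p_k)\neq 0$ and note that when $\rho_{i,j}=r_i$ one has $\boldsymbol\rho_{\boldsymbol\alpha}=r_1^{p_1}\cdots r_k^{p_k}$ for every $\boldsymbol\alpha\in\Lambda_{\bf p}$. Since $\Lambda_{\bf p}$ is orthogonal, the isometries ${\bf S}_{\boldsymbol\alpha}$, $\boldsymbol\alpha\in\Lambda_{\bf p}$, have pairwise orthogonal ranges, so $\sum_{\boldsymbol\alpha\in\Lambda_{\bf p}}{\bf S}_{\boldsymbol\alpha}{\bf S}_{\boldsymbol\alpha}^*$ is an orthogonal projection and hence $\left\|\sum_{\boldsymbol\alpha\in\Lambda_{\bf p}}\boldsymbol\rho_{\boldsymbol\alpha}^2{\bf S}_{\boldsymbol\alpha}{\bf S}_{\boldsymbol\alpha}^*\right\|^{1/2}=r_1^{p_1}\cdots r_k^{p_k}$. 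Combining the row/column factorization and the Weiner estimate of Proposition \ref{W} exactly as in the proof of Theorem \ref{Bohr1}, the block of multidegree ${\bf p}$ is controlled by $(1-|a_0|^2)\,r_1^{p_1}\cdots r_k^{p_k}$, while the ${\bf p}=0$ block contributes $|a_0|$. Summing the resulting geometric series via $\sum_{{\bf p}\in\ZZ_+^k}r_1^{p_1}\cdots r_k^{p_k}=\prod_{i=1}^k(1-r_i)^{-1}$ yields the stated bound.

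For the consequence in part (i), under $\prod_{i=1}^k(1-r_i)\geq\tfrac23$ one has $\prod_{i=1}^k(1-r_i)^{-1}-1\leq\tfrac12$, so the bound becomes $|a_0|+\tfrac12(1-|a_0|^2)$; this is at most $1$ because $1-\bigl(|a_0|+\tfrac12(1-|a_0|^2)\bigr)=\tfrac12(1-|a_0|)^2\geq0$. The lower bound in part (ii) is then immediate: specializing to $r_i=r$ (so that $\boldsymbol\rho{\bf B_n}(\cH)=r{\bf B_n}(\cH)$), the hypothesis $\prod_i(1-r_i)\geq\tfrac23$ reads $(1-r)^k\geq\tfrac23$, i.e. $r\leq 1-(\tfrac23)^{1/k}$; for every such $r$ the multi-homogeneous Bohr sum of any normalized scalar $F\in H^\infty({\bf B_n})$ is $\leq 1=\|F\|_\infty$, whence $K_{mh}({\bf B_n})\geq 1-(\tfrac23)^{1/k}$.

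For the upper bound $K_{mh}({\bf B_n})\leq\tfrac13$ I would exhibit, for each $r>\tfrac13$, a single function violating the Bohr inequality, obtained by embedding the classical extremal example. Fix $a\in(0,1)$ and set $F({\bf X}):=\varphi_a(X_{1,1})$ with $\varphi_a(z)=(a-z)/(1-az)$; since $\varphi_a$ is inner, the noncommutative von Neumann inequality for the polyball gives $F\in H^\infty({\bf B_n})$ with $\|F\|_\infty=1$. Because $F$ depends only on the single variable $X_{1,1}$, its multi-homogeneous expansion collapses: for ${\bf p}=(p,0,\ldots,0)$ the only surviving word is $\alpha_1=(g_1^1)^p$ (the coefficients attached to all other words of that multidegree vanish), and every remaining $\Lambda_{\bf p}$-block is zero. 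Using $\|{\bf S}_{1,1}^{\,p}\|=1$, the multi-homogeneous Bohr sum at radius $r$ therefore reduces to the classical one, $\sum_{p\geq0}|a_p|r^p=a+(1-a^2)\tfrac{r}{1-ar}$, which exceeds $1=\|F\|_\infty$ precisely when $r>\tfrac{1}{1+2a}$. Letting $a\to1$ shows the Bohr inequality fails for every $r>\tfrac13$, so $K_{mh}({\bf B_n})\leq\tfrac13$.

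The factorization-and-Weiner step (imported essentially verbatim from Theorem \ref{Bohr1}) and the geometric summation are routine. The one genuinely delicate point is the orthogonality-based evaluation $\left\|\sum_{\boldsymbol\alpha\in\Lambda_{\bf p}}\boldsymbol\rho_{\boldsymbol\alpha}^2{\bf S}_{\boldsymbol\alpha}{\bf S}_{\boldsymbol\alpha}^*\right\|^{1/2}=r_1^{p_1}\cdots r_k^{p_k}$, which is exactly what decouples the estimate into a product of independent one-variable geometric series and thereby produces the factor $\prod_i(1-r_i)^{-1}$; for the sharpness half, the corresponding subtlety is verifying that the collapse of the multi-homogeneous expansion for the embedded $\varphi_a$ makes its Bohr sum literally coincide with the scalar one, so that the classical extremal constant $\tfrac13$ transfers without loss.
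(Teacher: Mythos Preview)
Your proposal is correct and follows essentially the same route as the paper: both use the exhaustion $\{\Lambda_{\bf p}\}$ by minimal orthogonal sets, evaluate $\bigl\|\sum_{\boldsymbol\alpha\in\Lambda_{\bf p}}\boldsymbol\rho_{\boldsymbol\alpha}^2{\bf S}_{\boldsymbol\alpha}{\bf S}_{\boldsymbol\alpha}^*\bigr\|^{1/2}=r_1^{p_1}\cdots r_k^{p_k}$ via orthogonality, combine with Proposition~\ref{W} as in Theorem~\ref{Bohr1}, and sum the geometric series. The only difference is cosmetic: for the upper bound $K_{mh}({\bf B_n})\leq\tfrac13$ the paper simply invokes the isometric embedding of $H^\infty(\DD)$ into $H^\infty({\bf B_n})$ from Proposition~\ref{prelim} together with the classical value $K_{mh}(\DD)=\tfrac13$, whereas you make this explicit by writing down the M\"obius extremal $\varphi_a(X_{1,1})$ and computing its Bohr sum directly.
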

\begin{proof} Let  $\{\Sigma_m\}_{m=0}^\infty$ be an exhaustion of
 $\FF_{n_1}^+\times\cdots \times \FF_{n_k}^+$  by  minimal and orthogonal sets and
  let $\boldsymbol\rho=(\rho_{i,j})\in [0,1)^{n_1+\cdots + n_k}$  be  such that
$
\sum\limits_{m=1}^\infty \sup_{ \boldsymbol \alpha\in \Sigma_m}
 \boldsymbol\rho_{\boldsymbol \alpha} \leq \frac{1}{2}.
 $
 Since $\Sigma_m$ is an orthogonal set, the set
$\{{\bf S}_{\boldsymbol \alpha}\}_{\boldsymbol \alpha\in \Sigma_m}$  consists of  isometries with orthogonal ranges. Consequently, we have
\begin{equation*}
 \left\|\sum_{ \boldsymbol\alpha \in \Sigma_m}
 \boldsymbol\rho_{\boldsymbol \alpha}^2{\bf S}_{\boldsymbol \alpha}{\bf S}_{\boldsymbol \alpha}^*\right\|=\sup_{ \boldsymbol\alpha \in \Sigma_m}
\boldsymbol\rho_{\boldsymbol \alpha}^2.
\end{equation*}
As in the proof of  Theorem \ref{Bohr1},
 in the particular case when the exhaustion $\{\Sigma_m\}$ coincides with $\{\Lambda_{\bf p}\}_{{\bf p}\in \ZZ_+^k}$ and $\rho_{i,j}=r_i\in [0,1)$,   we deduce that
\begin{equation*}
\begin{split}
\sum\limits_{{\bf p}\in \ZZ_+^k} \left\|\sum\limits_ {\boldsymbol\alpha\in \Lambda_{\bf p}} A_{(\boldsymbol \alpha)} \otimes {\bf X}_{\boldsymbol \alpha}\right\|
&\leq |a_0|+ (1-|a_0|^2)\sum\limits_{(p_1,\ldots, p_k)\in \ZZ_+^k\backslash\{0\}} r_1^{p_1}\cdots r_k^{p_k}\\
&=
|a_0|+(1-|a_0|^2)\left[\prod_{i=1}^k (1-r_i)^{-1}-1\right].
\end{split}
\end{equation*}
Consequently, if
$\prod_{i=1}^k (1-r_i)\geq \frac{2}{3}$, then
$$
\sum\limits_{{\bf p}\in \ZZ_+^k} \left\|\sum\limits_ {\boldsymbol\alpha\in \Lambda_{\bf p}} A_{(\boldsymbol \alpha)}\otimes  {\bf X}_{\boldsymbol \alpha}\right\|
\leq1, \qquad {\bf X}\in   \boldsymbol \rho {\bf B_n}(\cH)^-.
$$
To prove part (ii),
take $r_1=\cdots =r_k=r$ with the property that $\prod_{i=1}^k(1-r_i)=(1-r)^k\geq \frac{2}{3}$. Applying part (i) of the theorem to $F\in H^\infty({\bf B_n})$, we deduce that
 $ 1-\left(\frac{2}{3}\right)^{1/k}\leq K_{mh}({\bf B_n})$.
 The second inequality in (ii)   is due to the classical result and  the fact, due to Proposition \ref{prelim}, that each function in $H^\infty(\DD)$ can be seen as a function in $H^\infty({\bf B_n})$. Indeed, we have $K_{mh}({\bf B_n})\leq K_{mh}(\DD)=\frac{1}{3}$.
 The proof is complete.
\end{proof}

The next result for the polydisc is a consequence of Proposition \ref{prelim} and Theorem \ref{Bohr2}.

\begin{corollary}  \label{pdisk} Let
$$
f(z)=\sum_{(p_1,\ldots, p_k)\in \ZZ_+^k}  A_{p_1,\ldots, p_k} z_1^{p_1}\cdots z_k^{p_k},\qquad z=(z_1,\ldots, z_k)\in \DD^k,
$$
 be an operator-valued   analytic function on the  polydisk \, $\DD^k$ such that  $\|f\|_\infty\leq 1$ and $f(0)=a_0I$, $a_0\in \CC$.  Then
$$
\sum_{(p_1,\ldots, p_k)\in \ZZ_+^k} \| A_{p_1,\ldots, p_k}\| r_1^{p_1}\cdots r_k^{p_k}\leq 1
$$
for any $r_i\in [0, 1)$ with $\prod_{i=1}^k (1-r_i)\geq \frac{2}{3}$.
\end{corollary}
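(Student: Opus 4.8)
The plan is to reduce the scalar statement on the polydisk $\DD^k$ to the noncommutative regular polydisc ${\bf D}^k={\bf B_n}$ (the case $n_1=\cdots=n_k=1$) and then to read off the coefficient estimate from Theorem \ref{Bohr2}. First I would use the operator-valued analogue of Proposition \ref{prelim} to associate with $f$ the free holomorphic function $F({\bf X}):=\sum_{{\bf p}\in\ZZ_+^k}A_{\bf p}\otimes {\bf X}^{\bf p}$ on ${\bf D}^k$, for which $\|F\|_\infty=\|f\|_\infty\leq 1$ and $F(0)=A_{0,\ldots,0}=a_0I$ is a scalar operator; thus the hypotheses of Theorem \ref{Bohr2} are fulfilled. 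Since $n_i=1$ for every $i$, each free semigroup $\FF_{n_i}^+$ is identified with $\ZZ_+$, and consequently every set $\Lambda_{\bf p}=\Lambda_{p_1}\times\cdots\times\Lambda_{p_k}$ is a singleton. Hence the multi-homogeneous block indexed by ${\bf p}$ reduces to the single monomial term $A_{\bf p}\otimes {\bf X}^{\bf p}$.

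Next I would apply the inequality of Theorem \ref{Bohr2}(i), which holds for every ${\bf X}\in\boldsymbol\rho{\bf B_n}(\cH)^-$ with $\boldsymbol\rho=(\rho_{i,j})$ and $\rho_{i,1}=r_i$, and evaluate its left-hand side at the single point ${\bf X}=\boldsymbol\rho{\bf S}$, where ${\bf S}=(S_1,\ldots,S_k)$ is the universal model of ${\bf D}^k$. This is legitimate because ${\bf S}$ lies in the closed polyball ${\bf B_n}(\cH)^-$ (indeed $r{\bf S}\in{\bf B_n}(\cH)$ for $r\in[0,1)$ and ${\bf S}=\lim_{r\to1}r{\bf S}$ in norm), so $\boldsymbol\rho{\bf S}\in\boldsymbol\rho{\bf B_n}(\cH)^-$. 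Since $(\boldsymbol\rho{\bf S})^{\bf p}=r_1^{p_1}\cdots r_k^{p_k}\,{\bf S}^{\bf p}$ and ${\bf S}^{\bf p}=S_1^{p_1}\cdots S_k^{p_k}$ is a product of commuting isometries, hence itself an isometry, the multiplicativity of the minimal tensor norm yields $\left\|A_{\bf p}\otimes(\boldsymbol\rho{\bf S})^{\bf p}\right\|=r_1^{p_1}\cdots r_k^{p_k}\,\|A_{\bf p}\|\,\|{\bf S}^{\bf p}\|=r_1^{p_1}\cdots r_k^{p_k}\,\|A_{\bf p}\|$.

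Summing over ${\bf p}$, the left-hand side of Theorem \ref{Bohr2}(i) at ${\bf X}=\boldsymbol\rho{\bf S}$ is exactly $\sum_{{\bf p}\in\ZZ_+^k}\|A_{\bf p}\|\,r_1^{p_1}\cdots r_k^{p_k}$, so the hypothesis $\prod_{i=1}^k(1-r_i)\geq\frac{2}{3}$ together with that theorem gives the bound $\leq 1$, which is the assertion. The only point requiring care is the norm identity $\|A_{\bf p}\otimes r_1^{p_1}\cdots r_k^{p_k}{\bf S}^{\bf p}\|=r_1^{p_1}\cdots r_k^{p_k}\|A_{\bf p}\|$: this is where both the reduction of $\Lambda_{\bf p}$ to a single monomial (so that no off-diagonal contributions survive) and the isometry property of ${\bf S}^{\bf p}$ are essential, and it is precisely what collapses the operator-norm Bohr estimate for $F$ into the scalar coefficient estimate claimed for $f$.
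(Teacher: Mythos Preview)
Your proof is correct and follows exactly the route the paper indicates (it simply cites Proposition~\ref{prelim} and Theorem~\ref{Bohr2} without spelling out details). You have accurately identified that, for $n_1=\cdots=n_k=1$, each $\Lambda_{\bf p}$ collapses to a singleton so the multi-homogeneous blocks become $A_{\bf p}\otimes {\bf X}^{\bf p}$, and that evaluating at ${\bf X}=\boldsymbol\rho{\bf S}$ together with the isometry of ${\bf S}^{\bf p}$ recovers precisely $\sum_{\bf p}\|A_{\bf p}\|\,r_1^{p_1}\cdots r_k^{p_k}$.
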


Here is an extension of the classical result of Bohr to operator-valued bounded analytic function in the disc.
\begin{corollary} \label{disk} Let $f(z)=\sum_{n=0}^\infty A_n z^n$, $A_n\in B(\cH)$, be an operator-valued bounded analytic function in $\DD$ such that $A_0=a_0 I$, $a_0\in \CC$.
Then
$$\sum_{n=0}^\infty \|A_n\| r^n\leq \|f\|_\infty
$$
 for any $r\in [0, \frac{1}{3}]$, and $\frac{1}{3}$ is the best possible constant. Moreover, the inequality  is strict unless $f$ is a constant.
\end{corollary}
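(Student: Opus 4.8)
The plan is to read this off as the disc case $k=1$, $n_1=1$ of the machinery already in place, the crucial input being the Weiner-type estimate of Proposition \ref{W}. First I would dispose of $f\equiv 0$ and then normalize so that $\|f\|_\infty=1$ (replacing $f$ by $f/\|f\|_\infty$ scales both sides of the asserted inequality and preserves the property of being constant). I then identify $f$ with the bounded free holomorphic function $F(X)=\sum_{n\ge 0}A_n\otimes X^n$ on the regular ball ${\bf B_n}(\cH)=[B(\cH)]_1$ attached to $k=1$, $n_1=1$, whose universal model ${\bf S}$ is the unilateral shift on $H^2(\DD)$; by the operator-coefficient analogue of Proposition \ref{prelim} one has $\|F\|_\infty=\|f\|_\infty=1$ and $F(0)=a_0 I$ is scalar. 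Since $\FF_1^+$ is the free semigroup on a single generator, for each $n\ge 1$ the singleton $\{g_1^n\}$ is a right minimal subset not containing the neutral element, so Proposition \ref{W} gives the Weiner inequality $\|A_n\|=\|A_n^*A_n\|^{1/2}\le 1-|a_0|^2$ for every $n\ge 1$.

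Second, I would sum the resulting geometric majorant: for $r\in[0,1)$,
\[
\sum_{n=0}^\infty \|A_n\|\, r^n \le |a_0| + (1-|a_0|^2)\sum_{n=1}^\infty r^n = |a_0| + (1-|a_0|^2)\frac{r}{1-r}.
\]
On $0\le r\le \tfrac13$ we have $\frac{r}{1-r}\le \tfrac12$, so the right-hand side is at most $|a_0|+\tfrac12(1-|a_0|^2)=1-\tfrac12(1-|a_0|)^2\le 1=\|f\|_\infty$, which is exactly the Bohr inequality.

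Third, for the strictness I would track the two sources of equality in the display above: one needs both $r=\tfrac13$ (so that $\frac{r}{1-r}=\tfrac12$) and $|a_0|=1$ (so that $(1-|a_0|)^2=0$). The condition $|a_0|=1=\|f\|_\infty$ forces $f$ to be constant: for a unit vector $u\in\cH$ the scalar function $z\mapsto \langle f(z)u,u\rangle$ is bounded by $1$ and attains modulus $|a_0|=1$ at $z=0$ (as $f(0)=a_0I$), hence by the classical maximum modulus principle it equals the constant $a_0$; since this holds for every unit $u$, polarization yields $f(z)\equiv a_0 I$. Therefore, whenever $f$ is non-constant we have $|a_0|<1=\|f\|_\infty$, and the above bound is strictly below $1$ both at $r=\tfrac13$ and, a fortiori, for $r<\tfrac13$; this gives the strict inequality, while for constant $f=a_0I$ the left-hand side equals $|a_0|=\|f\|_\infty$.

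Finally, that $\tfrac13$ cannot be enlarged is inherited from the scalar theory: scalar $f$ occur as the special case $\cH=\CC$ (equivalently $A_n=a_nI$), for which optimality of $\tfrac13$ is the classical Bohr--M.~Riesz--Schur--Wiener result; equivalently it is the $k=1$ instance of Theorem \ref{Bohr2}(ii), giving $K_{mh}({\bf B_n})\le K_{mh}(\DD)=\tfrac13$. The only step that is not purely formal is the equality analysis, namely deducing that $f$ is constant from $\|f(0)\|=\|f\|_\infty$ for an operator-valued function; this I expect to be the main obstacle, and the maximum-modulus-plus-polarization argument above is precisely what resolves it.
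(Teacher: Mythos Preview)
Your proof is correct and follows essentially the same route as the paper: normalize, apply the Weiner-type estimate of Proposition~\ref{W} to get $\|A_n\|\le 1-|a_0|^2$, sum the geometric series, and invoke the classical scalar case for optimality of $\tfrac13$. For the strictness step you flagged as the main obstacle, the paper's argument is simpler than your maximum-modulus-plus-polarization detour: once equality forces $|a_0|=1$, the Weiner bound you already have gives $\|A_n\|\le 1-|a_0|^2=0$ for all $n\ge 1$, so $f$ is constant immediately.
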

\begin{proof} The first part of the corollary is due to Corollary \ref{pdisk} and the classical result. To prove the second part, assume that $\|f\|_\infty=1$. As in the proof of Theorem \ref{Bohr2}, we have
$$
\sum_{n=0}^\infty \|A_n\| r^n\leq
|a_0|+\frac{1-|a_0|^2}{2}\leq 1=\|f\|_\infty.
$$
If the equality holds, then $|a_0|=1$. Due to Proposition \ref{W}, we have
$\|A_n^*A_n\|^{1/2}\leq 1-|a_0|^2$ if $n\geq 1$. Consequently $A_n=0$ for $n\geq 1$. This completes the proof.
\end{proof}

\begin{proposition} \label{Cro} Let $F:{\bf B_n}(\cH)\to B(\cK)\otimes_{min}B(\cH)$ be a bounded free holomorphic function with  representation
$$
F({\bf X})=\sum_{{\bf p}\in \ZZ_+^k}\sum_{\boldsymbol\alpha\in \Lambda_{\bf p}} A_{(\boldsymbol\alpha)} \otimes {\bf X}_{\boldsymbol\alpha}
$$
such that $\|F\|_\infty\leq 1$ and $F(0)$ is a scalar operator.
If $\boldsymbol\rho:=(\rho_{i,j})$ with $\rho_{i,j}=r_i\in [0,1)$ for   $j\in \{1,\ldots, n_i\}$,   then
$$
\sum\limits_{{\bf p}\in \ZZ_+^k}\sup_{{\bf X}\in \boldsymbol\rho {\bf B_n(\cH)}^-}\left\|\sum\limits_ {\boldsymbol\alpha\in \Lambda_{\bf p}} A_{(\boldsymbol \alpha)} \otimes  {\bf X}_{\boldsymbol \alpha}\right\|
\leq
 C(\boldsymbol\rho)\|F\|_\infty,
$$
where
$$
C(\boldsymbol\rho):=
\begin{cases} 1&\quad \text{if }\  c\leq \frac{1}{2}\\
c+\frac{1}{4c} &\quad \text{if }\  c>\frac{1}{2}
\end{cases}
$$
and $c:=\prod_{i=1}^k (1-r_i)^{-1}-1$.
\end{proposition}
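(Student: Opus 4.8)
The plan is to recognize this as a sharpening of Theorem \ref{Bohr2}(i): the same term-by-term estimate is run, but instead of restricting to the regime $\prod_i(1-r_i)\ge\frac23$, the resulting bound $|a_0|+(1-|a_0|^2)c$ is optimized over all admissible values of the constant term. First I would dispose of the trivial case $F\equiv 0$, and otherwise normalize by replacing $F$ with $F/\|F\|_\infty$ so that I may assume $\|F\|_\infty=1$; it then suffices to show that the left-hand side is at most $C(\boldsymbol\rho)$, since both sides of the asserted inequality are homogeneous of degree one in $F$ and the factor $\|F\|_\infty$ is recovered upon undoing the normalization. Note this normalization is genuinely needed to obtain the stated $\|F\|_\infty$ factor rather than the weaker bound $C(\boldsymbol\rho)$.

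Next I would produce the per-block estimate exactly as in Theorem \ref{Bohr1} and Theorem \ref{Bohr2}, using that $\{\Lambda_{\bf p}\}_{{\bf p}\in\ZZ_+^k}$ is an exhaustion of $\FF_{n_1}^+\times\cdots\times\FF_{n_k}^+$ by minimal and orthogonal sets (Example \ref{Ex1}(ii) and Example \ref{Ex2}(iii)). Fixing ${\bf p}\neq 0$ and writing ${\bf X}=\boldsymbol\rho{\bf Y}$ with ${\bf Y}\in{\bf B_n}(\cH)^-$, the noncommutative von Neumann inequality identifies $\sup_{{\bf X}\in\boldsymbol\rho{\bf B_n}(\cH)^-}\|\sum_{\boldsymbol\alpha\in\Lambda_{\bf p}}A_{(\boldsymbol\alpha)}\otimes{\bf X}_{\boldsymbol\alpha}\|$ with $\|\sum_{\boldsymbol\alpha\in\Lambda_{\bf p}}\boldsymbol\rho_{\boldsymbol\alpha}A_{(\boldsymbol\alpha)}\otimes{\bf S}_{\boldsymbol\alpha}\|$. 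Factoring this as a row operator times a column operator, applying Proposition \ref{W} to the right minimal set $\Lambda_{\bf p}$ (valid because ${\bf g}_0\notin\Lambda_{\bf p}$) to get $\|\sum_{\boldsymbol\alpha\in\Lambda_{\bf p}}A_{(\boldsymbol\alpha)}^*A_{(\boldsymbol\alpha)}\|^{1/2}\le 1-|a_0|^2$, and using the orthogonality of $\{{\bf S}_{\boldsymbol\alpha}\}_{\boldsymbol\alpha\in\Lambda_{\bf p}}$ to evaluate $\|\sum_{\boldsymbol\alpha\in\Lambda_{\bf p}}\boldsymbol\rho_{\boldsymbol\alpha}^2{\bf S}_{\boldsymbol\alpha}{\bf S}_{\boldsymbol\alpha}^*\|^{1/2}=\sup_{\boldsymbol\alpha\in\Lambda_{\bf p}}\boldsymbol\rho_{\boldsymbol\alpha}=r_1^{p_1}\cdots r_k^{p_k}$, I obtain the block bound $(1-|a_0|^2)r_1^{p_1}\cdots r_k^{p_k}$ for each ${\bf p}\neq 0$. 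Summing the resulting geometric series over ${\bf p}\neq 0$ and adding the trivial ${\bf p}=0$ contribution $|a_0|$ gives
\begin{equation*}
\sum_{{\bf p}\in\ZZ_+^k}\sup_{{\bf X}\in\boldsymbol\rho{\bf B_n}(\cH)^-}\left\|\sum_{\boldsymbol\alpha\in\Lambda_{\bf p}}A_{(\boldsymbol\alpha)}\otimes{\bf X}_{\boldsymbol\alpha}\right\|\le |a_0|+(1-|a_0|^2)c,
\end{equation*}
where $c=\prod_{i=1}^k(1-r_i)^{-1}-1$ as in the statement.

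Finally I would maximize the right-hand side over the value of the constant term. Writing $t:=|a_0|\in[0,1]$ and $g(t):=t+(1-t^2)c=c+t-ct^2$, I compute $g'(t)=1-2ct$, so the free maximizer is $t_*=\tfrac{1}{2c}$. The one genuine (though elementary) point is the case split governing whether $t_*$ is admissible: if $c\le\tfrac12$ then $t_*\ge 1$, so $g$ is nondecreasing on $[0,1]$ and its maximum is the endpoint value $g(1)=1$, yielding $C(\boldsymbol\rho)=1$; if $c>\tfrac12$ then $t_*\in(0,1)$ is an interior maximizer with $g(t_*)=c+\tfrac{1}{4c}$, yielding $C(\boldsymbol\rho)=c+\tfrac{1}{4c}$. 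This reproduces the asserted constant, and multiplying through by $\|F\|_\infty$ completes the argument. I expect no real obstacle beyond this optimization; the only subtlety worth flagging is that the supremum over ${\bf X}$ sits \emph{inside} the sum, which is harmless precisely because the bound is established block by block.
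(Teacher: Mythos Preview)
Your proposal is correct and follows essentially the same route as the paper: invoke the estimate of Theorem~\ref{Bohr2}(i) to bound the left-hand side by $|a_0|+(1-|a_0|^2)c$, then maximize $t+(1-t^2)c$ over $t\in[0,1]$ via the same case split at $c=\tfrac12$. The only difference is cosmetic---the paper cites Theorem~\ref{Bohr2}(i) directly, whereas you re-derive that block estimate from Proposition~\ref{W} and orthogonality---and your explicit normalization $F\mapsto F/\|F\|_\infty$ is a cleaner way to recover the factor $\|F\|_\infty$ in the stated bound.
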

\begin{proof} Without loss of generality, we may assume that $\|F\|_\infty\leq 1$ and, consequently $|a_0|\leq 1$.
Note that $c:=\prod_{i=1}^k (1-r_i)^{-1}-1\geq 0$.
It is easy to see that, if $0\leq c\leq \frac{1}{2}$, then
$\sup \{x+ (1-x^2)c: 0\leq x\leq 1\}\leq 1$. On the other hand, if $c> \frac{1}{2}$, then
$\sup \{x+ (1-x^2)c: 0\leq x\leq 1\}=c+\frac{1}{4c}$.
Consequently, using  Theorem \ref{Bohr2}, part (i), we deduce that
$$
\sum\limits_{{\bf p}\in \ZZ_+^k}\sup_{{\bf X}\in \boldsymbol\rho {\bf B_n(\cH)}^-}\left\|\sum\limits_ {\boldsymbol\alpha\in \Lambda_{\bf p}} A_{(\boldsymbol \alpha)} \otimes {\bf X}_{\boldsymbol \alpha}\right\|
\leq
 |a_0|+(1-|a_0|^2)c\leq C(\boldsymbol \rho),
$$
which completes the proof.
\end{proof}

Let $F\in H^\infty({\bf B_n})$ have the representation
$
F({\bf X} ):=\sum_{{\bf p}\in \ZZ_+^k}\sum_{\boldsymbol\alpha\in \Lambda_{\bf p}} a_{(\boldsymbol\alpha)}  {\bf X}_{\boldsymbol\alpha}
$
and let
$$
\cD(F,r):=\sum\limits_{{\bf p}\in \ZZ_+^k}r^{|{\bf p}|}\left\|\sum\limits_{\boldsymbol\alpha\in \Lambda_{\bf p}} a_{(\boldsymbol\alpha)}  {\bf S}_{\boldsymbol\alpha}\right\|
$$
be the associated majorant series, where ${\bf S}=\{{\bf S}_{ij}\}$ is the universal model of the polyball. Define
$$
d_{\bf B_n}(r):=\sup \frac{\cD(F,r)}{\|F\|_\infty}, \qquad r\in [0,1),
$$
where the supremum is taken over all $F\in H^\infty({\bf B_n})$  with $F$ not identically $0$.
For the open unit disk, Bombieri and Bourgain (see  \cite{BB}) proved  that
 $d_{\DD}(r)\sim \frac{1}{\sqrt{1-r^2}}$ as $r\to 1$.
In what follows we extend their result to the regular polyball ${\bf B_n}$. In particular, the following  result holds for the scalar polydisk $\DD^k$.

\begin{theorem}\label{Bo-Bo} Let
$$
F({\bf X})=\sum\limits_{{\bf p}\in \ZZ_+^k} \left(\sum\limits_ {\boldsymbol\alpha\in \Lambda_{\bf p}} a_{(\boldsymbol \alpha)}   {\bf X}_{\boldsymbol \alpha}\right), \qquad {\bf X}\in {\bf B_n}(\cH),
$$
be any bounded free holomorphic function  on the polyball. If $\boldsymbol\rho:=(\rho_{i,j})$ with $\rho_{i,j}=r_i\in [0,1)$ for   $j\in \{1,\ldots, n_i\}$,   then
$$
\sum\limits_{{\bf p}\in \ZZ_+^k}\sup_{{\bf X}\in \boldsymbol\rho {\bf B_n(\cH)}^-}\left\|\sum\limits_ {\boldsymbol\alpha\in \Lambda_{\bf p}} a_{(\boldsymbol \alpha)} \ {\bf X}_{\boldsymbol \alpha}\right\|
\leq
 K(\boldsymbol\rho)\|F\|_\infty,\qquad F\in H^\infty({\bf B_n}),
$$
where
$$ K(\boldsymbol\rho):=min\left\{ C(\boldsymbol\rho),
\prod_{i=1}^k (1-r^2_i)^{-1/2}\right\}
$$
and $C(\boldsymbol\rho)$ is defined in Proposition \ref{Cro}.
 Moreover, $d_{\bf B_n}(r)$  has the following properties:
 \begin{enumerate}
 \item[(i)] $ 1\leq d_{\bf B_n}(r)\leq \min\left\{c(r),\left(\frac{1}{\sqrt{1-r^2}}\right)^k\right\},
     $
  where
$$
c(r):=
\begin{cases} 1&\quad \text{if }\  0\leq r\leq 1-\left(\frac{2}{3}\right)^{1/k}\\
c+\frac{1}{4c} &\quad \text{if }\  1-\left(\frac{2}{3}\right)^{1/k}<r<1
\end{cases}
$$
and $c:= (1-r)^{-k}-1$.
 \item[(ii)]
   $ d_{\bf B_n}(r)$ behaves asymptotically as  $\left(\frac{1}{\sqrt{1-r^2}}\right)^k$ if $r\to 1$, i.e.
$$
 \lim_{r\to 1} \frac{d_{\bf B_n}(r)}{\left(\frac{1}{\sqrt{1-r^2}}\right)^k}=1.
 $$
 \end{enumerate}
\end{theorem}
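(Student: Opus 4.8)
The plan is to establish the displayed inequality first, and then read off both parts (i) and (ii) as consequences. The bound by $C(\boldsymbol\rho)$ is already available: applying Proposition \ref{Cro} with $\cK=\CC$ (so that the scalar coefficients $a_{(\boldsymbol\alpha)}$ force $F(0)=a_0$ to be scalar) gives the $C(\boldsymbol\rho)$ half of $K(\boldsymbol\rho)$. The new content is the majorant bound by $\prod_{i=1}^k(1-r_i^2)^{-1/2}$. For this I would first use the noncommutative von Neumann inequality for regular polyballs to compute the supremum over ${\bf X}\in\boldsymbol\rho{\bf B_n}(\cH)^-$: writing ${\bf X}=\boldsymbol\rho{\bf Y}$ with ${\bf Y}\in{\bf B_n}(\cH)^-$ and noting that $\boldsymbol\rho_{\boldsymbol\alpha}=r_1^{p_1}\cdots r_k^{p_k}$ is constant on $\Lambda_{\bf p}$, the supremum equals $r_1^{p_1}\cdots r_k^{p_k}\bigl\|\sum_{\boldsymbol\alpha\in\Lambda_{\bf p}}a_{(\boldsymbol\alpha)}{\bf S}_{\boldsymbol\alpha}\bigr\|$, so the left-hand side is exactly the majorant series evaluated at $(r_1,\ldots,r_k)$.

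Next I exploit that each $\Lambda_{\bf p}$ is an orthogonal set (Example \ref{Ex2}(iii)). Since the ${\bf S}_{\boldsymbol\alpha}$, $\boldsymbol\alpha\in\Lambda_{\bf p}$, are isometries with pairwise orthogonal ranges, one has ${\bf S}_{\boldsymbol\alpha}^*{\bf S}_{\boldsymbol\beta}=\delta_{\boldsymbol\alpha\boldsymbol\beta}I$, whence $\bigl\|\sum_{\boldsymbol\alpha\in\Lambda_{\bf p}}a_{(\boldsymbol\alpha)}{\bf S}_{\boldsymbol\alpha}\bigr\|=\bigl(\sum_{\boldsymbol\alpha\in\Lambda_{\bf p}}|a_{(\boldsymbol\alpha)}|^2\bigr)^{1/2}=:b_{\bf p}$. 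The Parseval-type control comes from evaluating the boundary function on the vacuum: $\widehat F(1)=\sum_{\boldsymbol\alpha}a_{(\boldsymbol\alpha)}e_{\boldsymbol\alpha}$, so $\sum_{\boldsymbol\alpha}|a_{(\boldsymbol\alpha)}|^2=\|\widehat F(1)\|^2\leq\|\widehat F\|^2=\|F\|_\infty^2$, i.e. $\bigl(\sum_{\bf p}b_{\bf p}^2\bigr)^{1/2}\leq\|F\|_\infty$. A single application of Cauchy--Schwarz then yields
$$\sum_{{\bf p}\in\ZZ_+^k}r_1^{p_1}\cdots r_k^{p_k}b_{\bf p}\leq\Bigl(\sum_{\bf p}b_{\bf p}^2\Bigr)^{1/2}\Bigl(\sum_{\bf p}r_1^{2p_1}\cdots r_k^{2p_k}\Bigr)^{1/2}\leq\|F\|_\infty\prod_{i=1}^k(1-r_i^2)^{-1/2},$$
and taking the minimum of the two estimates gives the asserted bound with $K(\boldsymbol\rho)$. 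Part (i) then follows by specializing to $\rho_{i,j}=r$: the lower bound $d_{\bf B_n}(r)\geq1$ is witnessed by any nonzero constant function, while setting $r_1=\cdots=r_k=r$ turns $\prod(1-r_i^2)^{-1/2}$ into $(1/\sqrt{1-r^2})^k$ and $C(\boldsymbol\rho)$ into $c(r)$ (since $c=(1-r)^{-k}-1$ and the threshold $c\leq\frac{1}{2}$ is precisely $r\leq1-(2/3)^{1/k}$), giving the stated upper bound for $d_{\bf B_n}(r)$.

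For part (ii) the upper estimate $\limsup_{r\to1}d_{\bf B_n}(r)/(1/\sqrt{1-r^2})^k\leq1$ is immediate from (i). The reverse inequality is the main obstacle, and I would obtain it by transporting the classical Bombieri--Bourgain theorem $d_{\DD}(r)\sqrt{1-r^2}\to1$ through a product construction. Given a single-variable $g\in H^\infty(\DD)$ with $g(z)=\sum_nc_nz^n$, I would form $f(z_1,\ldots,z_k)=g(z_1)\cdots g(z_k)$ on $\DD^k$; since the variables are independent one has $\|f\|_\infty=\|g\|_\infty^k$, and because for the scalar polydisc every block $\Lambda_{\bf p}$ is a single monomial, the majorant factors as $\sum_{\bf p}|c_{p_1}\cdots c_{p_k}|r^{|{\bf p}|}=\bigl(\sum_n|c_n|r^n\bigr)^k$, so $d_{\DD^k}(r)\geq d_{\DD}(r)^k$. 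Using Proposition \ref{prelim}(iii) to embed $H^\infty(\DD^k)$ isometrically into $H^\infty({\bf B_n})$, under which the multi-homogeneous majorant of the embedded function reduces to $\sum_{\bf p}|a_{\bf p}|r^{|{\bf p}|}$, yields $d_{\bf B_n}(r)\geq d_{\DD^k}(r)\geq d_{\DD}(r)^k$. Then $d_{\DD}(r)^k(1-r^2)^{k/2}\to1$ forces $\liminf_{r\to1}d_{\bf B_n}(r)(1-r^2)^{k/2}\geq1$, which together with the upper bound proves that the limit equals $1$. The points I expect to need careful verification are that the supremum norm and the majorant series genuinely factor over the $k$ variable blocks, and that the isometric embedding leaves the value of the majorant unchanged.
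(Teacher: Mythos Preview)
Your proposal is correct and follows essentially the same route as the paper's proof. The only cosmetic differences are that the paper bounds $\bigl\|\sum_{\boldsymbol\alpha\in\Lambda_{\bf p}}a_{(\boldsymbol\alpha)}\boldsymbol\rho_{\boldsymbol\alpha}{\bf S}_{\boldsymbol\alpha}\bigr\|$ via a row--column factorization (obtaining $\|\sum\boldsymbol\rho_{\boldsymbol\alpha}^2{\bf S}_{\boldsymbol\alpha}{\bf S}_{\boldsymbol\alpha}^*\|^{1/2}\cdot(\sum|a_{(\boldsymbol\alpha)}|^2)^{1/2}$ and then identifying the first factor as $r_1^{p_1}\cdots r_k^{p_k}$), whereas you compute the norm directly from ${\bf S}_{\boldsymbol\alpha}^*{\bf S}_{\boldsymbol\beta}=\delta_{\boldsymbol\alpha\boldsymbol\beta}I$; and for part (ii) the paper writes the product with possibly distinct $f_i\in H^\infty(\DD)$ in each slot rather than the same $g$, which yields the same lower bound since each $f_i$ is chosen as a near-optimizer for $d_{\DD}(r)$.
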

\begin{proof}
Let ${\bf p}:=(p_1,\ldots, p_k)\in \ZZ_+^k$ be such that at  least one $p_i\neq 0$.
Since $\Lambda_{\bf p}$ is an orthogonal set, the set
$\{{\bf S}_{\boldsymbol \alpha}\}_{\boldsymbol \alpha\in \Lambda_{\bf p}}$  consists of  isometries with orthogonal ranges. Consequently, we have
\begin{equation*}
 \left\|\sum_{ \boldsymbol\alpha \in \Lambda_{\bf p}}
 \boldsymbol\rho_{\boldsymbol \alpha}^2{\bf S}_{\boldsymbol \alpha}{\bf S}_{\boldsymbol \alpha}^*\right\|=\sup_{ \boldsymbol\alpha \in \Lambda_{\bf p}}
\boldsymbol\rho_{\boldsymbol \alpha}^2=r_1^{2p_1}\cdots r_k^{2p_k},
\end{equation*}
where ${\bf S}=\{{\bf S}_{i,j}\}$ is the universal model of the polyball.
Due to the noncommutative von Neumann inequality and using the Cauchy Schwarz inequality, we obtain
\begin{equation*}
\begin{split}
\sum\limits_{{\bf p}\in \ZZ_+^k}\sup_{{\bf X}\in \boldsymbol\rho {\bf B_n(\cH)}^-}\left\|\sum\limits_ {\boldsymbol\alpha\in \Lambda_{\bf p}} a_{(\boldsymbol \alpha)} \ {\bf X}_{\boldsymbol \alpha}\right\|
&\leq
\sum\limits_{{\bf p}\in \ZZ_+^k} \left\|\sum\limits_ {\boldsymbol\alpha=(\alpha_1,\ldots, \alpha_k)\in \Lambda_{\bf p}} a_{(\boldsymbol \alpha)} r_1^{|\alpha_1|}\cdots r_k^{|\alpha_k|} {\bf S}_{\boldsymbol \alpha}\right\|\\
&\leq
 \sum\limits_{{\bf p}\in \ZZ_+^k} \left\|\sum\limits_ {\boldsymbol\alpha\in \Lambda_{\bf p}} r_1^{2|\alpha_1|}\cdots r_k^{2|\alpha_k|} {\bf S}_{\boldsymbol \alpha}{\bf S}_{\boldsymbol \alpha}^*\right\|^{1/2}\left(\sum\limits_ {\boldsymbol\alpha\in \Lambda_{\bf p}}
 |a_{(\boldsymbol \alpha)}|^2\right)^{1/2}\\
 &\leq
 \left(\sum\limits_{{\bf p}\in \ZZ_+^k} \left\|\sum\limits_ {\boldsymbol\alpha\in \Lambda_{\bf p}} r_1^{2|\alpha_1|}\cdots r_k^{2|\alpha_k|} {\bf S}_{\boldsymbol \alpha}{\bf S}_{\boldsymbol \alpha}^*\right\|\right)^{1/2}
 \left(\sum\limits_{{\bf p}\in \ZZ_+^k}  \sum\limits_ {\boldsymbol\alpha\in \Lambda_{\bf p}}
 |a_{(\boldsymbol \alpha)}|^2\right)^{1/2}\\
 &\leq
 \left(\sum\limits_{{\bf p}\in \ZZ_+^k} r_1^{2p_1}\cdots r_k^{2p_k}\right)^{1/2}\|F\|_2\\
 &\leq
 \prod_{i=1}^k (1-r^2_i)^{-1/2}\|F\|_\infty.
\end{split}
\end{equation*}
Using now Proposition \ref{Cro}, we deduce the inequality in   the theorem.

Now, we prove the second part of the theorem. First, note that $d_{\bf B_n}(r)\geq 1$ for any $r\in [0,1)$.
Part (i) is due to the first part of the theorem when one takes $r_1=\cdots =r_k=r$.  Note that if $0\leq r\leq 1-\left(\frac{2}{3}\right)^{1/k}$, then $d_{\bf B_n}(r)=1$. On the other hand,
it is easy to see that if $r$ is sufficiently close to $1$,  then we have $c(r)>\left(\frac{1}{\sqrt{1-r^2}}\right)^k$.

 To prove part (ii), note that due to the  result of Bombieri and Bourgain, for any $\epsilon >0$, there is $\delta>0$ such that
 $$
 \left| \frac{d_{\DD}(r)}{\frac{1}{\sqrt{1-r^2}}}-1\right|<\epsilon
 $$
 for any $r\in [0,1)$ such that $|r-1|<\delta$. Fix such an $r$. For each $i\in \{1,\ldots, k\}$,  there exists $f_i\in H^\infty(\DD)$ such that
 $$
 \frac{\cD(f_i, r)}{\|f_i\|_\infty}>(1-\epsilon) \frac{1}{\sqrt{1-r^2}}.
 $$
 Hence, we deduce that
 \begin{equation}\label{D}
 \prod_{i=1}^k \frac{\cD(f_i, r)}{\|f_i\|_\infty}>(1-\epsilon)^k \left(\frac{1}{\sqrt{1-r^2}}\right)^k.
 \end{equation}
 Note that the function $f(z_1,\ldots, z_k):=f_1(z_1)\cdots f_k(z_k)$ is in the Hardy space of the polydisk  $H^\infty(\DD^k)$.
 Due to Proposition \ref{prelim},
 the Banach algebra $H^\infty(\DD^k)$ is isometrically isomorphic to  $H^\infty({\bf D}^k)$, which is isometrically embedded in $H^\infty({\bf B_n})$.
 Let $G\in H^\infty({\bf B_n})$ be the corresponding  element to $f$, via   this embedding, and note that
  $G(r{\bf S}):=f_1(r{\bf S}_{1,1})\cdots f_k(r{\bf S}_{k,1})$. A careful examination reveals that
 $$
 \cD(G, r)=\cD(f_1, r)\cdots \cD(f_k, r) \quad \text{ and }\quad \|G\|_\infty=\|f_1\|_\infty \cdots \|f_k\|_\infty.
 $$
 Hence, using inequality \eqref{D} and  part (i) of the theorem, we obtain
 $$
 \left(\frac{1}{\sqrt{1-r^2}}\right)^k\geq d_{\bf B_n}(r)\geq \frac{\cD(G,r)}{\|G\|_\infty}=\prod_{i=1}^k \frac{\cD(f_i, r)}{\|f_i\|_\infty}>(1-\epsilon)^k \left(\frac{1}{\sqrt{1-r^2}}\right)^k
 $$
 for any $r\in [0,1)$ such that $|r-1|<\delta$.
 This  completes the proof.
 \end{proof}

Now, we consider the case when $F(0)=0$.

\begin{corollary} \label{00}
Let $F:{\bf B_n}(\cH)\to B(\cH)$ be a bounded free holomorphic function with $ F(0)=0$ and representation
$$
F({\bf X})=\sum\limits_{{\bf p}\in \ZZ_+^k} \left(\sum\limits_ {\boldsymbol\alpha\in \Lambda_{\bf p}} a_{(\boldsymbol \alpha)}   {\bf X}_{\boldsymbol \alpha}\right), \qquad {\bf X}\in {\bf B_n}(\cH),
$$
and let  $\{\Sigma_m\}_{m=0}^\infty$ be an exhaustion of
 $\FF_{n_1}^+\times\cdots \times \FF_{n_k}^+$  by  minimal and orthogonal sets.
 \begin{enumerate}
 \item[(i)]
 If $\boldsymbol\rho=(\rho_{i,j})\in [0,1)^{n_1+\cdots + n_k}$,
then
$$
\sum\limits_{ m=0}^\infty\sup_{{\bf X}\in \boldsymbol\rho {\bf B_n(\cH)}^-}\left\|\sum\limits_ {\boldsymbol\alpha\in \Sigma_m} a_{(\boldsymbol \alpha)} {\bf X}_{\boldsymbol \alpha}\right\|
\leq
  \left(\sum\limits_{m=1}^\infty \sup_{ \boldsymbol \alpha\in \Sigma_m}
 \boldsymbol\rho_{\boldsymbol \alpha}\right)^{1/2}\|F\|_\infty.
$$
\item[(ii)]
If $\boldsymbol\rho:=(\rho_{i,j})$ with $\rho_{i,j}=r_i\in [0,1)$ for   $j\in \{1,\ldots, n_i\}$,  then
$$
\sum\limits_{{\bf p}\in \ZZ_+^k}\sup_{{\bf X}\in \boldsymbol\rho {\bf B_n(\cH)}^-} \left\|\sum\limits_ {\boldsymbol\alpha\in \Lambda_{\bf p}} a_{(\boldsymbol \alpha)}  {\bf X}_{\boldsymbol \alpha}\right\|
\leq K(\boldsymbol\rho)\|F\|_\infty
$$
where $$ K(\boldsymbol\rho):=min\left\{ \left[\prod_{i=1}^k (1-r_i)^{-1}-1\right],
\left[\prod_{i=1}^k (1-r^2_i)^{-1}-1\right]^{1/2}\right\}.
$$
\item[(iii)]
If $\boldsymbol\rho:=(\rho_{i,j})$ with $\rho_{i,j}=r_i\in [0,1)$ for   $j\in \{1,\ldots, n_i\}$ and $\prod_{i=1}^k (1-r_i^2)\geq \frac{1}{2}$,   then
$$
\sum\limits_{{\bf p}\in \ZZ_+^k}\sup_{{\bf X}\in \boldsymbol\rho {\bf B_n(\cH)}^-}\left\|\sum\limits_ {\boldsymbol\alpha\in \Lambda_{\bf p}} a_{(\boldsymbol \alpha)}  {\bf X}_{\boldsymbol \alpha}\right\|
\leq
 \|F\|_\infty,
$$
In particular, the inequality holds if $r_1=\cdots =r_k \leq \sqrt{1-\left(\frac{1}{2}\right)^{1/k}}$.
\end{enumerate}
\end{corollary}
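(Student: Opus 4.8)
The plan is to reduce every assertion to the Wiener-type bound of Proposition \ref{W}, combined with the orthogonality of the sets $\Sigma_m$ (resp.\ $\Lambda_{\bf p}$), the noncommutative von Neumann inequality for polyballs, and one or two applications of the Cauchy--Schwarz inequality. The observation that drives everything is that $F(0)=0$ forces the constant term $a_0$ to vanish, so that Proposition \ref{W}, specialized to scalar coefficients (where $\sum_{\boldsymbol\alpha\in\Sigma_m}A_{(\boldsymbol\alpha)}^*A_{(\boldsymbol\alpha)}=(\sum_{\boldsymbol\alpha\in\Sigma_m}|a_{(\boldsymbol\alpha)}|^2)I$), yields for each right minimal set $\Sigma_m$ the clean estimate $b_m:=(\sum_{\boldsymbol\alpha\in\Sigma_m}|a_{(\boldsymbol\alpha)}|^2)^{1/2}\le\|F\|_\infty$. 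I will also use the elementary inequality $\sum_{\boldsymbol\alpha}|a_{(\boldsymbol\alpha)}|^2\le\|F\|_\infty^2$ (i.e.\ $\|F\|_2\le\|F\|_\infty$), already invoked in the proof of Theorem \ref{Bo-Bo}.

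For part (i), set $s_m:=\sup_{\boldsymbol\alpha\in\Sigma_m}\boldsymbol\rho_{\boldsymbol\alpha}$. Replacing ${\bf X}\in\boldsymbol\rho{\bf B_n}(\cH)^-$ by $\boldsymbol\rho{\bf S}$ via the von Neumann inequality and factoring $\sum_{\boldsymbol\alpha\in\Sigma_m}\boldsymbol\rho_{\boldsymbol\alpha}a_{(\boldsymbol\alpha)}{\bf S}_{\boldsymbol\alpha}$ as a row times a column exactly as in the proof of Theorem \ref{Bohr1}, the orthogonality of $\Sigma_m$ gives $\|\sum_{\boldsymbol\alpha\in\Sigma_m}\boldsymbol\rho_{\boldsymbol\alpha}^2{\bf S}_{\boldsymbol\alpha}{\bf S}_{\boldsymbol\alpha}^*\|^{1/2}=s_m$, so each block is at most $s_m b_m$. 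Because $F(0)=0$ annihilates the $m=0$ term, it remains to sum $\sum_{m\ge1}s_m b_m$; writing $s_m b_m=\sqrt{s_m}\,(\sqrt{s_m}\,b_m)$ and applying Cauchy--Schwarz gives $\sum_{m\ge1}s_m b_m\le(\sum_{m\ge1}s_m)^{1/2}(\sum_{m\ge1}s_m b_m^2)^{1/2}$, and since $s_m\le1$ and $\sum_m b_m^2\le\|F\|_\infty^2$, the second factor is at most $\|F\|_\infty$. This is exactly the asserted bound.

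For part (ii) the exhaustion is $\{\Lambda_{\bf p}\}$, on which $\boldsymbol\rho_{\boldsymbol\alpha}=r_1^{p_1}\cdots r_k^{p_k}$ is constant, so $s_{\bf p}=r_1^{p_1}\cdots r_k^{p_k}$. The two competing bounds arise from distributing the weights differently. The first uses $b_{\bf p}\le\|F\|_\infty$ blockwise, whence $\sum_{{\bf p}\ne0}s_{\bf p}b_{\bf p}\le\|F\|_\infty\sum_{{\bf p}\ne0}r_1^{p_1}\cdots r_k^{p_k}=[\prod_{i=1}^k(1-r_i)^{-1}-1]\|F\|_\infty$, reproducing Theorem \ref{Bohr2}(i) with $a_0=0$. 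The second follows the Bombieri--Bourgain scheme of Theorem \ref{Bo-Bo}: applying Cauchy--Schwarz directly as $\sum_{{\bf p}\ne0}s_{\bf p}b_{\bf p}\le(\sum_{{\bf p}\ne0}s_{\bf p}^2)^{1/2}(\sum_{{\bf p}\ne0}b_{\bf p}^2)^{1/2}$, where $\sum_{{\bf p}\ne0}s_{\bf p}^2=\prod_{i=1}^k(1-r_i^2)^{-1}-1$ and $\sum_{\bf p}b_{\bf p}^2\le\|F\|_\infty^2$, gives $[\prod_{i=1}^k(1-r_i^2)^{-1}-1]^{1/2}\|F\|_\infty$. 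Taking the smaller of the two yields $K(\boldsymbol\rho)$.

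Part (iii) is then immediate: the hypothesis $\prod_{i=1}^k(1-r_i^2)\ge\frac12$ gives $\prod_{i=1}^k(1-r_i^2)^{-1}-1\le1$, so the second entry of $K(\boldsymbol\rho)$ is at most $1$; and when $r_1=\cdots=r_k=r\le\sqrt{1-(1/2)^{1/k}}$ one has $1-r^2\ge(1/2)^{1/k}$, i.e.\ $(1-r^2)^k\ge\frac12$, so this range is covered. The only genuinely delicate point is the bookkeeping: one must choose the two different ways of splitting the weights $s_{\bf p}$ inside Cauchy--Schwarz to produce the two entries of $K(\boldsymbol\rho)$, and keep in mind that $F(0)=0$ is precisely what makes both $b_m\le\|F\|_\infty$ and $\sum_m b_m^2\le\|F\|_\infty^2$ hold with constant $1$ rather than with a loss.
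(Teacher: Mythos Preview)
Your proof is correct and follows essentially the same approach the paper intends: mimic the arguments of Theorem~\ref{Bo-Bo} (and Theorem~\ref{Bohr2}(i)) with $a_0=0$, using Proposition~\ref{W} for the blockwise bound, orthogonality of $\Sigma_m$ (resp.\ $\Lambda_{\bf p}$) to compute $\|\sum \boldsymbol\rho_{\boldsymbol\alpha}^2{\bf S}_{\boldsymbol\alpha}{\bf S}_{\boldsymbol\alpha}^*\|$, and Cauchy--Schwarz together with $\|F\|_2\le\|F\|_\infty$ for the summation. Your splitting $s_mb_m=\sqrt{s_m}\,(\sqrt{s_m}\,b_m)$ in part (i) is a nice touch that lands exactly on the stated bound $(\sum_m s_m)^{1/2}$; the straight Bo--Bo imitation would in fact give the slightly sharper $(\sum_m s_m^2)^{1/2}$, which also implies the statement since $s_m\le 1$.
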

\begin{proof}The proof of (i) and (ii) is very similar to that of Theorem \ref{Bo-Bo}, but we have to take into account that $f(0)=0$.
Part (iii) follows from part (ii).
\end{proof}

If we replace the Hardy space
$H^\infty({\bf B_n})$ with
the subspace
$$H_0^\infty({\bf B_n}):=\{f\in H^\infty({\bf B_n}) : \ f(0)=0\},$$   the corresponding Bohr radius is denoted by $K_{mh}^0({\bf B_n})$.

\begin{corollary} \label{Kmh0} The Bohr radius $K_{mh}^0({\bf B_n})$ satisfies the inequalities
$$
\sqrt{1-\left(\frac{1}{2}\right)^{1/k}}\leq K_{mh}^0({\bf B_n})\leq \frac{1}{\sqrt{2}}.
$$
\end{corollary}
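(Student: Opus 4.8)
The plan is to get the two bounds from essentially independent sources: the lower bound falls out of the machinery already assembled, while the upper bound requires a sharp example, which I will build in one variable and transplant to the polyball via Proposition \ref{prelim}.

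For the lower bound $\sqrt{1-(1/2)^{1/k}}\leq K_{mh}^0({\bf B_n})$ I would simply specialize Corollary \ref{00}, part (iii). Taking $r_1=\cdots=r_k=r$ there, the hypothesis $\prod_{i=1}^k(1-r_i^2)\geq \frac12$ becomes $(1-r^2)^k\geq\frac12$, i.e.\ $r\leq \sqrt{1-(1/2)^{1/k}}$, and the conclusion is precisely the defining inequality of $K_{mh}^0({\bf B_n})$ for every $F\in H_0^\infty({\bf B_n})$. Hence every such $r$ is admissible, which gives the asserted lower bound with no further work.

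For the upper bound $K_{mh}^0({\bf B_n})\leq \frac{1}{\sqrt2}$ the idea is to produce, for each $r\in(\frac{1}{\sqrt2},1)$, a single function violating the Bohr inequality, and to do so by reduction to $k=1$. On the disc set $f_a(z):=z\frac{a-z}{1-az}$ with $a\in(0,1)$; then $f_a(0)=0$ and $\|f_a\|_\infty=1$ since $\frac{a-z}{1-az}$ is unimodular on $\TT$. A direct expansion of the Taylor coefficients $b_m$ of $f_a$ gives $\sum_{m\geq1}|b_m|r^m = ar+(1-a^2)\frac{r^2}{1-ar}$. Using Proposition \ref{prelim} I would embed $f_a$ into $H_0^\infty({\bf B_n})$ as $G_a({\bf X}):=f_a(X_{1,1})$: each monomial $X_{1,1}^m$ is the lone term of the multi-homogeneous block indexed by ${\bf p}=(m,0,\ldots,0)$, so $\|G_a\|_\infty=\|f_a\|_\infty=1$ and, evaluating at ${\bf X}=r{\bf S}$ and using that ${\bf S}_{1,1}$ is an isometry, the multi-homogeneous Bohr sum equals exactly $ar+(1-a^2)\frac{r^2}{1-ar}$.

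The decisive computation is the choice $a=\frac{1}{\sqrt2}$: at $r=\frac{1}{\sqrt2}$ one has $ar=1-a^2=r^2=1-ar=\frac12$, so the Bohr sum equals $\frac12+\frac12=1$, while each summand is strictly increasing in $r$ on $(0,1)$, so the sum strictly exceeds $1=\|G_a\|_\infty$ for every $r\in(\frac{1}{\sqrt2},1)$. Thus the Bohr inequality fails past $\frac{1}{\sqrt2}$, forcing $K_{mh}^0({\bf B_n})\leq\frac{1}{\sqrt2}$, uniformly in $k$. The point that needs care is checking that the embedding faithfully transports the one-variable coefficient sum into the multi-homogeneous Bohr sum on the polyball (hinging on the $X_{1,1}^m$ landing in pairwise distinct blocks $\Lambda_{(m,0,\ldots,0)}$ and on $\|{\bf S}_{1,1}^m\|=1$); the only computational subtlety, confirming that $\frac{1}{\sqrt2}$ is genuinely the sharp threshold rather than merely an upper estimate, is that $a=r=\frac{1}{\sqrt2}$ is a critical point of the sum in $a$, since $\partial_a$ of the sum at $a=r$ is proportional to $1-2r^2$.
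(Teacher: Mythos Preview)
Your proof is correct and follows essentially the same route as the paper: the lower bound comes from Corollary~\ref{00}\,(iii), and the upper bound comes from the isometric embedding of $H_0^\infty(\DD)$ into $H_0^\infty({\bf B_n})$ via $z\mapsto X_{1,1}$ (Proposition~\ref{prelim}), reducing to the one-variable fact $K_{mh}^0(\DD)=\frac{1}{\sqrt2}$. The only difference is cosmetic: the paper simply cites $K_{mh}^0(\DD)=\frac{1}{\sqrt2}$ from \cite{PPoS}, whereas you reprove the needed inequality $K_{mh}^0(\DD)\leq\frac{1}{\sqrt2}$ by exhibiting the explicit extremal function $f_{1/\sqrt2}(z)=z\,\frac{1/\sqrt2-z}{1-z/\sqrt2}$ and checking that its coefficient sum exceeds $1$ for $r>\frac{1}{\sqrt2}$; this makes your argument self-contained at the cost of a short computation, but the underlying strategy is identical.
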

\begin{proof}
The left hand inequality is due to part (iii) of Corollary \ref{00}.
On the other hand, Proposition \ref{prelim} shows that $H_0^\infty(\DD)$ is isometrically embedded in
 $H_0^\infty({\bf B_n})$  via the map
$$
\sum_{n=1}^\infty a_n z^n\mapsto \sum_{n=1}^\infty a_n X_{1,1}^n.
$$
 Consequently, $K_{mh}^0({\bf B_n})\leq K_{mh}^0(\DD)$.
Since $K_{mh}^0(\DD)=\frac{1}{\sqrt{2}}$ (e.g. \cite{PPoS}), the result follows.
\end{proof}

Now, we consider the general case of arbitrary  free holomorphic functions on polyballs, with operator coefficients. Our Bohr type result in this setting is the following.

\begin{theorem} Let    $F:{\bf B_n}(\cH)\to B(\cK)\otimes_{min}B(\cH)$ be a   free holomorphic function with
 $\|F\|_\infty\leq 1$ and representation
$$
F({\bf X})=\sum_{\boldsymbol \alpha\in  \FF_{n_1}^+\times\cdots \times \FF_{n_k}^+} A_{(\boldsymbol \alpha)}\otimes  {\bf X}_{\boldsymbol \alpha}.
$$
If  $\{\Sigma_m\}_{m=0}^\infty$ is an exhaustion of
 $\FF_{n_1}^+\times\cdots \times \FF_{n_k}^+$  by right minimal  sets, then
 $$
\sum_{\boldsymbol \alpha\in \Sigma_m} A_{(\boldsymbol \alpha)}^*A_{(\boldsymbol \alpha)}\leq I-A_0^*A_0
$$
for any $m\in \NN$.
If, in addition, each $\Sigma_m$ is an  orthogonal set, then
  the following statements hold.
\begin{enumerate}

\item[(i)] For any  $\boldsymbol\rho=(\rho_{i,j})\in [0,1)^{n_1+\cdots n_k}$,
$$
\sup_{{\bf X}\in \boldsymbol\rho{\bf B_n}} \left\|\sum_{\boldsymbol \alpha\in \Sigma_m} A_{(\boldsymbol \alpha)}\otimes {\bf X}_{\boldsymbol \alpha}\right\|\leq \|(I-A_0^* A_0)^{1/2}\|\sup_{\boldsymbol\alpha\in \Sigma_m} \boldsymbol\rho_{\boldsymbol\alpha} .
$$
\item[(ii)] If $\boldsymbol\rho:=(\rho_{i,j})$ with $\rho_{i,j}=r_i\in [0,1)$ for   $j\in \{1,\ldots, n_i\}$,   then
\begin{equation*}
\begin{split}
 \sum\limits_{{\bf p}\in \ZZ_+^k}\sup_{{\bf X}\in  \boldsymbol\rho {\bf B_n(\cH)}^-} \left\|\sum\limits_ {\boldsymbol\alpha\in \Lambda_{\bf p}} A_{(\boldsymbol \alpha)}\otimes   {\bf X}_{\boldsymbol \alpha}\right\|
 &\leq \|A_0\|+\|(I-A_0^*A_0)^{1/2}\|\left[\prod_{i=1}^k (1-r_i)^{-1}-1\right].
\end{split}
\end{equation*}
\end{enumerate}

\end{theorem}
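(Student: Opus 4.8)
The plan is to run the compression argument of Proposition \ref{W} in the situation where $F(0)=A_0$ is an arbitrary operator rather than a scalar. Fix $m\in\NN$, put $\Sigma:=\Sigma_m$ (note ${\bf g}_0\notin\Sigma$ since $\Sigma_0=\{{\bf g}_0\}$ and the sets are disjoint), and let $\cE$ be the closed linear span of $1$ together with $\{e_{\boldsymbol\beta}:\boldsymbol\beta\in\Sigma\}$ in $F^2(H_{n_1})\otimes\cdots\otimes F^2(H_{n_k})$. Writing $\widehat F:=\text{\rm SOT-}\lim_{r\to 1}F(r{\bf S})$ for the boundary function, which satisfies $\|\widehat F\|=\|F\|_\infty\le 1$, the compression $C:=P_{\cK\otimes\cE}\widehat F|_{\cK\otimes\cE}$ is a contraction. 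The inner-product computations of Proposition \ref{W} carry over verbatim with each occurrence of $a_0$ replaced by $A_0$: right minimality of $\Sigma$ together with Proposition \ref{Prop1}(ii) forces $\langle e_{\boldsymbol\alpha}\otimes e_{\boldsymbol\beta},e_{\boldsymbol\gamma}\rangle$ to vanish unless $\boldsymbol\alpha={\bf g}_0$ and $\boldsymbol\beta=\boldsymbol\gamma$, so that $C$ has the lower-triangular matrix with $A_0$ in the $(1,1)$ block, $A_0$ on every diagonal block indexed by $\Sigma$, the column $[A_{(\boldsymbol\alpha)}]_{\boldsymbol\alpha\in\Sigma}$ in the first column, and zeros elsewhere in the first row.

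The essential modification is in how I extract the inequality. In the scalar case one invokes Lemma 3.4 of \cite{PPoS} for $2\times2$ matrices with scalar $\lambda$ on the diagonal; with a non-scalar $A_0$ that criterion is not available, but I do not need it, because the desired bound is read off directly by evaluating the contraction on the first coordinate. Indeed, $C(h\otimes 1)=(A_0h)\otimes 1+\sum_{\boldsymbol\beta\in\Sigma}(A_{(\boldsymbol\beta)}h)\otimes e_{\boldsymbol\beta}$ for $h\in\cK$, and since $1$ and the $e_{\boldsymbol\beta}$ are orthonormal, $\|C(h\otimes 1)\|^2=\|A_0h\|^2+\sum_{\boldsymbol\beta\in\Sigma}\|A_{(\boldsymbol\beta)}h\|^2$. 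Contractivity gives $\|C(h\otimes 1)\|\le\|h\|$, and letting $h$ range over $\cK$ yields $A_0^*A_0+\sum_{\boldsymbol\beta\in\Sigma_m}A_{(\boldsymbol\beta)}^*A_{(\boldsymbol\beta)}\le I$, which is the asserted bound $\sum_{\boldsymbol\alpha\in\Sigma_m}A_{(\boldsymbol\alpha)}^*A_{(\boldsymbol\alpha)}\le I-A_0^*A_0$.

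For part (i), assume in addition that $\Sigma_m$ is orthogonal. Writing ${\bf X}\in\boldsymbol\rho{\bf B_n}(\cH)^-$ as ${\bf X}=\boldsymbol\rho{\bf Y}$ with ${\bf Y}\in{\bf B_n}(\cH)^-$ and applying the noncommutative von Neumann inequality for polyballs (\cite{Po-von}, \cite{Po-poisson}), I reduce the supremum to the universal model and factor exactly as in Theorem \ref{Bohr1}:
$$\left\|\sum_{\boldsymbol\alpha\in\Sigma_m}\boldsymbol\rho_{\boldsymbol\alpha}A_{(\boldsymbol\alpha)}\otimes{\bf S}_{\boldsymbol\alpha}\right\|\le\left\|\sum_{\boldsymbol\alpha\in\Sigma_m}\boldsymbol\rho_{\boldsymbol\alpha}^2{\bf S}_{\boldsymbol\alpha}{\bf S}_{\boldsymbol\alpha}^*\right\|^{1/2}\left\|\sum_{\boldsymbol\alpha\in\Sigma_m}A_{(\boldsymbol\alpha)}^*A_{(\boldsymbol\alpha)}\right\|^{1/2}.$$
Orthogonality of the ranges of the isometries ${\bf S}_{\boldsymbol\alpha}$ makes the first factor equal to $\sup_{\boldsymbol\alpha\in\Sigma_m}\boldsymbol\rho_{\boldsymbol\alpha}$, while the inequality from the first part bounds the second factor by $\|(I-A_0^*A_0)^{1/2}\|$; combining these gives (i).

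Finally, for part (ii) I apply (i) to the exhaustion $\{\Lambda_{\bf p}\}_{{\bf p}\in\ZZ_+^k}$, which is both minimal and orthogonal by Examples \ref{Ex1} and \ref{Ex2}. With $\rho_{i,j}=r_i$ the weight $\boldsymbol\rho_{\boldsymbol\alpha}=r_1^{p_1}\cdots r_k^{p_k}$ is constant on $\Lambda_{\bf p}$, so for ${\bf p}\neq0$ part (i) bounds the $\Lambda_{\bf p}$-term by $\|(I-A_0^*A_0)^{1/2}\|\,r_1^{p_1}\cdots r_k^{p_k}$, whereas the ${\bf p}=0$ term is $\|A_0\|$. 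Summing the geometric series $\sum_{{\bf p}\in\ZZ_+^k}r_1^{p_1}\cdots r_k^{p_k}=\prod_{i=1}^k(1-r_i)^{-1}$ and removing the $1$ contributed by ${\bf p}=0$ produces the stated estimate. The only genuinely delicate point is the second paragraph: recognizing that the non-scalar $A_0$ does not obstruct the argument, because the needed inequality comes from contractivity on the single coordinate $\cK\otimes 1$ rather than from an operator-valued analogue of the $2\times2$ contraction lemma, which would be harder to formulate and yields no improvement here.
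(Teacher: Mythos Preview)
Your proof is correct and follows essentially the same approach as the paper's. The paper works with the compression of $F(r{\bf S})$ for $r\in[0,1)$ rather than the boundary function $\widehat F$, observes that its first column $[A_0;\,r^{|\boldsymbol\alpha|}A_{(\boldsymbol\alpha)}]_{\boldsymbol\alpha\in\Sigma_m}$ is a contraction, and then passes to the limit; your evaluation on vectors $h\otimes 1$ is exactly this column-contraction argument made explicit, and your treatment of parts (i) and (ii) matches the paper's reference back to the factorisation in Theorem \ref{Bohr1}/\ref{Bohr2}.
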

\begin{proof}
Let $\cE$ be the closed linear span of the vectors $1, e_{\boldsymbol \beta}$, where $\boldsymbol \beta\in \Sigma_m$. As in the proof of Proposition \ref{W}
,
taking into account that $\Sigma_m$ is a  right minimal  subset of $\FF_{n_1}^+\times\cdots \times \FF_{n_k}^+$, one  can show that
 the operator matrix   of $P_{\cK\otimes \cE}F(r{\bf S})_{\cK\otimes \cE}$, $r\in [0,1)$,  with respect to the decomposition
$$\cK\otimes \cE=(\cK\otimes 1)\oplus \bigoplus_{\boldsymbol \beta\in \Sigma_m}(\cK\otimes e_{\boldsymbol \beta})$$
is
$$
\left(\begin{matrix}
  A_{0}& [0\quad \  \cdots   \quad \  0]\\
 \left[
 \begin{matrix} r^{|\boldsymbol \alpha|}A_{(\boldsymbol \alpha)}\\
 \vdots\\
 \boldsymbol \alpha\in \Sigma_m
 \end{matrix}
 \right]&
 \left[
 \begin{matrix}  A_{0}&\cdots& 0\\
 \vdots&\ddots& \vdots\\
 0&\cdots&  A_{0}
 \end{matrix}
 \right]
 \end{matrix}
 \right).
$$
Since $P_{\cK\otimes \cE}F(r{\bf S})_{\cK\otimes \cE}$ is a contraction, so is the operator matrix $\left[
 \begin{matrix}A_0\\ r^{|\boldsymbol \alpha|}A_{(\boldsymbol \alpha)}\\
 \vdots\\
 \boldsymbol \alpha\in \Sigma_m
 \end{matrix}
 \right].$
  Hence, it is clear that
 $$
\sum_{\boldsymbol \alpha\in \Sigma_m} A_{(\boldsymbol \alpha)}^*A_{(\boldsymbol \alpha)}\leq I-A_0^*A_0.
$$
Now we assume, in addition, that each $\Sigma_m$ is  an orthogonal set, i.e. $\{{\bf S}_{\boldsymbol \alpha}\}_{\boldsymbol\alpha\in \Sigma_m}$ is  a sequence of isometries with orthogonal ranges. As in the proof of Theorem \ref{Bohr2}, we can use part (i) of the theorem to deduce the   inequality of part (ii).
\end{proof}

\bigskip

\section{Bohr inequalities  for free  holomorphic functions with $F(0)\geq 0$ and $\Re f\leq I$}

In this section,   we obtain an analogue of Landau's  inequality \cite{LG} for  bounded free holomorhic functions with operator coefficients on the polyball  and   use it  to obtain Bohr inequalities for free holomorphic functions on polyballs with operator coefficients
  such that $ F(0)\geq 0$ and  $\Re F({\bf X})\leq I$. The results play an important role in the next sections.

\begin{theorem} \label{W-O} Let $\Lambda$ be a right minimal  subset of $ \FF_{n_1}^+\times\cdots \times \FF_{n_k}^+$ that does not contain the neutral element $(g_0^1,\ldots, g_0^k)$ and let $F:{\bf B_n}(\cH)\to B(\cK)\otimes_{min}B(\cH)$ be a   free holomorphic function with representation
$$
F({\bf X})=\sum_{\boldsymbol \alpha\in  \FF_{n_1}^+\times\cdots \times \FF_{n_k}^+} A_{(\boldsymbol \alpha)}\otimes  {\bf X}_{\boldsymbol \alpha}
$$
  such that $ F(0)\geq 0$ and  $\Re F({\bf X})\leq I$ for any ${\bf X}\in {\bf B_n}(\cH)$.
Then
the operator matrix
$$
 P_{\Lambda}:=\left(\begin{matrix}
 2(I-A_{0})& [A_{(\boldsymbol \alpha)}^*:\ \boldsymbol \alpha\in \Lambda]\\
 \left[
 \begin{matrix}A_{(\boldsymbol \alpha)}\\
 \vdots\\
 \boldsymbol \alpha\in \Lambda
 \end{matrix}
 \right]&
 \left[
 \begin{matrix} 2(I-A_{0})&\cdots& 0\\
 \vdots&\ddots& \vdots\\
 0&\cdots& 2(I-A_{0})
 \end{matrix}
 \right]
 \end{matrix}
 \right)
$$
is positive and
 $$
\sum_{\boldsymbol \alpha\in \Lambda} A_{(\boldsymbol \alpha)}^*A_{(\boldsymbol \alpha)}\leq 4\|I-A_0\|(I-A_0).
$$
If, in addition,  $\Lambda$ is an orthogonal set, then
$$\sup_{{\bf X}\in {\bf B_n}} \left\|\sum_{\boldsymbol \alpha\in \Lambda} A_{(\boldsymbol \alpha)}\otimes {\bf X}_{\boldsymbol \alpha}\right\|= \left\|\sum_{\boldsymbol \alpha\in \Lambda} A_{(\boldsymbol \alpha)}\otimes {\bf S}_{\boldsymbol \alpha}\right\|\leq 2\|I-A_0\|.$$
\end{theorem}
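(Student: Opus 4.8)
The plan is to run the same compression argument as in Proposition \ref{W}, but now feeding in the positivity coming from $\Re F\le I$ instead of a norm bound. Since $F$ is only assumed to satisfy $\Re F({\bf X})\le I$ and $F(0)=A_0\ge 0$ (so $A_0=A_0^*$ and $0\le A_0\le I$), I would work with the evaluations $F(r{\bf S})$ for $r\in[0,1)$, which are legitimate since $r{\bf S}\in{\bf B_n}$ and hence $\Re F(r{\bf S})\le I$. Let $\cE$ be the closed span of the vacuum $1$ together with $\{e_{\boldsymbol\beta}:\boldsymbol\beta\in\Lambda\}$, and set $M_r:=P_{\cK\otimes\cE}F(r{\bf S})|_{\cK\otimes\cE}$. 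Computing its matrix entries via $\langle {\bf S}_{\boldsymbol\alpha}e_{\boldsymbol\beta},1\rangle$, $\langle e_{\boldsymbol\alpha},e_{\boldsymbol\gamma}\rangle$, and Proposition \ref{Prop1}(ii) (which is exactly where right-minimality of $\Lambda$ enters) shows that $M_r$ is block lower-triangular with $A_0$ in the corner, the column $[r^{|\boldsymbol\alpha|}A_{(\boldsymbol\alpha)}]_{\boldsymbol\alpha\in\Lambda}$ beneath it, and $A_0$ repeated down the remaining diagonal; the off-corner top row vanishes precisely because $\Lambda$ avoids the neutral element.

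The second step inserts positivity. From $\Re F(r{\bf S})\le I$ we get $2I-F(r{\bf S})-F(r{\bf S})^*\ge 0$, and compressing to $\cK\otimes\cE$ (compression commutes with taking adjoints) yields $2I-M_r-M_r^*\ge 0$. Writing this out and using $A_0=A_0^*$ gives
\[
2I-M_r-M_r^*=\begin{pmatrix}2(I-A_0)& -[A_{(\boldsymbol\alpha)}^*]\\ -[r^{|\boldsymbol\alpha|}A_{(\boldsymbol\alpha)}]& 2\,\mathrm{diag}(I-A_0)\end{pmatrix}\ge 0.
\]
Conjugating by the diagonal unitary $\mathrm{diag}(I,-I_\Lambda)$ flips the off-diagonal signs while preserving positivity, and then letting $r\to 1$ entrywise produces exactly $P_\Lambda\ge 0$. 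When $\Lambda$ is infinite I would phrase this through finite truncations: each compression of $P_\Lambda$ to the vacuum together with finitely many $e_{\boldsymbol\beta}$ is a norm-limit (as $r\to1$) of positive finite matrices, hence positive, so $P_\Lambda$ itself is positive.

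The operator inequality is then a consequence of $P_\Lambda\ge 0$. Put $T:=I-A_0\ge 0$ and $B:=[A_{(\boldsymbol\alpha)}]_{\boldsymbol\alpha\in\Lambda}$, so that $B^*B=\sum_{\boldsymbol\alpha\in\Lambda}A_{(\boldsymbol\alpha)}^*A_{(\boldsymbol\alpha)}$. Testing the quadratic form of $P_\Lambda$ on vectors $(h,(-tA_{(\boldsymbol\alpha)}h)_{\boldsymbol\alpha})$ with $t>0$ gives
\[
2\langle Th,h\rangle-2t\sum_{\boldsymbol\alpha}\|A_{(\boldsymbol\alpha)}h\|^2+2t^2\sum_{\boldsymbol\alpha}\langle TA_{(\boldsymbol\alpha)}h,A_{(\boldsymbol\alpha)}h\rangle\ge 0.
\]
Bounding the last term by $t^2\|T\|\sum_{\boldsymbol\alpha}\|A_{(\boldsymbol\alpha)}h\|^2$ and optimizing at $t=\tfrac{1}{2\|T\|}$ yields $\langle B^*Bh,h\rangle\le 4\|T\|\langle Th,h\rangle$ for every $h\in\cK$, i.e. $\sum_{\boldsymbol\alpha\in\Lambda}A_{(\boldsymbol\alpha)}^*A_{(\boldsymbol\alpha)}\le 4\|I-A_0\|(I-A_0)$.

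For the orthogonal case I would invoke the noncommutative von Neumann inequality for regular polyballs (\cite{Po-von}, \cite{Po-poisson}) to replace the supremum over ${\bf X}\in{\bf B_n}$ by the value at the universal model ${\bf S}$. Orthogonality of $\Lambda$ means the ${\bf S}_{\boldsymbol\alpha}$ are isometries with ${\bf S}_{\boldsymbol\beta}^*{\bf S}_{\boldsymbol\alpha}=\delta_{\boldsymbol\alpha\boldsymbol\beta}I$, so $\bigl(\sum_{\boldsymbol\alpha}A_{(\boldsymbol\alpha)}\otimes{\bf S}_{\boldsymbol\alpha}\bigr)^*\bigl(\sum_{\boldsymbol\alpha}A_{(\boldsymbol\alpha)}\otimes{\bf S}_{\boldsymbol\alpha}\bigr)=\sum_{\boldsymbol\alpha}A_{(\boldsymbol\alpha)}^*A_{(\boldsymbol\alpha)}\otimes I$, whence $\bigl\|\sum_{\boldsymbol\alpha}A_{(\boldsymbol\alpha)}\otimes{\bf S}_{\boldsymbol\alpha}\bigr\|^2=\bigl\|\sum_{\boldsymbol\alpha}A_{(\boldsymbol\alpha)}^*A_{(\boldsymbol\alpha)}\bigr\|\le 4\|I-A_0\|^2$ by the previous step, giving the bound $2\|I-A_0\|$. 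I expect the main obstacle to be the bookkeeping for infinite $\Lambda$: making the compression-and-limit rigorous (entrywise convergence together with positivity passing through finite truncations) and verifying that the matrix-entry computation genuinely uses right-minimality through Proposition \ref{Prop1}(ii). The optimization step and the orthogonal-ranges computation are routine once $P_\Lambda\ge 0$ is established.
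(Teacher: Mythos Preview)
Your argument is correct and follows the paper's strategy for the first half: compress $2I-F(r{\bf S})-F(r{\bf S})^*$ to $\cK\otimes\cE$, use right-minimality via Proposition~\ref{Prop1}(ii) to identify the block structure, conjugate by $\mathrm{diag}(I,-I_\Lambda)$, and let $r\to 1$ to get $P_\Lambda\ge 0$. (One small slip: in your displayed matrix the top-right block should also carry the factors $r^{|\boldsymbol\alpha|}$, to match the bottom-left.) The orthogonal case is handled the same way in both.

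The genuine difference is in how you pass from $P_\Lambda\ge 0$ to the operator inequality $\sum_{\boldsymbol\alpha}A_{(\boldsymbol\alpha)}^*A_{(\boldsymbol\alpha)}\le 4\|I-A_0\|(I-A_0)$. The paper does not test the quadratic form directly; instead it invokes Lemma~2.2 of \cite{Po-Bohr} to rewrite $P_\Lambda(r)\ge 0$ as positivity of the $2\times 2$ block
\[
\begin{pmatrix} I\otimes 2(I-A_0) & \sum_{\boldsymbol\alpha}V_{(\boldsymbol\alpha)}\otimes r^{|\boldsymbol\alpha|}A_{(\boldsymbol\alpha)}\\ \bigl(\sum_{\boldsymbol\alpha}V_{(\boldsymbol\alpha)}\otimes r^{|\boldsymbol\alpha|}A_{(\boldsymbol\alpha)}\bigr)^* & I\otimes 2(I-A_0)\end{pmatrix}
\]
for an auxiliary family of orthogonal-range isometries $\{V_{(\boldsymbol\alpha)}\}$, and then applies the standard fact that $\left(\begin{smallmatrix}P&M\\M^*&P\end{smallmatrix}\right)\ge 0$ implies $M^*M\le\|P\|P$; taking $r\to 1$ finishes. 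Your route---plugging $(h,(-tA_{(\boldsymbol\alpha)}h)_{\boldsymbol\alpha})$ into the quadratic form and optimizing at $t=\tfrac{1}{2\|I-A_0\|}$---is more elementary and avoids the external lemma and the auxiliary isometries entirely. The paper's version keeps $r$ throughout and takes a single limit at the end, which makes the infinite-$\Lambda$ bookkeeping slightly cleaner; your version is more self-contained but needs the degenerate case $A_0=I$ (i.e.\ $\|I-A_0\|=0$) treated separately, where the positive diagonal of $P_\Lambda$ vanishes and hence all $A_{(\boldsymbol\alpha)}=0$.
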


\begin{proof}
First, we recall that $F$ is a free holomorphic function on the polyball ${\bf B_n}$ if and only if
  the series
$$
\sum_{q=0}^\infty \sum_{{\boldsymbol\alpha\in \FF_{n_1}^+\times \cdots \times\FF_{n_k}^+ }\atop {|\boldsymbol\alpha|=q}} A_{(\boldsymbol\alpha)} \otimes r^q {\bf S}_{\boldsymbol\alpha}
$$
is convergent in the operator norm topology for any $r\in [0,1)$. This shows that $F(r{\bf S})$ exists and it is in the polyball algebra $\boldsymbol\cA_{\bf n}$. In particular, this implies for each $r\in [0,1)$, the series $\sum_{\boldsymbol\alpha\in \FF_{n_1}^+\times \cdots \times\FF_{n_k}^+}r^{|\boldsymbol \alpha|}A_{(\boldsymbol\alpha)}^*A_{(\boldsymbol\alpha)}$ is convergent in the strong operator topology.
 Note that  $\Re F({\bf X})\leq I$ for any ${\bf X}\in {\bf B_n}(\cH)$ if and only if
$$
2I-F(r{\bf S})-F(r{\bf S})^*\geq 0
$$
for any $r\in [0,1)$.  Indeed,   using the fact that the noncommutative Berezin transform on the polyball ${\bf B_n}$ is a completely positive map and
$$
2I-F({\bf X})-F({\bf X})^*=(id\otimes {\boldsymbol\cB}_{\frac{1}{r}{\bf X}})[2I-F(r{\bf S})-F(r{\bf S})^*],
$$
where $r\in [0,1)$ is such that $\frac{1}{r}{\bf X}\in {\bf B_n}(\cH)$, the result follows.
Note also that $F(0)=A_0\otimes I\geq 0$, thus $A_0\geq 0$.

Let $\cE$ be the closed linear span of the vectors $1, e_{\boldsymbol \beta}$, where $\boldsymbol \beta\in \Lambda$. Setting
$G({\bf X}):=2I-F({\bf X})-F({\bf X})^*$ and taking into account that $\{e_{\boldsymbol \alpha}\}_{\boldsymbol \alpha\in \FF_{n_1}^+\times\cdots \times \FF_{n_k}^+}$ is an orthonormal basis for the Hilbert tensor product $F^2(H_{n_1})\otimes \cdots \otimes  F^2(H_{n_k})$, we have
$$\left<P_{\cK\otimes\cE} G(r{\bf S})|_{\cK\otimes\cE} (x\otimes 1),y\otimes 1\right>=  \left< 2(I-A_0)x,y\right>,
$$
$$\left<P_{\cK\otimes\cE} G(r{\bf S})|_{\cK\otimes\cE} (x\otimes e_{\boldsymbol \beta}),y\otimes 1\right>= -r^{|\boldsymbol \beta|} \left< A_{(\boldsymbol \beta)}^*x,y\right>,
$$
and
$$\left<P_{\cK\otimes\cE} G(r{\bf S})|_{\cK\otimes\cE} (x\otimes 1),y\otimes e_{\boldsymbol \beta}\right>= -r^{|\boldsymbol \beta|} \left< A_{(\boldsymbol \beta)}x,y\right>
$$
for any $\boldsymbol \beta\in \Lambda$, where $|\boldsymbol \beta|:=|\beta_1|+\cdots +|\beta_k|$, if $\boldsymbol\beta=(\beta_1,\ldots, \beta_k)\in F_{n_1}^+\times \cdots \times \FF_{n_k}^+$. Since $\Lambda$ is a right minimal  subset of $ \FF_{n_1}^+\times\cdots \times  \FF_{n_k}^+$,  Proposition \ref{Prop1}, part (ii), implies
\begin{equation*}
\begin{split}
\left<P_{\cK\otimes\cE} G(r{\bf S})|_{\cK\otimes\cE} (x\otimes e_{\boldsymbol\beta}),y\otimes e_{\boldsymbol \gamma}\right>
&=
2\delta_{\boldsymbol \beta\boldsymbol \gamma}\left<x, y\right>-\sum_{\boldsymbol\alpha\in \FF_{n_1}^+\times\cdots \times \FF_{n_k}^+}r^{|\boldsymbol \alpha|}\left<A_{(\boldsymbol\alpha)}x,y\right>\left< e_{\boldsymbol \alpha}\otimes  e_{\boldsymbol \beta}, e_{\boldsymbol \gamma}\right>\\
&\qquad \qquad
-\sum_{\boldsymbol\alpha\in \FF_{n_1}^+\times\cdots \times \FF_{n_k}^+}r^{|\boldsymbol \alpha|}\left<A_{(\boldsymbol\alpha)}^*x,y\right>\left<  e_{\boldsymbol \beta},  e_{\boldsymbol \alpha}\otimes e_{\boldsymbol \gamma}\right>\\
&=2\delta_{\boldsymbol \beta\boldsymbol \gamma}\left<x, y\right>
-\delta_{\boldsymbol \beta\boldsymbol \gamma}\left<A_0x, y\right>-
\delta_{\boldsymbol \beta\boldsymbol \gamma}\left<A_0^*x, y\right>\\
&=
\left<2(I-A_0)x, y\right>\delta_{\boldsymbol \beta\boldsymbol \gamma}
\end{split}
\end{equation*}
for any $\boldsymbol \beta,\boldsymbol\gamma\in \Lambda$.
Consequently, the matrix of $G(r{\bf S})$ with respect to the decomposition
$$\cK\otimes \cE=(\cK\otimes 1)\oplus \bigoplus_{\boldsymbol \beta\in \Lambda}(\cK\otimes e_{\boldsymbol \beta})$$
is
$$ Q_\Lambda(r):=
\left(\begin{matrix}
 2(I-A_{0})& [-r^{|\boldsymbol \alpha|}A_{(\boldsymbol \alpha)}^*:\ \boldsymbol \alpha\in \Lambda]\\
 \left[
 \begin{matrix}-r^{|\boldsymbol \alpha|}A_{(\boldsymbol \alpha)}\\
 \vdots\\
 \boldsymbol \alpha\in \Lambda
 \end{matrix}
 \right]&
 \left[
 \begin{matrix} 2(I-A_{0})&\cdots& 0\\
 \vdots&\ddots& \vdots\\
 0&\cdots& 2(I-A_{0})
 \end{matrix}
 \right]
 \end{matrix}
 \right)
$$
and it is  positive  for any $r\in [0,1)$. Consider the unitary operator
$U:=\left(\begin{matrix} I_\cK&0\\
0&-I_{\cK'}\end{matrix}\right)$, where $\cK':= \bigoplus_{\boldsymbol \beta\in \Lambda}(\cK\otimes e_{\boldsymbol \beta})$, and note that $P_\Lambda(r):=U^*Q_\Lambda(r) U\geq 0$, $r\in [0,1)$.
Let $\{V_{(\boldsymbol \alpha)}\}_{\boldsymbol\alpha\in \Lambda}$ by a sequence of isometries with orthogonal ranges acting on a Hilbert space $\cG$. Applying Lemma 2.2 from \cite{Po-Bohr} to our setting, we deduce that $P_\Lambda(r)\geq 0$ if and only if the operator matrix
\begin{equation}
\label{V}
\left(\begin{matrix} I_\cG\otimes 2(I-A_0)& \sum_{\boldsymbol \alpha\in \Lambda}
V_{(\boldsymbol \alpha)}\otimes r^{|\boldsymbol \alpha|}A_{(\boldsymbol \alpha)}\\
\sum_{\boldsymbol \alpha\in \Lambda}
V_{(\boldsymbol \alpha)}^*\otimes r^{|\boldsymbol \alpha|}A_{(\boldsymbol \alpha)}^*& I_\cG\otimes 2(I-A_0)
\end{matrix}
\right)
\end{equation}
is positive.
It is well-known (see eg. \cite{Pa-book}) that if $P,M$ are bounded operators on a Hilbert space $\cM$ such that   $\left(\begin{matrix}P&M\\M^*&P\end{matrix}
\right)\geq 0$, then $M^*M\leq \|P\|P$. Applying this result to the matrix \eqref{V}, we deduce that
$$
\left(\sum_{\boldsymbol \alpha\in \Lambda}
V_{(\boldsymbol \alpha)}^*\otimes r^{|\boldsymbol \alpha|}A_{(\boldsymbol \alpha)}^*\right)\left(\sum_{\boldsymbol \alpha\in \Lambda}
V_{(\boldsymbol \alpha)}\otimes r^{|\boldsymbol \alpha|}A_{(\boldsymbol \alpha)}\right)\leq \|I_\cG\otimes 2(I-A_0)\|I_\cG\otimes 2(I-A_0),
$$
which, taking into account that $V_{(\boldsymbol \alpha)}^*V_{(\boldsymbol \beta)}=\delta_{\boldsymbol \alpha\boldsymbol \beta}I$ for any $\boldsymbol \alpha,\boldsymbol \beta\in \Lambda$, is equivalent to
$$
I_\cG\otimes \sum_{\boldsymbol \alpha\in \Lambda}r^{2|\boldsymbol \alpha|} A_{(\boldsymbol \alpha)}^*A_{(\boldsymbol \alpha)}\leq I_\cG\otimes 4\|I-A_0\|(I-A_0).
$$
Hence, taking $r\to 1$ we deduce that
\begin{equation}
\label{AAAA}
\sum_{\boldsymbol \alpha\in \Lambda} A_{(\boldsymbol \alpha)}^*A_{(\boldsymbol \alpha)}\leq 4\|I-A_0\|(I-A_0)
\end{equation}
and $P_\Lambda=U^*Q_\Lambda(1) U\geq 0$, which proves the first part of the theorem.
To prove the last part of the theorem, we assume that $\Lambda$ is, in addition,  an orthogonal set, i.e. $\{{\bf S}_{\boldsymbol \alpha}\}_{\boldsymbol\alpha\in \Lambda}$ is  a sequence of isometries with orthogonal ranges. Due to the von Neumann type inequality for regular polyballs \cite{Po-poisson} and using relation \eqref{AAAA},   we have
\begin{equation*}
\begin{split}
\left\|\sum_{\boldsymbol \alpha\in \Lambda_0} A_{(\boldsymbol \alpha)}\otimes {\bf X}_{\boldsymbol \alpha}\right\|
&\leq \left\|\sum_{\boldsymbol \alpha\in \Lambda_0} A_{(\boldsymbol \alpha)}\otimes r^{|\boldsymbol\alpha|}{\bf S}_{\boldsymbol \alpha}\right\|\\
&=\left\|[I\otimes r^{|\boldsymbol\alpha|}{\bf S}_{\boldsymbol \alpha}: \ {\boldsymbol \alpha}\in \Lambda_0]\left[\begin{matrix} A_{(\boldsymbol \alpha)}\otimes I\\:\\ \boldsymbol \alpha\in \Lambda_0\end{matrix}\right]\right\|\\
&\leq \left\|\text{\rm diag}_{\Lambda_0}(r^{2|\boldsymbol\alpha|}I)\right\|^{1/2}
\left\|\sum_{\boldsymbol \alpha\in \Lambda_0} A_{(\boldsymbol \alpha)}^*A_{(\boldsymbol \alpha)}\right\|^{1/2}\\
&=\left\|\sum_{\boldsymbol \alpha\in \Lambda_0} A_{(\boldsymbol \alpha)}^*A_{(\boldsymbol \alpha)}\right\|^{1/2}\sup_{\boldsymbol\alpha\in \Lambda_0}r^{|\boldsymbol\alpha|}\\
&\leq 2\|I-A_0\| \, r^{\inf_{\boldsymbol \alpha\in \Lambda_0}|\boldsymbol \alpha|},
\end{split}
\end{equation*}
for any ${\bf X}\in r{\bf B_n}(\cH)$, $r\in [0,1)$, and any finite subset $\Lambda_0\subset \Lambda$.
Consequently,
$$
\left\|\sum_{\boldsymbol \alpha\in \Lambda} A_{(\boldsymbol \alpha)}\otimes {\bf X}_{\boldsymbol \alpha}\right\|\leq 2\|I-A_0\|$$
for any ${\bf X}\in {\bf B_n}(\cH)$, which completes the proof.
\end{proof}

We use Theorem \ref{W-O} the following Bohr type result in a very general setting.

\begin{theorem}  \label{CAX} Let   $F:{\bf B_n}(\cH)\to B(\cK)\otimes_{min}B(\cH)$ be a   free holomorphic function with representation
$
F({\bf X})=\sum_{\boldsymbol \alpha\in  \FF_{n_1}^+\times\cdots \times \FF_{n_k}^+} A_{(\boldsymbol \alpha)}\otimes  {\bf X}_{\boldsymbol \alpha}
$
  such that $ F(0)\geq 0$ and  $\Re F({\bf X})\leq I$ for any ${\bf X}\in {\bf B_n}(\cH)$.
  If  $\{\Sigma_m\}_{m=0}^\infty$ is an exhaustion of
 $\FF_{n_1}^+\times\cdots \times \FF_{n_k}^+$  by  minimal and orthogonal sets, then the following statements hold.
  \begin{enumerate}
  \item[(i)] If $\{C_{(\boldsymbol\alpha)}\}_{\boldsymbol \alpha\in    \FF_{n_1}^+\times\cdots \times \FF_{n_k}^+}$ is a sequence of bounded linear operators on a Hilbert space $\cE$ such that
      $$
      \|C_0\|\leq 1 \quad \text{ and } \quad \sum_{m=1}^\infty \sup_{\alpha\in \Sigma_m}\|C_{(\boldsymbol\alpha)}\|\leq \frac{1}{2},
      $$
      then $$
      \sum_{m=0}^\infty  \sup_{{\bf X}\in   {\bf B_n}(\cH)^-} \left\|\sum_{\boldsymbol \alpha\in\Sigma_m}C_{(\boldsymbol\alpha)}\otimes  A_{(\boldsymbol \alpha)}\otimes {\bf X}_{\boldsymbol \alpha}\right\|\leq \|A_0\|+\|1-A_0\|.$$
      \item[(ii)]
      If
      $$\|C_0\|\leq 1\quad \text{  and }\quad
      \sum_{m=1}^\infty\left\|\sum_{\alpha\in \Sigma_m}
      C_{(\boldsymbol\alpha)}^*C_{(\boldsymbol\alpha)}\right\|^{1/2}\leq \frac{1}{2},$$
      then
      $$
      \left\|\sum_{m=0}^\infty\sum_{\boldsymbol \alpha\in \Sigma_m}C_{(\boldsymbol\alpha)}\otimes  A_{(\boldsymbol \alpha)}\otimes {\bf S}_{\boldsymbol \alpha}\right\|\leq 1.
      $$

\end{enumerate}
\end{theorem}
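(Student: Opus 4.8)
Throughout I will use both conclusions of Theorem~\ref{W-O}: for each $m\ge 1$ the set $\Sigma_m$ is minimal and orthogonal and avoids the neutral element, so $\sum_{\boldsymbol\alpha\in\Sigma_m}A_{(\boldsymbol\alpha)}^*A_{(\boldsymbol\alpha)}\le 4\|I-A_0\|(I-A_0)$ and the matrix $P_{\Sigma_m}$ (the matrix $P_\Lambda$ of Theorem~\ref{W-O} with $\Lambda=\Sigma_m$) is positive. Two preliminary remarks. Evaluating $\Re F\le I$ at ${\bf X}=0$ and recalling $F(0)=A_0\otimes I\ge 0$ gives $0\le A_0\le I$, whence $\|A_0\|\le 1$ and $\|I-A_0\|\le 1$. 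Next, writing $\cF:=F^2(H_{n_1})\otimes\cdots\otimes F^2(H_{n_k})$ and $\Phi_m:=\sum_{\boldsymbol\alpha\in\Sigma_m}C_{(\boldsymbol\alpha)}\otimes A_{(\boldsymbol\alpha)}\otimes{\bf S}_{\boldsymbol\alpha}$, the orthogonality of the ranges of $\{{\bf S}_{\boldsymbol\alpha}\}_{\boldsymbol\alpha\in\Sigma_m}$ lets me factor $\Phi_m$ as a contractive row of isometries times a column, which gives $\|\Phi_m\|\le\big\|\sum_{\boldsymbol\alpha\in\Sigma_m}C_{(\boldsymbol\alpha)}^*C_{(\boldsymbol\alpha)}\otimes A_{(\boldsymbol\alpha)}^*A_{(\boldsymbol\alpha)}\big\|^{1/2}$.

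For part (i) I would run a direct norm estimate. By the noncommutative von Neumann inequality the supremum over ${\bf X}\in{\bf B_n}(\cH)^-$ of $\|\sum_{\boldsymbol\alpha\in\Sigma_m}C_{(\boldsymbol\alpha)}\otimes A_{(\boldsymbol\alpha)}\otimes{\bf X}_{\boldsymbol\alpha}\|$ is dominated by $\|\Phi_m\|$. Bounding $C_{(\boldsymbol\alpha)}^*C_{(\boldsymbol\alpha)}\le(\sup_{\boldsymbol\alpha\in\Sigma_m}\|C_{(\boldsymbol\alpha)}\|)^2 I$ in the factorization and invoking $\sum_{\boldsymbol\alpha\in\Sigma_m}A_{(\boldsymbol\alpha)}^*A_{(\boldsymbol\alpha)}\le 4\|I-A_0\|(I-A_0)$ yields $\|\Phi_m\|\le 2\|I-A_0\|\,\sup_{\boldsymbol\alpha\in\Sigma_m}\|C_{(\boldsymbol\alpha)}\|$ for $m\ge1$, while the $m=0$ term equals $\|C_0\otimes A_0\otimes I\|\le\|A_0\|$. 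Summing and using $\sum_{m\ge1}\sup_{\boldsymbol\alpha\in\Sigma_m}\|C_{(\boldsymbol\alpha)}\|\le\tfrac12$ gives the asserted bound $\|A_0\|+\|1-A_0\|$.

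For part (ii) the same triangle inequality only yields $\|A_0\|+\|I-A_0\|$, which can be as large as $2$; the clean bound $1$ must come from the operator splitting $I=A_0+(I-A_0)$, so I would instead prove $\big(\begin{smallmatrix}I&T^*\\ T&I\end{smallmatrix}\big)\ge 0$, where $T:=\sum_{m\ge0}\Phi_m$, by building this matrix as a sum over $m\ge0$ of positive $2\times2$ blocks with off-diagonal corner $\Phi_m$ and diagonal corners adding up to at most $I$. For $m=0$, since $0\le A_0\le I$ and $\|C_0\|\le1$, the factorization $C_0\otimes A_0\otimes I=(I\otimes A_0^{1/2}\otimes I)(C_0\otimes I\otimes I)(I\otimes A_0^{1/2}\otimes I)$ exhibits $\big(\begin{smallmatrix}I\otimes A_0\otimes I&\Phi_0^*\\ \Phi_0&I\otimes A_0\otimes I\end{smallmatrix}\big)\ge0$. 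For $m\ge1$ (with $s_m:=\sup_{\boldsymbol\alpha\in\Sigma_m}\|C_{(\boldsymbol\alpha)}\|>0$, the others contributing $\Phi_m=0$), I start from $P_{\Sigma_m}\otimes I_{\cE\otimes\cF}\ge0$ and apply the weighted congruence $\eta_0=t_m u$, $\eta_{\boldsymbol\alpha}=t_m^{-1}(C_{(\boldsymbol\alpha)}^*\otimes I\otimes{\bf S}_{\boldsymbol\alpha}^*)v$; the cross terms collapse, by $\boldsymbol\alpha$-orthogonality, to $2\,\Re\langle\Phi_m u,v\rangle$, producing
\[
\begin{pmatrix} 2t_m^2\, I\otimes(I-A_0)\otimes I & \Phi_m^*\\[2pt] \Phi_m & 2t_m^{-2}\sum_{\boldsymbol\alpha\in\Sigma_m}C_{(\boldsymbol\alpha)}C_{(\boldsymbol\alpha)}^*\otimes(I-A_0)\otimes{\bf S}_{\boldsymbol\alpha}{\bf S}_{\boldsymbol\alpha}^*\end{pmatrix}\ge 0 .
\]
Using again the orthogonality of the ranges of the ${\bf S}_{\boldsymbol\alpha}$ together with $C_{(\boldsymbol\alpha)}C_{(\boldsymbol\alpha)}^*\le s_m^2 I$, both diagonal corners are dominated by $2t_m^2\,I\otimes(I-A_0)\otimes I$ and $2t_m^{-2}s_m^2\,I\otimes(I-A_0)\otimes I$. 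Choosing $t_m^2=s_m$ makes both equal to $2s_m\,I\otimes(I-A_0)\otimes I$, and since $s_m\le\|\sum_{\boldsymbol\alpha\in\Sigma_m}C_{(\boldsymbol\alpha)}^*C_{(\boldsymbol\alpha)}\|^{1/2}$, the hypothesis gives $\sum_{m\ge1}2s_m\le1$, so the tail diagonals sum to at most $I\otimes(I-A_0)\otimes I$. Adding the $m=0$ block, the total diagonal is at most $I\otimes A_0\otimes I+I\otimes(I-A_0)\otimes I\le I$ (because $0\le A_0\le I$ forces $(1-c)A_0+cI\le I$ for $0\le c\le1$), while the off-diagonal is $T$; padding by $\mathrm{diag}(I-D_1,I-D_2)\ge0$ upgrades this to $\big(\begin{smallmatrix}I&T^*\\ T&I\end{smallmatrix}\big)\ge0$, i.e. $\|T\|\le1$.

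The main obstacle is precisely this cancellation in (ii): norm estimates cannot improve on $\|A_0\|+\|I-A_0\|$, so the argument must encode $I=A_0+(I-A_0)$ inside one positive operator matrix, and the delicate point is to insert the coefficients $C_{(\boldsymbol\alpha)}$ and the model isometries ${\bf S}_{\boldsymbol\alpha}$ into Theorem~\ref{W-O}'s matrix $P_{\Sigma_m}$ through the weighted congruence so that it (a) reproduces exactly $\Phi_m$ off the diagonal, (b) keeps the diagonal corners controlled by $(I-A_0)$, and (c) balances the weights $t_m^2=s_m$ so that the factor $2$ in $2(I-A_0)$ pairs with the $\tfrac12$ in the hypothesis. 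The remaining items — the tensor-factor bookkeeping in the congruence and the operator-norm convergence of the series defining $T$ — are routine.
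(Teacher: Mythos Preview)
Your proof of part (i) is correct and matches the paper's argument essentially line for line.

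For part (ii) you and the paper follow the same overall strategy---build, for each $m\ge 1$, a positive $2\times 2$ block with off-diagonal $\Phi_m$ and diagonal controlled by $I-A_0$, add the $m=0$ block with diagonal $A_0$, and finish via $A_0+c(I-A_0)\le I$ for $0\le c\le 1$---but the mechanism for producing each block is different. The paper tensors $P_{\Sigma_m}$ with the auxiliary positive matrix
\[
Q_{\Sigma_m}:=\begin{pmatrix} d_{\Sigma_m}I_\cE & [C_{(\boldsymbol\alpha)}^*:\boldsymbol\alpha\in\Sigma_m]\\ [C_{(\boldsymbol\alpha)}] & d_{\Sigma_m}I_\cE\end{pmatrix},\qquad d_{\Sigma_m}:=\Big\|\sum_{\boldsymbol\alpha\in\Sigma_m}C_{(\boldsymbol\alpha)}^*C_{(\boldsymbol\alpha)}\Big\|^{1/2},
\]
compresses $P_{\Sigma_m}\otimes Q_{\Sigma_m}\ge 0$ to extract the entries $C_{(\boldsymbol\alpha)}\otimes A_{(\boldsymbol\alpha)}$, and then appeals to Lemma~2.2 of \cite{Po-Bohr} to fold in the isometries $\{{\bf S}_{\boldsymbol\alpha}\}$, arriving at a $2\times 2$ block with diagonal $2d_{\Sigma_m}(I-A_0)$. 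Your weighted congruence of $P_{\Sigma_m}\otimes I_{\cE\otimes\cF}$ accomplishes all three steps---insert the $C_{(\boldsymbol\alpha)}$, insert the ${\bf S}_{\boldsymbol\alpha}$, and collapse to a $2\times 2$ matrix---in one move, and is self-contained in that it avoids the external lemma. A small dividend: your diagonal constant comes out as $2s_m=2\sup_{\boldsymbol\alpha\in\Sigma_m}\|C_{(\boldsymbol\alpha)}\|\le 2d_{\Sigma_m}$, so your argument in fact establishes the conclusion under the (formally weaker) hypothesis $\sum_{m\ge 1}\sup_{\boldsymbol\alpha\in\Sigma_m}\|C_{(\boldsymbol\alpha)}\|\le\tfrac12$. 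One cosmetic remark: the collapse of the cross terms to $2\Re\langle\Phi_m u,v\rangle$ does not use orthogonality of the ${\bf S}_{\boldsymbol\alpha}$; that orthogonality is only needed when you dominate the lower-right diagonal by $2t_m^{-2}s_m^2\, I\otimes(I-A_0)\otimes I$ via $\sum_{\boldsymbol\alpha}{\bf S}_{\boldsymbol\alpha}{\bf S}_{\boldsymbol\alpha}^*\le I$.
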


\begin{proof}

Let  $\cF$  be a finite subset of $\Sigma_m$.
According to Theorem \ref{W-O}, we have
$$
\sum_{\boldsymbol \alpha\in \cF} A_{(\boldsymbol \alpha)}^*A_{(\boldsymbol \alpha)}\leq 4\|I-A_0\|(I-A_0).
$$
Hence, and  using the noncommutative von Neumann inequality,  we deduce that
\begin{equation*}
\begin{split}
\left\|\sum_{\boldsymbol \alpha\in \cF} C_{(\boldsymbol \alpha)}\otimes A_{(\boldsymbol \alpha)}\otimes {\bf X}_{\boldsymbol \alpha}\right\|
&\leq  \left\|[C_{(\boldsymbol \alpha)}\otimes I\otimes {\bf S}_{\boldsymbol \alpha}: \ {\boldsymbol \alpha}\in \cF]\left[\begin{matrix}I\otimes  A_{(\boldsymbol \alpha)}\otimes I\\:\\ \boldsymbol \alpha\in \cF\end{matrix}\right]\right\|\\
&\leq \left\|\text{\rm diag}_{\cF}(C_{(\boldsymbol \alpha)}^*C_{(\boldsymbol \alpha)})\right\|^{1/2}
\left\|\sum_{\boldsymbol \alpha\in \cF} A_{(\boldsymbol \alpha)}^*A_{(\boldsymbol \alpha)}\right\|^{1/2}\\
&=\left\|\sum_{\boldsymbol \alpha\in \cF} A_{(\boldsymbol \alpha)}^*A_{(\boldsymbol \alpha)}\right\|^{1/2}\sup_{\boldsymbol\alpha\in \cF}\|C_{(\boldsymbol \alpha)}\|\\
&\leq 2\|I-A_0\| \, \sup_{\boldsymbol\alpha\in \cF}\|C_{(\boldsymbol \alpha)}\|.
\end{split}
\end{equation*}
Consequently, $\left\|\sum_{\boldsymbol \alpha\in \Sigma_m} C_{(\boldsymbol \alpha)}\otimes A_{(\boldsymbol \alpha)}\otimes {\bf X}_{\boldsymbol \alpha}\right\|\leq
 2\|I-A_0\| \, \sup_{\boldsymbol\alpha\in \Sigma_m}\|C_{(\boldsymbol \alpha)}\|
$ for any $m\in \NN$.
Now, using the hypothesis, we have
\begin{equation*}
\begin{split}
\sum_{m=0}^\infty  \sup_{{\bf X}\in   {\bf B_n}(\cH)^-} \left\|\sum_{\boldsymbol \alpha\in\Sigma_m}C_{(\boldsymbol\alpha)}\otimes  A_{(\boldsymbol \alpha)}\otimes {\bf X}_{\boldsymbol \alpha}\right\|&\leq \|A_0\|+2\|1-A_0\|\sum_{m=1}^\infty \sup_{\alpha\in \Sigma_m}\|C_{(\boldsymbol\alpha)}\|\\
&\leq\|A_0\|+\|1-A_0\|.
\end{split}
\end{equation*}
To prove part (ii), denote  $d_{\Sigma_m}:=\left\|[C_{(\boldsymbol\alpha)}^*:\ \boldsymbol\alpha\in \Sigma_m]\right\|$ for $m\in \NN$, and note that
$$ Q_{\Lambda_m}:=
\left(\begin{matrix}
 d_{\Sigma_m}I_\cE& [C_{(\boldsymbol \alpha)}^*:\ \boldsymbol \alpha\in \Sigma_m]\\
 \left[
 \begin{matrix}C_{(\boldsymbol \alpha)}\\
 \vdots\\
 \boldsymbol \alpha\in \Sigma_m
 \end{matrix}
 \right]&
 \left[
 \begin{matrix} d_{\Sigma_m}I_\cE&\cdots& 0\\
 \vdots&\ddots& \vdots\\
 0&\cdots& d_{\Sigma_m}I_\cE
 \end{matrix}
 \right]
 \end{matrix}
 \right)
$$
is a positive operator matrix. Since $P_{\Sigma_m}\geq 0$, due to Theorem \ref{W-O}, we have $P_{\Sigma_m}\otimes Q_{\Sigma_m}\geq 0$. Compressing the operator matrix $P_{\Sigma_m}\otimes Q_{\Sigma_m}$ to the appropriate entries, we obtain that the operator matrix
\begin{equation}\label{2d}
\begin{split}
\left(\begin{matrix}
 I_\cE\otimes 2d_{\Sigma_m}(I-A_0)& [C_{(\boldsymbol \alpha)}^*\otimes A_{(\boldsymbol \alpha)}^*:\ \boldsymbol \alpha\in \Sigma_m]\\
 \left[
 \begin{matrix}C_{(\boldsymbol \alpha)}\otimes A_{(\boldsymbol \alpha)}\\
 \vdots\\
 \boldsymbol \alpha\in \Sigma_m
 \end{matrix}
 \right]&
 \left[
 \begin{matrix} I_\cE\otimes 2d_{\Sigma_m}(I-A_0)&\cdots& 0\\
 \vdots&\ddots& \vdots\\
 0&\cdots& I_\cE\otimes 2d_{\Sigma_m}(I-A_0)
 \end{matrix}
 \right]
 \end{matrix}
 \right)
\end{split}
\end{equation}
is positive. Since $\Sigma_m$ is an orthogonal set, the isometries $\{{\bf S}_{\boldsymbol\alpha}\}_{\boldsymbol\alpha\in \Sigma_m}$ have orthogonal ranges.
Applying  now Lemma 2.2 from \cite{Po-Bohr} to the operator matrix \eqref{2d}, we deduce that
\begin{equation}
\label{V2}
\left(\begin{matrix} I_\cE\otimes 2d_{\Sigma_m}(I-A_0)\otimes I & \sum_{\boldsymbol \alpha\in \Sigma_m}
C_{(\boldsymbol \alpha)}\otimes A_{(\boldsymbol \alpha)}\otimes {\bf S}_{\boldsymbol\alpha}\\
\sum_{\boldsymbol \alpha\in \Sigma_m}
C_{(\boldsymbol \alpha)}^*\otimes A_{(\boldsymbol \alpha)}^*\otimes {\bf S}_{\boldsymbol\alpha}^*& I_\cE\otimes 2d_{\Sigma_m}(I-A_0)\otimes I
\end{matrix}
\right)
\end{equation}
is positive. Since $0\leq I-A_0\leq I$, we deduce that
\begin{equation*}
\left(\begin{matrix} 2d_{\Sigma_m}I_\cE\otimes  I_\cH\otimes I & \sum_{\boldsymbol \alpha\in \Sigma_m}
C_{(\boldsymbol \alpha)}\otimes A_{(\boldsymbol \alpha)}\otimes {\bf S}_{\boldsymbol\alpha}\\
\sum_{\boldsymbol \alpha\in \Sigma_m}
C_{(\boldsymbol \alpha)}^*\otimes A_{(\boldsymbol \alpha)}^*\otimes {\bf S}_{\boldsymbol\alpha}^*& 2d_{\Sigma_m}I_\cE\otimes I_\cH\otimes I
\end{matrix}
\right)\geq 0,
\end{equation*}
which implies
$$
\left\|\sum_{\boldsymbol \alpha\in \Sigma_m}
C_{(\boldsymbol \alpha)}\otimes A_{(\boldsymbol \alpha)}\otimes {\bf S}_{\boldsymbol\alpha}\right\|\leq d_{\Sigma_m}, \qquad m\in \NN.
$$
Due to the hypothesis, we have $\sum_{m=1}^\infty \sum_{\boldsymbol \alpha\in \Sigma_m} d_{\Sigma_m}\leq \frac{1}{2}$, which shows that the series
$$
\sum_{m=1}^\infty \left\|\sum_{\boldsymbol \alpha\in \Sigma_m}
C_{(\boldsymbol \alpha)}\otimes A_{(\boldsymbol \alpha)}\otimes {\bf S}_{\boldsymbol\alpha}\right\|
$$
is convergent.
Since $A_0\geq 0$ and $\left(\begin{matrix}I_\cE&C_0\\C_0^*&I_\cE\end{matrix}
\right)\geq 0$
we also have
$\left(\begin{matrix}I_\cE\otimes A_0\otimes I&C_0\otimes A_0\otimes I\\C_0^*\otimes A_0\otimes I&I_\cE\otimes A_0\otimes I\end{matrix}
\right)\geq 0$. Taking the sum of the matrices \eqref{V2} and the latter one, we obtain

\begin{equation*}
\left(\begin{matrix} I_\cE\otimes \left[A_0+\sum_{m=1}^\infty 2d_{\Sigma_m}(I-A_0)\right]\otimes I& \sum_{m=0}^\infty\sum_{\boldsymbol \alpha\in \Sigma_m}
C_{(\boldsymbol \alpha)}\otimes A_{(\boldsymbol \alpha)}\otimes {\bf S}_{\boldsymbol\alpha}\\
\sum_{m=0}^\infty\sum_{\boldsymbol \alpha\in \Sigma_m}
C_{(\boldsymbol \alpha)}^*\otimes A_{(\boldsymbol \alpha)}^*\otimes {\bf S}_{\boldsymbol\alpha}^*& I_\cE\otimes \left[A_0+\sum_{m=1}^\infty2d_{\Sigma_m}(I-A_0)\right]\otimes I
\end{matrix}
\right)\geq 0.
\end{equation*}
Since $0\leq A_0+\sum_{m=1}^\infty 2d_{\Sigma_m}(I-A_0)\leq I$, we deduce that
\begin{equation*}
\left(\begin{matrix} I_\cE\otimes I_\cK\otimes I& \sum_{m=0}^\infty\sum_{\boldsymbol \alpha\in \Sigma_m}
C_{(\boldsymbol \alpha)}\otimes A_{(\boldsymbol \alpha)}\otimes {\bf S}_{\boldsymbol\alpha}\\
\sum_{m=0}^\infty\sum_{\boldsymbol \alpha\in \Sigma_m}
C_{(\boldsymbol \alpha)}^*\otimes A_{(\boldsymbol \alpha)}^*\otimes {\bf S}_{\boldsymbol\alpha}^*& I_\cE\otimes I_\cK\otimes I
\end{matrix}
\right)\geq 0,
\end{equation*}
which implies
$$
      \left\|\sum_{m=0}^\infty\sum_{\boldsymbol \alpha\in \Sigma_m}C_{(\boldsymbol\alpha)}\otimes  A_{(\boldsymbol \alpha)}\otimes {\bf S}_{\boldsymbol \alpha}\right\|\leq 1
      $$
and completes  the proof.
\end{proof}

\begin{corollary} \label{im} Let  $\{\Sigma_m\}_{m=0}^\infty$ be an exhaustion of
 $\FF_{n_1}^+\times\cdots \times \FF_{n_k}^+$  by  minimal and orthogonal sets, and
let   $F:{\bf B_n}(\cH)\to B(\cK)\otimes_{min}B(\cH)$ be a   free holomorphic function with representation
$$
F({\bf X})=\sum_{\boldsymbol \alpha\in  \FF_{n_1}^+\times\cdots \times \FF_{n_k}^+} A_{(\boldsymbol \alpha)}\otimes  {\bf X}_{\boldsymbol \alpha}
$$
  such that $ F(0)\geq 0$ and  $\Re F({\bf X})\leq I$ for any ${\bf X}\in {\bf B_n}(\cH)$.
  Then the following statements hold.

 \begin{enumerate}
\item[(i)] For any   $\boldsymbol\rho=(\rho_{i,j})\in [0,1)^{n_1+\cdots n_k}$,
$$
\sup_{{\bf X}\in \boldsymbol\rho{\bf B_n}} \left\|\sum_{\boldsymbol \alpha\in \Sigma_m} A_{(\boldsymbol \alpha)}\otimes {\bf X}_{\boldsymbol \alpha}\right\|\leq 2\|I-A_0\|\sup_{\boldsymbol\alpha\in \Sigma_m} \boldsymbol\rho_{\boldsymbol \alpha}.
$$
\item[(ii)]
If  $
\sum\limits_{m=1}^\infty \sup\limits_{\boldsymbol\alpha\in \Sigma_m} \boldsymbol\rho_{\boldsymbol \alpha}\leq \frac{1}{2},
$
 then
$$
\sum_{m=0}^\infty  \sup_{{\bf X}\in   \boldsymbol\rho{\bf B_n}(\cH)^-} \left\|\sum_{\boldsymbol \alpha\in \Sigma_m}   A_{(\boldsymbol \alpha)}\otimes {\bf X}_{\boldsymbol \alpha}\right\|
\leq
 \|A_0\|+\|I-A_0\|.
$$
\item[(iii)] If $\boldsymbol\rho:=(\rho_{i,j})$ with $\rho_{i,j}=r_i\in [0,1)$ for   $j\in \{1,\ldots, n_i\}$ and
$\prod_{i=1}^k (1-r_i)\geq \frac{2}{3}$, then
$$
\sum\limits_{{\bf p}\in \ZZ_+^k} \left\|\sum\limits_ {\boldsymbol\alpha\in \Lambda_{\bf p}} A_{(\boldsymbol \alpha)}\otimes   {\bf X}_{\boldsymbol \alpha}\right\|\leq
 \|A_0\|+\|I-A_0\|, \qquad {\bf X}\in \boldsymbol\rho{\bf B_n}(\cH)
$$
   In particular, the inequality  holds when  $r_1=\cdots =r_k\leq 1-\left(\frac{2}{3}\right)^{1/k}$.
\end{enumerate}
\end{corollary}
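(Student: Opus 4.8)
The plan is to derive all three parts from Theorem \ref{W-O}, whose hypotheses ($F(0)\ge 0$ and $\Re F\le I$) coincide with ours and whose conclusions apply to each $\Sigma_m$ with $m\ge 1$, since such a $\Sigma_m$ is right minimal, orthogonal, and disjoint from $\Sigma_0=\{{\bf g}_0\}$, hence avoids the neutral element. For part (i), I would first quote the coefficient inequality of Theorem \ref{W-O},
$$\sum_{\boldsymbol\alpha\in\Sigma_m}A_{(\boldsymbol\alpha)}^*A_{(\boldsymbol\alpha)}\le 4\|I-A_0\|(I-A_0),$$
so that $\|\sum_{\boldsymbol\alpha\in\Sigma_m}A_{(\boldsymbol\alpha)}^*A_{(\boldsymbol\alpha)}\|^{1/2}\le 2\|I-A_0\|$. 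For ${\bf X}\in\boldsymbol\rho{\bf B_n}(\cH)^-$ I would write ${\bf X}=\boldsymbol\rho{\bf Y}$ with ${\bf Y}\in{\bf B_n}(\cH)^-$, so that ${\bf X}_{\boldsymbol\alpha}=\boldsymbol\rho_{\boldsymbol\alpha}{\bf Y}_{\boldsymbol\alpha}$, and reduce to the universal model by the noncommutative von Neumann inequality, after which the row--column factorization used in the last part of the proof of Theorem \ref{W-O} (with $\boldsymbol\rho_{\boldsymbol\alpha}$ in place of $r^{|\boldsymbol\alpha|}$) gives
$$\Bigl\|\sum_{\boldsymbol\alpha\in\Sigma_m}\boldsymbol\rho_{\boldsymbol\alpha}A_{(\boldsymbol\alpha)}\otimes{\bf S}_{\boldsymbol\alpha}\Bigr\|\le\Bigl(\sup_{\boldsymbol\alpha\in\Sigma_m}\boldsymbol\rho_{\boldsymbol\alpha}\Bigr)\Bigl\|\sum_{\boldsymbol\alpha\in\Sigma_m}A_{(\boldsymbol\alpha)}^*A_{(\boldsymbol\alpha)}\Bigr\|^{1/2}\le 2\|I-A_0\|\sup_{\boldsymbol\alpha\in\Sigma_m}\boldsymbol\rho_{\boldsymbol\alpha},$$
which is exactly part (i).

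For part (ii) I would sum the bound of part (i) over $m\ge 0$. The index $m=0$ gives $\Sigma_0=\{{\bf g}_0\}$ and ${\bf X}_{{\bf g}_0}=I$, so that term equals $\|A_0\otimes I\|=\|A_0\|$; the remaining terms are estimated by part (i) together with the hypothesis $\sum_{m\ge 1}\sup_{\boldsymbol\alpha\in\Sigma_m}\boldsymbol\rho_{\boldsymbol\alpha}\le\tfrac12$, giving
$$\sum_{m=1}^\infty\sup_{{\bf X}\in\boldsymbol\rho{\bf B_n}(\cH)^-}\Bigl\|\sum_{\boldsymbol\alpha\in\Sigma_m}A_{(\boldsymbol\alpha)}\otimes{\bf X}_{\boldsymbol\alpha}\Bigr\|\le 2\|I-A_0\|\sum_{m=1}^\infty\sup_{\boldsymbol\alpha\in\Sigma_m}\boldsymbol\rho_{\boldsymbol\alpha}\le\|I-A_0\|.$$
Adding the two contributions yields the bound $\|A_0\|+\|I-A_0\|$.

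For part (iii) I would take the exhaustion $\{\Lambda_{\bf p}\}_{{\bf p}\in\ZZ_+^k}$, which is minimal and orthogonal by Examples \ref{Ex1}(ii) and \ref{Ex2}(iii). With $\rho_{i,j}=r_i$ one has $\boldsymbol\rho_{\boldsymbol\alpha}=r_1^{p_1}\cdots r_k^{p_k}$ for every $\boldsymbol\alpha\in\Lambda_{\bf p}$, hence $\sup_{\boldsymbol\alpha\in\Lambda_{\bf p}}\boldsymbol\rho_{\boldsymbol\alpha}=r_1^{p_1}\cdots r_k^{p_k}$ and
$$\sum_{{\bf p}\in\ZZ_+^k\backslash\{0\}}\sup_{\boldsymbol\alpha\in\Lambda_{\bf p}}\boldsymbol\rho_{\boldsymbol\alpha}=\prod_{i=1}^k(1-r_i)^{-1}-1.$$
The assumption $\prod_{i=1}^k(1-r_i)\ge\tfrac23$ forces this sum to be $\le\tfrac12$, so the hypothesis of part (ii) holds and part (ii) applies; since a fixed value is dominated by the supremum, the asserted inequality for ${\bf X}\in\boldsymbol\rho{\bf B_n}(\cH)$ follows. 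The special case $r_1=\cdots=r_k=r\le 1-(2/3)^{1/k}$ is immediate, as then $(1-r)^k\ge\tfrac23$.

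I expect no serious obstacle: once Theorem \ref{W-O} is granted, everything reduces to bookkeeping and summation. The one point needing care is the factorization in part (i), where it is precisely the orthogonality of the ranges of $\{{\bf S}_{\boldsymbol\alpha}\}_{\boldsymbol\alpha\in\Sigma_m}$ that collapses the norm of the row $[\boldsymbol\rho_{\boldsymbol\alpha}I\otimes{\bf S}_{\boldsymbol\alpha}]$ to the single scalar $\sup_{\boldsymbol\alpha\in\Sigma_m}\boldsymbol\rho_{\boldsymbol\alpha}$ rather than an $\ell^2$-type sum; this is why minimality alone is insufficient and the orthogonality of the exhaustion is genuinely used.
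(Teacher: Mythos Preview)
Your proof is correct and follows essentially the same approach as the paper. The paper phrases items (i) and (ii) as particular cases of Theorem \ref{CAX} (taking $C_{(\boldsymbol\alpha)}=\boldsymbol\rho_{\boldsymbol\alpha}I$), but the proof of Theorem \ref{CAX} itself proceeds exactly as you do: it invokes the coefficient inequality from Theorem \ref{W-O}, applies the row--column factorization using the orthogonality of $\{\mathbf S_{\boldsymbol\alpha}\}_{\boldsymbol\alpha\in\Sigma_m}$ to obtain the single-$\Sigma_m$ bound, and then sums over $m$; part (iii) is derived from (ii) via the same geometric-series computation you give.
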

\begin{proof} Items (i) and (ii) are particular cases of Theorem \ref{CAX}.  It is easy to see that part (iii) follows from part (ii).
\end{proof}
We remark that under the conditions of Corollary \ref{im}, we have $0\leq A_0\leq I$. Due to the spectral theorem, one can easily see that $\|A_0\|+\|I-A_0\|=1$ if and only $A_0=a_0I$ for some scalar $a_0>0$.
In the particular case when $n_1=\cdots= n_k=1$, we obtain the following result for the scalar  polydisc $\DD^k$.

\begin{corollary}  \label {pdisc} Let
$$
f({\bf z})=\sum_{(p_1,\ldots, p_k)\in \ZZ_+^k}  A_{p_1,\ldots, p_k} z_1^{p_1}\cdots z_k^{p_k},\qquad {\bf z}=(z_1,\ldots, z_k)\in \DD^k,
$$
 be an operator-valued analytic function on the  polydisk  such that $\Re f(z)\leq I$ and $f(0)\geq 0$. Then
$$
\sum_{(p_1,\ldots, p_k)\in \ZZ_+^k} \| A_{p_1,\ldots, p_k}\| r_1^{p_1}\cdots r_k^{p_k}\leq \|A_0\|+\|I-A_0\|
$$
for any $r_i\in [0, 1)$ with $\prod_{i=1}^k (1-r_i)\geq \frac{2}{3}$.
\end{corollary}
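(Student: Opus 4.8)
The plan is to obtain this as the $n_1=\cdots=n_k=1$ specialization of Corollary~\ref{im}, part~(iii), transported to the scalar polydisc through the operator-valued version of Proposition~\ref{prelim}. First I would pass from $f$ to its polyball counterpart: by the remark following Proposition~\ref{prelim}, the operator-valued analytic function $f$ on $\DD^k$ corresponds isometrically to a free holomorphic function $F$ on the regular polydisc ${\bf D}^k={\bf B_n}$ (with $n_1=\cdots=n_k=1$) given by $F({\bf X})=\sum_{(p_1,\ldots,p_k)\in\ZZ_+^k} A_{p_1,\ldots,p_k}\otimes X_{1,1}^{p_1}\cdots X_{k,1}^{p_k}$, so that $A_0=f(0)$ is the constant coefficient of both and $\|f\|_\infty=\|F\|_\infty$.

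Next I would verify that $F$ meets the standing hypotheses of Corollary~\ref{im}. The condition $f(0)\geq 0$ is exactly $F(0)=A_0\otimes I\geq 0$, hence $A_0\geq 0$. For the real-part condition, the operator-valued form of Proposition~\ref{prelim}, part~(ii), gives that $\Re f({\bf z})\leq I$ on $\DD^k$ is equivalent to $\Re F(r{\bf S})\leq I$ for all $r\in[0,1)$; and, just as in the opening paragraph of the proof of Theorem~\ref{W-O}, applying the completely positive noncommutative Berezin transform $\boldsymbol\cB_{\frac{1}{r}{\bf X}}$ to $2I-F(r{\bf S})-F(r{\bf S})^*$ upgrades this to $\Re F({\bf X})\leq I$ for every ${\bf X}\in{\bf B_n}(\cH)$. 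Thus $F$ is an admissible function for Corollary~\ref{im}.

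The point that makes the identification clean is that for $n_i=1$ each $\FF_{n_i}^+$ is just $\{(g_1^i)^{p_i}:p_i\in\ZZ_+\}$, so every index set $\Lambda_{\bf p}=\Lambda_{p_1}\times\cdots\times\Lambda_{p_k}$ is a singleton. Hence the multi-homogeneous block $\sum_{\boldsymbol\alpha\in\Lambda_{\bf p}}A_{(\boldsymbol\alpha)}\otimes{\bf S}_{\boldsymbol\alpha}$ collapses to the single term $A_{p_1,\ldots,p_k}\otimes S_1^{p_1}\cdots S_k^{p_k}$, whose norm equals $\|A_{p_1,\ldots,p_k}\|$ because $S_1^{p_1}\cdots S_k^{p_k}$ is an isometry on $F^2(H_1)^{\otimes k}\cong H^2(\DD^k)$. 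Applying Corollary~\ref{im}, part~(iii), with $\rho_{i,j}=r_i$ and evaluating the polyball majorant at the extremal point $\boldsymbol\rho{\bf S}$ (legitimate by the noncommutative von Neumann inequality for polyballs, which also underlies the equivalent form of the Bohr inequalities stated earlier), the left-hand side becomes precisely $\sum_{(p_1,\ldots,p_k)\in\ZZ_+^k}\|A_{p_1,\ldots,p_k}\|\,r_1^{p_1}\cdots r_k^{p_k}$ and the right-hand side is $\|A_0\|+\|I-A_0\|$, under the stated constraint $\prod_{i=1}^k(1-r_i)\geq\frac{2}{3}$. I do not anticipate a genuine obstacle here: the only steps requiring care are this collapse of the index set to a singleton, so that the polyball multi-homogeneous majorant literally becomes the monomial majorant series on $\DD^k$, and the appeal to the operator-coefficient analogue of Proposition~\ref{prelim}, part~(ii), which is asserted in the surrounding remark rather than reproved in the scalar statement.
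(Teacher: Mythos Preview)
Your proposal is correct and follows essentially the same approach as the paper, which simply states that the corollary is the specialization of Corollary~\ref{im}, part~(iii), to the case $n_1=\cdots=n_k=1$. You have merely spelled out the details of that specialization: the passage from $f$ to $F$ via the operator-valued form of Proposition~\ref{prelim}, the verification of the hypotheses on $F$, and the collapse of each $\Lambda_{\bf p}$ to a singleton so that the multi-homogeneous majorant becomes the monomial series.
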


We should mention that, in the particular case when $k=1$,   we recover the corresponding result for the disc   obtained by Paulsen and Singh in \cite{PS}.

\section{The Bohr radius $K_{mh}({\bf B_n})$ for the Hardy space $H^\infty({\bf B_n})$}

 In this section, using the results of  Section 3, we  obtain   estimations for the Bohr radii
     $K_{mh}({\bf B_n})$ and  $K_{mh}^0({\bf B_n})$ which  extend Boas-Khavinson  results for the scalar polydisk to the polyball.

\begin{theorem} \label{RE}  Let   $F:{\bf B_n}(\cH)\to B(\cK)\otimes_{min}B(\cH)$ be a   free holomorphic function with representation
$
F({\bf X})=\sum_{\boldsymbol \alpha\in  \FF_{n_1}^+\times\cdots \times \FF_{n_k}^+} A_{(\boldsymbol \alpha)}\otimes  {\bf X}_{\boldsymbol \alpha}
$
  such that $ F(0)=a_0I$, $a_0\geq 0$, and  $\Re F({\bf X})\leq I$ for any ${\bf X}\in {\bf B_n}(\cH)$. If $k>1$, then
  $$\sum\limits_{{\bf p}\in \ZZ_+^k} \left\|\sum\limits_ {\boldsymbol\alpha\in \Lambda_{\bf p}} A_{(\boldsymbol \alpha)} \otimes  {\bf X}_{\boldsymbol \alpha}\right\|
\leq 1, \qquad {\bf X}\in   r {\bf B_n}(\cH)^-.
$$
for any  $r\in [0,\gamma_k]$, where $\gamma_k\in \left(\frac{1}{3\sqrt{k}},1\right)$ is the   solution of the equation
  \begin{equation*}
  \sum_{m=1}^\infty \left(\begin{matrix} m+k-1\\k-1\end{matrix}\right)^{1/2} r^m=\frac{1}{2}.
  \end{equation*}
  Moreover, if $r\geq \frac{2\sqrt{\log k}}{\sqrt{k}}$, then the inequality above fails.
\end{theorem}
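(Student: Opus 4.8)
The plan is to treat the three assertions---the existence and location of $\gamma_k$, the Bohr inequality for $r\le\gamma_k$, and its failure for $r\ge 2\sqrt{\log k}/\sqrt k$---separately, reducing everything to the universal model ${\bf S}$ via the noncommutative von Neumann inequality for polyballs.

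First I would settle the defining equation. Set $\phi(r):=\sum_{m=1}^\infty\binom{m+k-1}{k-1}^{1/2}r^m$. Since $\binom{m+k-1}{k-1}$ grows polynomially in $m$, the series has radius of convergence $1$; on $[0,1)$ the function $\phi$ is continuous and strictly increasing with $\phi(0)=0$, and for $k>1$ its terms do not tend to $0$ as $r\to1^-$, so $\phi(r)\to\infty$. By the intermediate value theorem there is a unique $\gamma_k\in(0,1)$ with $\phi(\gamma_k)=\tfrac12$. To locate $\gamma_k$ I would use the elementary bound $\binom{m+k-1}{k-1}\le k^m$ (the monomials of degree $m$ in $k$ variables are fewer than the words of length $m$), which is strict for $m\ge2$ when $k>1$. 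Hence $\phi\bigl(\tfrac1{3\sqrt k}\bigr)<\sum_{m\ge1}k^{m/2}(3\sqrt k)^{-m}=\sum_{m\ge1}3^{-m}=\tfrac12$, and monotonicity gives $\gamma_k>\tfrac1{3\sqrt k}$.

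For the inequality itself I would first note that $\Re F\le I$ and $F(0)=a_0I$ force $0\le a_0\le1$. Fix ${\bf X}\in r{\bf B_n}(\cH)^-$ with $r\le\gamma_k$; by the von Neumann inequality it suffices to bound the majorant at $r{\bf S}$, i.e.\ $a_0+\sum_{{\bf p}\neq0}r^{|{\bf p}|}\bigl\|\sum_{\boldsymbol\alpha\in\Lambda_{\bf p}}A_{(\boldsymbol\alpha)}\otimes{\bf S}_{\boldsymbol\alpha}\bigr\|$. Grouping the terms by total degree $m=|{\bf p}|$, there are $\binom{m+k-1}{k-1}$ multi-indices ${\bf p}$ with $|{\bf p}|=m$; since each $\Lambda_{\bf p}$ is orthogonal (Example \ref{Ex2}), the isometries ${\bf S}_{\boldsymbol\alpha}$, $\boldsymbol\alpha\in\Lambda_{\bf p}$, have orthogonal ranges, whence $\bigl\|\sum_{\boldsymbol\alpha\in\Lambda_{\bf p}}A_{(\boldsymbol\alpha)}\otimes{\bf S}_{\boldsymbol\alpha}\bigr\|=\bigl\|\sum_{\boldsymbol\alpha\in\Lambda_{\bf p}}A_{(\boldsymbol\alpha)}^*A_{(\boldsymbol\alpha)}\bigr\|^{1/2}$. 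Applying Cauchy--Schwarz across the $\binom{m+k-1}{k-1}$ blocks and then Theorem \ref{W-O} to the minimal set $\Gamma_m$ (Example \ref{Ex1}), which gives $\sum_{\boldsymbol\alpha\in\Gamma_m}A_{(\boldsymbol\alpha)}^*A_{(\boldsymbol\alpha)}\le4(1-a_0)^2I$, I would obtain $\sum_{|{\bf p}|=m}\|\cdots\|\le\binom{m+k-1}{k-1}^{1/2}\,2(1-a_0)$. Summing the weighted series yields $a_0+2(1-a_0)\phi(r)\le a_0+2(1-a_0)\cdot\tfrac12=1$, as required. For the last assertion I would invoke the isometric embedding $H^\infty(\DD^k)\hookrightarrow H^\infty({\bf B_n})$ of Proposition \ref{prelim}, under which each $\Lambda_{\bf p}$ corresponds to the single monomial ${\bf z}^{\bf p}$ and the multi-homogeneous functional becomes the classical polydisc Bohr sum. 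By the Boas--Khavinson upper bound \cite{BK}, $K_k<2\sqrt{\log k}/\sqrt k$, so for $r\ge2\sqrt{\log k}/\sqrt k$ there is $f\in H^\infty(\DD^k)$ with $\|f\|_\infty=1$ whose Bohr sum exceeds $1$ at radius $r$; after multiplying by a unimodular constant so that $f(0)\ge0$ (which preserves the coefficient moduli and $\|f\|_\infty$ and keeps $\Re f\le1$), the transplanted function lies in the class of the theorem and violates the displayed inequality.

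The main obstacle is the square-root gain in the third paragraph, namely passing from the multi-homogeneous blocks $\Lambda_{\bf p}$ to the homogeneous block $\Gamma_m$. The delicate interplay is that each $\Lambda_{\bf p}$ is orthogonal (so its block norm collapses to an $\ell^2$-sum of coefficients) while $\Gamma_m$ is only minimal---yet minimality is precisely what Theorem \ref{W-O} requires for the coefficient estimate $\sum_{\boldsymbol\alpha\in\Gamma_m}A_{(\boldsymbol\alpha)}^*A_{(\boldsymbol\alpha)}\le4(1-a_0)^2I$. For scalar coefficients $A_{(\boldsymbol\alpha)}=a_{(\boldsymbol\alpha)}I$ the Cauchy--Schwarz step is immediate because $\sum_{|{\bf p}|=m}\bigl\|\sum_{\Lambda_{\bf p}}A^*A\bigr\|=\bigl\|\sum_{\Gamma_m}A^*A\bigr\|$, which is exactly the setting relevant to the scalar Bohr radius $K_{mh}({\bf B_n})$; for genuinely operator-valued coefficients one must control the sum of the block norms by the norm of the homogeneous sum, and it is this step that has to be handled carefully if the exponent $\tfrac12$ (rather than $1$) is to survive.
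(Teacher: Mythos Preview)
Your argument follows the paper's almost exactly. The paper packages the computation by fixing ${\bf Y}\in{\bf B_n}(\cH)^-$, forming the operator-valued polydisc function $g({\bf z}):=F({\bf z}{\bf Y})$ with coefficients $B_{\bf p}:=\sum_{\boldsymbol\alpha\in\Lambda_{\bf p}}A_{(\boldsymbol\alpha)}\otimes{\bf Y}_{\boldsymbol\alpha}$, and applying Theorem~\ref{W-O} in the case $n_1=\cdots=n_k=1$ to get $\sum_{|{\bf p}|=m}B_{\bf p}^*B_{\bf p}\le4(1-a_0)^2I$; your version is the specialization ${\bf Y}={\bf S}$, where orthogonality of $\Lambda_{\bf p}$ gives $B_{\bf p}^*B_{\bf p}=\bigl(\sum_{\Lambda_{\bf p}}A_{(\boldsymbol\alpha)}^*A_{(\boldsymbol\alpha)}\bigr)\otimes I$, so the two routes coincide after the von Neumann reduction. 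Your treatment of $\gamma_k$ (via $\binom{m+k-1}{k-1}<k^m$ for $k>1$, $m\ge2$) and the Boas--Khavinson failure (via Proposition~\ref{prelim}) is identical to the paper's.

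The obstacle you flag in your final paragraph is exactly the step the paper performs without comment: it writes
\[
\sum_{|{\bf p}|=m}\|B_{\bf p}\|\,|{\bf z}^{\bf p}|\ \le\ \Bigl\|\sum_{|{\bf p}|=m}B_{\bf p}^*B_{\bf p}\Bigr\|^{1/2}\Bigl(\sum_{|{\bf p}|=m}|{\bf z}^{\bf p}|^2\Bigr)^{1/2},
\]
which would require $\sum_{|{\bf p}|=m}\|B_{\bf p}\|^2\le\bigl\|\sum_{|{\bf p}|=m}B_{\bf p}^*B_{\bf p}\bigr\|$. For positive operators this points the wrong way: on ${\bf D}^2$ with $F=(X_{1,1}+X_{2,1})/2$ and ${\bf Y}=(\operatorname{diag}(1,0),\operatorname{diag}(0,1))\in{\bf D}^2(\CC^2)^-$ one gets $B_{(1,0)}=\operatorname{diag}(\tfrac12,0)$, $B_{(0,1)}=\operatorname{diag}(0,\tfrac12)$, so $\sum\|B_{\bf p}\|^2=\tfrac12$ while $\|\sum B_{\bf p}^*B_{\bf p}\|=\tfrac14$. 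Your formulation actually fares \emph{better} here: working at ${\bf S}$ the blocks are $P_{\bf p}=\sum_{\Lambda_{\bf p}}A_{(\boldsymbol\alpha)}^*A_{(\boldsymbol\alpha)}\in B(\cK)$, and when $\cK=\CC$ these are nonnegative scalars, so $\sum_{|{\bf p}|=m}\|P_{\bf p}\|=\|\sum_{\Gamma_m}A_{(\boldsymbol\alpha)}^*A_{(\boldsymbol\alpha)}\|$ is an equality and your Cauchy--Schwarz step is rigorous. Thus your proof is complete for scalar-valued $F$ (which is all that is needed for the Bohr radius $K_{mh}({\bf B_n})$); for genuinely operator-valued $A_{(\boldsymbol\alpha)}$ both the paper's route and yours share the same unjustified inequality.
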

 \begin{proof}
Assume that  $F $ has the representation
$
F({\bf X})=\sum\limits_{{\bf p}\in \ZZ_+^k} \left(\sum\limits_ {\boldsymbol\alpha\in \Lambda_{\bf p}} A_{(\boldsymbol \alpha)}  \otimes {\bf X}_{\boldsymbol \alpha}\right)$
and    the   properties stated in the theorem. Note also that $a_0\leq 1$.
Let ${\bf Y}=(Y_1,\ldots, Y_k)\in {\bf B_n}(\cH)^-$ with $Y_i=(Y_{i,1},\ldots, Y_{i,n_i})$  and let ${\bf z}:=(z_1, \ldots, z_k)\in \DD^k$.
Since ${\bf B_n}(\cH)$ is noncommutative complete Reinhardt domain (see Proposition 1.3 from \cite{Po-automorphisms-polyball}), we have ${\bf zY}:=(z_1Y_1,\ldots, z_k Y_k)\in {\bf B_n}(\cH)$. Consequently,
$$
g(z_1,\ldots, z_k):=F({\bf zY})=\sum\limits_{{\bf p}\in \ZZ_+^k} \left(\sum\limits_ {\boldsymbol\alpha\in \Lambda_{\bf p}} A_{(\boldsymbol \alpha)}  \otimes {\bf Y}_{\boldsymbol \alpha}\right)z_1^{p_1}\cdots z_k^{p_k}
$$
is an   operator-valued  analytic function on the polydisk $\DD^k$ with the properties that $g(0)=a_0$ and $\Re g({\bf z})\leq I$ for ${\bf z}\in \DD^k$.
Denote $B_{p_1,\ldots p_k}:=\sum\limits_ {\boldsymbol\alpha\in \Lambda_{\bf p}} A_{(\boldsymbol \alpha)} \otimes {\bf Y}_{\boldsymbol \alpha}$ for all ${\bf p}=(p_1,\ldots, p_k)\in \ZZ_+^k$.
Applying  Theorem \ref{W-O}  to $g$ and the right minimal set
$$\Lambda=\Gamma_m:=\{\boldsymbol\alpha=(\alpha_1,\ldots, \alpha_k)\in \FF_{n_1}^+\times\cdots \times \FF_{n_k}^+: \  |\boldsymbol\alpha|:=|\alpha_1|+\cdots +|\alpha_k|=m\},
$$
 when $n_1=\cdots=n_k=1$, we obtain
$$
\sum_{{(p_1,\ldots, p_k)\in \ZZ_+^k}\atop {p_1+\cdots +p_k=m}} B_{p_1,\ldots p_k}^* B_{p_1,\ldots p_k} \leq 4\|I-B_0\|(I-B_0)=4(1-a_0)^2.
$$
Consequently, if ${\bf z}:=(z_1, \ldots, z_k)\in \DD^k$, we deduce that
\begin{equation*}
\begin{split}
\sum\limits_{(p_1,\ldots, p_k)\in \ZZ_+^k} &\left\|B_{p_1,\ldots p_k}z_1^{p_1}\cdots z_k^{p_k} \right\|\\
&\leq \|B_0\| +\sum_{m=1}^\infty \sum_{{(p_1,\ldots, p_k)\in \ZZ_+^k}\atop {p_1+\cdots +p_k=m}}\left\|B_{p_1,\ldots p_k}z_1^{p_1}\cdots z_k^{p_k} \right\|\\
&\leq  \|B_0\| +\sum_{m=1}^\infty \left\|\sum_{{(p_1,\ldots, p_k)\in \ZZ_+^k}\atop {p_1+\cdots +p_k=m}} B_{p_1,\ldots p_k}^* B_{p_1,\ldots p_k}\right\|^{1/2}
\left(\sum_{{(p_1,\ldots, p_k)\in \ZZ_+^k}\atop {p_1+\cdots +p_k=m}}|z_1|^{2p_1}\cdots |z_k|^{2p_k}\right)^{1/2}\\
&\leq
a_0+2(1-a_0)\sum_{m=1}^\infty\left(\sum_{i=1}^k |z_i|^2\right)^{m/2}.
\end{split}
\end{equation*}
Hence, if $\sum_{m=1}^\infty\left(\sum_{i=1}^k |z_i|^2\right)^{m/2}\leq \frac{1}{2}$, then
$\sum\limits_{(p_1,\ldots, p_k)\in \ZZ_+^k} \left\|B_{p_1,\ldots p_k}z_1^{p_1}\cdots z_k^{p_k} \right\|\leq 1$.
In particular,  if we take $z_1=\cdots =z_k=r\in [0,\frac{1}{3\sqrt{k}}]$, then
$$
\sum_{m=1}^\infty\left(\sum_{i=1}^k |z_i|^2\right)^{m/2}=\sum_{m=1}^\infty(kr^2)^{m/2}\leq \sum_{m=1}^\infty\frac{1}{3^m}=\frac{1}{2}.
$$
The results above show that
$$
\sum\limits_{{\bf p}=(p_1,\ldots, p_k)\in \ZZ_+^k} \left\| \left(\sum\limits_ {\boldsymbol\alpha\in \Lambda_{\bf p}} A_{(\boldsymbol \alpha)} \otimes {\bf Y}_{\boldsymbol \alpha}\right)r^{p_1+\cdots p_k} \right\|\leq 1,
$$
which is equivalent to
$$\sum\limits_{{\bf p}\in \ZZ_+^k} \left\|\sum\limits_ {\boldsymbol\alpha\in \Lambda_{\bf p}} A_{(\boldsymbol \alpha)} \otimes {\bf X}_{\boldsymbol \alpha}\right\|
\leq 1, \qquad {\bf X}\in   r {\bf B_n}(\cH)^-,
$$
for any $r\in \left[0,\frac{1}{3\sqrt{k}}\right]$.
 Now, note   that
$$
\varphi(r_1,\ldots, r_k):=\sum_{m=1}^\infty \left(\sum_{{(p_1,\ldots, p_k)\in \ZZ_+^k}\atop{p_1+\cdots+p_k=m}}
r_1^{2p_1}\cdots r_k^{2p_k}\right)^{1/2} < \sum_{m=1}^\infty (r_1^2+\cdots r_k^2)^{m/2},
$$
whenever $k>1$. Taking $r_1=\cdots =r_k=r$, the inequality above becomes
$$
\sum_{m=1}^\infty \left(\begin{matrix} m+k-1\\k-1\end{matrix}\right)^{1/2} r^m <
\sum_{m=1}^\infty (kr^2)^{m/2}.
$$
In particular, if $r=\frac{1}{3\sqrt{k}}$, we obtain
$$
\sum_{m=1}^\infty \left(\begin{matrix} m+k-1\\k-1\end{matrix}\right)^{1/2} \left(\frac{1}{3\sqrt{k}}\right)^m <
\frac{1}{2}.
$$
Hence,   there is a unique solution $\gamma_k> \frac{1}{3\sqrt{k}}$ of the equation
$$
  \sum_{m=1}^\infty \left(\begin{matrix} m+k-1\\k-1\end{matrix}\right)^{1/2} r^m=\frac{1}{2}.
  $$
  Consequently, $\varphi(\gamma_k,\ldots, \gamma_k)=\frac{1}{2}$.
 Using the first part of the proof when $z_1=\cdots= z_k=\gamma_k$,  we deduce  that  the inequality in the theorem holds for any  $r\in [0,\gamma_k]$.

    On the other hand,  according to a result of Boas and Khavinson \cite{BK},
if $r\geq\frac{2\sqrt{\log k}}{\sqrt{k}}$, then there is a function $f({\bf z})=\sum_{{\bf p}=(p_1,\ldots, p_k)\in \ZZ_+^k} a_{\bf p}{\bf z}^p$, ${\bf z}=(z_1,\ldots, z_k)\in \DD^k$,   in the Hardy space $H^\infty(\DD^k)$
such that  $\|f\|_\infty\leq 1$ and
$\sum_{{\bf p}\in \ZZ_+^k} r^{|{\bf p}|} |a_{\bf p}|> 1$. We can assume without loss of generality that $f(0)\geq 0$.
We remark that the free holomorphic   function
$F({\bf X}):=\sum_{{\bf p}=(p_1,\ldots, p_k)\in \ZZ_+^k} a_{\bf p}X_{1,1}^{p_1}X_{2,1}^{p_2}\cdots X_{k,1}^{p_k}$,
 where ${\bf X}=({ X}_1,\ldots, { X}_k)\in {\bf B_n}(\cH)$  and  $X_i:=(X_{i,1},\ldots, X_{i,n_i})$,
 is in the noncommutative Hardy algebra $H^\infty({\bf B_n})$ and $\|F\|_\infty=\|f\|_\infty$.
 Therefore, $F(0)=f(0)I$ and $\Re F({\bf X})\leq I$ for any ${\bf X}\in {\bf B_n}(\cH)$. Since
 $$
 \sum_{{\bf p}\in \ZZ_+^k} \sup_{{\bf X}\in r{\bf B_n}(\cH)}\left\|  |a_{\bf p}|X_{1,1}^{p_1}X_{2,1}^{p_2}\cdots X_{k,1}^{p_k}\right\|
 =
 \sum_{{\bf p}\in \ZZ_+^k}|a_{\bf p}|r^{|{\bf p}|}\|{\bf S}_{1,1}^{p_1}{\bf S}_{2,1}^{p_2}\cdots {\bf S}_{k,1}^{p_k}\|
 =\sum_{{\bf p}\in \ZZ_+^k} r^{|{\bf p}|} |a_{\bf p}|>1,
 $$
the last part of the theorem holds.
The proof is complete.
\end{proof}

We remark that the hypothesis of Theorem \ref{RE} are satisfied, in particular,  for any  bounded   free holomorphic function $F:{\bf B_n}(\cH)\to B(\cK)\otimes_{min}B(\cH)$  with $\|F\|_\infty\leq 1$ and
$ F(0)=a_0I$ for some  $a_0\geq 0$.
As a consequence, we obtain the following result concerning the Bohr radius for the  Hardy space $H^\infty({\bf B_n})$.

\begin{corollary} If $k>1$, then the   Bohr radius for   the Hardy space $H^\infty({\bf B_n})$, with respect to the multi-homogeneous expansion,  satisfies the inequalities
$$
\frac{1}{3\sqrt{k}}< K_{mh}({\bf B_n})< \frac{2\sqrt{\log k}}{\sqrt{k}}
$$
and
$$
\limsup_{k\to\infty}\frac{K_{mh}({\bf B_n})}{\frac{\sqrt{\log k}}{\sqrt{k}}}\leq 1.
$$
Moreover, if     $\gamma_k\in [0,1)$ is the   solution of the equation
  $$
  \sum_{m=1}^\infty \left(\begin{matrix} m+k-1\\k-1\end{matrix}\right)^{1/2} r^m=\frac{1}{2}
  $$
   then
   $$\frac{1}{3\sqrt{k}}<\gamma_k\leq K_{mh}({\bf B_n}).
   $$
\end{corollary}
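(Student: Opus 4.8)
The plan is to deduce all three assertions from Theorem \ref{RE} and the classical results for the polydisc recalled in the introduction; the corollary is essentially a packaging of that theorem together with the isometric embedding of Proposition \ref{prelim}.

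\emph{Lower bound.} For an arbitrary $\varphi\in H^\infty({\bf B_n})$ with scalar coefficients I would first normalize so that $\|\varphi\|_\infty\le 1$, and then multiply $\varphi$ by the unimodular constant $e^{-i\arg\varphi(0)}$. This operation changes none of the block norms $\|\sum_{\boldsymbol\alpha\in\Lambda_{\bf p}}a_{(\boldsymbol\alpha)}{\bf X}_{\boldsymbol\alpha}\|$ and forces $\varphi(0)\ge 0$. The resulting $F:=e^{-i\arg\varphi(0)}\varphi$ has $\|F\|_\infty\le 1$ and $F(0)=a_0 I$ with $a_0\ge 0$, hence $\Re F\le I$, so the hypotheses of Theorem \ref{RE} are met (as observed in the remark following it). Theorem \ref{RE} then yields the multi-homogeneous Bohr inequality $\sum_{\bf p}\|\sum_{\boldsymbol\alpha\in\Lambda_{\bf p}}a_{(\boldsymbol\alpha)}{\bf X}_{\boldsymbol\alpha}\|\le 1=\|\varphi\|_\infty$ for all ${\bf X}\in r{\bf B_n}(\cH)^-$ and every $r\in[0,\gamma_k]$, so $K_{mh}({\bf B_n})\ge\gamma_k$. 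Since Theorem \ref{RE} also records $\gamma_k>\frac{1}{3\sqrt{k}}$, this gives exactly the final assertion $\frac{1}{3\sqrt{k}}<\gamma_k\le K_{mh}({\bf B_n})$, and in particular the left-hand estimate of the display.

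\emph{Upper bounds.} Here I would use the isometric embedding $H^\infty(\DD^k)\hookrightarrow H^\infty({\bf B_n})$ of Proposition \ref{prelim}(iii), under which the monomial $z_1^{p_1}\cdots z_k^{p_k}$ is sent to $X_{1,1}^{p_1}\cdots X_{k,1}^{p_k}$. On $r{\bf B_n}(\cH)^-$ the corresponding multi-homogeneous norm of the image equals $r^{|{\bf p}|}$ (the product of shifts being an isometry, by the von Neumann inequality), so the multi-homogeneous Bohr sum of the image collapses to the classical polydisc Bohr sum $\sum_{\bf p}|a_{\bf p}|r^{|{\bf p}|}$. Consequently validity of the Bohr inequality on ${\bf B_n}$ at radius $r$ forces it on $\DD^k$ at the same radius, giving $K_{mh}({\bf B_n})\le K_k$, where $K_k$ is the Boas--Khavinson radius of $H^\infty(\DD^k)$. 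The strict bound $K_k<\frac{2\sqrt{\log k}}{\sqrt{k}}$ of \cite{BK} (realized by the very function produced in the last paragraph of the proof of Theorem \ref{RE}) then yields $K_{mh}({\bf B_n})<\frac{2\sqrt{\log k}}{\sqrt{k}}$.

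\emph{Asymptotics.} Finally I would combine $K_{mh}({\bf B_n})\le K_k$ with the sharp asymptotics $\lim_{k\to\infty}K_k/\sqrt{(\log k)/k}=1$ of \cite{DFOOS,BPS} quoted in the introduction: dividing by $\sqrt{(\log k)/k}$ and passing to the limit superior gives $\limsup_{k\to\infty}K_{mh}({\bf B_n})/\sqrt{(\log k)/k}\le 1$. The one point that needs care is the lower bound, namely verifying that normalization and unimodular rotation leave every multi-homogeneous block norm unchanged, so that Theorem \ref{RE} — proved only for functions with $F(0)=a_0I\ge 0$ — truly applies to an \emph{arbitrary} element of $H^\infty({\bf B_n})$; once that is granted, both upper estimates are immediate consequences of the classical polydisc theory through the isometric embedding.
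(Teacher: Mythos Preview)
Your proposal is correct and follows essentially the same approach as the paper: reduce to Theorem \ref{RE} after a unimodular rotation to ensure $F(0)\ge 0$, and obtain both upper estimates via the isometric embedding $H^\infty(\DD^k)\hookrightarrow H^\infty({\bf B_n})$ from Proposition \ref{prelim} together with the Boas--Khavinson bound and the asymptotics of \cite{DFOOS,BPS}. The only cosmetic difference is that the paper packages the strict upper bound $K_{mh}({\bf B_n})<\frac{2\sqrt{\log k}}{\sqrt{k}}$ inside the statement of Theorem \ref{RE} itself rather than rederiving it via the inequality $K_{mh}({\bf B_n})\le K_k$, but the underlying argument is the same.
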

\begin{proof}
Let $F\in H^\infty({\bf B_n})$ have the representation
$
F({\bf X})=\sum_{{\bf p}\in \ZZ_+^k} \left(\sum_ {\boldsymbol\alpha\in \Lambda_{\bf p}} a_{(\boldsymbol \alpha)}  {\bf X}_{\boldsymbol \alpha}\right)$
and assume that $\|F\|_\infty\leq 1$. Without loss of generality, we can assume that $a_0\geq 0$. Note also that $a_0\leq 1$ and    $\Re F({\bf X})\leq I$ for any ${\bf X}\in {\bf B_n}(\cH)$. Applying Theorem  \ref{RE} to $F$ one can obtain most of the results of the corollary.
 The only thing remaining   to show  is that  the inequality with the $\limsup$ holds.
To this end, note that, due to Proposition \ref{prelim},  each function in $H^\infty(\DD^k)$ can be seen as   a function  in $H^\infty({\bf B_n})$. Consequently,  we have  $K_{mh}({\bf B_n})\leq K_{mh}({\DD^k})$. Using the result
from \cite{BK} (se also  \cite{BPS}) we have
$$
\limsup_{k\to\infty}\frac{K_{mh}({\bf B_n})}{\frac{\sqrt{\log k}}{\sqrt{k}}}\leq \limsup_{k\to\infty}\frac{K_{mh}(\DD^k)}{\frac{\sqrt{\log k}}{\sqrt{k}}} \leq 1.
$$
The proof is complete.
\end{proof}

\begin{theorem} \label{RE2}  Let   $F:{\bf B_n}(\cH)\to B(\cK)\otimes_{min}B(\cH)$ be a bounded  free holomorphic function with representation
$
F({\bf X})=\sum_{\boldsymbol \alpha\in  \FF_{n_1}^+\times\cdots \times \FF_{n_k}^+} A_{(\boldsymbol \alpha)}\otimes  {\bf X}_{\boldsymbol \alpha}
$
  such that $ F(0)=0$.  If $k>1$, then
  $$\sum\limits_{{\bf p}\in \ZZ_+^k} \left\|\sum\limits_ {\boldsymbol\alpha\in \Lambda_{\bf p}} A_{(\boldsymbol \alpha)} \otimes  {\bf X}_{\boldsymbol \alpha}\right\|
\leq \|F\|_\infty, \qquad {\bf X}\in   r {\bf B_n}(\cH)^-
$$
for any  $r\in [0,t_k]$, where $t_k\in \left(\frac{1}{2\sqrt{k}},1\right)$ is the   solution of the equation
  \begin{equation*}
  \sum_{m=1}^\infty \left(\begin{matrix} m+k-1\\k-1\end{matrix}\right)^{1/2} r^m=1.
  \end{equation*}
  Moreover, if $r\geq \max \{\frac{2\sqrt{\log k}}{\sqrt{k}}, \frac{1}{\sqrt{2}}\}$, then the inequality above fails.
\end{theorem}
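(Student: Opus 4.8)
The plan is to transcribe the proof of Theorem \ref{RE}, replacing the hypothesis ``$F(0)=a_0I$, $\Re F\leq I$'' (which there feeds Theorem \ref{W-O}) by ``$F(0)=0$, $\|F\|_\infty\leq1$'', which instead feeds the Weiner inequality of Proposition \ref{W}; this is exactly what moves the defining equation from $\sum_m\binom{m+k-1}{k-1}^{1/2}r^m=\frac12$ to $\sum_m\binom{m+k-1}{k-1}^{1/2}r^m=1$. By homogeneity in $F$ it suffices to treat $\|F\|_\infty\leq1$. Writing ${\bf X}=r{\bf Y}$ with ${\bf Y}\in{\bf B_n}(\cH)^-$, the noncommutative von Neumann inequality together with the orthogonality of the ranges of $\{{\bf S}_{\boldsymbol\alpha}\}_{\boldsymbol\alpha\in\Lambda_{\bf p}}$ (Example \ref{Ex2}(iii)) give
\[
\left\|\sum_{\boldsymbol\alpha\in\Lambda_{\bf p}}A_{(\boldsymbol\alpha)}\otimes{\bf X}_{\boldsymbol\alpha}\right\|=r^{|{\bf p}|}\left\|\sum_{\boldsymbol\alpha\in\Lambda_{\bf p}}A_{(\boldsymbol\alpha)}\otimes{\bf Y}_{\boldsymbol\alpha}\right\|\leq r^{|{\bf p}|}\left\|\sum_{\boldsymbol\alpha\in\Lambda_{\bf p}}A_{(\boldsymbol\alpha)}^*A_{(\boldsymbol\alpha)}\right\|^{1/2},
\]
so the whole matter reduces to bounding $\sum_{m\geq1}r^m\sum_{|{\bf p}|=m}\bigl\|\sum_{\boldsymbol\alpha\in\Lambda_{\bf p}}A_{(\boldsymbol\alpha)}^*A_{(\boldsymbol\alpha)}\bigr\|^{1/2}$.

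Since $F(0)=0$ is a scalar operator and $\Gamma_m=\{\boldsymbol\alpha:|\boldsymbol\alpha|=m\}$ is a right minimal set not containing the neutral element, Proposition \ref{W} applied with $\Lambda=\Gamma_m$ yields $\bigl\|\sum_{\boldsymbol\alpha\in\Gamma_m}A_{(\boldsymbol\alpha)}^*A_{(\boldsymbol\alpha)}\bigr\|\leq1$, the analogue of the bound $4(1-a_0)^2$ used in Theorem \ref{RE}. As $\Gamma_m=\bigsqcup_{|{\bf p}|=m}\Lambda_{\bf p}$ is a disjoint union, a Cauchy--Schwarz step over the $\binom{m+k-1}{k-1}$ multi-indices ${\bf p}$ with $|{\bf p}|=m$ (relying on the orthogonality of the $\Lambda_{\bf p}$, exactly as in Theorem \ref{RE}) gives $\sum_{|{\bf p}|=m}\bigl\|\sum_{\boldsymbol\alpha\in\Lambda_{\bf p}}A_{(\boldsymbol\alpha)}^*A_{(\boldsymbol\alpha)}\bigr\|^{1/2}\leq\binom{m+k-1}{k-1}^{1/2}$. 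Summing against $r^m$ shows the inequality of the theorem holds whenever $\sum_{m\geq1}\binom{m+k-1}{k-1}^{1/2}r^m\leq1$, i.e. for $r\in[0,t_k]$, where $t_k$ is the unique root in $(0,1)$ of $\sum_{m\geq1}\binom{m+k-1}{k-1}^{1/2}r^m=1$ (the coefficients grow polynomially, so the sum is finite, strictly increasing and unbounded on $[0,1)$).

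To locate $t_k$ I would compare with a geometric series. The multinomial theorem gives $\sum_{|{\bf p}|=m}\binom{m}{p_1,\dots,p_k}=k^m$, and since every multinomial coefficient is $\geq1$ we obtain $\binom{m+k-1}{k-1}\leq k^m$, strictly for $m\geq2$ when $k>1$. Hence
\[
\sum_{m\geq1}\binom{m+k-1}{k-1}^{1/2}r^m\leq\sum_{m\geq1}(\sqrt k\,r)^m=\frac{\sqrt k\,r}{1-\sqrt k\,r},
\]
whose value at $r=\frac1{2\sqrt k}$ is exactly $1$. As the comparison is strict for $k>1$, the left--hand series is $<1$ at $r=\frac1{2\sqrt k}$, so $t_k>\frac1{2\sqrt k}$; this is the precise analogue of the estimate $\gamma_k>\frac1{3\sqrt k}$ of Theorem \ref{RE}, with $\frac12$ there replaced by $1$ here.

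For the failure statement I would embed two scalar extremal examples via Proposition \ref{prelim}. If $r\geq\frac1{\sqrt2}$, choose $f(z)=\sum_{n\geq1}a_nz^n\in H_0^\infty(\DD)$ witnessing the sharpness of $K_{mh}^0(\DD)=\frac1{\sqrt2}$ (Corollary \ref{Kmh0}) and set $F({\bf X}):=\sum_{n\geq1}a_nX_{1,1}^n$; then $F\in H_0^\infty({\bf B_n})$ with $\|F\|_\infty=\|f\|_\infty$, and evaluating at ${\bf X}=r{\bf S}$ gives $\sum_{\bf p}\bigl\|\sum_{\boldsymbol\alpha\in\Lambda_{\bf p}}A_{(\boldsymbol\alpha)}\otimes{\bf X}_{\boldsymbol\alpha}\bigr\|=\sum_{n\geq1}|a_n|r^n>\|f\|_\infty=\|F\|_\infty$. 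If $r\geq\frac{2\sqrt{\log k}}{\sqrt k}$, I would invoke the Boas--Khavinson example on $\DD^k$: taken as a homogeneous polynomial it has vanishing constant term, satisfies $\|f\|_\infty\leq1$, and has majorant series exceeding $1$ at such $r$, so its image $F\in H_0^\infty({\bf B_n})$ violates the inequality. I expect the main obstacle to be the operator--theoretic Cauchy--Schwarz step of the second paragraph, which converts the Weiner bound on $\Gamma_m$ into the block bound carrying the exponent $1/2$ on $\binom{m+k-1}{k-1}$ --- the same delicate mechanism underlying Theorem \ref{RE} --- together with checking that the Boas--Khavinson extremizer may be chosen with $f(0)=0$.
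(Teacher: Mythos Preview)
Your proposal is correct and follows essentially the same approach as the paper: transcribe the proof of Theorem \ref{RE} with Proposition \ref{W} in place of Theorem \ref{W-O}, which turns the defining equation into $\sum_{m\ge 1}\binom{m+k-1}{k-1}^{1/2}r^m=1$ and gives $t_k>\tfrac{1}{2\sqrt k}$ via the comparison $\binom{m+k-1}{k-1}\le k^m$, and then invoke the Boas--Khavinson example together with $K^0_{mh}(\DD)=\tfrac{1}{\sqrt 2}$ for the failure statement. The only cosmetic difference is that the paper (as in Theorem \ref{RE}) first passes to the polydisc via $g({\bf z})=F({\bf z}{\bf Y})$, ${\bf z}\in\DD^k$, ${\bf Y}\in{\bf B_n}(\cH)^-$, and applies Proposition \ref{W} to $g$ with coefficients $B_{\bf p}=\sum_{\boldsymbol\alpha\in\Lambda_{\bf p}}A_{(\boldsymbol\alpha)}\otimes{\bf Y}_{\boldsymbol\alpha}$, whereas you apply it directly to $F$ on the polyball; the Cauchy--Schwarz step you flag as the main obstacle is the same in both routes.
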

\begin{proof} The proof is very similar to that of Theorem \ref{RE}. We point out the differences. We use Proposition \ref{W} (instead of Theorem \ref{W-O}) to obtain
$$
\sum_{{(p_1,\ldots, p_k)\in \ZZ_+^k}\atop {p_1+\cdots +p_k=m}} B_{p_1,\ldots p_k}^* B_{p_1,\ldots p_k} \leq  1-|a_0|^2=1.
$$
Consequently, we deduce that
$$
\sum\limits_{(p_1,\ldots, p_k)\in \ZZ_+^k} \left\|B_{p_1,\ldots p_k}z_1^{p_1}\cdots z_k^{p_k} \right\|\leq \sum_{m=1}^\infty\left(\sum_{i=1}^k |z_i|^2\right)^{m/2}.
$$
In particular,  if we take $z_1=\cdots =z_k=r\in [0,\frac{1}{2\sqrt{k}}]$, then
$$
\sum_{m=1}^\infty\left(\sum_{i=1}^k |z_i|^2\right)^{m/2}=\sum_{m=1}^\infty(kr^2)^{m/2}\leq \sum_{m=1}^\infty\frac{1}{2^m}=1.
$$
This leads to
$$\sum\limits_{{\bf p}\in \ZZ_+^k} \left\|\sum\limits_ {\boldsymbol\alpha\in \Lambda_{\bf p}} A_{(\boldsymbol \alpha)} \otimes {\bf X}_{\boldsymbol \alpha}\right\|
\leq \|F\|_\infty, \qquad {\bf X}\in   r {\bf B_n}(\cH)^-,
$$
for any $r\in \left[0,\frac{1}{2\sqrt{k}}\right]$.
The rest of the  proof is very similar to that of Theorem \ref{RE}. We leave it to the reader.
\end{proof}

A simple consequence of Theorem \ref{RE2} is the following.

\begin{corollary} \label{coco} If $k>1$, the Bohr radius $K_{mh}^0({\bf B_n})$ satisfies the inequalities
$$
\frac{1}{2\sqrt{k}}< K_{mh}^0({\bf B_n})< \frac{2\sqrt{\log k}}{\sqrt{k}}.
$$
\end{corollary}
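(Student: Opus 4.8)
The plan is to read both inequalities off Theorem \ref{RE2}, treating the lower and upper bounds separately. Recall that $K_{mh}^0({\bf B_n})$ is by definition the largest $r\geq 0$ for which the multi-homogeneous majorant $\sum_{{\bf p}\in\ZZ_+^k}\|\sum_{\boldsymbol\alpha\in\Lambda_{\bf p}}a_{(\boldsymbol\alpha)}{\bf X}_{\boldsymbol\alpha}\|$ is dominated by $\|F\|_\infty$ on $r{\bf B_n}(\cH)^-$ for every $F\in H_0^\infty({\bf B_n})$. For the left-hand inequality I would simply observe that each such $F$ is a bounded free holomorphic function with $F(0)=0$, so Theorem \ref{RE2} applies verbatim and gives the Bohr inequality for all $r\in[0,t_k]$, where $t_k\in\left(\frac{1}{2\sqrt k},1\right)$ is the root of $\sum_{m=1}^\infty\binom{m+k-1}{k-1}^{1/2}r^m=1$. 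Hence $K_{mh}^0({\bf B_n})\geq t_k>\frac{1}{2\sqrt k}$, which is the strict left-hand bound.

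For the right-hand inequality I would use the failure clause of Theorem \ref{RE2}, which is established exactly as in Theorem \ref{RE} from the Boas--Khavinson extremal function on the polydisc $\DD^k$. The key point is that this extremizer can be taken to be a homogeneous polynomial of positive degree, so that it vanishes at the origin; transporting it through the isometric embedding $H^\infty(\DD^k)\hookrightarrow H^\infty({\bf B_n})$ of Proposition \ref{prelim} (iii) yields an $F\in H_0^\infty({\bf B_n})$ with $\|F\|_\infty\leq 1$ whose majorant exceeds $1$ once $r\geq\frac{2\sqrt{\log k}}{\sqrt k}$. When $\frac{2\sqrt{\log k}}{\sqrt k}<1$ this forces $K_{mh}^0({\bf B_n})<\frac{2\sqrt{\log k}}{\sqrt k}$ directly; for the small values of $k$ with $\frac{2\sqrt{\log k}}{\sqrt k}\geq 1$ the claimed bound is automatic, since Corollary \ref{Kmh0} already gives $K_{mh}^0({\bf B_n})\leq\frac{1}{\sqrt 2}<1$.

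The step I expect to be most delicate is the upper bound: one must be sure that the Boas--Khavinson extremizer is genuinely origin-vanishing (homogeneous), so that its image lands in the subspace $H_0^\infty({\bf B_n})$ rather than merely in $H^\infty({\bf B_n})$, and one must check that the embedding of Proposition \ref{prelim} carries the scalar majorant $\sum_{\bf p}r^{|{\bf p}|}|a_{\bf p}|$ on $\DD^k$ to the operatorial majorant $\sum_{\bf p}\|\sum_{\boldsymbol\alpha\in\Lambda_{\bf p}}a_{(\boldsymbol\alpha)}r^{|\boldsymbol\alpha|}{\bf S}_{\boldsymbol\alpha}\|$ with the norm preserved, precisely as in the last paragraph of the proof of Theorem \ref{RE}. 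The lower bound, by contrast, is an immediate transcription of Theorem \ref{RE2}.
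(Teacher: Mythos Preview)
Your proposal is correct and follows the route the paper intends: the lower bound is an immediate transcription of Theorem \ref{RE2}, and the upper bound is read off the Boas--Khavinson construction via the embedding of Proposition \ref{prelim}. The paper records the corollary only as ``a simple consequence of Theorem \ref{RE2}'' without further detail, so your argument is in fact more careful than the paper's own treatment---in particular, you rightly notice that Theorem \ref{RE2}'s failure clause, as stated with $\max\{\frac{2\sqrt{\log k}}{\sqrt{k}},\frac{1}{\sqrt{2}}\}$, does not by itself give $K_{mh}^0({\bf B_n})<\frac{2\sqrt{\log k}}{\sqrt{k}}$ for large $k$, and you correctly resolve this by observing that the Boas--Khavinson extremizers are homogeneous polynomials of positive degree and hence already lie in $H_0^\infty(\DD^k)$. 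Your case split (invoking Corollary \ref{Kmh0} when $\frac{2\sqrt{\log k}}{\sqrt{k}}\geq 1$) is the clean way to handle the small values of $k$.
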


\section{The Bohr radius $K_{h}({\bf B_n})$ for the Hardy space $H^\infty({\bf B_n})$}

With respect to the homogeneous power series expansion of the elements in the Hardy space $H^\infty({\bf B_n})$, we  prove that  $K_{h}({\bf B_n})=1/3$,  extending the classical result
  to our multivariable noncommutative setting. We also obtain estimations for the Bohr radius $K_{h}^0({\bf B_n})$.

According to Exemple \ref{Ex1}, if $q\in \ZZ^+$, then
$$
\Gamma_q:=\{\boldsymbol\alpha=(\alpha_1,\ldots, \alpha_k)\in \FF_{n_1}^+\times\cdots \times \FF_{n_k}^+: \  |\boldsymbol\alpha|:=|\alpha_1|+\cdots +|\alpha_k|=q\}
$$
is a minimal set in  $\FF_{n_1}^+\times\cdots \times \FF_{n_k}^+$ and  $\{\Gamma_q\}_{q=0}^\infty$ is an   exhaustion of
 $\FF_{n_1}^+\times\cdots \times \FF_{n_k}^+$ by minimal sets.
 Any function $F$ in the Hardy algebra $H^\infty({\bf B_n})$ has a power series expansion   in terms of  {\it homogeneous} polynomials, i.e.
\begin{equation*}
F({\bf X})=\sum\limits_{q=0}^\infty \left(\sum\limits_ {\boldsymbol\alpha\in \Gamma_q} a_{(\boldsymbol \alpha)}  {\bf X}_{\boldsymbol \alpha}\right), \qquad {\bf X}\in {\bf B_n}(\cH),
\end{equation*}
for any Hilbert space $\cH$, where the convergence is in the operator norm topology.

 \begin{definition}\label{Bohr-objects}
\begin{enumerate}
\item[(i)] The {\it Bohr radius} for the polyball ${\bf B_n}$  is  denoted  by $K_{h}({\bf B_n})$ and is the largest $r\geq 0$ such that
   \begin{equation*}
\sum\limits_{q=0}^\infty \left\|\sum\limits_ {\boldsymbol\alpha\in \Gamma_q} a_{(\boldsymbol \alpha)}  {\bf X}_{\boldsymbol \alpha}\right\|
\leq \|F\|_\infty, \qquad {\bf X}\in   r {\bf B_n}(\cH)^-,
\end{equation*}
  for any   $F\in H^\infty({\bf B_n})$.
  \item[(ii)]
The {\it Bohr scaling set}  for  the polyball ${\bf B_n}$ is denoted by  $\cS_{h}({\bf B_n})$ and consists of all
$\boldsymbol\rho=(\rho_{i,j})$  with $\rho_{i,j}\geq 0$  and such that
 \begin{equation*}
\sum\limits_{q=0}^\infty \left\|\sum\limits_ {\boldsymbol\alpha\in \Gamma_q} a_{(\boldsymbol \alpha)}  {\bf X}_{\boldsymbol \alpha}\right\|
\leq \|F\|_\infty, \qquad {\bf X}\in   \boldsymbol \rho {\bf B_n}(\cH)^-,
\end{equation*}
  for any   $F\in H^\infty({\bf B_n})$.

\item [(iii)]The {\it Bohr part} of the polyball ${\bf B_n}$ is defined by \
$
\cB_{h}({\bf B_n}):=\bigcup_{\boldsymbol \rho\in \cS_{h}({\bf B_n})} \boldsymbol \rho {\bf B^-_n} .
$

\item[(iv)] The   Bohr  positive scalar  part of the polyball ${\bf B_n}$ is  the set $\Omega_{h}({\bf B_n})$   of all
    ${\bf r}:= \{r_{i,j})$ with
    $r_{i,j}\geq 0$  and such that
    $$
 \sum\limits_{q=0}^\infty \left|\sum\limits_ {\boldsymbol\alpha\in \Gamma_q} a_{(\boldsymbol \alpha)}  {\bf r}_{\boldsymbol \alpha}\right|
\leq \|F\|_\infty
$$
for any   $F\in H^\infty({\bf B_n})$.

\end{enumerate}
\end{definition}
We remark that, due to the noncommutative von Neumann inequality for the polyball \cite{Po-poisson}, we have
$$ \sup_{{\bf X}\in \boldsymbol r {\bf B_n(\cH)^-}}\left\|\sum\limits_ {\boldsymbol\alpha\in \Gamma_q} a_{(\boldsymbol \alpha)} {\bf X}_{\boldsymbol \alpha}\right\|
=\left\|\sum\limits_ {\boldsymbol\alpha\in \Gamma_q} a_{(\boldsymbol \alpha)} r^{|{\boldsymbol \alpha}|}{\bf S}_{\boldsymbol \alpha}\right\|
$$
and  the inequality in part (i) of the definition above is equivalent to
$
\sum\limits_{q=0}^\infty\left\|\sum\limits_ {\boldsymbol\alpha\in \Gamma_q} a_{(\boldsymbol \alpha)} r^{|{\boldsymbol \alpha}|}{\bf S}_{\boldsymbol \alpha}\right\|
\leq \|F\|_\infty.
$
A similar observation should be made for the inequality in  definition  (ii).
We should mention that due to the fact that
$$
\sum_{m=0}^\infty \left\|\sum_{\boldsymbol\alpha\in \Gamma_m} a_{(\boldsymbol\alpha)}   {\bf X}_{\boldsymbol\alpha}\right\|\leq
\sum\limits_{{\bf p}\in \ZZ_+^k}\left\|\sum\limits_ {\boldsymbol\alpha\in \Lambda_{\bf p}} a_{(\boldsymbol \alpha)}  {\bf X}_{\boldsymbol \alpha}\right\|,\qquad {\bf X}\in {\bf B_n}(\cH).
$$
we always have  $ K_{h}({\bf B_n})\geq  K_{mh}({\bf B_n})$ and $ K_{h}^0({\bf B_n})\geq  K_{mh}^0({\bf B_n})$. As we will see soon, the inequalities are strict, in general.

\begin{theorem} \label{FX} If $F:{\bf B_n}(\cH)\to B(\cK)\otimes_{min} B(\cH)$ is  any bounded free holomorphic function with representation
$
F({\bf X} ):=\sum_{m=0}^\infty \sum_{\boldsymbol\alpha\in \Gamma_m}  A_{(\boldsymbol \alpha)} \otimes {\bf X}_{\boldsymbol \alpha}
$
such that $ F(0)=a_0I$ for some  $a_0\in \CC$, then
  $$
\sum_{m=0}^\infty \left\|\sum_{\boldsymbol\alpha\in \Gamma_m} A_{(\boldsymbol\alpha)} \otimes  {\bf X}_{\boldsymbol\alpha}\right\| \leq \|F\|_\infty
$$
for any $ {\bf X}\in \frac{1}{3}{\bf B_n}(\cH)$. Moreover, $\frac{1}{3}$ is the best possible constant and the inequality is strict unless $F$ is a constant.
\end{theorem}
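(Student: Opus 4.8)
The plan is to collapse the several noncommuting variables down to a single complex variable, where the operator-valued one-variable Bohr inequality (Corollary \ref{disk}) already delivers the constant $\frac13$ independently of $k$. Concretely, fix ${\bf X}\in\frac13{\bf B_n}(\cH)$ and set ${\bf Y}:=3{\bf X}$, so that ${\bf Y}\in{\bf B_n}(\cH)$. Since the regular polyball is a complete Reinhardt, hence circular and balanced, domain --- the fact from \cite{Po-automorphisms-polyball} already used in the proof of Theorem \ref{RE} --- we have $\zeta{\bf Y}\in{\bf B_n}(\cH)$ for every $\zeta\in\DD$. First I would substitute ${\bf X}=\zeta{\bf Y}$ into the homogeneous expansion of $F$; because each $\boldsymbol\alpha\in\Gamma_m$ has $|\boldsymbol\alpha|=m$, the block $G_m:=\sum_{\boldsymbol\alpha\in\Gamma_m}A_{(\boldsymbol\alpha)}\otimes{\bf Y}_{\boldsymbol\alpha}$ is homogeneous of degree $m$ in $\zeta$, and the norm convergence of the homogeneous expansion shows that $h(\zeta):=F(\zeta{\bf Y})=\sum_{m=0}^\infty G_m\zeta^m$ is a bounded analytic function on $\DD$ with values in $B(\cK)\otimes_{min}B(\cH)$. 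Moreover $h(0)=A_0\otimes I=a_0I$ is a scalar operator and $\|h\|_\infty\le\|F\|_\infty$.

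Next I would apply Corollary \ref{disk} to $h$, whose zeroth coefficient is the scalar $a_0I$, obtaining $\sum_{m=0}^\infty\|G_m\|\,(1/3)^m\le\|h\|_\infty\le\|F\|_\infty$. Rescaling back via the identity $\sum_{\boldsymbol\alpha\in\Gamma_m}A_{(\boldsymbol\alpha)}\otimes{\bf X}_{\boldsymbol\alpha}=(1/3)^mG_m$, which holds because ${\bf X}_{\boldsymbol\alpha}=(1/3)^{|\boldsymbol\alpha|}{\bf Y}_{\boldsymbol\alpha}$, this is exactly $\sum_{m=0}^\infty\bigl\|\sum_{\boldsymbol\alpha\in\Gamma_m}A_{(\boldsymbol\alpha)}\otimes{\bf X}_{\boldsymbol\alpha}\bigr\|\le\|F\|_\infty$, the desired inequality. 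The merit of this route is that passing to the single variable $\zeta$ completely dissolves the $k$-dependence that forced the weaker $1/(3\sqrt k)$ bound in the \emph{multi-homogeneous} setting of Theorem \ref{RE}.

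For optimality I would invoke Proposition \ref{prelim}: the map $\sum a_nz^n\mapsto\sum a_nX_{1,1}^n$ embeds $H^\infty(\DD)$ isometrically into $H^\infty({\bf B_n})$, and under it the homogeneous expansion becomes $\sum_n a_nX_{1,1}^n$ with the $n$-th term homogeneous of degree $n$. Hence the homogeneous Bohr inequality on ${\bf B_n}$ restricts to the classical Bohr inequality on $\DD$, for which $\frac13$ is sharp; this gives $K_h({\bf B_n})\le\frac13$ and shows $\frac13$ is best possible. For the strictness claim I would normalise $\|F\|_\infty=1$ and follow the proof of Corollary \ref{disk}: its intermediate estimate $\sum_m\|G_m\|(1/3)^m\le|a_0|+\frac{1-|a_0|^2}{2}\le 1$ shows that equality at ${\bf X}$ forces $|a_0|=1$, whereupon Proposition \ref{W}, applied to $F$ and each right minimal set $\Gamma_m$ with $m\ge 1$ (which omits the neutral element ${\bf g}_0$), gives $\bigl\|\sum_{\boldsymbol\alpha\in\Gamma_m}A_{(\boldsymbol\alpha)}^*A_{(\boldsymbol\alpha)}\bigr\|^{1/2}\le 1-|a_0|^2=0$, so $A_{(\boldsymbol\alpha)}=0$ for all $\boldsymbol\alpha\neq{\bf g}_0$ and $F$ is constant.

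The step needing the most care is the reduction itself: I must verify that $h$ genuinely lies in the one-variable $H^\infty(\DD)$ with Taylor coefficients exactly the homogeneous blocks $G_m$ and with $\|h\|_\infty\le\|F\|_\infty$. This rests on two ingredients already available --- the complete Reinhardt property of ${\bf B_n}$, so that the whole disc $\{\zeta{\bf Y}:\zeta\in\DD\}$ stays inside the polyball, and the norm convergence of the homogeneous expansion, so that substitution and regrouping in powers of $\zeta$ is legitimate. Once this is in place, everything reduces to the one-variable results already proved, and, pleasantly, the constant $\frac13$ survives untouched for every $k$ and every ${\bf n}$.
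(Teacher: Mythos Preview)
Your argument is correct and follows essentially the same route as the paper: reduce to a single complex variable via the complete Reinhardt property, apply the one-variable operator-valued Bohr inequality (the paper cites Theorem~\ref{Bohr2}(i) with $k=1$, you cite its consequence Corollary~\ref{disk}), then use Proposition~\ref{prelim} for sharpness and Proposition~\ref{W} for strictness. The only differences are cosmetic---you fix ${\bf X}$ and set ${\bf Y}=3{\bf X}$ whereas the paper takes ${\bf Y}\in{\bf B_n}(\cH)$ arbitrary and lets $r\in[0,\frac13]$---and in fact your strictness step states the correct conclusion $|a_0|=1$ where the paper's text contains a typo.
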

\begin{proof}
Without loss of generality, we can assume that $\|F\|_\infty$=1. Let ${\bf Y}\in {\bf B_n}(\cH)$ and $z\in \DD$. According to Proposition 1.3 from \cite{Po-automorphisms-polyball}, ${\bf B_n}(\cH)$ is a noncommutative complete Reinhardt domain. Consequently, $z{\bf Y}\in {\bf B_n}(\cH)$ for any $z\in \DD$, and
$\sup_{z\in \DD}\|F(z{\bf Y})\|\leq 1$ for any ${\bf Y}\in {\bf B_n}(\cH)$.
Since $F$ is free holomorphic on ${\bf B_n}(\cH)$, we have (see \cite{Po-automorphisms-polyball})
$
\sum_{m=0}^\infty \left\|\sum_{\boldsymbol\alpha\in \Gamma_m} A_{(\boldsymbol\alpha)} \otimes   {\bf Y}_{\boldsymbol\alpha}\right\|<\infty$,  ${\bf Y}\in {\bf B_n}(\cH).
$
Hence, we deduce that
$$
g(z):=F(z{\bf Y})=\sum_{m=0}^\infty \left(\sum_{\boldsymbol\alpha\in \Gamma_m} A_{(\boldsymbol\alpha)} \otimes   {\bf Y}_{\boldsymbol\alpha}\right)z^m,\qquad z\in \DD,
$$
is an operator-valued analytic function  on $\DD$ with $\|g\|_\infty\leq 1$ and $g(0)=a_0I_\cH$, $a_0\in \CC$. Applying Theorem \ref{Bohr2}, part (i), when $ k=1$,  to $g$ and taking into account that $0\leq |a_0|\leq 1$, we obtain
$$
\sum_{m=0}^\infty \left\|\sum_{\boldsymbol\alpha\in \Gamma_m} A_{(\boldsymbol\alpha)}  \otimes {\bf Y}_{\boldsymbol\alpha}\right\|r^m\leq |a_0|+(1-|a_0|^2)\frac{r}{1-r}\leq1, \qquad {\bf Y}\in {\bf B_n}(\cH),
$$
for any $r\in [0,\frac{1}{3}]$, which is equivalent to the inequality in the theorem.

  On the other hand, due to Proposition \ref{prelim}, we have  $ K_{h}({\bf B_n})\leq K_h(\DD)\leq\frac{1}{3}.
$
Therefore, $\frac{1}{3}$ is the best possible constant. To prove the last part of the theorem, note that if $\|F\|_\infty=1$, then using the inequality above, we deduce that $|a_0|+\frac{1}{2}(1-|a_0|^2)=1$, which implies $a_0=0$. Employing Proposition \ref{W}. we deduce that $ A_{(\boldsymbol\alpha)}=0$ for any $\boldsymbol\alpha \in \FF_{n_1}^+\times\cdots \times  \FF_{n_k}^+$ different from the identity ${\bf g}_0$. This shows that $F=a_0I$.
The proof is complete.
\end{proof}

\begin{corollary} \label{zero} If $F:{\bf B_n}(\cH)\to B(\cK)\otimes_{min} B(\cH)$ is  any bounded free holomorphic function with representation
$
F({\bf X} ):=\sum_{m=0}^\infty \sum_{\boldsymbol\alpha\in \Gamma_m}  A_{(\boldsymbol \alpha)} \otimes {\bf X}_{\boldsymbol \alpha}
$
such that $ F(0)=0$, then
  $$
\sum_{m=0}^\infty \left\|\sum_{\boldsymbol\alpha\in \Gamma_m} A_{(\boldsymbol\alpha)} \otimes  {\bf X}_{\boldsymbol\alpha}\right\| \leq \|F\|_\infty
$$
for any $ {\bf X}\in \frac{1}{2}{\bf B_n}(\cH)$.
\end{corollary}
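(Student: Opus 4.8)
The plan is to reuse the argument of Theorem \ref{FX}, taking advantage of the fact that the constant term now vanishes. First I would normalize to $\|F\|_\infty = 1$ and observe that the hypothesis $F(0) = 0$ means precisely that $A_{{\bf g}_0} = 0$, so that in the notation of Theorem \ref{FX} the scalar constant $a_0$ equals $0$. This is the single feature that will upgrade the admissible radius.

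Next comes the same one-variable reduction used in Theorem \ref{FX}. Fixing an arbitrary ${\bf Y} \in {\bf B_n}(\cH)$, the fact that ${\bf B_n}(\cH)$ is a noncommutative complete Reinhardt domain (Proposition 1.3 in \cite{Po-automorphisms-polyball}) guarantees that $z{\bf Y} \in {\bf B_n}(\cH)$ for every $z \in \DD$. Hence
$$
g(z) := F(z{\bf Y}) = \sum_{m=0}^\infty \left(\sum_{\boldsymbol\alpha\in \Gamma_m} A_{(\boldsymbol\alpha)} \otimes {\bf Y}_{\boldsymbol\alpha}\right) z^m, \qquad z\in\DD,
$$
is an operator-valued analytic function on $\DD$ with $\|g\|_\infty \le 1$ and $g(0) = 0$.

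The decisive step is to apply Theorem \ref{Bohr2}, part (i), in the single-variable case $k=1$ to $g$, but now with $a_0 = 0$. In this case the bound $|a_0| + (1-|a_0|^2)\left[(1-r)^{-1} - 1\right]$ collapses to $\frac{r}{1-r}$, giving
$$
\sum_{m=0}^\infty \left\|\sum_{\boldsymbol\alpha\in \Gamma_m} A_{(\boldsymbol\alpha)} \otimes {\bf Y}_{\boldsymbol\alpha}\right\| r^m \le \frac{r}{1-r} \le 1 \qquad \text{for } r \in \left[0, \tfrac12\right],
$$
since $\frac{r}{1-r} \le 1$ is equivalent to $r \le \tfrac12$. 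This is exactly where the radius improves from $\tfrac13$ (Theorem \ref{FX}) to $\tfrac12$: the vanishing of $a_0$ removes the term $|a_0|$ that otherwise forces the smaller radius. Finally, writing a general ${\bf X}\in \tfrac12{\bf B_n}(\cH)$ as ${\bf X} = r{\bf Y}$ with ${\bf Y}\in{\bf B_n}(\cH)$ and $r \le \tfrac12$, and using $\sum_{\boldsymbol\alpha\in\Gamma_m} A_{(\boldsymbol\alpha)}\otimes {\bf X}_{\boldsymbol\alpha} = r^m\sum_{\boldsymbol\alpha\in\Gamma_m} A_{(\boldsymbol\alpha)}\otimes {\bf Y}_{\boldsymbol\alpha}$, the displayed inequality becomes the assertion of the corollary.

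I do not anticipate a genuine obstacle here; the only subtlety worth flagging is that one cannot invoke Theorem \ref{FX} as a black box, since its stated conclusion gives only radius $\tfrac13$. Instead one must re-enter its proof at the point where Theorem \ref{Bohr2}(i) is applied and read off the sharper estimate available when $a_0 = 0$.
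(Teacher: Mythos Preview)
Your proposal is correct and follows essentially the same route as the paper: re-enter the proof of Theorem \ref{FX}, apply Theorem \ref{Bohr2}(i) with $k=1$ to the one-variable slice $g(z)=F(z{\bf Y})$, and observe that with $a_0=0$ the bound $|a_0|+(1-|a_0|^2)\frac{r}{1-r}$ reduces to $\frac{r}{1-r}\le 1$ for $r\le \tfrac12$. The paper's proof is just a one-line compression of exactly this argument.
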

\begin{proof} Following the proof of Theorem \ref{FX}, we have
$$
\sum_{m=0}^\infty \left\|\sum_{\boldsymbol\alpha\in \Gamma_m} A_{(\boldsymbol\alpha)}  \otimes {\bf Y}_{\boldsymbol\alpha}\right\|r^m\leq |a_0|+(1-|a_0|^2)\frac{r}{1-r}=\frac{r}{1-r}\leq 1, \qquad {\bf Y}\in {\bf B_n}(\cH),
$$
for any $r\in[0,\frac{1}{2}]$. This completes the proof.
\end{proof}

\begin{corollary}  \label{K-0} The Bohr radius $K_{h}^0({\bf B_n})$ satisfies the inequalities
$$
\max\left\{\frac{1}{2}, \sqrt{1-\left(\frac{1}{2}\right)^{1/k}}\right\}\leq K_h^0({\bf B_n})\leq \frac{1}{\sqrt{2}}.
$$
\end{corollary}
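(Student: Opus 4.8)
The plan is to build the two-sided estimate entirely out of results established earlier in the paper, treating each of the three bounds separately and then combining the two lower bounds via the elementary fact that $K_h^0({\bf B_n})$ dominates every lower bound we produce. First I would dispose of the lower bound $\frac{1}{2}\le K_h^0({\bf B_n})$. This is immediate from Corollary \ref{zero}: for every bounded free holomorphic $F$ with $F(0)=0$ the homogeneous Bohr inequality $\sum_{m=0}^\infty\|\sum_{\boldsymbol\alpha\in\Gamma_m}A_{(\boldsymbol\alpha)}\otimes{\bf X}_{\boldsymbol\alpha}\|\le\|F\|_\infty$ holds on $\frac{1}{2}{\bf B_n}(\cH)$, so by the definition of $K_h^0({\bf B_n})$ (Definition \ref{Bohr-objects}) we get $K_h^0({\bf B_n})\ge\frac{1}{2}$.

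Next, for the lower bound $\sqrt{1-(1/2)^{1/k}}\le K_h^0({\bf B_n})$ I would use the comparison $K_h^0({\bf B_n})\ge K_{mh}^0({\bf B_n})$ recorded right after Definition \ref{Bohr-objects}, which rests on the pointwise inequality $\sum_{m}\|\sum_{\boldsymbol\alpha\in\Gamma_m}\cdots\|\le\sum_{{\bf p}}\|\sum_{\boldsymbol\alpha\in\Lambda_{\bf p}}\cdots\|$, i.e.\ grouping by total degree is never worse than grouping by multi-degree. Corollary \ref{Kmh0} already gives $\sqrt{1-(1/2)^{1/k}}\le K_{mh}^0({\bf B_n})$, and chaining the two inequalities yields the desired bound. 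Since $K_h^0({\bf B_n})$ is then bounded below by both $\frac{1}{2}$ and $\sqrt{1-(1/2)^{1/k}}$, it is bounded below by their maximum, which is exactly the left-hand side of the asserted inequality.

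Finally, for the upper bound $K_h^0({\bf B_n})\le\frac{1}{\sqrt{2}}$ I would exploit the isometric embedding of $H_0^\infty(\DD)$ into $H_0^\infty({\bf B_n})$ furnished by Proposition \ref{prelim}, namely $f=\sum_{n\ge1}a_nz^n\mapsto F$ with $F({\bf X})=\sum_{n\ge1}a_nX_{1,1}^n$ and $\|F\|_\infty=\|f\|_\infty$. The one point that requires checking is that this embedding respects the homogeneous expansion: the monomial $X_{1,1}^n$ lies in the class $\Gamma_n$, so the homogeneous Bohr sum for $F$ at radius $r$ coincides termwise with the disc Bohr sum $\sum_n|a_n|r^n$ for $f$. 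Consequently the homogeneous Bohr inequality for $F$ on $r{\bf B_n}(\cH)^-$ is equivalent to Bohr's inequality for $f$ on the disc of radius $r$, so any $f$ violating the latter produces an $F$ violating the former; this forces $K_h^0({\bf B_n})\le K_h^0(\DD)$. Since for $k=1$ the homogeneous and multi-homogeneous radii coincide, $K_h^0(\DD)=K_{mh}^0(\DD)=\frac{1}{\sqrt{2}}$ (the classical value, cf.~\cite{PPoS}), which finishes the argument. I expect the verification that the embedding preserves the $\Gamma_q$-grading to be the only genuine, albeit routine, obstacle; the remaining steps are direct quotations of prior corollaries.
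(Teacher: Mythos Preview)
Your proposal is correct and follows essentially the same approach as the paper: the lower bound $\tfrac{1}{2}$ from Corollary \ref{zero}, the lower bound $\sqrt{1-(1/2)^{1/k}}$ via $K_{mh}^0({\bf B_n})\le K_h^0({\bf B_n})$ together with Corollary \ref{Kmh0}, and the upper bound via the embedding of Proposition \ref{prelim} and the classical value $K_h^0(\DD)=\tfrac{1}{\sqrt{2}}$. Your additional remark that the embedding respects the $\Gamma_q$-grading is a helpful clarification but not a departure from the paper's argument.
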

\begin{proof}  Due to Corollary \ref{zero}, we have $\frac{1}{2}\leq K_h^0({\bf B_n})$.  Using Corollary \ref{Kmh0} and the fact that   $K_{mh}^0({\bf B_n})\leq K_{h}^0({\bf B_n})$, we deduce that  $ \sqrt{1-\left(\frac{1}{2}\right)^{1/k}}\leq K_h^0({\bf B_n})$.
On the other hand, employing Proposition \ref{prelim}, we deduce the inequality  $K_{h}^0({\bf B_n})\leq K_{h}^0(\DD)$. Since   $K_{h}^0(\DD)=\frac{1}{\sqrt{2}}$ (see e.g. \cite{PPoS}), we conclude that $ K_h^0({\bf B_n})\leq \frac{1}{\sqrt{2}}$ and complete the proof.
\end{proof}

\begin{corollary} \label{Bo-homo} Let $F:{\bf B_n}(\cH)\to B(\cH)$ be a bounded free holomorphic function with representation
$
F({\bf X} ):=\sum_{m=0}^\infty \sum_{\boldsymbol\alpha\in \Gamma_m}  a_{(\boldsymbol \alpha)} {\bf X}_{\boldsymbol \alpha}.
$
Then the following statements hold:
\begin{enumerate}
\item[(i)] For any $ {\bf X}\in \frac{1}{3}{\bf B_n}(\cH)$
 $$
\sum_{m=0}^\infty \left\|\sum_{\boldsymbol\alpha\in \Gamma_m} a_{(\boldsymbol\alpha)}   {\bf X}_{\boldsymbol\alpha}\right\| \leq \|F\|_\infty,\qquad F\in H^\infty ({\bf B_n}),
$$
and $\frac{1}{3}$ is the best possible constant. Moreover, the inequality is strict unless $F$ is a constant.
\item[(ii)] $K_h({\bf B_n})=\frac{1}{3}$.
\item[(iii)] $\cS_{h}({\bf B_n})= [0,\frac{1}{3}]^{n_1+\cdots n_k}$ and $\cB_{h}({\bf B_n})=\frac{1}{3}{\bf B_n^-}$.
\item[(iv)] The Bohr positive scalar part   for the polyball $\Omega_h({\bf B_n})$ consists of all tuples of positive numbers
    ${\bf r}:=(r_1,\ldots, r_k)$, $r_i:=(r_{i,1},\ldots, r_{i,n_i})$ with the property that
    $r_{i,j}\geq 0$ and $\|r_i\|_2\leq \frac{1}{3}$ for any $i\in \{1,\ldots, k\}$.
\end{enumerate}
\end{corollary}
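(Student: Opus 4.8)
The plan is to read all four assertions off Theorem \ref{FX} together with the scalar reduction of Proposition \ref{prelim}, since the present corollary is essentially the specialization of Theorem \ref{FX} to scalar coefficients, repackaged in the vocabulary of Definition \ref{Bohr-objects}. For part (i), note that when the coefficients $a_{(\boldsymbol\alpha)}$ are scalars the value $F(0)=a_{({\bf g}_0)}I$ is automatically a scalar operator, so Theorem \ref{FX} applies verbatim with $B(\cK)=\CC$; it delivers at once the inequality on $\frac{1}{3}{\bf B_n}(\cH)$, the sharpness of the constant $\frac{1}{3}$, and the strictness statement. For part (ii), the lower bound $K_h({\bf B_n})\geq\frac{1}{3}$ is immediate from (i) and the von Neumann reformulation recorded right after Definition \ref{Bohr-objects}, while the upper bound comes from Proposition \ref{prelim}, which embeds $H^\infty(\DD)$ isometrically into $H^\infty({\bf B_n})$ via $z^n\mapsto X_{1,1}^n$: every radius admissible for ${\bf B_n}$ is then admissible for $\DD$, so $K_h({\bf B_n})\leq K_h(\DD)=\frac{1}{3}$, the classical Bohr radius.

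For part (iii) I would prove the set equality $\cS_h({\bf B_n})=[0,\frac13]^{n_1+\cdots+n_k}$ by two inclusions. For ``$\supseteq$'', if every $\rho_{i,j}\leq\frac13$ write $\boldsymbol\rho=\frac13\boldsymbol\sigma$ with $\sigma_{i,j}=3\rho_{i,j}\in[0,1]$; since ${\bf B_n}(\cH)$ is a noncommutative complete Reinhardt domain (Proposition 1.3 of \cite{Po-automorphisms-polyball}), contracting each generator by $\sigma_{i,j}$ keeps the tuple in ${\bf B_n}(\cH)^-$, so $\boldsymbol\rho{\bf B_n}(\cH)^-\subseteq\frac13{\bf B_n}(\cH)^-$ and the inequality of part (i) applies. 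For ``$\subseteq$'', fix $(i_0,j_0)$ and use the isometric disk embedding $z^n\mapsto X_{i_0,j_0}^n$: the defining inequality for $\boldsymbol\rho$ collapses to $\sum_n|a_n|\rho_{i_0,j_0}^n\leq\|f\|_\infty$ for every $f\in H^\infty(\DD)$, which forces $\rho_{i_0,j_0}\leq K_h(\DD)=\frac13$. The description of $\cB_h({\bf B_n})$ then follows: by the Reinhardt property each $\boldsymbol\rho{\bf B_n}^-$ with $\boldsymbol\rho\in\cS_h$ is contained in $\frac13{\bf B_n}^-$, and $\frac13{\bf B_n}^-$ is itself attained at $\boldsymbol\rho=(\frac13,\dots,\frac13)$, so the union is exactly $\frac13{\bf B_n}^-$.

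For part (iv) I would first observe that a positive scalar point ${\bf r}$ lies in $\frac13{\bf B_n}(\CC)^-$ precisely when $\|r_i\|_2\leq\frac13$ for every $i$, because the scalar polyball is the product of the Euclidean balls. The inclusion ``$\supseteq$'' is then part (i) evaluated at ${\bf X}={\bf r}\in\frac13{\bf B_n}(\CC)\subseteq\frac13{\bf B_n}(\cH)$, with boundary points $\|r_i\|_2=\frac13$ handled by a radius-to-$\frac13$ limiting argument. For the converse, given a block $i$ and the unit vector $u:=r_i/\|r_i\|_2$, the linear free functional $\phi({\bf X}):=\sum_{j}\bar u_j X_{i,j}$ maps ${\bf B_n}$ into $\DD$ and satisfies $\phi({\bf r})=\|r_i\|_2$; composing with $g\in H^\infty(\DD)$ yields $F=g\circ\phi\in H^\infty({\bf B_n})$ with $\|F\|_\infty=\|g\|_\infty$, whose homogeneous Bohr sum at ${\bf r}$ is exactly $\sum_n|c_n|\,\|r_i\|_2^{\,n}$ (by the multinomial identity $\sum_{|\alpha_i|=n}\bar u_{\alpha_i}r_{i,\alpha_i}=\|r_i\|_2^{\,n}$). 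Membership ${\bf r}\in\Omega_h$ then forces $\|r_i\|_2\leq K_h(\DD)=\frac13$, as required.

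The main obstacle lies in the two sharpness (converse) steps, which is where the classical constant $K_h(\DD)=\frac13$ must be transported into the polyball. Concretely, one must justify that per-generator contraction preserves ${\bf B_n}(\cH)^-$ (the complete Reinhardt property used in (iii)) and that the composite $g\circ\phi$ is norm-preserving with the claimed homogeneous expansion (used in (iv)); the routine pieces---parts (i)--(ii) and the forward inclusions---are direct consequences of Theorem \ref{FX} and Proposition \ref{prelim}.
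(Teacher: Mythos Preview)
Your proposal is correct and follows essentially the same route as the paper: parts (i) and (ii) are read off Theorem \ref{FX} and the isometric embedding of $H^\infty(\DD)$ from Proposition \ref{prelim}, and for (iii) both you and the paper reduce the converse inclusion to the one-variable Bohr constant via the single-generator embedding $z^n\mapsto X_{i_0,j_0}^n$. The only noteworthy difference is in part (iv): the paper handles the failure for $\|r_i\|_2>\frac13$ by invoking the construction of Theorem~4.5 of \cite{PPoS}, whereas you spell out that construction explicitly as the composition $g\circ\phi$ with $\phi({\bf X})=\sum_j\bar u_jX_{i,j}$, $u=r_i/\|r_i\|_2$; since the $r_{i,j}$ are nonnegative this collapses the $m$-th homogeneous term at ${\bf r}$ to $c_m\|r_i\|_2^m$ exactly as you claim. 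Your version has the advantage of being self-contained, but it is the same idea.
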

\begin{proof} Note that items (i) and (ii)  hold due to Theorem \ref{FX}.
We need to prove (iii). If $\boldsymbol\rho=(\rho_{i,j})$ is in $ [0,\frac{1}{3}]^{n_1+\cdots n_k}$ and ${\bf X}\in \boldsymbol\rho {\bf B_n}(\cH)^-$, then the noncommutative von Neumann inequality and item (i) imply
$$
\sum_{m=0}^\infty \left\|\sum_{\boldsymbol\alpha\in \Gamma_m} a_{(\boldsymbol\alpha)}   {\bf X}_{\boldsymbol\alpha}\right\|
\leq \sum_{m=0}^\infty \left\|\sum_{\boldsymbol\alpha\in \Gamma_m} a_{(\boldsymbol\alpha)}  \boldsymbol\rho_{\boldsymbol \alpha} {\bf S}_{\boldsymbol\alpha}\right\|
\leq \sum_{m=0}^\infty \left\|\sum_{\boldsymbol\alpha\in \Gamma_m} a_{(\boldsymbol\alpha)}   {\bf S}_{\boldsymbol\alpha}\right\|\frac{1}{3^q}
\leq \|F\|_\infty.
$$
This shows that $ [0,\frac{1}{3}]^{n_1+\cdots n_k}\subseteq \cS_{h}({\bf B_n})$. If we assume that there is $\boldsymbol\rho=(\rho_{i,j})\in \cS_{h}({\bf B_n})$ with at least one $\rho_{i,j}>\frac{1}{3}$, then,  due to the fact that $H^\infty(\DD)$ is
  isometrically   embedded in $H^\infty({\bf B_n})$ (see Proposition \ref{prelim}), we deduce that $K_h(\DD)>\frac{1}{3}$, which is a contradiction. Therefore, we must have  $\cS_{h}({\bf B_n})= [0,\frac{1}{3}]^{n_1+\cdots n_k}$ which clearly implies $\cB_{h}({\bf B_n})=\frac{1}{3}{\bf B_n^-}$.

 To prove item (iv), note that applying item (i) to ${\bf X}=\{X_{ij}\}$ with $X_{i,j}=r_{i,j}I_\cH$, where $r_{i,j}\geq 0$ and $\|r_i\|_2\leq \frac{1}{3}$ for any $i\in \{1,\ldots, k\}$, we deduce that ${\bf r}\in \Omega_h({\bf B_n})$.
 Now, assume that ${\bf r}:=(r_1,\ldots, r_k)$, $r_i:=(r_{i,1},\ldots, r_{i,n_i})$ with the property that
    $r_{i,j}\geq 0$ and   at least one $r_i$ satisfies the inequality $\|r_i\|_2>\frac{1}{3}$. For simplicity, we assume that $\|r_1\|_2>\frac{1}{3}$.
    As in the proof of Theorem 4.5 from \cite{PPoS}, we can construct a bounded free holomorphic function on the unit ball $[B(H)^{n_1}]_1$,
    $f(X_1)=\sum_{m=0}^\infty \sum_{\alpha_1\in \FF_{n_1}^+, |\alpha_1|=m} c_{\alpha_1} X_{1,\alpha_1}$, with the property that $\|f\|_\infty=1$ and
    $$
    \sum_{m=0}^\infty \left|\sum_{\alpha_1\in \FF_{n_1}^+, |\alpha_1|=m} c_{\alpha_1} r_{1,\alpha_1}\right| >1.
    $$
Define $F(X_1,\ldots, X_k):=f(X_1)$ on the polyball ${\bf B_n}$, and note that $F\in H^\infty({\bf B_n})$ and  $\|F\|_\infty=1$. The free holomorphic function $F$ has the representation
    $
F({\bf X} ):=\sum_{m=0}^\infty \sum_{\boldsymbol\alpha\in \Gamma_m}  b_{(\boldsymbol \alpha)} {\bf X}_{\boldsymbol \alpha},
$
where $b_{(\boldsymbol \alpha)}=0$ unless $(\boldsymbol \alpha)=(\alpha_1, g_0^2,\ldots, g_0^k)$, in which case  $b_{(\alpha_1, g_0^2,\ldots, g_0^k)}:=c_{\alpha_1}$.
Since
$$
\sum_{m=0}^\infty \left|\sum_{\boldsymbol\alpha\in \Gamma_m} b_{(\boldsymbol\alpha)}   {\bf r}_{\boldsymbol\alpha}\right|=\sum_{m=0}^\infty \left|\sum_{\alpha_1\in \FF_{n_1}^+, |\alpha_1|=m} a_{\alpha_1} r_{1,\alpha_1}\right|>1,
$$
we deduce that  ${\bf r}\notin \Omega_h({\bf B_n})$.
The proof is complete.
\end{proof}

\begin{theorem} \label{Omega} If $F:{\bf B_n}(\cH)\to B(\cH)$ is a bounded free holomorphic function with representation
$
F({\bf X} ):=\sum_{m=0}^\infty \sum_{\boldsymbol\alpha\in \Gamma_m}  a_{(\boldsymbol \alpha)} {\bf X}_{\boldsymbol \alpha},
$
then
$$
\sum_{m=0}^\infty \left\|\sum_{\boldsymbol\alpha\in \Gamma_m} a_{(\boldsymbol\alpha)}   {\bf X}_{\boldsymbol\alpha}\right\| \leq \Omega(r)\|F\|_\infty,\qquad  {\bf X}\in r{\bf B_n}(\cH)^-,
$$
where  $\Omega(r):=\min\left\{ M(r), \left( \frac{1}{\sqrt{1-r^2}}\right)^k\right\}$ and
$$
M(r):=\begin{cases} 1& \ \text{ if } 0\leq r\leq \frac{1}{3},\\
\frac{4r^2+(1-r)^2}{4r (1-r)}& \ \text{ if } \frac{1}{3}< r <1.
\end{cases}
$$
\end{theorem}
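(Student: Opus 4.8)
The plan is to reduce the whole statement to the homogeneous majorant series $\cM(F,r)=\sum_{m=0}^\infty r^m\|\sum_{\boldsymbol\alpha\in\Gamma_m}a_{(\boldsymbol\alpha)}{\bf S}_{\boldsymbol\alpha}\|$. By the noncommutative von Neumann inequality (recorded in the remark following Definition \ref{Bohr-objects}), for every ${\bf X}\in r{\bf B_n}(\cH)^-$ one has $\|\sum_{\boldsymbol\alpha\in\Gamma_m}a_{(\boldsymbol\alpha)}{\bf X}_{\boldsymbol\alpha}\|\le r^m\|\sum_{\boldsymbol\alpha\in\Gamma_m}a_{(\boldsymbol\alpha)}{\bf S}_{\boldsymbol\alpha}\|$, so the left-hand side of the theorem is dominated termwise by $\cM(F,r)$. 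It therefore suffices to prove $\cM(F,r)\le\Omega(r)\|F\|_\infty$, and since $\Omega(r)=\min\{M(r),(1-r^2)^{-k/2}\}$ I would establish the two estimates $\cM(F,r)\le M(r)\|F\|_\infty$ and $\cM(F,r)\le(1-r^2)^{-k/2}\|F\|_\infty$ separately and take the minimum.

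For the second estimate I would compare the homogeneous majorant with the multi-homogeneous one. Writing $\Gamma_q=\bigcup_{|{\bf p}|=q}\Lambda_{\bf p}$ and applying the triangle inequality inside each norm gives $\cM(F,r)\le\sum_{{\bf p}\in\ZZ_+^k}r^{|{\bf p}|}\|\sum_{\boldsymbol\alpha\in\Lambda_{\bf p}}a_{(\boldsymbol\alpha)}{\bf S}_{\boldsymbol\alpha}\|=\cD(F,r)$, i.e. $m_{\bf B_n}(r)\le d_{\bf B_n}(r)$. The bound $d_{\bf B_n}(r)\le(1-r^2)^{-k/2}$ is exactly the content of Theorem \ref{Bo-Bo}(i): taking $r_1=\cdots=r_k=r$ in the displayed inequality there, whose left-hand side is precisely $\cD(F,r)$, gives the Cauchy-Schwarz bound $\prod_{i=1}^k(1-r_i^2)^{-1/2}=(1-r^2)^{-k/2}$. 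This step uses that $F$ has scalar coefficients, so that $\|F\|_2\le\|F\|_\infty$ may be invoked.

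For the first estimate I would pass to a single disc variable. Normalize $\|F\|_\infty=1$. Since ${\bf B_n}(\cH)$ is a complete Reinhardt domain, $z{\bf S}$ lies in the polyball for $z\in\DD$, so $g(z):=F(z{\bf S})=\sum_{m=0}^\infty\left(\sum_{\boldsymbol\alpha\in\Gamma_m}a_{(\boldsymbol\alpha)}{\bf S}_{\boldsymbol\alpha}\right)z^m$ is an operator-valued analytic function on $\DD$ with $\|g\|_\infty\le1$ and $g(0)=a_0I$ scalar, where $a_0:=a_{({\bf g}_0)}$. Setting $\tilde B_m:=\sum_{\boldsymbol\alpha\in\Gamma_m}a_{(\boldsymbol\alpha)}{\bf S}_{\boldsymbol\alpha}$, the quantity $\cM(F,r)=\sum_m r^m\|\tilde B_m\|$ is exactly the Bohr majorant of $g$ at radius $r$. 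The $k=n_1=1$ case of Proposition \ref{W} (equivalently the coefficient bound behind Corollary \ref{disk}), applied to $g$, yields the one-variable Wiener inequality $\|\tilde B_m\|\le1-|a_0|^2$ for $m\ge1$. Hence $\cM(F,r)\le|a_0|+(1-|a_0|^2)\sum_{m\ge1}r^m=|a_0|+(1-|a_0|^2)\frac{r}{1-r}$, and maximizing the right-hand side over $|a_0|\in[0,1]$ produces $M(r)$ exactly: the supremum is $1$ when $\frac{r}{1-r}\le\frac12$ (that is $r\le\frac13$), and is attained at $|a_0|=\frac{1-r}{2r}$ with value $\frac{r}{1-r}+\frac{1-r}{4r}=\frac{4r^2+(1-r)^2}{4r(1-r)}$ when $r>\frac13$.

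Combining the two estimates gives $\cM(F,r)\le\Omega(r)\|F\|_\infty$, whence the theorem. I expect the only substantive point to be the Wiener-type bound $\|\tilde B_m\|\le(1-|a_0|^2)\|F\|_\infty$ on the homogeneous blocks: a naive application of Proposition \ref{W} to $F$ with $\Lambda=\Gamma_m$ controls only $(\sum_{\boldsymbol\alpha\in\Gamma_m}|a_{(\boldsymbol\alpha)}|^2)^{1/2}$, which does \emph{not} dominate the operator norm $\|\tilde B_m\|$ because $\Gamma_m$ is not an orthogonal set (distinct $\boldsymbol\alpha\in\Gamma_m$ may have unequal component lengths, so the ranges of the ${\bf S}_{\boldsymbol\alpha}$ need not be orthogonal). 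Routing through the reduction $g=F(z{\bf S})$, where $\tilde B_m$ appears as a genuine Taylor coefficient of a contractive-valued disc function with scalar value at the origin, is precisely what makes the operator-norm bound available; the remaining maximization is the same elementary computation already carried out in the multi-homogeneous setting of Theorem \ref{Bohr2} and Proposition \ref{Cro}.
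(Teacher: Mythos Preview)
Your proposal is correct and follows essentially the same route as the paper. The paper invokes the proof of Theorem~\ref{FX} (which is precisely your reduction to the disc function $g(z)=F(z{\bf Y})$ with scalar value at the origin, followed by the one-variable Wiener bound and the elementary maximization of $|a_0|+(1-|a_0|^2)\frac{r}{1-r}$ as in Proposition~\ref{Cro}) to obtain the $M(r)$ bound, and then compares with the multi-homogeneous majorant via Theorem~\ref{Bo-Bo} with $r_1=\cdots=r_k=r$ to obtain the $(1-r^2)^{-k/2}$ bound---exactly your two steps. Your closing remark that $\Gamma_m$ is not orthogonal, so one must pass through the disc reduction rather than apply Proposition~\ref{W} directly to $F$, is the right diagnosis.
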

\begin{proof} We can assume that $\|F\|_\infty\leq 1$. We saw in the proof of Theorem \ref{FX}
that
$$
\sum_{m=0}^\infty \left\|\sum_{\boldsymbol\alpha\in \Gamma_m} a_{(\boldsymbol\alpha)}   {\bf X}_{\boldsymbol\alpha}\right\| \leq |a_0|+(1-|a_0|^2)\frac{r}{1-r}
$$
for any ${\bf X}\in r{\bf B_n}(\cH)^-$. Since $0\leq |a_0|\leq 1$, as in the proof of Proposition \ref{Cro}, when $c=\frac{r}{1-r}$,  we can prove that
$\sup \{x+ (1-x^2)c: 0\leq x\leq 1\} \leq M(r)$, where $M(r)$ is given in the theorem.
On the other hand, due to Theorem \ref{Bo-Bo}, if $\boldsymbol\rho:=(\rho_{i,j})$ with $\rho_{i,j}=r_i\in [0,1)$ for   $j\in \{1,\ldots, n_i\}$, then
$$
\sum_{m=0}^\infty \left\|\sum_{\boldsymbol\alpha\in \Gamma_m} a_{(\boldsymbol\alpha)}   {\bf X}_{\boldsymbol\alpha}\right\|\leq
\sum\limits_{{\bf p}\in \ZZ_+^k}\left\|\sum\limits_ {\boldsymbol\alpha\in \Lambda_{\bf p}} a_{(\boldsymbol \alpha)}  {\bf X}_{\boldsymbol \alpha}\right\|
\leq
 \prod_{i=1}^k (1-r^2_i)^{-1/2}\|F\|_\infty,
$$
for any ${\bf X}\in \boldsymbol\rho {\bf B_n(\cH)}^-$. Taking $r_1=\cdots=r_k=r$, we complete the proof.
\end{proof}

Let $F\in H^\infty({\bf B_n})$ have the representation
$
F({\bf X} ):=\sum_{m=0}^\infty \sum_{\boldsymbol\alpha\in \Gamma_m}  a_{(\boldsymbol \alpha)} {\bf X}_{\boldsymbol \alpha}
$
and let
$$
\cM(F,r):=\sum_{m=0}^\infty r^m \left\|\sum_{\boldsymbol\alpha\in \Gamma_m}  a_{(\boldsymbol \alpha)} {\bf S}_{\boldsymbol \alpha}\right\|
$$
be the associated majorant series. Define
$$
m_{\bf B_n}(r):=\sup \frac{\cM(F,r)}{\|F\|_\infty}, \qquad r\in [0,1),
$$
where the supremum is taken over all $F\in H^\infty({\bf B_n})$  with $F$ not identically $0$.
Due to Corollary \ref{Bo-homo}, we have $m_{\bf B_n}(r)=1$ if $r\in[0,\frac{1}{3}]$.  On the other hand,  Theorem \ref{Omega}, shows that
$m_{\bf B_n}(r)\leq \Omega(r)$ for $r\in [0,1)$.
The precise value of $m_{\bf B_n}(r)$ as $\frac{1}{3}<r<1$ remains unknown, in general.

\section{Fej\' er and  Bohr  inequalities  for multivariable polynomials with operator coefficients}

In this section, we   obtain  analogues of Carath\' eodory's   inequality,   and Fej\' er  and Egerv\' ary-Sz\' azs inequalities   for   free holomorhic functions  with  operator coefficients and  positive  real parts on the polyball. These results are use to provide  an analogue of Landau's   inequality    and  Bohr type inequalities  when the norm is replaced by the numerical radius of  an operator.

The classical numerical radius of an operator $T\in B(\cH)$ is defined by
$$
\omega(T):=\sup\{|\left<Th,h\right>| : \ h\in\cH, \|h\|=1\}.
$$
Here are some of its basic properties that will be used in what follows.
\begin{enumerate}
\item[(i)] $\omega (T_1+T_2)\leq \omega(T_1)+\omega(T_2)$ for any  $T_1,T_2\in B(\cH)$.
\item[(ii)] $\omega(\lambda T)=|\lambda| \omega (T)$ for any $\lambda\in \CC$.
\item[(iii)] $\omega(U^* TU)=\omega(T)$ for any unitary operator $U$.
\item[(iv)] $\omega(X^* TX)\leq \|X\|^2\omega(T)$ for any operator $X:\cK\to \cH$.
    \item[(v)] $\omega(T\otimes I_\cE)=\omega(T)$ for any separable Hilbert space $\cE$.
        \item[(vi)] The numerical radius is continuous in the operator norm topology.
\end{enumerate}

For each $p\in \ZZ$, we set $p^+:=\max\{p, 0\}$ and $p^-:=\max\{-p, 0\}$.
 A function $F$ with operator-valued coefficients in $B(\cK)$, where $\cK$ is separable Hilbert space, is called {\it free $k$-pluriharmonic} on the abstract polyball ${\bf B_n}$    if it has the form
\begin{equation} \label{F(X)}
F({\bf X})= \sum_{p_1\in \ZZ}\cdots \sum_{p_k\in \ZZ} \sum_{{\alpha_i,\beta_i\in \FF_{n_i}^+, i\in \{1,\ldots, k\}}\atop{|\alpha_i|=p_i^-, |\beta_i|=p_i^+}}A_{(\alpha_1,\ldots,\alpha_k;\beta_1,\ldots, \beta_k)}
\otimes {\bf X}_{1,\alpha_1}\cdots {\bf X}_{k,\alpha_k}{\bf X}_{1,\beta_1}^*\cdots {\bf X}_{k,\beta_k}^*,
\end{equation}
where the multi-series converge in the operator norm topology for any  ${\bf X}=(X_1,\ldots, X_k)\in {\bf B_n}(\cH)$, with $X_i:=(X_{i,1},\ldots, X_{i,n_i})$, and any Hilbert space $\cH$.
According to \cite{Po-pluriharmonic-polyball}, the order of the series in the definition above is irrelevant. Note that any free holomorphic function on the polyball is $k$-pluriharmonic and so is the real part of a free holomorphic function.

In what follows, we obtain an analogue of Carath\' eodory's   inequality \cite{Ca}, and Fej\' er  \cite{Fe} and Egerv\' ary-Sz\' azs inequalities \cite{ES} for   free holomorhic functions  with  positive  real parts on the polyball  and  operator coefficients.

\begin{theorem} \label{Fejer} Let $m\in \NN\cup \{\infty\}$ and let
$$
f({\bf X}):=\sum_{q=1}^m\sum_{\boldsymbol\alpha\in \Gamma_q} A_{(\boldsymbol \alpha)}^*\otimes {\bf X}_{\boldsymbol \alpha}^*+A_0\otimes I+
\sum_{q=1}^m\sum_{\boldsymbol\alpha\in \Gamma_q} A_{(\boldsymbol \alpha)}\otimes {\bf X}_{\boldsymbol \alpha},\qquad {\bf X}\in {\bf B_n}(\cH),
$$
be a positive $k$-pluriharmonic  function on the polyball ${\bf B_n}$ with coefficients in $B(\cK)$. If $m\in \NN$, then for each $q\in \{1,\ldots, m\}$,
$$
\omega \left(\sum_{\boldsymbol\alpha\in \Gamma_q} A_{(\boldsymbol \alpha)}\otimes {\bf X}_{\boldsymbol \alpha}\right)\leq\omega \left(\sum_{\boldsymbol\alpha\in \Gamma_q} A_{(\boldsymbol \alpha)}\otimes {\bf S}_{\boldsymbol \alpha}\right)
\leq \|A_0\|\cos \frac{\pi}{\left[\frac{m}{q}\right]+2},\qquad {\bf X}\in {\bf B_n}(\cH),
$$
where  ${\bf S}$ is the universal model of the polyball and $[x]$ is the integer part of $x$.
If $m=\infty$, then
$$
\omega \left(\sum_{\boldsymbol\alpha\in \Gamma_q} A_{(\boldsymbol \alpha)}\otimes {\bf S}_{\boldsymbol \alpha}\right)
\leq \|A_0\|.
$$
\end{theorem}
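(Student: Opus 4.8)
The plan is to reduce this operator-valued, polyball statement to the classical scalar Egerv\'ary--Sz\'asz inequality \cite{ES} (which contains Fej\'er's \cite{Fe} and Carath\'eodory's \cite{Ca} inequalities as special cases) by two independent maneuvers: a von~Neumann-type domination of the numerical radius by the universal model, and a one-variable slicing that turns the hypothesis into the positivity of a trigonometric polynomial. First I would record two elementary facts. Since $f\geq 0$ on ${\bf B_n}(\cH)$, evaluating at ${\bf X}=0$ gives $f(0)=A_0\otimes I\geq 0$, so $A_0=A_0^*\geq 0$. I would also use the characterization of the numerical radius: for an operator $T$ one has $\omega(T)\leq t$ if and only if $\Re(\lambda T):=\tfrac12(\lambda T+\bar\lambda T^*)\leq tI$ for every $\lambda\in\CC$ with $|\lambda|=1$.

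For the first inequality $\omega\big(\sum_{\boldsymbol\alpha\in\Gamma_q}A_{(\boldsymbol\alpha)}\otimes{\bf X}_{\boldsymbol\alpha}\big)\leq\omega\big(\sum_{\boldsymbol\alpha\in\Gamma_q}A_{(\boldsymbol\alpha)}\otimes{\bf S}_{\boldsymbol\alpha}\big)$ I would invoke the noncommutative Berezin transform of Section~1. For ${\bf X}\in{\bf B_n}(\cH)$ the map $\mathrm{id}_{B(\cK)}\otimes\boldsymbol\cB_{\bf X}$ is unital and completely positive, and since $\boldsymbol\cB_{\bf X}({\bf S}_{\boldsymbol\alpha})={\bf X}_{\boldsymbol\alpha}$ (take $\boldsymbol\beta={\bf g}_0$ in the intertwining relation), it carries the finite sum $\sum_{\boldsymbol\alpha\in\Gamma_q}A_{(\boldsymbol\alpha)}\otimes{\bf S}_{\boldsymbol\alpha}$ onto $\sum_{\boldsymbol\alpha\in\Gamma_q}A_{(\boldsymbol\alpha)}\otimes{\bf X}_{\boldsymbol\alpha}$. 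A unital positive map $\Phi$ is $*$-preserving, so whenever $\Re(\lambda T)\leq tI$ one has $\Re(\lambda\Phi(T))=\Phi(\Re(\lambda T))\leq\Phi(tI)=tI$; hence $\Phi$ never increases the numerical radius, and applying this to $\Phi=\mathrm{id}\otimes\boldsymbol\cB_{\bf X}$ yields the inequality.

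For the second inequality I would slice to the disc. Because ${\bf B_n}(\cH)$ is a noncommutative complete Reinhardt domain (Proposition~1.3 of \cite{Po-automorphisms-polyball}), $z{\bf S}$ lies in the polyball for $z\in\DD$, so $g(z):=f(z{\bf S})$ is well defined and positive there. Using $(z{\bf S})_{\boldsymbol\alpha}=z^{|\boldsymbol\alpha|}{\bf S}_{\boldsymbol\alpha}$ and writing $B_q:=\sum_{\boldsymbol\alpha\in\Gamma_q}A_{(\boldsymbol\alpha)}\otimes{\bf S}_{\boldsymbol\alpha}$, I obtain the operator-valued harmonic polynomial $g(z)=A_0\otimes I+\sum_{q=1}^m\big(z^q B_q+\bar z^q B_q^*\big)$, which, being a polynomial, extends continuously to $|z|=1$ and stays positive there. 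For a unit vector $h$ the scalar function $\phi_h(\theta):=\langle g(e^{i\theta})h,h\rangle$ is then a nonnegative trigonometric polynomial of order at most $m$, with constant term $\langle(A_0\otimes I)h,h\rangle$ and $q$-th Fourier coefficient $\langle B_q h,h\rangle$. The scalar Egerv\'ary--Sz\'asz inequality gives $|\langle B_q h,h\rangle|\leq\langle(A_0\otimes I)h,h\rangle\cos\frac{\pi}{\left[\frac{m}{q}\right]+2}\leq\|A_0\|\cos\frac{\pi}{\left[\frac{m}{q}\right]+2}$, and taking the supremum over $h$ produces the bound on $\omega(B_q)$. When $m=\infty$, $\phi_h$ is the boundary function of a nonnegative harmonic function on $\DD$, for which the $q$-th coefficient is dominated by the zeroth; this gives $|\langle B_q h,h\rangle|\leq\|A_0\|$ and hence $\omega(B_q)\leq\|A_0\|$.

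The delicate points will be (i) the bookkeeping in the slice together with the upgrade of positivity from the open disc to the boundary circle, so that the classical inequality genuinely applies, and (ii) the assertion that a unital positive map does not increase the numerical radius, which rests on the $\Re(\lambda T)\leq tI$ characterization and on the positivity and $*$-preservation of $\boldsymbol\cB_{\bf X}$. The only essential content is the correct reduction to the scalar case; once the Berezin domination and the one-variable slice are set up, every remaining step is routine.
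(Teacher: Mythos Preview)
Your proof is correct and follows the same overall architecture as the paper—reduce the first inequality to a von~Neumann/Berezin domination, and the second to a one-variable slice $g(z)=f(z{\bf S})$—but the implementations differ in two places. For the coefficient bound with $m\in\NN$, the paper invokes an \emph{operator-valued} Egerv\'ary--Sz\'asz inequality (Theorem~2.29 of \cite{Po-unitary}) directly applied to the positive operator-valued harmonic function $\psi(z)$; you instead compress to scalars via $\phi_h(\theta)=\langle g(e^{i\theta})h,h\rangle$ and apply the classical scalar inequality vector by vector, which is more elementary and avoids the external reference. For $m=\infty$, the paper truncates at level $N$, compensates with a tail bound $M(r,N)$ to keep positivity, applies the finite case, and passes $N\to\infty$ then $r\to1$; you bypass this by quoting Carath\'eodory's inequality for positive harmonic functions on $\DD$. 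Your route is shorter here, though the phrase ``boundary function'' is slightly imprecise (a positive harmonic function has a boundary \emph{measure}, not necessarily a function); the substantive inequality $|c_q|\leq c_0$ you need holds regardless, so this does not affect correctness. Finally, for the domination $\omega(\cdot\,|_{\bf X})\leq\omega(\cdot\,|_{\bf S})$ the paper uses the dilation $X_{i,j}^*=({\bf S}_{i,j}^*\otimes I_\cM)|_\cH$ together with property~(iv) of the numerical radius, whereas you use the Berezin transform and the $\Re(\lambda T)\leq tI$ characterization; these are two sides of the same coin since the Berezin kernel furnishes the dilation.
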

\begin{proof}
According to the multivariable dilation theory for regular polyballs \cite{Po-Berezin-poly}, each element ${\bf X}=\{X_{i,j}\}$ in the polyball ${\bf B_n}(\cH)$,  has a dilation  $\{{\bf S}_{i,j}\otimes I_\cM\}$ for some separable Hilbert space $\cM$ with the property  that $X_{i,j}^*=({\bf S}_{i,j}^*\otimes I_\cM)|_\cH$ for any $i\in \{1,\ldots, k\}$ and $j\in \{1,\ldots, n_i\}$.
Using   the properties of the numerical radius, we obtain
$$
\omega \left(\sum_{\boldsymbol\alpha\in \Gamma_q} A_{(\boldsymbol \alpha)}\otimes {\bf X}_{\boldsymbol \alpha}\right)\leq \omega \left(\sum_{\boldsymbol\alpha\in \Gamma_q} A_{(\boldsymbol \alpha)}\otimes {\bf S}_{\boldsymbol \alpha}\right).$$
Assume now that $m\in \NN$. Due to Theorem 2.29 from \cite{Po-unitary}, we have the following operatorial version of the Fej\' er and Egerv\' ary-Sz\' azs inequalities. If $\{C_p\}_{p=0}^m\subset B(\cK)$, $m\geq 1$, and
$$
\sum_{1\leq p\leq m} C_p^*\bar z^p +C_0 +\sum_{1\leq p\leq m} C_pz^p \geq 0, \qquad z\in \DD,
$$
then
$$
\omega (C_p)\leq \|C_0\|\cos \frac{\pi}{\left[\frac{m}{p}\right]+2}, \qquad p\in \{1,\ldots, m\}.
$$
Since ${\bf B_n}$ is a noncommutative complete Reinhardt domain (see \cite{Po-automorphisms-polyball}), we deduce that  the tuple  $z{\bf S}$ is in $ {\bf B_n}(F^2(H_{n_1})\otimes \cdots \otimes F^2(H_{n_k}))$ for any $z\in \DD$. Consequently,
 $$
\psi(z):=\sum_{q=1}^m\left(\sum_{\boldsymbol\alpha\in \Gamma_q} A_{(\boldsymbol \alpha)}^*\otimes {\bf S}_{\boldsymbol \alpha}^*\right)\bar z^q+A_0\otimes I+
\sum_{q=1}^m\left(\sum_{\boldsymbol\alpha\in \Gamma_q} A_{(\boldsymbol \alpha)}\otimes {\bf S}_{\boldsymbol \alpha}\right)z^p
$$
is a  positive operator-valued harmonic function on the disc $\DD$. Applying  the above-mentioned result to $\psi$,  we deduce the inequality in the theorem.

Now, we consider the case $m=\infty$. Setting
$$
M(r,N):=2\sum_{q=N+1}^\infty \left\|\sum_{\boldsymbol\alpha\in \Gamma_q} A_{(\boldsymbol \alpha)}\otimes r^{|\boldsymbol\alpha|}{\bf S}_{\boldsymbol \alpha}\right\|, \qquad r\in [0,1), N\in \NN,
$$
 and using the noncommutative von Neumann inequality for the polyball \cite{Po-poisson}, we deduce that
 $$
\varphi(z):=\sum_{q=1}^N\left(\sum_{\boldsymbol\alpha\in \Gamma_q} A_{(\boldsymbol \alpha)}^*\otimes {\bf S}_{\boldsymbol \alpha}^*\right) r^q\bar z^q+\left(A_0\otimes I+ M(r,N)I\right) +
\sum_{q=1}^N\left(\sum_{\boldsymbol\alpha\in \Gamma_q} A_{(\boldsymbol \alpha)}\otimes {\bf S}_{\boldsymbol \alpha}\right)r^qz^p
$$
is a  positive operator-valued harmonic function on the disc $\DD$. As in the first part of the proof, if $q\in \NN$ and $N\geq q$,  we obtain
 $$
\omega \left(\sum_{\boldsymbol\alpha\in \Gamma_q} A_{(\boldsymbol \alpha)}\otimes r^q{\bf S}_{\boldsymbol \alpha}\right)
\leq \|A_0\otimes I+ M(r,N)I\|\cos \frac{\pi}{\left[\frac{N}{q}\right]+2}.
$$
 Since $M(r,N)\to 0$ as $N\to \infty$, passing to the limit in the inequality above, we deduce that $$\omega \left(\sum_{\boldsymbol\alpha\in \Gamma_q} A_{(\boldsymbol \alpha)}\otimes r^q{\bf S}_{\boldsymbol \alpha}\right)
\leq \|A_0\|,  \qquad r\in [0,1).
$$
 Since the numerical radius is continuous in the operator norm topology, taking $r\to 1$, we complete the proof.
\end{proof}

When the  positive $k$-pluriharmonic  functions on the polyball ${\bf B_n}$ have scalar coefficients, we obtain the following consequence of Theorem \ref{Fejer}.

\begin{corollary}  \label{Fejer-cor} Let  $m\in \NN\cup \{\infty\}$ and let
$$
g({\bf X}):=\sum_{q=1}^m\sum_{\boldsymbol\alpha\in \Gamma_q} \bar a_{(\boldsymbol \alpha)}{\bf X}_{\boldsymbol \alpha}^*+a_0 I+
\sum_{q=1}^m\sum_{\boldsymbol\alpha\in \Gamma_q} a_{(\boldsymbol \alpha)} {\bf X}_{\boldsymbol \alpha},\qquad {\bf X}\in {\bf B_n}(\cH),
$$
be a positive $k$-pluriharmonic  function on the polyball.   Then for each $q\in \{1,\ldots, m\}$,
$$
\sup_{\boldsymbol\lambda=\{\lambda_{i,j}\}\in  {\bf B_n}(\CC)^-}\left|\sum_{\boldsymbol\alpha\in \Gamma_q} a_{(\boldsymbol \alpha)}{\boldsymbol\lambda}_{\boldsymbol \alpha}\right|\leq a_0\cos \frac{\pi}{\left[\frac{m}{q}\right]+2}.
$$
In particular, when $q=1$, we obtain
$$\sum_{i=1}^k\left(\sum_{j=1}^{n_i} \left|\left(\frac{\partial \widetilde g}{\partial z_{i,j}}\right)(0)\right|^2\right)^{1/2}\leq a_0\cos \frac{\pi}{m+2},
$$
 where  $\widetilde g$ is the scalar representation of $g$, i.e. $\widetilde g({\bf z}):=g({\bf z})$ for  ${\bf z}=\{z_{i,j}\}\in (\CC^{n_1})_1\times \cdots \times (\CC^{n_k})_1$.
\end{corollary}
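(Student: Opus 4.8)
The plan is to reduce everything to Theorem~\ref{Fejer} applied to the scalar coefficients $A_0=a_0I$ and $A_{(\boldsymbol\alpha)}=a_{(\boldsymbol\alpha)}I$. Positivity of $g$ at ${\bf X}=0$ forces $a_0\geq 0$, so $\|A_0\|=a_0$, and Theorem~\ref{Fejer} yields, for each $q\in\{1,\ldots,m\}$,
$$\omega\left(\sum_{\boldsymbol\alpha\in\Gamma_q}a_{(\boldsymbol\alpha)}{\bf S}_{\boldsymbol\alpha}\right)\leq a_0\cos\frac{\pi}{\left[\frac{m}{q}\right]+2}.$$
It therefore suffices to dominate the scalar supremum over ${\bf B_n}(\CC)^-$ by this numerical radius, and this is where I would invoke the noncommutative Berezin kernel.

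For a scalar point $\boldsymbol\lambda=\{\lambda_{i,j}\}\in{\bf B_n}(\CC)$ (so $\cH=\CC$), the defect is the positive scalar ${\bf \Delta}_{\boldsymbol\lambda}(I)=\prod_{i=1}^k(1-\|\lambda_i\|_2^2)$, and since $\sum_{\boldsymbol\beta}|\boldsymbol\lambda_{\boldsymbol\beta}|^2=\prod_{i=1}^k(1-\|\lambda_i\|_2^2)^{-1}$ a direct computation gives ${\bf K}^*_{\boldsymbol\lambda}{\bf K}_{\boldsymbol\lambda}={\bf \Delta}_{\boldsymbol\lambda}(I)\sum_{\boldsymbol\beta}|\boldsymbol\lambda_{\boldsymbol\beta}|^2=1$; hence $\xi_{\boldsymbol\lambda}:={\bf K}_{\boldsymbol\lambda}1$ is a unit vector in the Fock space. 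Writing $T:=\sum_{\boldsymbol\alpha\in\Gamma_q}a_{(\boldsymbol\alpha)}{\bf S}_{\boldsymbol\alpha}$, which is a polynomial in the universal model (the set $\Gamma_q$ is finite) and hence lies in $C^*({\bf S})$, the defining property of the Berezin transform gives $\langle T\xi_{\boldsymbol\lambda},\xi_{\boldsymbol\lambda}\rangle={\bf K}^*_{\boldsymbol\lambda}T{\bf K}_{\boldsymbol\lambda}={\boldsymbol\cB}_{\boldsymbol\lambda}[T]$. Using ${\boldsymbol\cB}_{\boldsymbol\lambda}[{\bf S}_{\boldsymbol\alpha}]=\boldsymbol\lambda_{\boldsymbol\alpha}$ (the case $\boldsymbol\beta={\bf g}_0$ of the reproducing identity ${\boldsymbol\cB}_{\bf X}({\bf S}_{\boldsymbol\alpha}{\bf S}_{\boldsymbol\beta}^*)={\bf X}_{\boldsymbol\alpha}{\bf X}_{\boldsymbol\beta}^*$), this equals $\sum_{\boldsymbol\alpha\in\Gamma_q}a_{(\boldsymbol\alpha)}\boldsymbol\lambda_{\boldsymbol\alpha}$. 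Since $\xi_{\boldsymbol\lambda}$ is a unit vector,
$$\left|\sum_{\boldsymbol\alpha\in\Gamma_q}a_{(\boldsymbol\alpha)}\boldsymbol\lambda_{\boldsymbol\alpha}\right|=|\langle T\xi_{\boldsymbol\lambda},\xi_{\boldsymbol\lambda}\rangle|\leq\omega(T).$$
Taking the supremum over the open polyball and using continuity of the polynomial $\boldsymbol\lambda\mapsto\sum_{\boldsymbol\alpha\in\Gamma_q}a_{(\boldsymbol\alpha)}\boldsymbol\lambda_{\boldsymbol\alpha}$ to pass to the closure, I obtain $\sup_{\boldsymbol\lambda\in{\bf B_n}(\CC)^-}|\sum_{\boldsymbol\alpha\in\Gamma_q}a_{(\boldsymbol\alpha)}\boldsymbol\lambda_{\boldsymbol\alpha}|\leq\omega(T)$, which combined with the first step proves the main inequality.

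For the special case $q=1$, the set $\Gamma_1$ is indexed by the pairs $(i,j)$, and the corresponding coefficient is exactly $a_{(i,j)}=(\partial\widetilde g/\partial z_{i,j})(0)$ (the antiholomorphic part of $\widetilde g$ does not contribute to $\partial/\partial z_{i,j}$ at $0$), so that $\sum_{\boldsymbol\alpha\in\Gamma_1}a_{(\boldsymbol\alpha)}\boldsymbol\lambda_{\boldsymbol\alpha}=\sum_{i=1}^k\sum_{j=1}^{n_i}a_{(i,j)}\lambda_{i,j}$. Since ${\bf B_n}(\CC)^-$ is the product of the closed balls $\{\|\lambda_i\|_2\leq 1\}$ and the blocks $\lambda_i$ are independent, I would optimize each block separately: by Cauchy--Schwarz $\max_{\|\lambda_i\|_2\leq1}|\sum_{j=1}^{n_i}a_{(i,j)}\lambda_{i,j}|=(\sum_{j=1}^{n_i}|a_{(i,j)}|^2)^{1/2}$, attained for $\lambda_{i,j}$ proportional to $\overline{a_{(i,j)}}$, and the phases of the $k$ blocks can be aligned so that these maxima add constructively. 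Hence the supremum equals $\sum_{i=1}^k(\sum_{j=1}^{n_i}|a_{(i,j)}|^2)^{1/2}$, and substituting $\left[\frac{m}{1}\right]=m$ gives the stated Landau-type inequality.

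The one step that carries any real content is the identification of the scalar supremum with a lower bound for the numerical radius via the Berezin kernel; the remainder is bookkeeping and an elementary Cauchy--Schwarz optimization. The points I would check carefully are that $\xi_{\boldsymbol\lambda}$ is genuinely a unit vector (equivalently, that ${\bf K}_{\boldsymbol\lambda}$ is an isometry on the open polyball, which the scalar computation above confirms) and that $T\in C^*({\bf S})$ so the boundary Berezin transform applies; both are immediate because $\Gamma_q$ is finite and $\boldsymbol\lambda$ lies in the open polyball. No convergence difficulty arises when $m=\infty$, since for each fixed $q$ the operator $T$ is a finite sum.
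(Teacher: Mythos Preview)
Your proof is correct and essentially follows the route the paper has in mind; the corollary is stated there as an immediate consequence of Theorem~\ref{Fejer} with no separate argument. One remark: your Berezin-kernel step, while correct, duplicates work already done in Theorem~\ref{Fejer}. That theorem's first inequality $\omega\bigl(\sum_{\boldsymbol\alpha\in\Gamma_q}A_{(\boldsymbol\alpha)}\otimes{\bf X}_{\boldsymbol\alpha}\bigr)\leq\omega\bigl(\sum_{\boldsymbol\alpha\in\Gamma_q}A_{(\boldsymbol\alpha)}\otimes{\bf S}_{\boldsymbol\alpha}\bigr)$ holds for every ${\bf X}\in{\bf B_n}(\cH)$, so with $\cK=\cH=\CC$ and ${\bf X}=\boldsymbol\lambda$ you get $\bigl|\sum_{\boldsymbol\alpha\in\Gamma_q}a_{(\boldsymbol\alpha)}\boldsymbol\lambda_{\boldsymbol\alpha}\bigr|\leq a_0\cos\frac{\pi}{[m/q]+2}$ directly, and continuity handles the closure. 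The dilation argument behind that inequality in Theorem~\ref{Fejer} is precisely the compression via the Berezin kernel that you spelled out, so you have effectively reproved that step in the scalar case rather than cited it. Your treatment of the $q=1$ case is exactly right.
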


 A simple consequence of Theorem \ref{Fejer} and Corollary \ref{Fejer-cor} is the following result concerning    holomorphic functions with positive real parts on the polydisk.
 In particular cases, we recover  Fej\' er and Egerv\' ary-Sz\' azs inequalities, as well as Carath\' eodory's inequality.

\begin{corollary} If  $m\in \NN\cup \{\infty\}$ and  $f({\bf z} )=\sum\limits_{{{\bf p} \in \ZZ_+^k},  {|{\bf p}|\leq m}} a_{\bf p} {\bf z}^{\bf p}$ is a holomorphic function on the polydisk   and $\Re f(z)\geq 0$, $z\in \DD^k$, then
$$
\sup_{\boldsymbol  \xi\in \TT^k}\left|\sum_{{\bf p} \in \ZZ_+^k, |{\bf p}|=q} a_{\bf p}{\boldsymbol\xi}^{\bf p}\right|\leq 2\Re a_0\cos \frac{\pi}{\left[\frac{m}{q}\right]+2}. \qquad q\in \{1,\ldots, m\}.
$$
In particular,
\begin{enumerate}
\item[(i)]
if $m\in \NN$ and $k=1$, then we recover  Fej\' er and Egerv\' ary-Sz\' azs inequalities, i.e.
$$|a_{p_1}|\leq 2\Re a_0\cos \frac{\pi}{\left[\frac{m}{q}\right]+2}, \qquad 1\leq p_1\leq m;
$$
\item[(ii)]
if $m=\infty$ and  $k=1$, then we recover the Carath\' eodory's inequality, i.e.
 $$|a_{p_1}|\leq 2\Re a_0, \qquad p_1\in \NN.
 $$
 \end{enumerate}
\end{corollary}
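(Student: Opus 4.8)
The plan is to realize $2\Re f$ as a positive free $k$-pluriharmonic function on the regular polydisk ${\bf D}^k={\bf B_n}$ (the case $n_1=\cdots=n_k=1$) and then invoke Corollary \ref{Fejer-cor}, finally passing from the closed polydisk to the torus by the maximum modulus principle.

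First I would use Proposition \ref{prelim} to transport the scalar holomorphic function $f$ on $\DD^k$ to the corresponding free holomorphic function $F$ on ${\bf D}^k$, whose homogeneous expansion carries the same coefficients $a_{\bf p}$ with ${\bf X}^{\bf p}$ in place of ${\bf z}^{\bf p}$. Setting $G({\bf X}):=F({\bf X})+F({\bf X})^*$ produces a free $k$-pluriharmonic function on ${\bf B_n}$ of exactly the form required in Corollary \ref{Fejer-cor}, with scalar coefficients and constant term $a_0+\bar a_0=2\Re a_0$. The essential point is to check that $G$ is positive. Following the Berezin transform argument of Proposition \ref{prelim}(ii), for each $r\in[0,1)$ and $h\in H^2(\TT^k)$ one has
$$
\left<[F(r{\bf S})+F(r{\bf S})^*]h,h\right>=\int_{\TT^k} 2\Re f(r\boldsymbol\xi)\,|h(\boldsymbol\xi)|^2\,dm_k(\boldsymbol\xi)\geq 0,
$$
since $S_1,\ldots,S_k$ are unitarily equivalent to the coordinate multiplications and $\Re f\geq 0$; hence $G(r{\bf S})\geq 0$. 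For arbitrary ${\bf X}\in{\bf B_n}(\cH)$ one chooses $r$ with $\frac{1}{r}{\bf X}\in{\bf B_n}(\cH)$ and writes $G({\bf X})=(id\otimes\boldsymbol{\cB}_{\frac{1}{r}{\bf X}})[G(r{\bf S})]$; complete positivity of the Berezin transform then gives $G({\bf X})\geq 0$, so $G$ is a positive $k$-pluriharmonic function on the polyball.

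With positivity established, Corollary \ref{Fejer-cor} applied to $G$ yields, for each $q\in\{1,\ldots,m\}$,
$$
\sup_{\boldsymbol\lambda\in{\bf B_n}(\CC)^-}\left|\sum_{{\bf p}\in\ZZ_+^k,\,|{\bf p}|=q} a_{\bf p}\,\boldsymbol\lambda^{\bf p}\right|\leq 2\Re a_0\,\cos\frac{\pi}{\left[\frac{m}{q}\right]+2},
$$
because for $n_1=\cdots=n_k=1$ the set $\Gamma_q$ is indexed precisely by the multi-indices ${\bf p}$ with $|{\bf p}|=q$ and ${\bf B_n}(\CC)=\DD^k$. To finish I would identify the left-hand side with the supremum over $\TT^k$: the inner sum is a homogeneous polynomial of degree $q$, holomorphic on $\DD^k$ and continuous on its closure, so by the maximum modulus principle for the polydisk its modulus is attained on the distinguished boundary $\TT^k$. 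This gives the displayed inequality, the case $m=\infty$ being read off by taking $\left[\frac{m}{q}\right]=\infty$, whence the cosine equals $1$.

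The two special cases are immediate specializations at $k=1$. When $k=1$ and $n_1=1$ the set $\Gamma_q$ is a singleton and the inner sum reduces to $a_q\xi^q$ with $\xi\in\TT$, so $\sup_{\xi\in\TT}|a_q\xi^q|=|a_q|$; this recovers the Fej\'er and Egerv\'ary-Sz\'azs inequalities for $m\in\NN$, and, letting $m\to\infty$ so that $\cos\frac{\pi}{[m/q]+2}\to 1$, it recovers Carath\'eodory's inequality $|a_q|\leq 2\Re a_0$. The only step requiring genuine care is the positivity transfer in the first paragraph, namely promoting $\Re f\geq 0$ on the scalar polydisk to positivity of $G$ on the entire noncommutative polyball; once that is in place, the remainder is a direct application of Corollary \ref{Fejer-cor} combined with the maximum modulus principle.
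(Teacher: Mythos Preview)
Your proposal is correct and follows essentially the same approach the paper intends: the paper states this corollary as a direct consequence of Theorem \ref{Fejer} and Corollary \ref{Fejer-cor}, and your argument does exactly that---transfer $2\Re f$ to a positive free $k$-pluriharmonic function on the regular polydisk via Proposition \ref{prelim}, apply Corollary \ref{Fejer-cor}, and then use the maximum modulus principle to replace the closed polydisk by the distinguished boundary $\TT^k$. The positivity transfer you spell out is the natural adaptation of Proposition \ref{prelim}(ii) with the inequality reversed, and your treatment of the special cases at $k=1$ is correct.
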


We refer the reader to the paper by Kor\' anyi and Puk\' anszky \cite{KP} for a characterization of  the holomorphic functions  with positive real parts on the polydisk.

  In the next theorem, the first inequality can be seen as an analogue of Landau's   inequality \cite{LG}  for   free holomorhic functions   on the polyball  and  operator coefficients, while the second inequality is a Bohr type result where the norms are replaced by the numerical radius of operators.

\begin{theorem} \label{OM}  Let  $m\in \NN\cup \{\infty\}$ and let  $F({\bf X}):= \sum\limits_{q=1}^m\sum\limits_{\boldsymbol\alpha\in \Gamma_q} A_{(\boldsymbol \alpha)} \otimes {\bf X}_{\boldsymbol \alpha}
$
be a free holomorphic function on the polyball with $F(0)\geq 0$ and $\Re F({\bf X})\leq I$ for  ${\bf X}\in {\bf B_n}$. Then,   for each $q\in \{1,\ldots, m\}$,
$$
\omega \left(\sum_{\boldsymbol\alpha\in \Gamma_q} A_{(\boldsymbol \alpha)}\otimes {\bf X}_{\boldsymbol \alpha}\right)\leq
\omega \left(\sum_{\boldsymbol\alpha\in \Gamma_q} A_{(\boldsymbol \alpha)}\otimes {\bf S}_{\boldsymbol \alpha}\right)
\leq 2\|I-A_0\|\cos \frac{\pi}{\left[\frac{m}{q}\right]+2}
$$
and
$$
\sum_{q=0}^m \sup_{{\bf X}\in r {\bf B_n}(\cH)^-}\omega\left(\sum_{\boldsymbol\alpha\in \Gamma_q} A_{(\boldsymbol\alpha)} \otimes {\bf X}_{\boldsymbol\alpha}\right)= \sum_{q=0}^m  \omega\left(\sum_{\boldsymbol\alpha\in \Gamma_q} A_{(\boldsymbol\alpha)} \otimes {\bf S}_{\boldsymbol\alpha}\right)r^q\leq \|A_0\|+\|I-A_0\|
$$
for any $r\in [0,t_m]$, where $t_m\in (0,1)$ is the solution of the equation
$$
\sum_{q=1}^m t^q \cos \frac{\pi}{\left[\frac{m}{q}\right]+2}=\frac{1}{2}.
$$
Moreover, the sequence $\{t_m\}$ is  strictly decreasing and converging  to $\frac{1}{3}$.
When $m=\infty$, we have $t_\infty=\frac{1}{3}$.
\end{theorem}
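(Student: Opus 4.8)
The plan is to reduce the two displayed inequalities to the Fej\'er--Egerv\'ary--Sz\'asz estimate of Theorem \ref{Fejer}, to evaluate the Bohr sum at the universal model by dilation, and finally to study the scalar equation defining $t_m$ through a monotonicity-in-$m$ comparison together with a dominated-convergence limit. Throughout I write $A_0$ for the constant coefficient of $F$, so that $F(0)=A_0\otimes I$.

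First I would record that $F(0)=A_0\otimes I\geq 0$ forces $A_0\geq 0$, hence $A_0^*=A_0$, and that $\Re F({\bf X})\leq I$ is equivalent to $2I-F(r{\bf S})-F(r{\bf S})^*\geq 0$ for all $r\in[0,1)$, exactly as in the proof of Theorem \ref{W-O} via the complete positivity of the Berezin transform. Consequently
$$
G({\bf X}):=2I-F({\bf X})-F({\bf X})^*
=\sum_{q=1}^m\sum_{\boldsymbol\alpha\in\Gamma_q}\bigl(-A_{(\boldsymbol\alpha)}^*\bigr)\otimes{\bf X}_{\boldsymbol\alpha}^*
+2(I-A_0)\otimes I
+\sum_{q=1}^m\sum_{\boldsymbol\alpha\in\Gamma_q}\bigl(-A_{(\boldsymbol\alpha)}\bigr)\otimes{\bf X}_{\boldsymbol\alpha}
$$
is a positive $k$-pluriharmonic function on ${\bf B_n}$ with constant term $2(I-A_0)$. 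Applying Theorem \ref{Fejer} to $G$ and using $\omega(-T)=\omega(T)$ yields, for $q\in\{1,\ldots,m\}$,
$$
\omega\!\left(\sum_{\boldsymbol\alpha\in\Gamma_q}A_{(\boldsymbol\alpha)}\otimes{\bf S}_{\boldsymbol\alpha}\right)
\leq 2\|I-A_0\|\cos\frac{\pi}{\left[\frac{m}{q}\right]+2}.
$$

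Next I would obtain the comparison with the universal model as in Theorem \ref{Fejer}: for ${\bf X}=r{\bf Y}$ with ${\bf Y}\in{\bf B_n}(\cH)^-$, the dilation $\{{\bf S}_{i,j}\otimes I_\cM\}$ of ${\bf Y}$ together with properties (iv),(v) of the numerical radius gives $\omega(\sum_{\boldsymbol\alpha\in\Gamma_q}A_{(\boldsymbol\alpha)}\otimes{\bf X}_{\boldsymbol\alpha})=r^q\,\omega(\sum_{\boldsymbol\alpha\in\Gamma_q}A_{(\boldsymbol\alpha)}\otimes{\bf Y}_{\boldsymbol\alpha})\leq r^q\,\omega(\sum_{\boldsymbol\alpha\in\Gamma_q}A_{(\boldsymbol\alpha)}\otimes{\bf S}_{\boldsymbol\alpha})$, with equality at ${\bf Y}={\bf S}$; this proves the first display and shows that the supremum over $r{\bf B_n}(\cH)^-$ equals the value at $r{\bf S}$. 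Summing with weights $r^q$ and using $\omega(A_0\otimes I)=\|A_0\|$ (valid since $A_0\geq 0$), the Bohr-type inequality reduces to
$$
\sum_{q=1}^m r^q\cos\frac{\pi}{\left[\frac{m}{q}\right]+2}\leq\tfrac12 .
$$
Writing $\phi_m(t):=\sum_{q=1}^m t^q\cos\frac{\pi}{\left[\frac{m}{q}\right]+2}$, all coefficients are positive (because $\left[\frac{m}{q}\right]+2\geq 3$), so $\phi_m$ is strictly increasing on $[0,1]$ with $\phi_m(0)=0$ and $\phi_m(1)\geq\tfrac12$, with strict inequality for $m\geq 2$; hence $\phi_m(t)=\tfrac12$ has a unique root $t_m$, lying in $(0,1)$ for $m\geq 2$, and the inequality holds precisely for $r\in[0,t_m]$.

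Finally I would analyze the sequence $\{t_m\}$. For monotonicity, since $\left[\frac{m+1}{q}\right]\geq\left[\frac{m}{q}\right]$, each coefficient of $\phi_{m+1}$ dominates the corresponding one of $\phi_m$, and $\phi_{m+1}$ carries the extra positive term $t^{m+1}\cos\frac{\pi}{3}$; thus $\phi_{m+1}(t)>\phi_m(t)$ for $t>0$, which forces $t_{m+1}<t_m$. Because every coefficient is $\leq 1$, $\phi_m(\tfrac13)<\sum_{q\geq 1}(\tfrac13)^q=\tfrac12$, whence $t_m>\tfrac13$. For the limit, fix $t\in(0,1)$: as $m\to\infty$ one has $\left[\frac{m}{q}\right]\to\infty$, so $\cos\frac{\pi}{\left[\frac{m}{q}\right]+2}\to 1$, and dominated convergence of the series gives $\phi_m(t)\to\frac{t}{1-t}$; if $t_m\downarrow L>\tfrac13$, choosing $t_0\in(\tfrac13,L)$ yields $\phi_m(t_0)\to\frac{t_0}{1-t_0}>\tfrac12=\phi_m(t_m)$, so eventually $t_0>t_m\geq L>t_0$, a contradiction, hence $t_m\downarrow\tfrac13$. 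For $m=\infty$ every cosine equals $1$, the equation becomes $\frac{t}{1-t}=\tfrac12$, giving $t_\infty=\tfrac13$. The main obstacle is precisely this last paragraph: the reduction to Theorem \ref{Fejer} and the dilation argument are routine, whereas proving that $\{t_m\}$ is strictly decreasing and converges to $\tfrac13$ requires the careful comparison of the truncated cosine sums in $m$ and the limiting identity $\phi_m(t)\to t/(1-t)$.
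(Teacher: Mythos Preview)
Your proposal is correct and follows essentially the same approach as the paper: reduce the Landau-type inequality to Theorem \ref{Fejer} (you make explicit what the paper leaves implicit, namely that Theorem \ref{Fejer} is applied to the positive $k$-pluriharmonic function $G=2I-F-F^*$ with constant term $2(I-A_0)$), then sum against $r^q$ and analyze the scalar function $\phi_m$. Your treatment of the monotonicity and limit of $\{t_m\}$ via the coefficient comparison $\left[\frac{m+1}{q}\right]\geq\left[\frac{m}{q}\right]$ and dominated convergence is a slightly more detailed version of the paper's one-line appeal to $\varphi_m<\varphi_{m+1}<\sum_{q\geq 1}t^q$ and uniform convergence on compacta, but the ideas are identical.
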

\begin{proof} The Landau type inequality  is a consequence of Theorem \ref{Fejer}. We prove the second part of the theorem.
Let $r\in [0,t_m]$, where $t_m\in (0,1)$ is the solution of the equation mentioned above. Note that the first part of the theorem implies
\begin{equation*}
\begin{split}
 \sum_{q=0}^m  \omega\left(\sum_{\boldsymbol\alpha\in \Gamma_q} A_{(\boldsymbol\alpha)} \otimes {\bf S}_{\boldsymbol\alpha}\right)r^q &\leq
 \omega(A_0)+ 2\|I-A_0\|\sum_{q=1}^m t^q \cos \frac{\pi}{\left[\frac{m}{q}\right]+2}\\
 &\leq \|A_0\|+\|I-A_0\|.
\end{split}
\end{equation*}
A close look at the function $\varphi_m(t):=\sum_{q=1}^m t^q \cos \frac{\pi}{\left[\frac{m}{q}\right]+2}$, $t\in [0,1]$, reveals that $\varphi_m$ is strictly increasing and, consequently,   there is a unique solution $t_m\in (0,1)$  of the the equation $\varphi_m(t)=\frac{1}{2}$. Moreover, since $\varphi_m(t)<\varphi_{m+1}(t)<\sum_{q=1}^\infty t^q$ for $t\in [0,1)$ and $\sum_{q=1}^\infty \frac{1}{3^q}=\frac{1}{2}$, we have $t_m>\frac{1}{3}$ and  the sequence $\{t_m\}$ is  strictly decreasing. Since the sequence $\varphi_m$ is uniformly convergent to $\xi(t):=\sum_{q=1}^\infty t^q$  on any interval $[0,\delta]$ with $\delta\in (0,1)$, one can easily see that $t_m\to \frac{1}{3}$ as $m\to \infty$. Therefore, when $m=\infty$, we have $t_\infty=\frac{1}{3}$.
The proof is complete.
\end{proof}

Here is the numerical radius version of Bohr's inequality for  free holomorphic functions on polyballs.

\begin{corollary} \label{omco} Let  $m\in \NN\cup \{\infty\}$ and
 let  $f({\bf X}):= \sum\limits_{q=1}^m\sum\limits_{\boldsymbol\alpha\in \Gamma_q} a_{(\boldsymbol \alpha)} {\bf X}_{\boldsymbol \alpha}
$
be a free holomorphic function with $f(0)\geq 0$ and $\Re f({\bf X})\leq I$ on the polyball ${\bf B_n}$. Then
$$
  \sum_{q=0}^m  \omega\left(\sum_{\boldsymbol\alpha\in \Gamma_q} a_{(\boldsymbol\alpha)} {\bf S}_{\boldsymbol\alpha}\right)r^q\leq 1
$$
for any $r\in [0,t_m]$, where the sequence $\{t_m\}$  is defined in Theorem \ref{OM}. In particular, the result holds if $f$ is a free holomorphic function with $\|f\|_\infty\leq 1$. Moreover, we have
$$
\sum_{q=0}^m \sup_{{\bf z}\in r{\bf B_n}(\CC)^-}\left|\sum_{\boldsymbol\alpha\in \Gamma_q} a_{(\boldsymbol\alpha)} {\bf z}_{\boldsymbol\alpha}\right|\leq 1
$$
for any $r\in [0,t_m]$. When $m=\infty$, we have $t_\infty=\frac{1}{3}$.
\end{corollary}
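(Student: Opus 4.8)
The plan is to derive Corollary \ref{omco} as the scalar-coefficient specialization of Theorem \ref{OM}, supplemented by a short Berezin-transform argument to convert the numerical radius of the model operators into the scalar evaluations on ${\bf B_n}(\CC)^-$. First I would apply Theorem \ref{OM} with $\cK=\CC$, so that the operator coefficients $A_{(\boldsymbol\alpha)}$ reduce to the scalars $a_{(\boldsymbol\alpha)}$ and $A_0=f(0)=a_0$. The hypothesis $f(0)\geq 0$ gives $a_0\geq 0$, while $\Re f({\bf X})\leq I$ evaluated at ${\bf X}=0$ gives $a_0\leq 1$; hence $0\leq a_0\leq 1$ and $\|A_0\|+\|I-A_0\|=a_0+(1-a_0)=1$. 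The second inequality of Theorem \ref{OM} then reads exactly $\sum_{q=0}^m \omega\bigl(\sum_{\boldsymbol\alpha\in\Gamma_q} a_{(\boldsymbol\alpha)}{\bf S}_{\boldsymbol\alpha}\bigr)r^q\leq 1$ for $r\in[0,t_m]$, which is the first assertion. For the ``in particular'' clause I would simply observe that $\|f\|_\infty\leq 1$ forces $\Re f({\bf X})\leq\|f({\bf X})\|\,I\leq I$ for every ${\bf X}\in{\bf B_n}(\cH)$, so that together with $f(0)\geq 0$ the hypotheses of the main part are satisfied and its conclusion applies verbatim.

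The remaining scalar inequality is where the only genuine (if modest) work lies. Fix $q$ and a point ${\bf z}\in r{\bf B_n}(\CC)^-$, and write ${\bf z}=r{\bf w}$ with ${\bf w}\in{\bf B_n}(\CC)^-$, so that ${\bf z}_{\boldsymbol\alpha}=r^{q}{\bf w}_{\boldsymbol\alpha}$ for $\boldsymbol\alpha\in\Gamma_q$. Using the property ${\boldsymbol\cB}_{\bf w}({\bf S}_{\boldsymbol\alpha})={\bf w}_{\boldsymbol\alpha}$ of the noncommutative Berezin transform recalled in Section 1 (the case $\boldsymbol\beta={\bf g}_0$ of ${\boldsymbol\cB}_{\bf X}({\bf S}_{\boldsymbol\alpha}{\bf S}_{\boldsymbol\beta}^*)={\bf X}_{\boldsymbol\alpha}{\bf X}_{\boldsymbol\beta}^*$), linearity gives $\sum_{\boldsymbol\alpha\in\Gamma_q}a_{(\boldsymbol\alpha)}{\bf w}_{\boldsymbol\alpha}={\bf K}_{\bf w}^{*}\bigl(\sum_{\boldsymbol\alpha\in\Gamma_q}a_{(\boldsymbol\alpha)}{\bf S}_{\boldsymbol\alpha}\bigr){\bf K}_{\bf w}$, a compression of the model operator by the Berezin kernel ${\bf K}_{\bf w}$, which is a contraction on ${\bf B_n}(\CC)^-$ (an isometry on the open polyball, the boundary case following from the limit definition and the continuity of $\omega$). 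Since the numerical radius of a scalar equals its modulus, property (iv), $\omega(X^{*}TX)\leq\|X\|^{2}\omega(T)$, yields $\bigl|\sum_{\boldsymbol\alpha\in\Gamma_q}a_{(\boldsymbol\alpha)}{\bf w}_{\boldsymbol\alpha}\bigr|\leq\omega\bigl(\sum_{\boldsymbol\alpha\in\Gamma_q}a_{(\boldsymbol\alpha)}{\bf S}_{\boldsymbol\alpha}\bigr)$.

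Restoring the factor $r^{q}$ and taking the supremum over ${\bf z}$ gives $\sup_{{\bf z}\in r{\bf B_n}(\CC)^-}\bigl|\sum_{\boldsymbol\alpha\in\Gamma_q}a_{(\boldsymbol\alpha)}{\bf z}_{\boldsymbol\alpha}\bigr|\leq r^{q}\,\omega\bigl(\sum_{\boldsymbol\alpha\in\Gamma_q}a_{(\boldsymbol\alpha)}{\bf S}_{\boldsymbol\alpha}\bigr)$ for each $q$; summing over $q=0,\dots,m$ and invoking the first assertion closes the estimate, and $t_\infty=\tfrac{1}{3}$ is quoted directly from Theorem \ref{OM}. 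I expect the main obstacle to be the bookkeeping around the identification of the scalar evaluation as a Berezin compression and the verification that ${\bf K}_{\bf z}$ is a contraction on the closed polyball; all the analytic content has already been absorbed into Theorem \ref{OM}, so no further hard estimate is required.
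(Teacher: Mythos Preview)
Your proof is correct and follows the paper's approach: the corollary is stated without proof in the paper, being the scalar-coefficient ($\cK=\CC$) specialization of Theorem \ref{OM}, where $\|A_0\|+\|I-A_0\|=a_0+(1-a_0)=1$. Your Berezin-kernel argument for the ``moreover'' clause is valid but slightly more than needed: the second display in Theorem \ref{OM} already bounds $\sup_{{\bf X}\in r{\bf B_n}(\cH)^-}\omega\bigl(\sum_{\boldsymbol\alpha\in\Gamma_q}a_{(\boldsymbol\alpha)}{\bf X}_{\boldsymbol\alpha}\bigr)$, so taking $\cH=\CC$ (where the numerical radius of a scalar is its modulus) yields the scalar inequality directly, without re-deriving the compression estimate.
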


Due to Proposition \ref{prelim}, Theorem \ref{OM}, and Corollary \ref{omco}, we obtain the following result for the polydisk.

\begin{corollary} \label{Co-po} Let  $m\in \NN\cup \{\infty\}$ and let $f({\bf z} )=\sum\limits_{{{\bf p} \in \ZZ_+^k},  {|{\bf p}|\leq m}} a_{\bf p} {\bf z}^{\bf p}$ be a holomorphic function on the polydisk     such that  $f(0)\geq 0$  and $\Re f({\bf z})\leq 1$ for  ${\bf z}\in \DD^k$.
Then, for each $q\in \{1,\ldots, m\}$,
$$
\sup_{\boldsymbol\xi\in \TT^k}\left|\sum_{{\bf p}\in \ZZ_+^k, |{\bf p}|=q}a_{\bf p} \boldsymbol\xi^{\bf p}\right|\leq  2(1-|a_0|)\cos \frac{\pi}{\left[\frac{m}{q}\right]+2}
$$
and
$$
\sum_{q=0}^m \sup_{\boldsymbol\xi\in \TT^k}\left|\sum_{{\bf p}\in \ZZ_+^k, |{\bf p}|=q}a_{\bf p} \boldsymbol\xi^{\bf p}\right|r^q\leq 1
$$
for any $r\in [0,t_m]$, where the sequence $\{t_m\}$  is defined in Theorem \ref{OM}.
\end{corollary}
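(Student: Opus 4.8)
The plan is to reduce everything to the regular polydisk ${\bf D}^k$ (the case $n_1=\cdots=n_k=1$) through Proposition \ref{prelim} and then invoke Theorem \ref{OM} and Corollary \ref{omco}. First I would associate to $f$ the free holomorphic function $F({\bf X}):=\sum_{q=1}^m\sum_{\boldsymbol\alpha\in\Gamma_q}a_{(\boldsymbol\alpha)}{\bf X}_{\boldsymbol\alpha}$ on ${\bf D}^k$, where the isomorphism $\FF_1^+\cong\ZZ_+$ identifies each $\boldsymbol\alpha$ with a multi-index ${\bf p}$ and ${\bf X}_{\boldsymbol\alpha}$ with the corresponding monomial. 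By Proposition \ref{prelim} this $F$ lies in $H^\infty({\bf D}^k)\subset H^\infty({\bf B_n})$. Because $f(0)\geq 0$ together with $\Re f\leq 1$ at the origin forces $a_0:=f(0)\in[0,1]$, and because $\Re f({\bf z})\leq 1$ on $\DD^k$ is equivalent (Proposition \ref{prelim}(ii)) to $\Re F\leq I$ on the polyball, $F$ satisfies the hypotheses $F(0)=a_0I\geq 0$ and $\Re F({\bf X})\leq I$ required by Theorem \ref{OM} and Corollary \ref{omco}.

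For the first inequality I would apply Theorem \ref{OM} to $F$. Since $A_0=a_0I$ gives $\|I-A_0\|=1-a_0=1-|a_0|$, the theorem yields
$$
\omega\left(\sum_{\boldsymbol\alpha\in\Gamma_q}a_{(\boldsymbol\alpha)}{\bf S}_{\boldsymbol\alpha}\right)\leq 2(1-|a_0|)\cos\frac{\pi}{\left[\frac{m}{q}\right]+2},\qquad q\in\{1,\ldots,m\}.
$$
It then remains to bound $\sup_{\boldsymbol\xi\in\TT^k}\bigl|\sum_{|{\bf p}|=q}a_{\bf p}\boldsymbol\xi^{\bf p}\bigr|$ by this numerical radius. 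Here I would evaluate against the noncommutative Berezin kernel at scalar points: for $\boldsymbol\lambda\in{\bf B_n}(\CC)=\DD^k$ the kernel ${\bf K}_{\boldsymbol\lambda}$ is a unit vector $v_{\boldsymbol\lambda}$ (since $\boldsymbol\cB_{\boldsymbol\lambda}$ is unital), and $\boldsymbol\cB_{\boldsymbol\lambda}[{\bf S}_{\boldsymbol\alpha}]=\langle{\bf S}_{\boldsymbol\alpha}v_{\boldsymbol\lambda},v_{\boldsymbol\lambda}\rangle=\boldsymbol\lambda_{\boldsymbol\alpha}=\boldsymbol\lambda^{\bf p}$, so that $\bigl|\sum_{|{\bf p}|=q}a_{\bf p}\boldsymbol\lambda^{\bf p}\bigr|=\bigl|\langle(\sum_{\boldsymbol\alpha\in\Gamma_q}a_{(\boldsymbol\alpha)}{\bf S}_{\boldsymbol\alpha})v_{\boldsymbol\lambda},v_{\boldsymbol\lambda}\rangle\bigr|\leq\omega(\sum_{\boldsymbol\alpha\in\Gamma_q}a_{(\boldsymbol\alpha)}{\bf S}_{\boldsymbol\alpha})$. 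Taking the supremum over $\boldsymbol\lambda\in\DD^k$ and using the maximum modulus principle for the polydisk to replace $\DD^k$ by the distinguished boundary $\TT^k$ for the homogeneous polynomial delivers the first inequality.

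For the second inequality I would apply the scalar form of Corollary \ref{omco} to the same $F$, obtaining
$$
\sum_{q=0}^m\sup_{{\bf z}\in r{\bf B_n}(\CC)^-}\left|\sum_{\boldsymbol\alpha\in\Gamma_q}a_{(\boldsymbol\alpha)}{\bf z}_{\boldsymbol\alpha}\right|\leq 1,\qquad r\in[0,t_m].
$$
For the regular polydisk ${\bf B_n}(\CC)^-=\overline{\DD}^k$ and ${\bf z}_{\boldsymbol\alpha}={\bf z}^{\bf p}$; writing ${\bf z}=r\boldsymbol\zeta$ with $\boldsymbol\zeta\in\overline{\DD}^k$ and using homogeneity, one has $\sup_{{\bf z}\in r\overline{\DD}^k}\bigl|\sum_{|{\bf p}|=q}a_{\bf p}{\bf z}^{\bf p}\bigr|=r^q\sup_{\boldsymbol\zeta\in\overline{\DD}^k}\bigl|\sum_{|{\bf p}|=q}a_{\bf p}\boldsymbol\zeta^{\bf p}\bigr|=r^q\sup_{\boldsymbol\xi\in\TT^k}\bigl|\sum_{|{\bf p}|=q}a_{\bf p}\boldsymbol\xi^{\bf p}\bigr|$, again by maximum modulus. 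Substituting this into the displayed bound produces the stated Bohr-type inequality.

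The routine part is the bookkeeping that identifies $\FF_1^+$-indexed data with classical multi-indices and matches the two positivity conditions via Proposition \ref{prelim}. The only substantive step is passing from the numerical radius of $\sum_{\boldsymbol\alpha\in\Gamma_q}a_{(\boldsymbol\alpha)}{\bf S}_{\boldsymbol\alpha}$ to $\sup_{\boldsymbol\xi\in\TT^k}|\cdots|$, and I expect this to be the main (though mild) obstacle; it is resolved cleanly by evaluating against the normalized Berezin kernel $v_{\boldsymbol\lambda}$ at scalar points of $\DD^k$ and then invoking the maximum modulus principle to pass to $\TT^k$.
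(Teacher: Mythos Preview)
Your proposal is correct and follows exactly the route the paper indicates: the paper's entire proof is the single line ``Due to Proposition~\ref{prelim}, Theorem~\ref{OM}, and Corollary~\ref{omco}, we obtain the following result for the polydisk,'' and you have supplied precisely those reductions. The one detail you add beyond the paper --- bounding $\sup_{\boldsymbol\xi\in\TT^k}\bigl|\sum_{|{\bf p}|=q}a_{\bf p}\boldsymbol\xi^{\bf p}\bigr|$ by $\omega\bigl(\sum_{\boldsymbol\alpha\in\Gamma_q}a_{(\boldsymbol\alpha)}{\bf S}_{\boldsymbol\alpha}\bigr)$ via the normalized Berezin kernel and then invoking the maximum modulus principle --- is the natural way to fill in that step and is correct.
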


We remark the following particular cases of Corollary \ref{Co-po}.
\begin{enumerate}
\item[(i)] The results holds for any holomorphic function $f$ on the polydisk with $f(0)\geq 0$ and  $\|f\|_\infty\leq 1$.
     \item[(ii)] If $m=\infty$, then $t_\infty=\frac{1}{3}$ is the best positive constant in the Bohr type inequality (see \cite{Ai}).
          \item[(iii)]  If  $m=\infty$, $k=1$, and $n_1=1$, we recover Bohr's inequality (see \cite{Bo}).
          \end{enumerate}

\begin{corollary} \label{La} Let $f({\bf z} )=\sum\limits_{{{\bf p} \in \ZZ_+^k},  {|{\bf p}|\leq m}} a_{\bf p} {\bf z}^{\bf p}$ be a holomorphic function of degree  $m\in \NN\cup \{\infty\}$ on the polydisc $\DD^k$ such that $\Re f({\bf z})\leq 1$ for  ${\bf z}\in \DD^k$.

\begin{enumerate}
\item[(i)] If  $f(0)\geq 0$,
then
$$
\left|\left(\frac{\partial f}{\partial z_1}\right)(0)\right|+\cdots + \left|\left(\frac{\partial f}{\partial z_k}\right)(0)\right|
\leq 2(1-f(0)) \cos \frac{\pi}{m +2}.
$$
\item[(ii)]
If $m=\infty$  and ${\bf a}=(a_1,\ldots, a_k)\in \DD^k$ is such that  $f({\bf a})\geq 0$, then
$$
\sum_{i=1}^k (1-|a_i|^2)\left|\left(\frac{\partial f}{\partial z_i}\right)({\bf a})\right|
\leq  2(1-f({\bf a})).
$$
\end{enumerate}
\end{corollary}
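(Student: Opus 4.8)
The plan is to obtain both inequalities from Corollary \ref{Co-po}, the polydisk specialization of Theorem \ref{OM}, together with an automorphic change of variables for part (ii).

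For part (i) I would invoke the first inequality of Corollary \ref{Co-po} at $q=1$. Writing ${\bf e}_i$ for the multi-index having a single $1$ in position $i$, the degree-one coefficients are $a_{{\bf e}_i}=\left(\frac{\partial f}{\partial z_i}\right)(0)$, and that inequality reads
$$
\sup_{\boldsymbol\xi\in\TT^k}\left|\sum_{i=1}^k a_{{\bf e}_i}\boldsymbol\xi^{{\bf e}_i}\right|\leq 2(1-|a_0|)\cos\frac{\pi}{m+2},
$$
where $a_0=f(0)$. The left-hand side equals $\sum_{i=1}^k|a_{{\bf e}_i}|$: indeed $\left|\sum_i a_{{\bf e}_i}\xi_i\right|\leq\sum_i|a_{{\bf e}_i}|$ for every $\boldsymbol\xi\in\TT^k$, with equality attained by taking $\xi_i=\overline{a_{{\bf e}_i}}/|a_{{\bf e}_i}|$ for the nonzero coefficients. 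Since $f(0)\geq 0$ is real we have $|a_0|=f(0)$, so part (i) follows at once (with $\cos\frac{\pi}{m+2}=1$ when $m=\infty$).

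For part (ii), where $m=\infty$ and $f({\bf a})\geq 0$, I would reduce to the case ${\bf a}=0$ using a coordinatewise automorphism of the polydisk. Let $\phi_{a_i}(w):=\frac{a_i-w}{1-\overline{a_i}w}$, an automorphism of $\DD$ with $\phi_{a_i}(0)=a_i$ and $\phi_{a_i}'(0)=|a_i|^2-1$, and set $\Phi({\bf w}):=(\phi_{a_1}(w_1),\ldots,\phi_{a_k}(w_k))$, an automorphism of $\DD^k$ with $\Phi(0)={\bf a}$. Then $h:=f\circ\Phi$ is holomorphic on $\DD^k$, satisfies $\Re h\leq 1$ because $\Phi(\DD^k)\subseteq\DD^k$, and has $h(0)=f({\bf a})\geq 0$. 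Applying part (i) to $h$ with $m=\infty$ gives $\sum_{i=1}^k\left|\left(\frac{\partial h}{\partial w_i}\right)(0)\right|\leq 2(1-f({\bf a}))$. Since each $\phi_{a_j}$ depends only on $w_j$, the chain rule yields $\left(\frac{\partial h}{\partial w_i}\right)(0)=\left(\frac{\partial f}{\partial z_i}\right)({\bf a})\,\phi_{a_i}'(0)$, whence $\left|\left(\frac{\partial h}{\partial w_i}\right)(0)\right|=(1-|a_i|^2)\left|\left(\frac{\partial f}{\partial z_i}\right)({\bf a})\right|$; substituting gives the claimed inequality.

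The computations are routine, and I expect no serious obstacle: the only point requiring care is the reduction in part (ii), namely verifying that $f\circ\Phi$ still satisfies the hypotheses of part (i). Holomorphy and $\Re(f\circ\Phi)\leq 1$ are immediate from $\Phi(\DD^k)\subseteq\DD^k$, and $h(0)\geq 0$ holds by assumption. The conceptual crux is the vanishing of the mixed derivatives $\partial\phi_{a_j}/\partial w_i$ for $i\neq j$, which is exactly what makes the individual factors $(1-|a_i|^2)$ appear separately in each summand and thus produces the stated weighted sum.
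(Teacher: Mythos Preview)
Your proof is correct and follows essentially the same approach as the paper: part (i) is obtained from Corollary~\ref{Co-po} at $q=1$ (you spell out the elementary fact that $\sup_{\boldsymbol\xi\in\TT^k}\bigl|\sum_i a_{{\bf e}_i}\xi_i\bigr|=\sum_i|a_{{\bf e}_i}|$, which the paper leaves implicit), and part (ii) is reduced to part (i) via a coordinatewise M\"obius automorphism sending $0$ to ${\bf a}$. The only cosmetic difference is that the paper writes the automorphism as $-\varphi_{a_i}$ with $\varphi_{a_i}(z)=\frac{z-a_i}{1-\bar a_i z}$, whereas your $\phi_{a_i}(w)=\frac{a_i-w}{1-\bar a_i w}$ is the same map; your computation of $\phi_{a_i}'(0)=|a_i|^2-1$ and the paper's $\varphi_{a_i}'(0)=1-|a_i|^2$ lead to the same absolute value $1-|a_i|^2$ in each summand.
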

\begin{proof} Part (i) is obtained from Corollary \ref{Co-po} when $q=1$. When $m=\infty$, the inequality   becomes
\begin{equation}\label{infinite}
\left|\left(\frac{\partial f}{\partial z_1}\right)(0)\right|+\cdots + \left|\left(\frac{\partial f}{\partial z_k}\right)(0)\right|
\leq 2(1-f(0)).
\end{equation}
Part (ii) is an extension of this inequality and can be obtained as follows. Assume that ${\bf a}=(a_1,\ldots, a_k)\in \DD^k$ is such that  $f({\bf a})\geq 0$.
For each $i\in \{1,\ldots, k\}$, let $\varphi_{a_i}$ be  the automorphism of $\DD$ given by $\varphi_{a_i}(z):=\frac{z-a_i}{1-\bar a_i z}$ for  $z\in \DD$.
Define
$$
g({\bf z}):=f\left(-\varphi_{a_1}(z_1),\ldots, -\varphi_{a_k}(z_k)\right),\qquad {\bf z}=(z_1,\ldots, z_k)\in \DD^k.
$$
Note that $g$ is holomorphic on the polydisc such that $g(0)=f({\bf a})\geq 0$ and
$\Re g({\bf z})\leq 1$ for  ${\bf z}\in \DD^k$. Applying inequality \eqref{infinite} to $g$, we obtain
$$
\sum_{i=1}^k  \left|\left(\frac{\partial f}{\partial z_i}\right)({\bf a})\varphi_{a_i}'(0)\right|
\leq  2(1-g(0)).
$$
Now, taking into account that $\varphi_{a_i}'(0)=1-|a_i|^2$, we complete the proof.
\end{proof}

\section{Harnack   inequalities for free $k$-pluriharmonic functions}

In this section, we provide Harnack type inequalities for positive free $k$-pluriharmonic function  with operator coefficients  on polyballs.

 A bounded linear operator $A\in B(\cK\otimes \bigotimes_{i=1}^k F^2(H_{n_i}))$ is called {\it $k$-multi-Toeplitz} with respect to the universal model
 ${\bf R}:=({\bf R}_1,\ldots, {\bf R}_k)$,  where  ${\bf R}_i:=({\bf R}_{i,1},\ldots,{\bf R}_{i,n_i})$, if
 $$
 (I_\cK\otimes {\bf R}_{i,s}^*)A(I_\cK\otimes {\bf R}_{i,t})=\delta_{st}A,\qquad s,t\in \{1,\ldots, n_i\},
 $$
  for every $i\in\{1,\ldots, k\}$.
 Let     $\boldsymbol{\cT_{\bf n}}$ be the set of all    $k$-multi-Toeplitz operators  on $\cK\otimes\bigotimes_{i=1}^k F^2(H_{n_i})$.
 In \cite{Po-pluriharmonic-polyball}, we proved that
\begin{equation*}\begin{split}
\boldsymbol{\cT_{\bf n}}&=\text{\rm span} \{f^*g:\ f, g\in B(\cK)\otimes_{min} \boldsymbol\cA_{\bf n}\}^{- \text{\rm SOT}}\\
&=\text{\rm span} \{f^*g:\ f, g\in B(\cK)\otimes_{min} \boldsymbol\cA_{\bf n}\}^{- \text{\rm WOT}},
\end{split}
\end{equation*}
where $\boldsymbol\cA_{\bf n}$ is the polyball algebra.
A function $F$ with operator-valued coefficients in $B(\cK)$, where $\cK$ is separable Hilbert space, is called {\it free $k$-pluriharmonic} on the abstract polyball ${\bf B_n}$    if it has the form \eqref{F(X)}.
 In particular, for any $r\in [0,1)$,
$$F(r{\bf S}):= \sum_{p_1\in \ZZ}\cdots \sum_{p_k\in \ZZ} \sum_{{\alpha_i,\beta_i\in \FF_{n_i}^+, i\in \{1,\ldots, k\}}\atop{|\alpha_i|=p_i^-, |\beta_i|=p_i^+}}A_{(\alpha_1,\ldots,\alpha_k;\beta_1,\ldots, \beta_k)}
\otimes r^{|\boldsymbol\alpha|+ |\boldsymbol\beta|}{\bf S}_{1,\alpha_1}\cdots {\bf S}_{k,\alpha_k}{\bf S}_{1,\beta_1}^*\cdots {\bf S}_{k,\beta_k}^*
$$
  is convergent in the operator
 norm topology to a $k$-multi-Toeplitz operator, with respect to the right universal model ${\bf R}=\{{\bf R}_{i,j}\}$, which is in the operator space  $\text{\rm span} \{f^*g:\ f, g\in B(\cK)\otimes_{min} \boldsymbol\cA_{\bf n}\}^{-\|\cdot\|}$.

In \cite{Po-unitary}, we introduced  a joint numerical radius for $n$-tuples of of operators $(T_1,\ldots, T_n)\in B(\cH)^n$ which turned out to be related to the classical numerical radius by the relation
\begin{equation}\label{omfo}
w(T_1,\ldots, T_n)=\omega (T_1^*\otimes S_1+\cdots + T_n^*\otimes S_n),
\end{equation}
where $S_1,\ldots, S_n$ are the left creation operators on the full Fock space
$F^2(H_n)$. The joint numerical radius has similar properties to those mentioned, in Section 6, for the classical numerical radius.

\begin{theorem} \label{H-ineq}
 Let $F$ be a positive free $k$-pluriharmonic function  with operator coefficients   and of   degree $m_i\in \NN\cup \{\infty\}$, $i\in \{1,\ldots, k\}$, with respect to the variables $ (X_{i,1}, \ldots, X_{i,n_i})$.   If $\boldsymbol\rho:=(\rho_1,\ldots, \rho_k)\in [0,1)^k$, then
 $$
   \left\|F({\bf X})\right\|\leq \|F(0)\|
   \prod_{i=1}^k \left(1+2\sum_{p_i=1}^{m_i}\rho_i^{p_i} \cos \frac{\pi}{\left[\frac{m_i}{p_i}\right]+2}\right)\leq \|F(0)\|\prod_{i=1}^k \frac{1+\rho_i}{1-\rho_i}
 $$
 for any ${\bf X}\in \boldsymbol\rho{\bf B}_{\bf n}(\cH)^-$, where $[x]$ is the integer part of $x$.
 \end{theorem}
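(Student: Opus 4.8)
The plan is to reduce the estimate to the universal model via the noncommutative Berezin transform, and then to prove a Harnack-type bound for the resulting positive operator-valued harmonic function on the polydisc by peeling off one block at a time with the one-variable Fej\'er--Egerv\'ary--Sz\'azs inequality. First I would set ${\bf X}=\boldsymbol\rho{\bf Y}$ with ${\bf Y}\in{\bf B_n}(\cH)^-$. Since $F$ is $k$-pluriharmonic, $F(\boldsymbol\rho{\bf S})$ is a $k$-multi-Toeplitz operator to which the Berezin transform applies, and because ${\boldsymbol\cB}_{\bf Y}[{\bf S}_{\boldsymbol\alpha}{\bf S}_{\boldsymbol\beta}^*]={\bf Y}_{\boldsymbol\alpha}{\bf Y}_{\boldsymbol\beta}^*$ extends by linearity and norm-continuity, one gets $F({\bf X})=(id\otimes{\boldsymbol\cB}_{\bf Y})[F(\boldsymbol\rho{\bf S})]$ (the $\boldsymbol\rho$-factors sitting in the coefficients match on both sides). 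As ${\boldsymbol\cB}_{\bf Y}$ is unital completely positive, hence contractive, this yields $\|F({\bf X})\|\le\|F(\boldsymbol\rho{\bf S})\|$, using a limit $r\to1$ to cover the closure. Since $F(0)=A_0\otimes I$ with $A_0\ge0$, so that $\|F(0)\|=\|A_0\|$, it remains to bound $\|F(\boldsymbol\rho{\bf S})\|$.

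Next I would pass to the polydisc. Because ${\bf B_n}$ is a complete Reinhardt domain (Proposition~1.3 of \cite{Po-automorphisms-polyball}), the tuple $(z_1{\bf S}_1,\dots,z_k{\bf S}_k)$ lies in the closed polyball for every $z_i\in\overline\DD$, so $\Phi(z_1,\dots,z_k):=F(z_1{\bf S}_1,\dots,z_k{\bf S}_k)$ is a positive operator-valued function, separately harmonic of degree $m_i$ in $z_i$, with $\Phi(\rho_1,\dots,\rho_k)=F(\boldsymbol\rho{\bf S})$ and constant term $A_0\otimes I$. I then prove the bound by induction on $k$. Fixing $z_1=\rho_1,\dots,z_{k-1}=\rho_{k-1}$ and letting $z_k$ vary, $u(z_k):=\Phi(\rho_1,\dots,\rho_{k-1},z_k)=C_0+\sum_{p=1}^{m_k}\bigl(C_p z_k^{p}+(C_p z_k^{p})^*\bigr)$ is a positive operator-valued harmonic function on $\DD$ with $C_0\ge0$. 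The operatorial Fej\'er--Egerv\'ary--Sz\'azs inequality (Theorem~2.29 in \cite{Po-unitary}, invoked already in the proof of Theorem~\ref{Fejer}) gives $\omega(C_p)\le\|C_0\|\cos\frac{\pi}{[m_k/p]+2}$, and since $\|T+T^*\|\le 2\omega(T)$ for every $T$ (as $T+T^*$ is self-adjoint), I obtain $\|u(\rho_k)\|\le\|C_0\|\bigl(1+2\sum_{p=1}^{m_k}\rho_k^{p}\cos\frac{\pi}{[m_k/p]+2}\bigr)$.

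To close the induction I would identify $C_0=C_0(\rho_1,\dots,\rho_{k-1})$ with the $z_k$-average $\int_\TT\Phi(\rho_1,\dots,\rho_{k-1},\zeta)\,dm(\zeta)$, which is a positive $(k-1)$-pluriharmonic function in blocks $1,\dots,k-1$ of degrees $m_1,\dots,m_{k-1}$ with the same constant term $A_0$. The induction hypothesis then gives $\|C_0\|\le\|A_0\|\prod_{i=1}^{k-1}\bigl(1+2\sum_{p=1}^{m_i}\rho_i^{p}\cos\frac{\pi}{[m_i/p]+2}\bigr)$, and multiplying the two estimates produces the full product (the base case $k=1$ is exactly the one-variable step above). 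The second inequality of the theorem is then immediate, factor by factor, from $\cos(\cdot)\le1$ together with $1+2\sum_{p\ge1}\rho_i^{p}=\frac{1+\rho_i}{1-\rho_i}$.

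I expect the main obstacle to be the inductive peeling, namely the rigorous justification that the torus average $C_0$ is again a positive $(k-1)$-pluriharmonic function with unchanged constant term $A_0$ and degrees $m_1,\dots,m_{k-1}$, and that fixing the remaining variables at $\rho_i$ genuinely yields a one-variable positive harmonic function to which Theorem~2.29 of \cite{Po-unitary} applies. Here one must lean on the complete Reinhardt property of ${\bf B_n}$ (to keep the scaled tuples in the closed polyball and preserve positivity for all $z_i\in\overline\DD$) and on the order-independence of the $k$-pluriharmonic expansion established in \cite{Po-pluriharmonic-polyball}, which legitimizes both the separate-variable harmonic structure and the averaging step.
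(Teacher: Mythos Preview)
Your proof is correct and follows the same inductive ``peel off one block at a time'' strategy as the paper, reducing first to the universal model and then applying the operator Fej\'er--Egerv\'ary--Sz\'azs inequality at each step. The main difference is in how the $k$-th block is handled. The paper keeps $F(\boldsymbol\rho{\bf S})$ as a positive $1$-multi-Toeplitz operator with respect to ${\bf R}_k$, applies the \emph{joint} numerical radius version of Theorem~2.29 in \cite{Po-unitary} to the row $\bigl(C_{(\alpha_k)}:|\alpha_k|=p_k\bigr)$, and then invokes a Wold-type decomposition to identify $w\bigl(C_{(\alpha_k)}:|\alpha_k|=p_k\bigr)$ with $\omega\bigl(\sum_{|\alpha_k|=p_k}C_{(\alpha_k)}\otimes S_{k,\alpha_k}\bigr)$. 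You instead substitute scalar parameters $z_i\in\DD$ into each block (using the Reinhardt property), obtaining a positive operator-valued function that is separately harmonic on $\DD^k$; fixing all but $z_k$ yields a single-variable positive trigonometric polynomial with operator coefficients, to which the \emph{one-variable} case of Theorem~2.29 applies directly. This bypasses both the joint numerical radius and the Wold decomposition, and is closer in spirit to the proof of Theorem~\ref{Fejer}. The paper's route, on the other hand, makes the multi-Toeplitz structure explicit and shows precisely how the noncommutative block variables are absorbed. Your identification of $C_0$ with $F(\rho_1{\bf S}_1,\dots,\rho_{k-1}{\bf S}_{k-1},0)$ is exactly what the paper uses (there is no need for the torus-average description), and your concern about the $m_k=\infty$ case is handled in the paper by the same truncation argument already used in Theorem~\ref{Fejer}.
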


 \begin{proof} First assume that $m_i\in \NN$.  Let  $A_{(\alpha_1,\ldots,\alpha_k;\beta_1,\ldots, \beta_k)}\in B(\cK)$, where   $\alpha_i,\beta_i\in \FF_{n_i}^+$, $|\alpha_i|=q_i^-, |\beta_i|=q_i^+$, and $-m_i\leq q_i\leq m_i$ be the coefficients in the representation of  $F$.
Then, for any $r\in [0,1)$,
$$
F(\boldsymbol\rho{\bf S})= \sum_{{i\in \{1,\ldots, k\}}\atop{-m_i\leq q_i\leq m_i}}  \sum_{{\alpha_i,\beta_i\in \FF_{n_i}^+, |\beta_i|=q_i^+, |\alpha_i|=q_i^-}}A_{(\alpha_1,\ldots,\alpha_k;\beta_1,\ldots, \beta_k)}
\otimes \left({\prod_{i=1}^k\rho_i^{|\alpha_i|+|\beta_i|}}\right)
{S}_{1,\alpha_1}{S}_{1,\beta_1}^*\otimes \cdots \otimes {S}_{k,\alpha_k}{ S}_{k,\beta_k}^*,
$$
where $S_{i,j}$ are the left creation operators acting on the full Fock space $F^2(H_{n_i})$, as defined in Section 1. We also use the notation $S_i=(S_{i,1},\ldots, S_{i, n_i})$.
  We remark that    $F(\boldsymbol\rho{\bf S})$  is a positive $k$-multi-Toeplitz operator and
$$
F(\boldsymbol\rho{\bf S})= \sum_{\alpha_k\in \FF_{n_k}^+, 1\leq |\alpha_k|\leq m_k}
C_{(\alpha_k)}^*\otimes \rho_k^{|\alpha_k|}S_{k,\alpha_k}^*+ C_0\otimes I_{F^2(H_{n_k})}+
\sum_{\alpha_k\in \FF_{n_k}^+, 1\leq |\alpha_k|\leq m_k}
C_{(\alpha_k)}\otimes \rho_k^{|\alpha_k|}S_{k,\alpha_k}.
$$
Note that $F(\boldsymbol\rho{\bf S})$ is also a positive $1$-multi-Topeplitz operator with respect to the right creation operators $R_k=(R_{k,1},\ldots, R_{k, n_k})$, with coefficients  in $B(\cK)\otimes_{min} B(F^2(H_{n_1}))\otimes_{min} \cdots \otimes_{min} B(F^2(H_{n_{k-1}}))$.
According to Theorem 2.29 from \cite{Po-unitary}, for each $p_k\in \{1,\ldots, m_k\}$, the joint numerical radius satisfies the inequality
\begin{equation}
\label{joint}
\rho_k^{p_k}w\left( C_{(\alpha_k)}:\ |\alpha_k|=p_k\right)\leq \|C_0\| \cos \frac{\pi}{\left[\frac{m_i}{p_k}\right]+2}.
\end{equation}
 Note that  $C_0\otimes I_{F^2(H_{n_k})}=F(\rho_1S_1,\ldots, \rho_{k-1}S_{k-1}, 0)$  is a positive $(k-1)$-multi-Toeplitz operator.
Using the properties of the numerical radius, we deduce that
\begin{equation}\label{omeom}
\omega\left(F(\boldsymbol\rho{\bf S})\right)\leq \omega\left(C_0\otimes I_{F^2(H_{n_k})}\right)+2\sum_{p_k=1}^{m_k}\rho_k^{p_k}\omega\left( \sum_{\alpha_k\in \FF_{n_k}^+, |\alpha_k|= p_k}
C_{(\alpha_k)}\otimes S_{k,\alpha_k}
\right).
\end{equation}
Let $Y_1,\ldots, Y_{n_k^{p_k}}$ be  an enumeration of the operators $C_{(\alpha_k)}^*$, where $\alpha_k\in \FF_{n_k}^+$ and $|\alpha_k|=p_k$, and let $\widetilde S_1,\ldots, \widetilde S_{n_k^{p_k}}$ be the left creation operators on the full Fock space $F^2(H_{n_k^{p_k}})$ with $n_k^{p_k}$ generators.
According to relation \eqref{omfo}, we have
$$
w\left( C_{(\alpha_k)}:\ |\alpha_k|=p_k\right)=\omega \left( \sum_{j=1}^{n_k^{p_k}}Y_j^*\otimes \widetilde S_j\right).
$$
On the other hand, since $\left[S_{k,\alpha_k}:\ \alpha_k\in \FF_{n_k}^+, |\alpha_k|=p\right]$ is a pure row isometry, it is unitarily equivalent to
$\left[\widetilde S_j\otimes I_\cG: \ j\in \{1,\ldots, n_k^{p_k}\}\right]$ for some separable Hilbert space $\cG$, due to the Wold type decomposition \cite{Po-isometric} for row isometries. Consequently, we obtain
$$
\omega \left( \sum_{j=1}^{n_k^{p_k}}Y_j^*\otimes \widetilde S_j\right)=
\omega\left( \sum_{\alpha_k\in \FF_{n_k}^+, |\alpha_k|= p_k}
C_{(\alpha_k)}\otimes S_{k,\alpha_k}
\right).
$$
Hence and  using relations \eqref{joint} and \eqref{omeom}, we deduce that
\begin{equation}
\label{omom}
\omega\left(F(\rho_1 S_1,\ldots \rho_k S_k)\right)\leq \omega\left(F(\rho_1 S_1,\ldots \rho_{k-1} S_{k-1}, 0)\right)
\left[ 1+2\sum_{p_k=1}^{m_k}\rho_k^{p_k} \cos \frac{\pi}{\left[\frac{m_k}{p_k}\right]+2}\right].
\end{equation}
Let us show that this inequality remains true even when
   $m_k=+\infty$. Indeed, in this case    we have $0\leq F(\boldsymbol\rho {\bf S})\leq G(\boldsymbol\rho {\bf S})$, where
$$  G(\boldsymbol\rho {\bf S}):=\sum_{\alpha_k\in \FF_{n_k}^+, 1\leq |\alpha_k|\leq q_k}
C_{(\alpha_k)}^*\otimes \rho_k^{|\alpha_k|}S_{k,\alpha_k}^*+ (C_0\otimes I_{F^2(H_{n_k})}+ M_{q_k}(\boldsymbol \rho)I)+
\sum_{\alpha_k\in \FF_{n_k}^+, 1\leq |\alpha_k|\leq q_k}
C_{(\alpha_k)}\otimes \rho_k^{|\alpha_k|}S_{k,\alpha_k}
$$
and
$$
M_{q_k}(\boldsymbol \rho):=\left\|\sum_{\alpha_k\in \FF_{n_k}^+|\alpha_k|> q_k}
C_{(\alpha_k)}^*\otimes \rho_k^{|\alpha_k|}S_{k,\alpha_k}^*+
\sum_{\alpha_k\in \FF_{n_k}^+,  |\alpha_k|> q_k}
C_{(\alpha_k)}\otimes \rho_k^{|\alpha_k|}S_{k,\alpha_k}\right\|
$$
Note that $M_{q_k}(\boldsymbol \rho)\to 0$ as $q_k\to \infty$. As in the proof above, but written for $G(\boldsymbol\rho {\bf S})$, and taking $q_k\to \infty$, we obtain
 \begin{equation*}
 \begin{split}
\omega\left(F(\rho_1 S_1,\ldots \rho_k S_k)\right)&\leq \omega\left(F(\rho_1 S_1,\ldots \rho_{k-1} S_{k-1}, 0)\right)
\left[ 1+2\sum_{p_k=1}^\infty\rho_k^{p_k} \right]\\
&\leq \omega\left(F(\rho_1 S_1,\ldots \rho_{k-1} S_{k-1}, 0)\right) \frac{1+\rho_k}{1-\rho_k},
\end{split}
\end{equation*}
which   can be obtained from \eqref{omom} passing to the limit as $m_k\to \infty$. Therefore, the inequality \eqref{omom} is valid if $m_k\in \NN\cup \{\infty\}$.
Since $F(\rho_1 S_1,\ldots \rho_{k-1} S_{k-1}, 0)$ is a positive  $k-1$ multi-Toeplitz operator with coefficients  in
$B(\cE)\otimes_{min} B(F^2(H_{n_1}))\otimes_{min} \cdots \otimes_{min} B(F^2(H_{n_{k-2}}))$, as above, we obtain the inequality
$$
\omega\left(F(\rho_1 S_1,\ldots \rho_{k-1} S_{k-1}, 0)\right)\leq \omega\left(F(\rho_1 S_1,\ldots \rho_{k-2} S_{k-2}, 0, 0)\right)
\left[ 1+2\sum_{p_{k-1}=1}^{m_{k-1}}\rho_{k-1}^{p_{k-1}} \cos \frac{\pi}{\left[\frac{m_{k-1}}{p_{k-1}}\right]+2}\right].
$$
Continuing this process, and putting together  the inequalities obtained, we deduce that
$$
\omega\left(F(\boldsymbol\rho{\bf S})\right)=
\omega\left(F(\rho_1 S_1,\ldots \rho_{k} S_{k})\right)\leq
\omega(F(0))\prod_{i=1}^k \left(1+2\sum_{p_i=1}^{m_i}\rho_i^{p_i} \cos \frac{\pi}{\left[\frac{m_i}{p_i}\right]+2}\right).
$$
According to the multivariable dilation theory \cite{Po-Berezin-poly}, each element ${\bf X}=\{X_{i,j}\}$ in the polyball ${\bf B_n}(\cH)$,  has a dilation  $\{{\bf S}_{i,j}\otimes I_\cM\}$ for some separable Hilbert space $\cM$ such that $X_{i,j}^*=({\bf S}_{i,j}^*\otimes I_\cM)|_\cH$ for any $i\in \{1,\ldots, k\}$ and $j\in \{1,\ldots, n_i\}$.
Using to the properties of the numerical radius, we obtain
$$
\omega\left(F(\boldsymbol\rho{\bf X})\right)=\omega\left(F(\rho_1 X_1,\ldots \rho_{k} X_{k})\right)\leq
\omega\left(F(\rho_1 S_1,\ldots \rho_{k} S_{k})\right)=\omega\left(F(\boldsymbol\rho{\bf S})\right),
$$
which combined with the latter inequality and the fact that the numerical radius of a positive operator coincides with its norm, implies the inequality of the theorem. The proof is complete.
\end{proof}

\begin{corollary}
 Let $F$ be a positive free $k$-pluriharmonic function  with scalar coefficients   and of   degree $m_i\in \NN\cup \{\infty\}$, $i\in \{1,\ldots, k\}$,  with respect to the variables $ (X_{i,1}, \ldots, X_{i,n_i})$, then
 $$
   F({\bf X})\leq F(0)
   \prod_{i=1}^k \left(1+2\sum_{p_i=1}^{m_i}\rho_i^{p_i} \cos \frac{\pi}{\left[\frac{m_i}{p_i}\right]+2}\right)\leq F(0)\prod_{i=1}^k \frac{1+\rho_i}{1-\rho_i}
 $$
 for any ${\bf X}\in \boldsymbol\rho{\bf B}_{\bf n}(\cH)^-$    and  $\boldsymbol\rho:=(\rho_1,\ldots, \rho_k)\in [0,1)^k$, where $[x]$ is the integer part of $x$.
 \end{corollary}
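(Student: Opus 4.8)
The plan is to read this off from Theorem \ref{H-ineq} by using that scalar coefficients keep $F({\bf X})$ a genuine positive operator and make $F(0)$ a nonnegative scalar. First I would note that, $F$ being positive and $k$-pluriharmonic with scalar coefficients, for every ${\bf X}\in\boldsymbol\rho{\bf B}_{\bf n}(\cH)^-$ the operator $F({\bf X})\in B(\cH)$ is positive, while the constant term $F(0)=a_0$ is a nonnegative real number; in particular $\|F(0)\|=F(0)$.

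The one substantive step is the order relation $T\leq\|T\|\,I$, valid for any positive operator $T$ since its spectrum lies in $[0,\|T\|]$. Applying this to $T=F({\bf X})$ gives $F({\bf X})\leq\|F({\bf X})\|\,I$. Since scalar coefficients are a special case of the operator coefficients treated in Theorem \ref{H-ineq}, that theorem applies verbatim and yields the norm chain
$$
\|F({\bf X})\|\leq \|F(0)\|\prod_{i=1}^k\left(1+2\sum_{p_i=1}^{m_i}\rho_i^{p_i}\cos\frac{\pi}{\left[\frac{m_i}{p_i}\right]+2}\right)\leq \|F(0)\|\prod_{i=1}^k\frac{1+\rho_i}{1-\rho_i}.
$$
Combining this with $F({\bf X})\leq\|F({\bf X})\|\,I$ and $\|F(0)\|=F(0)$, and interpreting each scalar on the right as that scalar times the identity, gives the full chain of operator inequalities asserted in the corollary.

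For completeness I would recall why the second inequality in the display holds: for $m_i\in\NN$ one has $\cos\frac{\pi}{[m_i/p_i]+2}\leq 1$, so the middle factor is dominated by $1+2\sum_{p_i=1}^\infty\rho_i^{p_i}=\frac{1+\rho_i}{1-\rho_i}$ via the geometric series, while for $m_i=\infty$ the middle factor already equals $\frac{1+\rho_i}{1-\rho_i}$; taking the product over $i\in\{1,\dots,k\}$ finishes this estimate.

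Since the argument is purely a specialization of Theorem \ref{H-ineq} together with an elementary order relation, no real obstacle arises. The only point demanding a moment of care is the upgrade from the scalar norm bound of the theorem to a genuine operator inequality, which succeeds precisely because positivity of $F$ forces $F({\bf X})\geq 0$ and scalarity of the coefficients makes the right-hand side a nonnegative multiple of the identity.
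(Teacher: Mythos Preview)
Your proposal is correct and matches the paper's intended approach: the corollary is stated without proof immediately after Theorem \ref{H-ineq}, so the paper treats it as the obvious specialization you describe. Your added detail---that positivity of $F({\bf X})$ gives $F({\bf X})\leq \|F({\bf X})\|\,I$, and that scalar coefficients make $\|F(0)\|=F(0)$---is exactly the bridge needed to pass from the norm inequality of the theorem to the operator inequality of the corollary.
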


       %
\bigskip

\end{document}